\theoremstyle{plain}
\newtheorem*{theorem*}{Theorem}
\newtheorem*{remark*}{Remark}
\newtheorem*{example*}{Example}
\newtheorem{lemma}{Lemma}[subsection]
\newtheorem{proposition}[lemma]{Proposition}
\newtheorem{corollary}[lemma]{Corollary}
\newtheorem{theorem}[lemma]{Theorem}
\newtheorem*{conjecture*}{Conjecture}
\newtheorem{sublemma}[lemma]{Sublemma}
\newtheorem{introtheorem}{Theorem}
\theoremstyle{definition}
\newtheorem{definition}[lemma]{Definition}
\newtheorem{example}[lemma]{Example}
\theoremstyle{remark}
\newtheorem{remark}[lemma]{Remark}
\newtheorem{obsr}[lemma]{Observation}
\newtheorem{notation}[lemma]{Notation}
 \newcommand{\idealI}{\mathfrak{I}}
\newcommand{\Hom}{\operatorname{Hom}}
\newcommand{\triv}{{\mathbbm 1}}
\newcommand{\id}{\operatorname{Id}}
\renewcommand{\Im}{\operatorname{Im}}
\newcommand{\Ker}{\operatorname{Ker}}
\newcommand{\Ext}{\operatorname{Ext}}
\newcommand{\co}{{\it O}}
\newcommand{\Aut}{{\operatorname{Aut}}}
\newcommand{\End}{\operatorname{End}}
\newcommand{\bC}{{\mathbb C}}
\newcommand{\bZ}{{\mathbb Z}}
\newcommand{\Del}{{\Delta}}
\newcommand{\fh}{{\mathfrak{h}}}
\newcommand{\T}{\tau}
\newcommand{\gl}{{\mathfrak{gl}}}
\newcommand{\abs}[1]{\left|{#1}\right|}
\newcommand{\Dab}{\underline{Rep}^{ab}(S_{\nu})}
\newcommand{\InnaA}[1]{{{#1}}}
\newcommand{\InnaB}[1]{{{#1}}}
\def\quotient#1#2{%
    \raise1ex\hbox{$#1$}\Big/\lower1ex\hbox{$#2$}%
}
\begin{document}

\date{\today}
\title{Schur-Weyl duality for Deligne categories}
 \author{Inna Entova Aizenbud}
\address{Inna Entova Aizenbud,
Massachusetts Institute of Technology,
Department of Mathematics,
Cambridge, MA 02139 USA.}
\email{inna.entova@gmail.com}

\begin{abstract}
 This paper gives an analogue to the classical Schur-Weyl duality in the setting of Deligne categories.
 Given a finite-dimensional unital vector space $V$ (i.e. a vector space $V$ with a \InnaB{distinguished non-zero vector $\InnaB{\triv}$}) we give a definition of a complex tensor power of $V$. This is an $Ind$-object of the Deligne category $\underline{Rep}(S_{t})$ equipped with a natural action of $\gl(V)$.


 This construction allows us to describe a duality between the abelian envelope of the category $\underline{Rep}(S_{t})$ and \InnaB{a localization of} the category $\co^{\mathfrak{p}}_{V}$ (the parabolic category $\co$ for $\gl(V)$ associated with the \InnaB{pair $(V, \InnaB{ \triv})$}).

 In particular, we obtain an exact contravariant functor $\widehat{SW}_{t}$ from the category $\underline{Rep}^{ab}(S_{t})$ (the abelian envelope of the category $\underline{Rep}(S_{t})$) to a certain quotient of the category $\co^{\mathfrak{p}}_{V}$. This quotient, denoted by $\widehat{\co}^{\mathfrak{p}}_{t, V}$, is obtained by taking the full subcategory of $\co^{\mathfrak{p}}_{V}$ consisting of modules of \InnaA{degree} $t$, and \InnaB{localizing} by the subcategory of finite dimensional modules.
 It turns out that the contravariant functor $\widehat{SW}_{t}$ makes $\widehat{\co}^{\mathfrak{p}}_{t, V}$ a Serre quotient of the category $\underline{Rep}^{ab}(S_{t})^{op}$, and the kernel of $\widehat{SW}_{t}$ can be explicitly described.
 
\end{abstract}

\keywords{Deligne categories, Schur-Weyl duality}
\maketitle
\setcounter{tocdepth}{3}
\section{Introduction}

\subsection{}
The study of representations in complex rank involves defining and studying families of abelian categories depending on a parameter $t$ which are polynomial interpolations of the categories of representations of objects such as finite groups, Lie groups, Lie algebras and more. This was done by P. Deligne in \cite{D} for finite dimensional representations of the general linear group $GL_n$, the orthogonal and symplectic groups $O_n, Sp_{2n}$ and the symmetric group $S_n$. Deligne defined Karoubian tensor categories $Rep(GL_t), Rep(OSp_t), \underline{Rep}(S_t)$, $t \in \bC$, which at points $n=t \in \bZ_+$ allow an essentially surjective additive functor onto the standard categories $Rep(GL_n), Rep(OSp_n), Rep(S_n)$. The category $\underline{Rep}(S_t)$ was subsequently studied by himself and others (e.g. by V. Ostrik, J. Comes in \cite{CO}, \cite{CO2}). 

This paper gives an analogue to the classical Schur-Weyl duality in the setting of Deligne categories. In order to do this, we define the ``complex tensor power'' of a finite-dimensional split unital \InnaA{complex} vector space (i.e. a vector space $V$ with a distinguished non-zero vector $\InnaB{\triv}$ and a splitting $V \cong \InnaB{\bC \triv} \oplus U$). This ``complex tensor power'' of $V$ is an $Ind$-object in the category $\underline{Rep}(S_t)$, and comes with an action of $\gl(V)$ on it. \InnaB{Furthermore, it can be shown that this object does not depend on the choice of splitting, but only on the pair $(V, \triv)$.}

The ``$t$-th tensor power'' of $V$ is defined for any $t \in \bC$; for $n=t \in \bZ_+$, the functor $\underline{Rep}(S_{t=n}) \rightarrow Rep(S_n)$ takes this $Ind$-object of $\underline{Rep}(S_{t=n})$ to the usual tensor power $V^{\otimes n}$ in $Rep(S_n)$. Moreover, the action of $\gl(V)$ on the former object corresponds to the action of $\gl(V)$ on $V^{\otimes n}$.

This allows us to define an additive contravariant functor, called the Schur-Weyl functor,
$$SW_{t, V}: \underline{Rep}^{ab}(S_{t}) \rightarrow \co^{\mathfrak{p}}_{V}$$
Here $\underline{Rep}^{ab}(S_{t})$ is the abelian envelope of the category $\underline{Rep}(S_{t})$ (this envelope was described in \cite[Chapter 8]{D}, \cite{CO2}) and the category $\co^{\mathfrak{p}}_{V}$ is the parabolic category $\co$ for $\gl(V)$ associated with the \InnaB{pair $(V, \InnaB{ \triv})$}.

It turns out that $SW_{t, V}$ induces an anti-equivalence of abelian categories between a Serre quotient of $\underline{Rep}^{ab}(S_{t})$ and a \InnaB{localization of} $\co^{\mathfrak{p}}_{V}$. The latter quotient is obtained by taking the full subcategory of $\co^{\mathfrak{p}}_{V}$ consisting of \InnaB{``polynomial''} modules of \InnaA{degree} $t$ (i.e. modules on which $\id_V \in \End(V)$ acts by the scalar $t$, \InnaB{and on which the group $GL(V /\bC \triv)$ acts by polynomial maps), and localizing} by the Serre subcategory of finite dimensional modules. This quotient is denoted by $\widehat{\co}^{\mathfrak{p}}_{t, V}$.

Thus for any unital finite-dimensional space $(V, \triv)$ and for any $t \in \bC$, the category $\widehat{\co}^{\mathfrak{p}}_{t, V}$ is a Serre quotient of $\underline{Rep}^{ab}(S_{t})^{op}$. 

In the upcoming paper \cite{EA}, we will consider the categories $\widehat{\co}^{\mathfrak{p}_N}_{t, \bC^N}$ for $N \in \bZ_+$, and the corresponding Schur-Weyl functors. Defining appropriate restriction functors $$\widehat{Res}_{t, N}:\widehat{\co}^{\mathfrak{p}_N}_{t, \bC^N} \rightarrow \widehat{\co}^{\mathfrak{p}_{N-1}}_{t, \bC^{N-1}}$$ we can consider the inverse limit of the system $((\widehat{\co}^{\mathfrak{p}_N}_{t, \bC^N})_{N \geq 0}, (\widehat{Res}_{t, N})_{N \geq 1})$ and a contravariant functor $$ \underline{Rep}^{ab}(S_{t}) \rightarrow \varprojlim_{N \in \bZ_+} \widehat{\co}^{\mathfrak{p}_N}_{t, \bC^N}$$ induced by the Schur-Weyl functors $SW_{t, \bC^N}$.

We then define a a full subcategory of $ \varprojlim_{N \in \bZ_+} \widehat{\co}^{\mathfrak{p}_N}_{t, \bC^N}$ called ``the filtered inverse limit'' of the system $((\widehat{\co}^{\mathfrak{p}_N}_{t, \bC^N})_{N \geq 0}, (\widehat{Res}_{t, N})_{N \geq 1})$. Intuitively, one can describe the ``the filtered inverse limit'' as follows:

By definition, the objects in $\varprojlim_{N \in \bZ_+} \widehat{\co}^{\mathfrak{p}_N}_{t, \bC^N}$ are sequences $(M_N)_{N\in \bZ_+}$ such that $M_N \in \widehat{\co}^{\mathfrak{p}_N}_{t, \bC^N}$, together with isomorphisms $\widehat{Res}_{t, N}(M_N) \rightarrow M_{N-1}$. The objects in the filtered inverse limit are those sequences $(M_N)_{N\in \bZ_+}$ for which the integer sequence $\{\ell(M_N)\})_{N\in \bZ_+}$ stabilizes ($\ell(M_N)$ is the length of the \InnaA{$\widehat{\co}^{\mathfrak{p}_N}_{t, \bC^N}$-object} $M_N$).

In this setting, we have an anti-equivalence of abelian categories 
$$ \underline{Rep}^{ab}(S_{t}) \longrightarrow \varprojlim_{N \in \bZ_+, \text{  filtered}} \widehat{\co}^{\mathfrak{p}_N}_{t, \bC^N}$$
induced by the Schur-Weyl functors $SW_{t, \bC^N}$. \InnaB{This anti-equivalence gives us an unexpected structure of a rigid symmetric monoidal category on $\varprojlim_{N \in \bZ_+, \text{  filtered}} \widehat{\co}^{\mathfrak{p}_N}_{t, \bC^N}$. In particular, the duality coming from the tensor structure will be the same as the one arising from the usual duality in category $\co$.}

\InnaB{The Schur-Weyl functor described above can also be used to extend other classical dualities to complex rank. Namely, one can consider categories which are constructed ``on the basis of $\underline{Rep}(S_t)$''. A method for constructing such categories was suggested in \cite{E1}, and was used in \cite{EA1}, \cite{Mathew} to study representations of degenerate affine Hecke algebras and of rational Cherednik algebras in complex rank. One can then try to generalize the classical Schur-Weyl dualities for these new categories: for example, one can use the notion of a complex tensor power to construct a Schur-Weyl functor between the category of representations of the degenerate affine Hecke algebra of type $A$ of complex rank, and the category of parabolic-type representations of the Yangian $Y(\gl_N)$ for $N \in \bZ_+$. We plan to study these dualities in detail in the future.
}
\subsection{Summary of results}

Recall that the classical Schur-Weyl duality describes the relation between the actions of $\gl(V)$, $S_d$ on $V^{\otimes d}$ (here $V$ is a finite-dimensional \InnaA{complex} vector space, $d$ is a non-negative integer, and $S_d$ is the symmetric group). 

In particular, it says that the actions of $\gl(V)$, $S_d$ on $V^{\otimes d}$ commute with each other, and we have a decomposition of \InnaA{$\bC[S_d] \otimes_{\bC} \mathcal{U}(\gl(V))$-modules}
$$ V^{\otimes d} \cong \bigoplus_{\substack{\lambda \text{ is a Young diagram }\\ \abs{\lambda} =d}} \lambda \otimes S^{\lambda} V $$

We would like to extend this duality to the Deligne category $\underline{Rep}(S_t)$, by constructing an object $V^{\underline{\otimes}  t}$ in $\underline{Rep}(S_t)$, together with the action of $\gl(V)$ on it, which is an analogue (a polynomial interpolation) of the module $V^{\otimes d}$ for \InnaA{$\bC[S_d] \otimes_{\bC} \mathcal{U}(\gl(V))$}.

It turns out that this can be done in the following setting: 
\begin{itemize}
 \item The space $V$ is required to be unital, that is, we fix a \InnaB{distinguished non-zero vector $\triv$ in $V$. We then choose any splitting $V \cong \InnaB{\bC \triv} \oplus U$. It can be shown that $V^{\underline{\otimes}  t}$ does not depend on the choice of the splitting, but only on the choice of the distinguished vector $\triv$.
 
 For $t \notin \bZ_+$, one can actually give a definition without choosing a splitting, as it is done by P. Etingof in \cite{E1} (c.f. Appendix \ref{app:complex_ten_power_generic_nu}).}
 \item The object $V^{\underline{\otimes}  t}$ is not finite-dimensional (unlike $V^{\otimes d}$), but is an $Ind$-object (a countable direct sum) of objects from $\underline{Rep}(S_t)$.
\end{itemize}
 
The intuition for working in the above setting is as follows (proposed by P. Etingof in \cite{E1}): let $t \in \bC$, and let $x$ be a formal variable. The expression $ x^t$ is not polynomial in $t$, and has no algebraic meaning, but if we present $x$ as $x := 1+y$, we can write:
$$x^t= (1+y)^t = \sum_{k \in \bZ_+} \binom{t}{k} y^k$$ The function $\binom{t}{k}$ is polynomial in $t$, so the expression $\sum_{k \in \bZ_+} \binom{t}{k} y^k$ is just a formal power series with polynomial coefficients. This explains why it is convenient to work with unital vector spaces. 

Notice that for $t \notin \bZ_+$, the above sum is infinite, which also explains why the object $V^{\underline{\otimes}  t}$ can be expected to be an $Ind$-object of $\underline{Rep}(S_t)$ ($\underline{Rep}(S_t)$ has only finite direct sums).

To define the object $V^{\underline{\otimes}  t}$ for a \InnaB{unital vector space $(V, \triv)$}, we use the following notation:

\begin{notation}\label{notn:par_subalg}
\mbox{}
\begin{itemize}
 \item \InnaB{We denote by $\mathfrak{p}_{(V, \InnaB{\bC \triv})}\subset \mathfrak{gl}(V)$ the parabolic Lie subalgebra which consists of} all the endomorphisms $\phi: V \rightarrow V$ for which $\InnaB{\phi(\triv) \in \InnaB{\bC \triv} }$. We will write $\mathfrak{p}:=\mathfrak{p}_{(V, \InnaB{\bC \triv})}$ for short.
 \item \InnaB{$\bar{\mathfrak{P}}_{\triv}$ denotes the mirabolic subgroup corresponding to $\triv$, i.e. the group of automorphisms $\Phi: V \rightarrow V$ such that $\Phi(\triv) = \triv$, and $\bar{\mathfrak{p}}_{\InnaB{\bC \triv}} \subset \mathfrak{p}$ denotes the algebra of endomorphisms $\phi: V \rightarrow V$ for which $\phi(\triv) =0$ (thus $\bar{\mathfrak{p}}_{\InnaB{\bC \triv}} = Lie(\bar{\mathfrak{P}}_{\triv})$).}
 \item \InnaB{$\mathfrak{U}_{\triv}$ denotes the subgroup of $\bar{\mathfrak{P}}_{\triv}$ of automorphisms $\Phi: V \rightarrow V$ for which $Im(\Phi - \id_V) \subset \bC \triv$, and $\mathfrak{u}_{\mathfrak{p}}^{+} \subset \bar{\mathfrak{p}}_{\InnaB{\bC \triv}}$ denotes the algebra of endomorphisms $\phi: V \rightarrow V$ for which $\Im \phi \subset \InnaB{\bC \triv} \subset \Ker \phi$ (thus $\mathfrak{u}_{\mathfrak{p}}^{+} = Lie(\mathfrak{U}_{\triv})$). }
\end{itemize}
\end{notation}

Fix a splitting $V= \InnaB{\bC \triv} \oplus U $.

Recall that we have a splitting $\gl(V) \cong \mathfrak{p} \oplus \mathfrak{u}_{\mathfrak{p}}^{-}$, where $\mathfrak{u}_{\mathfrak{p}}^{-} \cong U$. This gives us an analogue of triangular decomposition:
$$\gl(V) \cong \bC   \id_V \oplus \mathfrak{u}_{\mathfrak{p}}^{-} \oplus \mathfrak{u}_{\mathfrak{p}}^{+}  \oplus \gl(U)$$
with $\mathfrak{u}_{\mathfrak{p}}^{+} \cong U^*$.

The definition of $V^{\underline{\otimes}  t}$ is essentially an analogue of the isomorphism of 

\InnaA{$\bC[S_d] \otimes_{\bC} \mathcal{U}(\gl(V))$}-modules
$$V^{\otimes d} \cong \bigoplus_{k=0,...,d} (U^{\otimes k} \otimes \bC Inj(\{1,...,k\} , \{1,...,d\}))^{S_k} $$
Here the action of $\gl(V)$ on the right hand side (viewed as a $\bZ_+$-graded space) \InnaA{is given as follows}:
$\id_V$ acts by the scalar $t$, $\gl(U)$ acts on each summand \InnaA{though its action on the spaces $U^{\otimes k}$}, and $\mathfrak{u}_{\mathfrak{p}}^{-}, \mathfrak{u}_{\mathfrak{p}}^{+}$ act by operators of degrees $1, -1$ respectively.

\InnaB{The group $S_d$ acts on each summand through its action on the set $Inj(\{1,...,k\} , \{1,...,d\})$ of injective maps from $\{1,...,k\} $ to $ \{1,...,d\}$.}

In the Deligne category $\underline{Rep}(S_{t})$, we have objects $\Del_k$ which are analogues of the $S_k \times S_d$ representation $\bC Inj(\{1,...,k\} , \{1,...,d\})$. The objects $\Del_k$ carry an action of $S_k$, therefore we can define a $\bZ_+$-graded $Ind$-object of $\underline{Rep}(S_{t})$:
$$ V^{\underline{\otimes}  t} := \bigoplus_{k \geq 0} (U^{\otimes k} \otimes \Del_k)^{S_k}$$

Next, one can define the action of $\gl(V)$ on $ V^{\underline{\otimes}  t}$ so that $\id_V$ acts by scalar $t$, $\gl(U)$ acts naturally on each summand $(U^{\otimes k} \otimes \Del_k)^{S_k}$, and $\mathfrak{u}_{\mathfrak{p}}^{-}, \mathfrak{u}_{\mathfrak{p}}^{+}$ act by operators of degrees $1, -1$ respectively.

\InnaB{In fact, it can be shown that the object $V^{\underline{\otimes}  t}$ does not depend on the choice of the splitting, but only on the choice of the distinguished vector $\triv$.}

\InnaA{We also show that for $t=n \in \bZ_+$, the functor $\underline{Rep}(S_{t=n}) \rightarrow Rep(S_n)$ takes $V^{\underline{\otimes}  t=n}$ to the usual tensor power $V^{\otimes n}$ in $Rep(S_n)$, and the action of $\gl(V)$ on $V^{\underline{\otimes}  t=n}$ corresponds to the action of $\gl(V)$ on $V^{\otimes n}$.}

\begin{remark}
 \InnaA{The Hilbert series of $V^{\underline{\otimes}  t}$ corresponding to the grading $gr_0(V):= \InnaB{\bC \triv}$, $gr_1(V):= U$ would be $(1+y)^t$.}
\end{remark}

\begin{remark}
 \InnaA{Given any symmetric monoidal category $\mathcal{C}$ with unit object $\mathbf{1}$ and a fixed object $X \in \mathcal{C}$, one can similarly define the object $(\mathbf{1} \oplus X)^{\underline{\otimes}  t}$ of $Ind-(\underline{Rep}(S_t) \boxtimes \mathcal{C})$.}
\end{remark}

%
%
$$ $$

We now proceed to the second part of the Schur-Weyl duality. Recall that in the classical Schur-Weyl duality for $\gl(V)$, $S_d$, the module $V^{\otimes d}$ over \InnaA{$\bC[S_d] \otimes_{\bC} \mathcal{U}(\gl(V))$} defines a contravariant functor 
\begin{align*}
 &\mathtt{SW}_{d, V}: Rep(S_d) \longrightarrow Mod_{\mathcal{U}(\gl(V)), poly} \\
 &\mathtt{SW}_{d, V} := \Hom_{S_d}(\cdot, V^{\otimes d})
\end{align*}
 Here 
 \begin{itemize}
  \item The category $Rep(S_d)$ is the semisimple abelian category of finite-dimensional representations of $S_d$. 
  \item The category $Mod_{\mathcal{U}(\gl(V)), poly}$ is the semisimple abelian category of polynomial representations of $\gl(V)$ (``polynomial'' meaning that these are \InnaA{direct summands of finite direct sums of tensor powers of $V$; \InnaB{alternatively, one can define these as finite-dimensional representations $GL(V) \rightarrow \Aut(W)$ which can be extended to an algebraic map $\End(V) \rightarrow \End(W)$})}.
 \end{itemize}

This functor takes the simple representation of $S_d$ corresponding to the Young diagram $\lambda$ either to zero, or to the simple representation $S^{\lambda} V$ of $\gl(V)$. 
Notice that the image of functor $ \mathtt{SW}_{d, V}$ lies in the full additive subcategory $Mod_{\mathcal{U}(\gl(V)), poly, d}$ of $Mod_{\mathcal{U}(\gl(V)), poly}$ whose objects are $\gl(V)$-modules on which $\id_V$ acts by the scalar $d$.

It is then easy to see that the contravariant functor $ \mathtt{SW}_{d, V}: Rep(S_d) \rightarrow Mod_{\mathcal{U}(\gl(V)), poly, d}$ is full and essentially surjective.

Considering these dualities for a fixed finite-dimensional vector space $V$ and every $d \in \bZ_+$, we can construct a full, essentially surjective, additive contravariant functor 
$$ \mathtt{SW}_V: \bigoplus_{d \in \bZ_+} Rep(S_d) \rightarrow Mod_{\mathcal{U}(\gl(V)), poly} \cong \bigoplus_{d \in \bZ_+} Mod_{\mathcal{U}(\gl(V)), poly, d}$$
between semisimple abelian categories.

The simple objects in $\bigoplus_{d \in \bZ_+} Rep(S_d)$ which $\mathtt{SW}_V$ sends to zero are (up to isomorphism) exactly those parametrized by Young diagrams $\lambda$ such that $\lambda$ has more than $\dim V $ rows. 

Thus the contravariant functor $ \mathtt{SW}_{V}$ induces an anti-equivalence of abelian categories between a Serre quotient of the semisimple abelian category $\bigoplus_{d \in \bZ_+}  Rep(S_d)$, and the semisimple abelian category $Mod_{\mathcal{U}(\gl(V)), poly}$.
$$ $$

In our case, we would like to consider the Deligne category $\underline{Rep}(S_t)$ and a category of representations of $\gl(V)$ related to the unital structure of $V$. 

Unfortunately, the Deligne category $\underline{Rep}(S_t)$ is Karoubian but not necessarily abelian, which would make it difficult to obtain an anti-equivalence of abelian categories. However, it turns out that the Karoubian tensor category $\underline{Rep}(S_t)$ is abelian semisimple whenever $t \notin \bZ_+$. For $t=d \in \bZ_+$, this is not the case, but then $\underline{Rep}(S_{t=d})$ can be embedded (as a Karoubian \InnaB{tensor} category) into a larger abelian tensor category, denoted by $\underline{Rep}^{ab}(S_{t=d})$. The construction of this abelian envelope is discussed in detail in \cite[Section 8]{D} and in \cite{CO2}. We will denote by $\underline{Rep}^{ab}(S_{t})$ the abelian envelope of $\underline{Rep}(S_t)$ for any $t \in \bC$, with $\underline{Rep}^{ab}(S_{t})$ being just $\underline{Rep}(S_{t})$ whenever $t \notin \bZ_+$.

The structure of $\underline{Rep}^{ab}(S_{t})$ as an abelian category is known, and described in \cite{CO2} and in Section \ref{ssec:S_nu_abelian_env}. In particular, it is a highest weight category \InnaA{(with infinitely many weights)}, with simple objects parametrized by all Young diagrams.
$$ $$
It turns out that the correct categories to consider for the Schur-Weyl duality in complex rank are the abelian category $\underline{Rep}^{ab}(S_t)$ and the parabolic category $\co$ for $\gl(V)$ corresponding to the \InnaB{pair $(V, \InnaB{ \triv})$}. 
\InnaB{
$$ $$
Consider the short exact sequence of groups $$ 1 \rightarrow \mathfrak{U}_{\triv} \longrightarrow \bar{\mathfrak{P}}_{\triv} \longrightarrow GL \left( \quotient{V}{\bC \triv} \right) \rightarrow 1$$
For any irreducible finite-dimensional algebraic representation $\rho: \bar{\mathfrak{P}}_{\triv} \rightarrow \Aut(E)$ of the mirabolic subgroup, $\mathfrak{U}_{\triv}$ acts trivially on $E$, and thus $\rho$ factors through $GL \left( V / \bC \triv \right)$. 

This allows us to say that $\rho$ is a {\it $GL \left( V / \bC \triv \right)$-polynomial representation of $\bar{\mathfrak{P}}_{\triv}$} if 

$\rho: GL \left( V / \bC \triv \right) \rightarrow \Aut(E)$ is a polynomial representation (i.e. if $\rho$ extends to an algebraic map $\End \left( V / \bC \triv \right) \rightarrow \End(E)$).
 
Now, for any finite-dimensional algebraic representation $E$ of $\bar{\mathfrak{P}}_{\triv}$, we say that $E$ is {\it $GL \left( V / \bC \triv \right)$-polynomial} if the Jordan-Holder components of $E$ are $GL \left( V / \bC \triv \right)$-polynomial representations of $\bar{\mathfrak{P}}_{\triv}$.
 
This allows us to give the following definition:
}
\begin{definition}
 The category $\co^{\mathfrak{p}}_{t, V}$ is defined to be the full subcategory of $Mod_{\mathcal{U}(\gl(V))}$ whose objects $M$ satisfy the following conditions:
 \begin{itemize}
 \item \InnaB{$M$ is a Harish-Chandra module for the pair $(\gl(V), \bar{\mathfrak{P}}_{\triv})$, i.e. the action of the Lie subalgebra ${\mathfrak{p}}_{\InnaB{\bC \triv}}$ on $M$ integrates to the action of the group $\bar{\mathfrak{P}}_{\triv}$.
 
 Furthermore, we require that as a representation of $\bar{\mathfrak{P}}_{\triv}$, $M$ be a filtered colimit of $GL \left( V / \bC \triv \right)$-polynomial representations, i.e. $$M \rvert_{\bar{\mathfrak{P}}_{\triv}} \in Ind-Rep(\bar{\mathfrak{P}}_{\triv})_{GL \left( V / \bC \triv \right)-poly}$$
 }
  \item $M$ is a finitely generated $\mathcal{U}(\gl(V))$-module.
  \item $\id_V \in \gl(V)$ acts by $t \id_M$ on $M$.
 \end{itemize}
\end{definition}
\InnaB{
\begin{remark}
 For any fixed splitting $V = \bC \triv \oplus U$, the first condition can be replaced by the requirement that $M \rvert_{\gl(U)}$ be a direct sum of polynomial simple $\mathcal{U}(\gl(U))$-modules, and that $\mathfrak{u}_{\mathfrak{p}}^{+}$ act locally finitely on $M$.
\end{remark}
}

The category $\co^{\mathfrak{p}}_{t, V}$ is an Artinian abelian category, and is a Serre subcategory of the usual category $\co$ for $\gl(V)$.

The $\gl(V)$-action on the object $V^{\underline{\otimes}  t}$ is a ``$\co^{\mathfrak{p}}_{t}$-type'' action, which allows us to define a contravariant functor from $\underline{Rep}^{ab}(S_t)$ to $\co^{\mathfrak{p}}_{t, V}$:
$$SW_{t, V} :=  \Hom_{\underline{Rep}^{ab}(S_t)}(\cdot, V^{\underline{\otimes}  t})$$

This contravariant functor is linear and additive, yet \InnaA{only left} exact. To fix this problem, we compose this functor with the quotient functor \InnaB{$\hat{\pi}$} from $\co^{\mathfrak{p}}_{t, V}$ to the category $\widehat{\co}^{\mathfrak{p}}_{t, V}$: the localization of $\co^{\mathfrak{p}}_{t, V}$ by the Serre subcategory of finite-dimensional modules. \InnaB{We denote the newly obtained functor by $\widehat{SW}_{t, V}$.}

One of the main results of this paper is the following theorem (c.f. Theorem \ref{thrm:SW_almost_equiv}):
\begin{introtheorem}\label{introthrm:SW_almost_equiv}
The contravariant functor $\widehat{SW}_{t, V}:\underline{Rep}^{ab}(S_{t}) \rightarrow {\widehat{\co}^{\mathfrak{p}}_{t, V}}$ is exact and essentially surjective.

Moreover, the induced contravariant functor $$ \quotient{\underline{Rep}^{ab}(S_{t})}{Ker(\widehat{SW}_{t, V})} \rightarrow \widehat{\co}^{\mathfrak{p}}_{t, V}$$ is an anti-equivalence of abelian categories, thus making $\widehat{\co}^{\mathfrak{p}}_{t, V}$ a Serre quotient of $\underline{Rep}^{ab}(S_{t})^{op}$.
\end{introtheorem}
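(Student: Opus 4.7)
The plan is to proceed in four stages, using the highest weight structure of $\underline{Rep}^{ab}(S_{t})$ and the explicit $\bZ_+$-graded structure of $V^{\underline{\otimes} t}$. First I would compute $SW_{t,V}$ on the ``permutation-type'' generators $\Del_k$ of $\underline{Rep}(S_t)$: using the decomposition $V^{\underline{\otimes} t} = \bigoplus_{k \geq 0} (U^{\otimes k} \otimes \Del_k)^{S_k}$ together with the known Hom spaces $\Hom_{\underline{Rep}(S_t)}(\Del_k, \Del_{k'})$, I would identify $SW_{t,V}(\Del_k)$ as a specific parabolic Verma-like module in $\co^{\mathfrak{p}}_{t,V}$. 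Since the $\Del_k$ generate $\underline{Rep}^{ab}(S_t)$ and their endomorphisms are well understood, I can deduce the image of each standard (and costandard) object labeled by a Young diagram $\lambda$, and determine which simples $L(\lambda)$ are sent by $\widehat{SW}_{t,V}$ to zero (these should be exactly the ``long'' diagrams, analogous to the classical case where $\lambda$ has more than $\dim V$ rows).

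For exactness of $\widehat{SW}_{t,V}$, I would use left exactness of $SW_{t,V}$ (as a Hom functor into an Ind-object) and reduce to showing that for each simple $L(\lambda)$ the obstruction group $\Ext^1_{\underline{Rep}^{ab}(S_t)}(L(\lambda), V^{\underline{\otimes} t})$ is a finite-dimensional $\gl(V)$-module and therefore vanishes in $\widehat{\co}^{\mathfrak{p}}_{t,V}$. Because $V^{\underline{\otimes} t}$ is a $\bZ_+$-graded Ind-object whose graded pieces are objects of $\underline{Rep}(S_t)$ tensored with finite-dimensional polynomial $\gl(U)$-modules, only finitely many graded pieces contribute to this $\Ext^1$, and the combinatorics of the block decomposition of $\underline{Rep}^{ab}(S_t)$ from \cite{CO2} should pin down exactly which pieces appear. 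Essential surjectivity then follows by matching simples: the nonzero images $\widehat{SW}_{t,V}(L(\lambda))$ should exhaust all simples of $\widehat{\co}^{\mathfrak{p}}_{t,V}$, namely the infinite-dimensional simple quotients of parabolic Verma modules.

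Finally, to upgrade to an anti-equivalence, I invoke the universal property of Serre quotients: once $\widehat{SW}_{t,V}$ is exact, essentially surjective, and factors through $\underline{Rep}^{ab}(S_t)/\Ker(\widehat{SW}_{t,V})$, it suffices to prove the induced functor is fully faithful. Faithfulness is automatic from the definition of the kernel; fullness amounts to a double-centralizer statement, that the endomorphism algebra of $V^{\underline{\otimes} t}$ computed in $\widehat{\co}^{\mathfrak{p}}_{t,V}$ coincides, up to the kernel ideal, with the opposite of the corresponding algebra in the quotient of $\underline{Rep}^{ab}(S_t)$. I would establish this first on the generators $\Del_k$ using Step 1 and then extend by a filtered-colimit argument. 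The main obstacle is expected to be this fullness step, since localization by finite-dimensional modules can create new morphisms that do not obviously lift; the key tool will be the $\bZ_+$-grading on $V^{\underline{\otimes} t}$, which allows obstructions to be controlled degree by degree, and at integer specializations $t=n \in \bZ_+$ the statement can be cross-checked against the classical Schur--Weyl duality for $V^{\otimes n}$.
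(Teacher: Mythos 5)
Your outline shares the paper's broad skeleton---reduce to a block-by-block analysis using the highest weight structure, prove exactness by showing $\Ext^{>0}$-terms are finite-dimensional (this part matches the paper's Lemma \ref{lem:overline_SW_exact} closely), and then match up projectives with injectives---but there is a genuine gap in the central computational step that your ``combinatorics of the block decomposition'' is supposed to handle.

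The issue is that the $\gl(U)$-decomposition of $SW_{\nu}(X)$ (which is what Lemma \ref{lem:hom_X_tau_Delta_k} gives you, and is the kind of data you'd get from understanding $\Hom$'s with $\Delta_k$) only determines $SW_{\nu}(X)$ up to semisimplification as a $\gl(V)$-module. For instance, in a non-semisimple block you must show $SW_{\nu}(\mathbf{M}(\lambda^{(i)})) \cong M_{\mathfrak{p}}(\nu-\abs{\lambda^{(i)}}, \lambda^{(i)})$ rather than the split module $L_i \oplus L_{i+1}$, and both candidates have identical $\gl(U)$-decompositions. The same ambiguity arises for costandards and indecomposable projectives. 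No amount of block combinatorics in $\Dab$ resolves this: you need a structural fact that breaks the symmetry. The paper's resolution is to introduce the right adjoint $SW^{*}_{\nu}$ and exploit the unit $\epsilon$ of the adjunction to derive a contradiction in the split case; and the crucial input that makes this work is the ``almost injectivity'' of the lowering operator $F_u$ on $V^{\underline{\otimes}\nu}$ (Lemma \ref{lem:tens_power_weak_torsion_free}), which forces $\Hom(\mathbf{L}_0 \otimes L_0, V^{\underline{\otimes}\nu}) = 0$ and hence $SW^{*}_{\nu}(L_0) = 0$. Your proposal never introduces an adjoint or any analogue of this torsion-freeness statement, so as written it would stall exactly at the point of pinning down the module structure of $SW_{\nu}$ on standard and projective objects.

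Two smaller remarks. First, essential surjectivity does not ``follow by matching simples'': an exact functor hitting all simples need not be essentially surjective. The paper instead shows that $\widehat{SW}_{\nu}$ carries the indecomposable projectives $\mathbf{P}_i \in \Dab$ onto the indecomposable injectives of $\widehat{\co}^{\mathfrak{p}}_{\nu,V}$ and is full on those, then realizes a general object as the kernel of a map between injectives. Second, for fullness the paper also proceeds via projective/injective resolutions (Sublemma \ref{sublem:overline_SW_equiv_proj} plus Lemma \ref{lem:proj_in_serre_quotient}), which is cleaner than the filtered-colimit extension you suggest and avoids having to reason directly about an abstract double-centralizer property in the localized category.
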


\InnaB{We also show that the duality in $\underline{Rep}^{ab}(S_{t})$ (given by the tensor structure) corresponds to the duality in the category $\widehat{\co}^{\mathfrak{p}}_{t, V}$ (c.f. \cite[Chapter 3.2]{H}), i.e. that there is an isomorphism of (covariant) functors $$ \widehat{SW}_{t, V}(( \cdot )^*) \longrightarrow \hat{\pi}(SW_{t, V}( \cdot )^{\vee})$$ 

This gives a new interpretation to the notion of duality in the category $\widehat{\co}^{\mathfrak{p}}_{t, V}$.
}

%
%

\subsection{Structure of the paper}
In Section \ref{sec:class_Schur_Weyl}, we recall the classical Schur-Weyl duality for $\gl(V)$, $S_d$.

In Section \ref{sec:Del_cat_S_nu}, we recall the necessary information about the Deligne category $\underline{Rep}(S_{\nu})$ (thoughout the paper, we use the parameter $\nu$ instead of $t$), as well as the abelian structure of the abelian envelope $\underline{Rep}^{ab}(S_{\nu})$ of $\underline{Rep}(S_{\nu})$.

In Section \ref{sec:par_cat_o}, we recall the necessary information about the parabolic category $\co$ for $\gl(V)$.

In Section \ref{sec:comp_tens_power}, we define the complex tensor power $V^{\underline{\otimes}  \nu}$ of a split unital vector space $V$.
Subsection \ref{ssec:def_cat_Del_dash_par_O} gives the formal framework needed to define the object $V^{\underline{\otimes}  \nu}$; Subsection \ref{ssec:def_comp_ten_power_split} contains the definition of $V^{\underline{\otimes}  \nu}$ and lists some properties of this object (part of the proof that $V^{\underline{\otimes}  \nu}$ is well defined is contained in Appendix \ref{app:complex_ten_power_action_well_def}); Subsection \ref{ssec:comp_tens_power_compatible_classical} shows that for $\nu =n \in \bZ_+$, our definition is compatible (via the functor $\underline{Rep}(S_{\nu=n}) \rightarrow Rep(S_n) $) with the usual notion of a tensor power of $V$; \InnaB{Subsection \ref{ssec:comp_tens_power_indep_splitting} shows that the object $V^{\underline{\otimes}  \nu}$ does not actually depend on the splitting of $V$, but rather only on the pair $(V, \triv)$}.

In Section \ref{sec:SW_functor}, we define the Schur-Weyl contravariant functors
$SW_{\nu, V} : \underline{Rep}^{ab}(S_{\nu}) \rightarrow \co^{\mathfrak{p}}_{\nu, V}$ and $\widehat{SW}_{\nu, V} : \underline{Rep}^{ab}(S_{\nu}) \rightarrow \widehat{\co}^{\mathfrak{p}}_{\nu, V}$, and prove Theorem \ref{introthrm:SW_almost_equiv}. The case of a semisimple block of the category $\Dab$ is discussed in Subsection \ref{ssec:SW_duality_ss_block} and the case of a non-semisimple block is discussed in Subsection \ref{ssec:SW_duality_non_ss_block}. \InnaB{Subsection \ref{ssec:dualities_SW_commute} shows how the functor $SW_{\nu, V}$ relates the duality in $\underline{Rep}^{ab}(S_{\nu})$ to the duality in the category $\widehat{\co}^{\mathfrak{p}}_{\nu, V}$.}

Appendix \ref{app:complex_ten_power_generic_nu} gives an alternative definition (due to P. Etingof) of the object $V^{\underline{\otimes}  \nu}$ when $\nu \notin \bZ_+$. This definition requires $V$ to be unital, but not split. We show that this definition coincides with our definition once we fix a splitting.

\subsection{Acknowledgements}
I would like to thank my advisor, Pavel Etingof, for guiding me thoughout this project.

\section{Notation and definitions}\label{sec:notation}
The base field throughout the paper will be $\bC$.
\subsection{Tensor categories}
\InnaA{The following standard notation will be used thoughout the paper: }
\begin{notation}
Let $\mathcal{C}$ be a rigid symmetric monoidal category. We denote by $\mathbf{1}$ the unit object. Also, for any object $M$, we denote by $M^*$ the dual of $M$.
\end{notation} 
\subsection{Karoubian categories}
\begin{definition}[Karoubian category]
We will call a category $\mathcal{A}$ Karoubian\footnote{Deligne calls such categories "pseudo-abelian" (c.f. \cite[1.9]{D}). } if it is an additive category, and every idempotent morphism is a projection onto a direct factor.
\end{definition}

\begin{definition}[Block of a Karoubian category]
 A block in an Karoubian category is a full subcategory generated by an equivalence class of indecomposable objects, defined by the minimal equivalence relation such that any two indecomposable objects with a non-zero morphism between them are equivalent.
\end{definition}

\subsection{Serre subcategories and quotients}
\begin{definition}[Serre subcategory]
A (nonempty) full subcategory $\mathcal{C}$ of an abelian category $\mathcal{A}$ is called a {\it Serre subcategory} if for any exact sequence
$$ 0 \rightarrow M' \rightarrow M \rightarrow M'' \rightarrow 0$$
$M$ is in $\mathcal{C}$ iff $M'$ and $M''$ are in $\mathcal{C}$.
\end{definition}

\begin{definition}[Serre quotient]
Let $\mathcal{A}$ be an abelian category, and $\mathcal{C}$ be a Serre subcategory.

We define the category $\mathcal{A} /{\mathcal{C}}$, called {\it the Serre quotient} of $\mathcal{A}$ by $\mathcal{C}$, whose objects are the objects of $\mathcal{A}$ and where the morphisms are defined by
$$ \Hom_{\mathcal{A} /{\mathcal{C}}}(X,Y):= \varinjlim_{\substack{X' \subset X, Y' \subset Y \\ X/X', Y' \in \mathcal{C}}} \Hom_{\mathcal{A}}(X',Y/Y') $$

The category $\mathcal{A} /{\mathcal{C}}$ comes with a quotient functor, $\pi: \mathcal{A} \rightarrow \mathcal{A} /{\mathcal{C}}$, which takes $X \in \mathcal{A}$ to $X \in \mathcal{A} /{\mathcal{C}}$, and $f: X \rightarrow Y$ in $\mathcal{A}$ to its image in $ \varinjlim_{\substack{X' \subset X, Y' \subset Y \\ X/X', Y' \in \mathcal{C}}} \Hom_{\mathcal{A}}(X',Y/Y') $.
\end{definition}

\begin{remark}
 It is easy to see that the category $\mathcal{A} /{\mathcal{C}}$ is abelian, and the functor $\pi:  \mathcal{A} \rightarrow \mathcal{A} /{\mathcal{C}}$ is exact.
\end{remark}

Let $\mathcal{A}, \mathcal{B}$ be abelian categories, and $\mathcal{F}:\mathcal{A} \rightarrow \mathcal{B}$ an exact functor. Then we can consider the full subcategory $Ker(\mathcal{F})$ of $\mathcal{A}$ whose objects are $X \in \mathcal{A}$ for which $\mathcal{F}(X)=0$. 

Then $Ker(\mathcal{F})$ is a Serre subcategory, and the functor $\mathcal{F}$ factors through the functor $\pi: \mathcal{A} \rightarrow \mathcal{A} /{Ker(\mathcal{F})}$: we have a functor \InnaA{$$\bar{\mathcal{F}}:\mathcal{A} /{Ker(\mathcal{F})} \rightarrow \mathcal{B} \, \text{ such that } \, \mathcal{F} = \bar{\mathcal{F}} \circ \pi$$}

One can easily check that the functor $\bar{\mathcal{F}}:\mathcal{A} /{Ker(\mathcal{F})} \rightarrow \mathcal{B} $ is exact and faithful.

\begin{remark}
 Let $\mathcal{A}$ be an abelian category, and $\mathcal{C}$ be a Serre subcategory. Consider the quotient functor, $\pi: \mathcal{A} \rightarrow \mathcal{A} /{\mathcal{C}}$. Then $Ker(\pi) = \mathcal{C}$, and any exact functor $\mathcal{F}:\mathcal{A} \rightarrow \mathcal{B}$ which takes all the objects of $\mathcal{C}$ to zero factors through $\pi$.
\end{remark}

\subsection{Ind-completion of categories}
Let $\mathcal{A}$ be a small category.
\begin{definition}[Ind-completion]
 The Ind-completion of $\mathcal{A}$, denoted by $Ind-\mathcal{A}$, is the full subcategory of the category $Fun(\mathcal{A}^{op}, \mathbf{Set})$, whose objects are functors which are filtered colimits of representable functors $\mathcal{A}^{op} \rightarrow \mathbf{Set}$.
\end{definition}
\begin{remark}
 The Yoneda lemma gives us a fully faithful functor $\jmath: \mathcal{A} \rightarrow Fun(\mathcal{A}^{op}, \mathbf{Set})$ which restricts to a fully faithful functor $\iota: \mathcal{A} \rightarrow Ind-\mathcal{A}$.
\end{remark}

An easy consequence of the definition is the following Lemma:

\begin{lemma}
 The objects of $\iota(\mathcal{A})$ are compact objects in $Ind-\mathcal{A}$.
\end{lemma}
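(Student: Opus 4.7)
The plan is to apply the Yoneda lemma together with the fact that filtered colimits in $Ind\text{-}\mathcal{A}$ are computed pointwise, as in the ambient functor category $Fun(\mathcal{A}^{op}, \mathbf{Set})$. Recall that an object $X$ in a category $\mathcal{C}$ is \emph{compact} if the functor $\Hom_{\mathcal{C}}(X, -)$ commutes with filtered colimits. Fix $A \in \mathcal{A}$; we wish to show that $\iota(A) = \jmath(A)$ is compact in $Ind\text{-}\mathcal{A}$.

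First I would verify that $Ind\text{-}\mathcal{A}$ is closed under filtered colimits inside $Fun(\mathcal{A}^{op}, \mathbf{Set})$. This amounts to the standard observation that a filtered colimit of filtered colimits of representables can be rewritten as a single filtered colimit of representables (via a cofinal reindexing argument using the Grothendieck construction on the indexing diagram). Consequently, if $F = \varinjlim_{i \in I} F_i$ is a filtered colimit in $Ind\text{-}\mathcal{A}$, it coincides with the corresponding colimit taken in $Fun(\mathcal{A}^{op}, \mathbf{Set})$, where colimits are computed pointwise from colimits in $\mathbf{Set}$.

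Next, I would combine this with the Yoneda lemma, which asserts that for any functor $G \in Fun(\mathcal{A}^{op}, \mathbf{Set})$ one has a natural bijection
$$\Hom_{Fun(\mathcal{A}^{op}, \mathbf{Set})}(\jmath(A), G) \;\cong\; G(A).$$
Applied to our filtered colimit $F = \varinjlim_{i \in I} F_i$, this yields the chain of natural isomorphisms
$$\Hom_{Ind\text{-}\mathcal{A}}(\iota(A), F) \;=\; F(A) \;=\; \bigl(\varinjlim_{i} F_i\bigr)(A) \;=\; \varinjlim_{i} F_i(A) \;=\; \varinjlim_{i} \Hom_{Ind\text{-}\mathcal{A}}(\iota(A), F_i),$$
and a quick check shows that the composite agrees with the canonical map induced by the colimit cocone. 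This establishes compactness of $\iota(A)$.

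There is no serious obstacle here: the argument is a routine consequence of Yoneda plus the fact that colimits in a presheaf category are computed pointwise. The only point requiring a moment of care is the closure of $Ind\text{-}\mathcal{A}$ under filtered colimits in $Fun(\mathcal{A}^{op}, \mathbf{Set})$, which is a standard ``colimit of colimits is a colimit'' manipulation.
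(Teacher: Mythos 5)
Your proof is correct and is the standard argument one would give here; the paper omits a proof entirely, remarking only that the lemma is ``an easy consequence of the definition,'' and your Yoneda-plus-pointwise-filtered-colimits reasoning (with the cofinal reindexing step to justify that $Ind\text{-}\mathcal{A}$ is closed under filtered colimits in the presheaf category) is exactly the consequence being invoked.
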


\begin{corollary}
 Given an object $A \in \mathcal{A}$ and a collection of objects $\{A_i\}_{i \in I}, A_i \in \mathcal{A}$ (here $I$ is a discrete set), we have: $$\Hom_{Ind-\mathcal{A}}(A, \bigoplus_{i \in I} A_i) \cong \bigoplus_{i \in I} \Hom_{\mathcal{A}}(A,  A_i)$$
\end{corollary}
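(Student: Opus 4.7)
The plan is to reduce the statement to the defining property of compactness established in the preceding lemma, namely that $\Hom_{Ind-\mathcal{A}}(A,-)$ commutes with filtered colimits for $A \in \iota(\mathcal{A})$. First I would observe that $Ind-\mathcal{A}$ is cocomplete (as the category of $\mathbf{Set}$-valued functors on $\mathcal{A}^{op}$ closed under filtered colimits is cocomplete, and arbitrary small colimits can be built from filtered colimits of finite colimits), so the arbitrary direct sum $\bigoplus_{i \in I} A_i$ exists in $Ind-\mathcal{A}$.

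Next, I would express this direct sum as a filtered colimit over the poset of finite subsets of $I$: writing $\mathrm{Fin}(I)$ for this poset (directed under inclusion), one has the canonical identification
$$\bigoplus_{i \in I} A_i \;\cong\; \varinjlim_{F \in \mathrm{Fin}(I)} \bigoplus_{i \in F} A_i$$
in $Ind-\mathcal{A}$, with transition maps given by the obvious inclusions of summands. Each finite direct sum $\bigoplus_{i \in F} A_i$ lies in $\iota(\mathcal{A})$ (or rather, is the image under $\iota$ of a finite biproduct in $\mathcal{A}$, assuming $\mathcal{A}$ is additive so that such biproducts exist; if $\mathcal{A}$ is only a general small category the finite direct sum is taken inside $Ind-\mathcal{A}$, but the argument is unchanged since $\iota$ preserves whatever finite biproducts do exist, and in the Deligne-category setting $\mathcal{A}$ will be additive).

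Now I would apply the preceding lemma: since $A \in \iota(\mathcal{A})$ is compact, the functor $\Hom_{Ind-\mathcal{A}}(A, -)$ commutes with the filtered colimit above, yielding
$$\Hom_{Ind-\mathcal{A}}\Bigl(A, \bigoplus_{i \in I} A_i\Bigr) \;\cong\; \varinjlim_{F \in \mathrm{Fin}(I)} \Hom_{Ind-\mathcal{A}}\Bigl(A, \bigoplus_{i \in F} A_i\Bigr).$$
For each finite $F$, the universal property of the finite biproduct gives $\Hom_{Ind-\mathcal{A}}(A, \bigoplus_{i \in F} A_i) \cong \bigoplus_{i \in F} \Hom_{\mathcal{A}}(A, A_i)$ (using also that $\iota$ is fully faithful). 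Passing to the filtered colimit over finite subsets of $I$ turns the right-hand side into $\bigoplus_{i \in I} \Hom_{\mathcal{A}}(A, A_i)$, which is the desired identification.

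The only real subtlety, and the one point I would want to double-check carefully, is the identification $\bigoplus_{i \in I} A_i \cong \varinjlim_F \bigoplus_{i \in F} A_i$ in $Ind-\mathcal{A}$. This is standard but worth justifying: both sides satisfy the same universal property as the coproduct, since a cocone on the filtered diagram of finite subsums is precisely a compatible family of maps out of each $A_i$, i.e. a cocone on the discrete diagram $\{A_i\}_{i \in I}$. Everything else is a formal consequence of the definition of compact object and the additivity statement for finite biproducts.
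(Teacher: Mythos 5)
Your proof is correct and takes the approach the paper implicitly intends: the Corollary is stated as a consequence of the preceding compactness Lemma, and you use exactly that—writing the coproduct as the filtered colimit over finite subsets of $I$, invoking compactness of $A$ to commute $\Hom_{Ind\text{-}\mathcal{A}}(A,-)$ past that colimit, and then applying full faithfulness of $\iota$ together with the finite biproduct property in $\mathcal{A}$. The subtle point you flag (the identification of the coproduct with the filtered colimit of finite sub-sums) is indeed the one place worth spelling out, and your universal-property argument for it is the right one.
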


We will also use the following property of the Ind-completion (c.f. \cite[Theorem 8.6.5, p.194]{KS}):
\begin{theorem}\label{thrm:ind_completion_prop}
 Assume the category $\mathcal{A}$ is abelian. Then the category $Ind-\mathcal{A}$ is abelian as well, and the functor $\iota$ is exact. Furthermore, the category $Ind-\mathcal{A}$ is a Grothendieck category (in the sense of \cite[Definition 8.3.24, p.186]{KS}), and thus any functor $\mathcal{F} : Ind-\mathcal{A} \rightarrow \mathcal{C}$ commuting with small colimits admits a right adjoint.
\end{theorem}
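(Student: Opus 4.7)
The plan is to exploit the defining property that every object of $Ind-\mathcal{A}$ is a filtered colimit of representables $\iota(A)$, and to lift the abelian and Grothendieck structure from $\mathcal{A}$ step by step. I would treat the four assertions (abelianness, exactness of $\iota$, Grothendieck property, existence of right adjoints) as a cascade, each building on the previous.

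First, to construct kernels and cokernels in $Ind-\mathcal{A}$: given a morphism $f: X \to Y$ between Ind-objects, a standard cofinality argument lets one present $f$ as a filtered colimit of morphisms $f_i : \iota(X_i) \to \iota(Y_i)$ with $f_i$ coming from $\mathcal{A}$, all indexed by a single filtered category $I$. I would then define $\ker f := \operatorname{colim}_i \iota(\ker f_i)$ and $\operatorname{coker} f := \operatorname{colim}_i \iota(\operatorname{coker} f_i)$, using that $\mathcal{A}$ is abelian. Their universal properties are verified by testing against compact objects, equivalently against representables $\iota(A)$ (which are compact by the preceding Lemma), where the statement reduces termwise to the universal property in $\mathcal{A}$; the coimage-to-image isomorphism in $Ind-\mathcal{A}$ is inherited levelwise from $\mathcal{A}$. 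This simultaneously shows that $Ind-\mathcal{A}$ is abelian and that $\iota$ preserves finite kernels and cokernels, hence is exact.

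For the Grothendieck property, I would verify three points. First, $Ind-\mathcal{A}$ has all small colimits: finite colimits come from the preceding step, filtered colimits from the very definition, and together these generate all small colimits. Second, the essentially small family $\{\iota(A) : A \in \mathcal{A}\}$ is a generator: any nonzero Ind-object $X$ is a filtered colimit of $\iota(X_i)$ with nonzero structure maps, which (by compactness of the $\iota(X_i)$) are witnessed by morphisms from representables. Third, filtered colimits in $Ind-\mathcal{A}$ are exact: this reduces, via the construction above, to the commutation of filtered colimits with finite limits in $\mathbf{Set}$, since filtered colimits in $Ind-\mathcal{A}$ are computed pointwise inside $Fun(\mathcal{A}^{op}, \mathbf{Set})$. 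The final assertion then follows from the special adjoint functor theorem: a Grothendieck category is complete, cocomplete, locally small, well-powered, and admits a cogenerator (via the standard construction from an injective cogenerator), so every colimit-preserving functor $\mathcal{F}: Ind-\mathcal{A} \to \mathcal{C}$ acquires a right adjoint.

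The main technical obstacle I expect is the first step: the ``straightening'' of an arbitrary morphism of Ind-objects into a uniformly indexed system of arrows in $\mathcal{A}$, and the verification that the resulting colimits of kernels and cokernels satisfy the universal property against \emph{all} Ind-objects, not merely against representables. This requires careful use of cofinality together with compactness of representables; once it is in hand, everything else follows formally from the abelianness of $\mathcal{A}$ and standard facts about Grothendieck categories, so no further delicate argument is needed.
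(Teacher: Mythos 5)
The paper does not prove this statement itself; it cites \cite[Theorem 8.6.5, p.194]{KS}, so your task is effectively to reconstruct the Kashiwara--Schapira argument, and your sketch does follow that standard route: straighten morphisms to a uniformly indexed system in $\mathcal{A}$, build kernels and cokernels levelwise, verify them against compacts, and then check the Grothendieck axioms using exactness of filtered colimits in $\mathbf{Set}$. You were also right to flag the cokernel universal property as the delicate point---it is not literally a matter of ``testing against representables,'' since the cokernel's universal property concerns maps \emph{out} of the cokernel. One has to unwind $\Hom(\operatorname{colim}_i\iota(C_i), \operatorname{colim}_j\iota(A_j))$ as a limit over $i$ of a filtered colimit over $j$ (via compactness of the $\iota(C_i)$), reduce to $\Hom_{\mathcal{A}}(C_i,A_j)$, apply the universal property in $\mathcal{A}$ there, and then re-commute the kernel with filtered colimits and with the limit over $i$. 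That is more than a cofinality trick, but it works; your self-warning was well-placed.

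The one genuine error is in the last step. You invoke the special adjoint functor theorem by listing that a Grothendieck category is complete, well-powered, and has a cogenerator; those are precisely the hypotheses that give a \emph{left} adjoint to a \emph{limit}-preserving functor, which is not what is wanted here. To produce a right adjoint to a colimit-preserving functor $\mathcal{F}:Ind\text{-}\mathcal{A}\to\mathcal{C}$ you need the dual package: $Ind\text{-}\mathcal{A}$ is cocomplete, locally small, \emph{co}-well-powered, and has a small \emph{generating} set (the objects $\iota(A)$, using that $\mathcal{A}$ is small). Equivalently, for each $E\in\mathcal{C}$ the presheaf $D\mapsto\Hom_{\mathcal{C}}(\mathcal{F}(D),E)$ sends colimits to limits, and in a Grothendieck (or more generally locally presentable) category such presheaves are representable. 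The conclusion is still correct because a Grothendieck category happens to satisfy both dual lists of hypotheses, but the argument as written appeals to the wrong half of the dictionary, so a reader following it literally would get stuck. Replace ``cogenerator, well-powered, complete'' with ``generator, co-well-powered, cocomplete'' (all of which you have already established earlier in your own outline), and the proof closes.
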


\subsection{Actions on tensor powers of a vector space}
Let $U$ be a vector space over $\bC$, and let $k \geq 0$.
\begin{notation}\label{notn:action_on_tes_powers}
\mbox{}
\begin{enumerate}
 \item Let $A \in \End(U)$. We denote the operator $\id_U \otimes \id_U \otimes... \otimes A \otimes ...\otimes \id_U$ on $U^{\otimes k}$ (with $A$ acting on the $i$-th factor of the tensor product) by $A^{(i)}$.
 
The diagonal action of $A$ on $U^{\otimes k}$ would then be $$\sum_{1 \leq i \leq n} A^{(i)} = A \otimes \id_U \otimes...\otimes \id_U + \id_U \otimes A \otimes...\otimes \id_U +...+ \id_U \otimes...\otimes \id_U \otimes A$$ and will sometimes be denoted by $A \rvert_{U^{\otimes k}}$.

\item Similarly, given a functional $f \in U^*$, we have an operator $f^{(l)}$ defined as
\begin{align*}
  f^{(l)}: U^{\otimes k} &\rightarrow U^{\otimes k-1}\\
u_1 \otimes ...\otimes u_k &\mapsto f(u_l)u_1 \otimes ...\otimes u_{l-1} \otimes u_{l+1} \otimes ...\otimes u_k
\end{align*}
\item Finally, given $u \in U$, we define the operator $u^{(l)}$ as
\begin{align*}
  u^{(l)}: U^{\otimes k} &\rightarrow U^{\otimes k+1} \\
u_1 \otimes ...\otimes u_k &\mapsto u_1 \otimes ...\otimes u_{l-1} \otimes u \otimes u_{l} \otimes ...\otimes u_k
\end{align*}

\end{enumerate}
\end{notation}
\begin{notation}
 Let $U$ be a finite-dimensional vector space, and let $f \in U^*$, $u \in U$. Denote by $T_{f, u} \in \End(U)$ the rank one operator $v_1 \mapsto f(v_1) u$ (i.e. the image of $f \otimes u$ under the isomorphism $U^* \otimes U \rightarrow \End(U)$).
\end{notation}
\begin{notation}
 Let $\lambda$ be a Young diagram. Denote by $S^{\lambda}$ the Schur functor corresponding to $\lambda$ (c.f. \cite[Chapter 6]{FH}). \InnaA{When applied to a finite-dimensional vector space $U$,} this is \InnaA{either zero (iff $l(\lambda) > \dim U$), or} an irreducible finite-dimensional representation of the Lie algebra $\gl(U)$\InnaB{, which integrates to a representation of the group $GL(U)$}.

 We will denote the full additive subcategory of $Mod_{\mathcal{U}(\gl(U))}$ generated by $\{S^{\lambda} U\}_{\lambda}$ ($\lambda$ running over all Young diagrams) by $Mod_{\mathcal{U}(\gl(U)), poly}$, and call its objects polynomial representations of the Lie algebra $\gl(U)$ \InnaB{(or the algebraic group $GL(U)$)}.

\end{notation}
The category $Mod_{\mathcal{U}(\gl(U)), poly}$ is obviously a semisimple abelian category, and it contains all the finite-dimensional representations of $\gl(U)$ which can be obtained as submodules of a direct sum of tensor powers of the tautological representation $U$ of $\gl(U)$. 

\InnaB{Alternatively, one can describe these representations as finite-dimensional representations $\rho: GL(U) \rightarrow \Aut(W)$ which can be extended to an algebraic map $\End(U) \rightarrow \End(W)$.}

\subsection{Symmetric group and Young diagrams}
\begin{notation}
\mbox{}
 \begin{itemize}[leftmargin=*]
\item $S_n$ will denote the symmetric group ($n \in \bZ_+$).

 \item The notation $\lambda$ will stand for a partition (weakly decreasing sequence of non-negative integers), a Young diagram $\lambda$, and the corresponding irreducible representation of $S_{\abs{\lambda}}$. Here $\abs{\lambda}$ is the sum of entries of the partition, or, equivalently, the number of cells in the Young diagram $\lambda$.

 \item \InnaA{All the Young diagrams will be considered in the English notation, i.e. the lengths of the rows decrease from top to bottom.}

\item The length of the partition $\lambda$, i.e. the number of rows of Young diagram $\lambda$, will be denoted by $\ell(\lambda)$.

\item The $i$-th entry of a partition $\lambda$, as well as the length of the $i$-th row of the corresponding Young diagram, will be denoted by $\lambda_i$ (if $i>\ell(\lambda)$, then $\lambda_i :=0$). 
%

\item $\fh$ (in context of representations of $S_n$) will denote the permutation representation of $S_n$, i.e. the $n$-dimensional representation $\bC^n$ with $S_n$ acting by $g.e_j =e_{g(j)}$ on the standard basis $e_1, .., e_n$ of $\bC^n$.

%

\item \InnaA{For any Young diagram $\lambda$ and an integer $n$ such that $n \geq \abs{\lambda} +\lambda_1$, we denote by $\tilde{\lambda}(n)$ the Young diagram obtained by adding a row of length $n - \abs{\lambda}$ on top of $\lambda$.}

\item Let $\mathcal{I}^{m,+}_{\lambda}$ denote the set of all Young diagrams obtained from $\lambda$ by adding $m$ boxes, no two in the same column, and $\mathcal{I}^{m,-}_{\lambda}$ denote the set of all Young diagrams obtained from $\lambda$ by removing $m$ boxes, no two in the same column. \InnaB{We will also denote}: $\mathcal{I}^{+}_{\lambda} : = \uplus_{ m\geq 0} \mathcal{I}^{m,+}_{\lambda}$, $\mathcal{I}^{-}_{\lambda} : = \uplus_{ 0 \leq m\leq \abs{\lambda}} \mathcal{I}^{m,-}_{\lambda}$.

\end{itemize}

\end{notation}
\InnaA{

\begin{example}
Consider the Young diagram $\lambda$ corresponding to the partition $(6,5,4,1)$:
 $$\yng(6,5,4,1) $$ 
 The length of $\lambda$ is $4$, \InnaB{and $\abs{\lambda}=16$}. For $n = 23$, we have:
 $$\tilde{\lambda}(n) = \yng(7,6,5,4,1)$$
\end{example}
}
\section{Classical Schur-Weyl duality}\label{sec:class_Schur_Weyl}
In this section we give a short overview of the classical Schur-Weyl duality.

Let $V$ be a finite-dimensional vector space over $\bC$, and let $E:= V^{\otimes d}$. Then $S_d$ acts on $E$ by permuting the factors of the tensor product (the action is semisimple, by Mashke's theorem):\InnaA{
$$\sigma.(v_1 \otimes v_2 \otimes...\otimes v_d) := v_{\sigma^{-1}(1)} \otimes v_{\sigma^{-1}(2)} \otimes...\otimes v_{\sigma^{-1}(d)}$$}
Denote by $A$ the image of $\bC[S_d]$ in $\End_{\bC}(E)$.

Since $\bC[S_d]$ is semisimple by Mashke's theorem, we have the following corollary of the Double Centralizer Theorem:

\begin{proposition}
Let $B: = \End_A(E)$. Then
\begin{itemize}
 \item $B$ is semisimple.
\item $A = \End_B(E)$.
\item As an \InnaA{$A \otimes_{\bC} B$-module}, $E$ decomposes as $$E =\bigoplus_i V_i \otimes W_i$$
where $V_i$ are all the irreducible representations of $A$, and $W_i$ are all the irreducible representations of $B$. In particular, there is a bijection between the sets of non-isomorphic irreducible representations of $A$ and $B$.

\end{itemize}
\end{proposition}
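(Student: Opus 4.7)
The plan is to apply the Double Centralizer Theorem in its standard form, using that $A$ is semisimple as a quotient of $\bC[S_d]$. First I would use semisimplicity of $A$ to decompose the finite-dimensional $A$-module $E$ into isotypic components. Writing $\{V_i\}_{i \in I}$ for the (finite) collection of pairwise non-isomorphic irreducible $A$-modules appearing in $E$, and setting $W_i := \Hom_A(V_i, E)$, the canonical evaluation map yields an isomorphism of $A$-modules
$$E \;\cong\; \bigoplus_{i \in I} V_i \otimes_{\bC} W_i,$$
where $A$ acts on the first factor of each summand.

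Next I would compute $B = \End_A(E)$ from this decomposition. By Schur's lemma, $\Hom_A(V_i, V_j) = 0$ for $i \neq j$ and $\Hom_A(V_i, V_i) = \bC \cdot \id$, so
$$B \;\cong\; \prod_{i \in I} \End_{\bC}(W_i),$$
which is a finite product of matrix algebras over $\bC$, hence semisimple. Under this identification, $B$ acts on $E$ through its natural action on the second factor of each summand $V_i \otimes_{\bC} W_i$; in particular the $W_i$ are exactly the distinct irreducible $B$-modules, and the $A \otimes_{\bC} B$-module decomposition $E \cong \bigoplus_i V_i \otimes_{\bC} W_i$ already has the claimed form. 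This simultaneously establishes semisimplicity of $B$, the required decomposition of $E$, and the bijection $V_i \leftrightarrow W_i$ between irreducibles of $A$ and of $B$.

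Finally, to prove $A = \End_B(E)$, I would apply the same reasoning with the roles of $A$ and $B$ swapped. Running $\End_B(\cdot)$ on $E \cong \bigoplus_i V_i \otimes_{\bC} W_i$ and using Schur's lemma on the $B$-side yields $\End_B(E) \cong \prod_i \End_{\bC}(V_i)$. On the other hand, $A$ is a semisimple subalgebra of $\End_{\bC}(E)$ whose isotypic decomposition on $E$ is precisely $\bigoplus_i V_i \otimes W_i$; since each $V_i$ appears in $E$ with nonzero multiplicity $\dim W_i \geq 1$, the map $A \to \prod_i \End_{\bC}(V_i)$ is both injective (faithfulness on $E$) and surjective (by comparing dimensions via Wedderburn for $A$). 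Hence $A$ and $\End_B(E)$ coincide as subalgebras of $\End_{\bC}(E)$.

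I do not expect any real obstacles: the only subtlety is bookkeeping the fact that $A$ is a priori only a quotient of $\bC[S_d]$, so one should verify that its Wedderburn decomposition matches the isotypic decomposition of its faithful module $E$; this is automatic once one notes that the kernel of $\bC[S_d] \to \End_{\bC}(E)$ is the sum of those Wedderburn components whose associated irreducibles do not appear in $E$.
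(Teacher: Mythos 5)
Your proof is correct and complete. The paper itself gives no proof of this proposition — it states it as a standard corollary of the Double Centralizer Theorem, invoking only Maschke's theorem to guarantee that $\bC[S_d]$ (and hence its quotient $A$) is semisimple — so your argument is simply a correct spelling-out of that standard derivation: isotypic decomposition of $E$ over the semisimple algebra $A$, Schur's lemma to identify $B$ with a product of matrix algebras acting on the multiplicity spaces, and a dimension count (using faithfulness of $A$ on $E$ and Wedderburn) to close the double-centralizer identity $A = \End_B(E)$. No gaps.
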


Consider the diagonal action of the Lie algebra $\mathfrak{gl}(V)$ on $E$ (i.e. $a \in \mathfrak{gl}(V)$ acts on $E$ by $a\rvert_E = \sum_{1 \leq i \leq d} a^{(i)} $).

Then we have the following result, known as Schur-Weyl duality:

\begin{theorem}[Schur-Weyl]\label{thrm:class_Schur_Weyl}
\mbox{}
\begin{itemize}
 \item $B$ is the image of $\mathcal{U}(\mathfrak{gl}(V))$ (the universal enveloping algebra of $\mathfrak{gl}(V)$) in $\End_{\bC}(E)$, and thus $E$ is a semisimple $\mathfrak{gl}(V)$-module.
 \item The images of $\bC[S_d]$ and $\mathcal{U}(\mathfrak{gl}(V))$ in $\End_{\bC}(E)$ are centralizers of each other.
\item As \InnaA{$\bC[S_d] \otimes_{\bC} \mathcal{U}(\mathfrak{gl}(V))$-module},

$$ E =\bigoplus_{\lambda: \abs{\lambda} =d} \lambda \otimes S^{\lambda} V$$
\end{itemize}

\end{theorem}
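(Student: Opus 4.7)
The plan is to deduce the theorem from the Double Centralizer Theorem (already invoked in the Proposition preceding the statement), for which the main task is to identify $B = \End_A(E)$ with the image of $\mathcal{U}(\mathfrak{gl}(V))$ in $\End_{\bC}(E)$. Once this is done, the second bullet is immediate (centralizers of each other), and the third bullet follows from the decomposition $E \cong \bigoplus_i V_i \otimes W_i$ together with an identification of the $W_i$'s with Schur functors applied to $V$.

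Step 1. \emph{The image of $\mathcal{U}(\mathfrak{gl}(V))$ equals $B$.} Clearly $\End_{S_d}(V^{\otimes d}) = \bigl(\End(V)^{\otimes d}\bigr)^{S_d}$, and this space is spanned by symmetric tensors $A^{\otimes d}$ for $A \in \End(V)$, by the standard polarization identity (the map $A \mapsto A^{\otimes d}$ from $\End(V)$ to the $S_d$-invariants in $\End(V)^{\otimes d}$ is polynomial of degree $d$, and its image linearly spans the target). So it suffices to show each $A^{\otimes d}$ lies in the image of $\mathcal{U}(\mathfrak{gl}(V))$. For this, observe that since the operators $A^{(i)}$ commute for different $i$, one has
\[
\exp\!\bigl(t \Delta(A)\bigr) \;=\; \exp(tA)^{\otimes d}, \qquad \Delta(A) := \sum_{i=1}^{d} A^{(i)} \in \End(V^{\otimes d}),
\]
which is an $\End(V^{\otimes d})$-valued polynomial in $t$ (the image of $\mathcal{U}(\mathfrak{gl}(V))$ is finite dimensional, so this exponential series is in fact a polynomial after projection). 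Differentiating $d$ times at $t=0$, or equivalently comparing the top-degree coefficient in $t$ on both sides, extracts $d!\cdot A^{\otimes d}$ on the right-hand side, while the left-hand side lies in the image of $\mathcal{U}(\mathfrak{gl}(V))$. Hence $A^{\otimes d}$ is in that image for every $A \in \gl(V) = \End(V)$.

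Step 2. \emph{Assembling the duality.} Conversely, the image of $\mathcal{U}(\mathfrak{gl}(V))$ commutes with the $S_d$-action by construction of the diagonal action, so it is contained in $B$; combined with Step 1, this gives equality. The Proposition then implies that $\bC[S_d]$ and $\mathcal{U}(\mathfrak{gl}(V))$ have images which are mutual centralizers in $\End_\bC(E)$, that $E$ is a semisimple $\mathfrak{gl}(V)$-module, and that $E \cong \bigoplus_i V_i \otimes W_i$ as an $A \otimes_\bC B$-module, with $V_i$, $W_i$ running over pairs of matched simples.

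Step 3. \emph{Identification of the $\gl(V)$-factors with $S^{\lambda}V$.} The simples $V_i$ of $\bC[S_d]$ appearing in $E$ are a subset of the irreducibles $\{\lambda : |\lambda| = d\}$. Writing $W_{\lambda} := \Hom_{S_d}(\lambda, V^{\otimes d})$, we have $E \cong \bigoplus_{|\lambda|=d} \lambda \otimes W_{\lambda}$ as $\bC[S_d] \otimes_\bC \mathcal{U}(\gl(V))$-modules (with the convention $W_{\lambda} = 0$ when $\lambda$ does not appear). It remains to identify $W_{\lambda}$ with $S^{\lambda}V$: this is essentially the definition of the Schur functor, obtained by applying the Young symmetrizer $c_{\lambda} \in \bC[S_d]$ corresponding to $\lambda$ to $V^{\otimes d}$ and recognizing $c_{\lambda} \cdot V^{\otimes d} \cong (\dim \lambda) \cdot S^{\lambda}V$ as $\gl(V)$-modules, whence $W_{\lambda} \cong S^{\lambda}V$.

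The main obstacle is Step 1, the density/polarization argument; everything else is then formal, either via the Double Centralizer Theorem or via the definition of $S^{\lambda}V$. One mild subtlety is that when $\ell(\lambda) > \dim V$ the space $S^{\lambda}V$ vanishes, but this is automatically compatible with the decomposition since those summands simply do not occur on the $\gl(V)$-side.
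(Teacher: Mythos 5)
The paper presents Theorem \ref{thrm:class_Schur_Weyl} as classical background without proof, so the question is simply whether your argument is correct. The overall architecture (reduce to the Double Centralizer Theorem, identify $B$ with the image of $\mathcal{U}(\gl(V))$ via polarization, then identify the $\gl(V)$-isotypic components with $S^\lambda V$ via Young symmetrizers) is the standard one and Steps 2--3 are fine. Step 1, however, contains a genuine error.

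You claim $\exp(t\Delta(A)) = \exp(tA)^{\otimes d}$ is a polynomial in $t$; it is not --- it is an entire power series, and the finite-dimensionality of $\mathrm{Im}\,\mathcal{U}(\gl(V))$ forces nothing of the sort (a power series with values in a finite-dimensional space need not terminate). More seriously, even reading the $d$-th Taylor coefficient off the formal series does not give $A^{\otimes d}$. Expanding, the coefficient of $t^d$ in $\exp(tA)^{\otimes d}$ is
\[
\sum_{k_1+\cdots+k_d=d}\frac{1}{k_1!\cdots k_d!}\,A^{k_1}\otimes\cdots\otimes A^{k_d},
\]
of which $A^{\otimes d}$ (from $k_1=\cdots=k_d=1$) is only one summand; the extra terms such as $A^2\otimes \mathrm{id}\otimes\cdots$ do not vanish. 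Equivalently, the identity you would be asserting, $\Delta(A)^d = d!\,A^{\otimes d}$, is false. There are two standard ways to repair this. One is to replace $\exp(tA)$ by $\mathrm{id}+tA$: then $(\mathrm{id}+tA)^{\otimes d}$ genuinely is a degree-$d$ polynomial in $t$ with top coefficient $A^{\otimes d}$, and each Taylor coefficient of $\exp\bigl(\Delta(\log(\mathrm{id}+tA))\bigr)$ is visibly a (noncommutative) polynomial in the elements $\Delta(A^j)$, hence lies in $\mathrm{Im}\,\mathcal{U}(\gl(V))$. The other, more elementary, is to observe that $A^{\otimes d}=A^{(1)}\cdots A^{(d)}=e_d\bigl(A^{(1)},\ldots,A^{(d)}\bigr)$ and invoke Newton's identities: $e_d$ is a polynomial in the power sums $p_j = \Delta(A^j)$, each of which already lies in $\mathrm{Im}\,\mathcal{U}(\gl(V))$. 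Either route closes the gap; as written, your Step 1 does not.
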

%

We now define a contravariant functor $$\mathtt{SW}_{d, V}: Rep(S_d) \rightarrow Mod_{\mathcal{U}(\gl(V)), poly}, \, \mathtt{SW}_{d, V}:=\Hom_{S_d}(\cdot,  V^{\otimes d})$$

The contravariant functor $\mathtt{SW}_{d, V}$ is $\bC$-linear and additive, and sends a simple module $\lambda$ of $S_d$ to $S^{\lambda}V$.

Next, consider the contravariant functor $$\mathtt{SW}_V: \bigoplus_{d \in \bZ_+} Rep(S_d) \rightarrow Mod_{\mathcal{U}(\gl(V)), poly}, \, \mathtt{SW}_V:= \oplus_d \mathtt{SW}_{d, V}$$
(the category $\bigoplus_{d \in \bZ_+} Rep(S_d)$ is equivalent to the category of Schur functors, and is obviously semisimple).
This functor $\mathtt{SW}_V$ is clearly essentially surjective and full (this is easy to see, since $Mod_{\mathcal{U}(\gl(V)), poly}$ is a semisimple category with simple objects $\InnaA{S^{\lambda} V \cong} \mathtt{SW}(\lambda)$).

The kernel of the functor $\mathtt{SW}_V$ is the full additive subcategory (direct factor) of $\bigoplus_{d \in \bZ_+} Rep(S_d)$ generated by simple objects $\lambda$ such that $\ell(\lambda) > \dim V $; taking the quotient, we see that $\mathtt{SW}_V$ defines an equivalence of categories $$\mathtt{SW}_V: \left(\bigoplus_{d \in \bZ_+} Rep(S_d)\right)_{\text{length } \leq \dim V } \rightarrow Mod_{\mathcal{U}(\gl(V)), poly}$$
where $\left(\bigoplus_{d \in \bZ_+} Rep(S_d)\right)_{\text{length } \leq \dim V }$ is the full additive subcategory (direct factor) of $\bigoplus_{d \in \bZ_+} Rep(S_d)$ generated by simple objects $\lambda$ such that $\ell(\lambda) \leq \dim V $.

\section{Deligne category $\underline{Rep}(S_{\nu})$}\label{sec:Del_cat_S_nu}
This section follows \cite{CO, D, E1}. Throughout the paper, we will use the parameter $\nu$ instead of the parameter $t$ used in Introduction.
\subsection{General description}\label{ssec:S_nu_general}
%
For any $\nu \in \bC$, the category $\underline{Rep}(S_{\nu})$ is generated, as a $\bC$-linear Karoubian tensor category, by one object, denoted $\fh$. This object is the analogue of the permutation representation of $S_n$, and any object in $\underline{Rep}(S_{\nu})$ is a direct summand in a direct sum of tensor powers of $\fh$.

For $\nu \notin \bZ_{+}$, $\underline{Rep}(S_{\nu})$ is a semisimple abelian category.

\begin{notation}
 We will denote Deligne's category for integer value $n \geq 0$ of $\nu$ as $\underline{Rep}(S_{\nu=n})$, to distinguish it from the classical category $Rep(S_{n})$ of representations of the symmetric group $S_{n}$. Similarly for other categories arising in this text.
\end{notation}

If $\nu$ is a non-negative integer, then the category $\underline{Rep}(S_{\nu})$ has a tensor ideal $\idealI_{\nu}$, called the ideal of negligible morphisms (this is the ideal of morphisms $f: X \longrightarrow Y$ such that $tr(fu)=0$ for any morphism $u: Y \longrightarrow X$). In that case, the classical category $Rep(S_n)$ of finite-dimensional representations of the symmetric group for $n:=\nu$ is equivalent to $\underline{Rep}(S_{\nu=n})/\idealI_{\nu}$ (equivalent as Karoubian rigid symmetric monoidal categories).

The full, essentially surjective functor $\underline{Rep}(S_{\nu=n}) \rightarrow Rep(S_n)$ defining this equivalence will be denoted by $\mathcal{S}_n$.

Note that $\mathcal{S}_n$ sends $\fh$ to the permutation representation of $S_n$.
\begin{remark}
 Although $\underline{Rep}(S_{\nu})$ is not semisimple and not even abelian when $\nu \InnaA{=n} \in \bZ_+$, a weaker statement holds (see \cite[Proposition 5.1]{D}): consider the full subcategory $\underline{Rep}(S_{\nu \InnaA{=n}})^{( \InnaA{n}/2)}$ of $\underline{Rep}(S_{\nu})$ whose objects are directs summands of sums of $\fh^{\otimes m}, 0\leq m \leq  \InnaA{\frac{n}{2}}$. This subcategory is abelian semisimple, and the restriction $\InnaA{\mathcal{S}}_{n} \lvert_{\underline{Rep}(S_{\nu \InnaA{=n}})^{( \InnaA{n}/{2})}}$ is fully faithful.
\end{remark}

The indecomposable objects of $\underline{Rep}(S_{\nu})$, regardless of the value of $\nu$, are \InnaA{parametrized (up to isomorphism)} by all Young diagrams (of arbitrary size). We will denote the indecomposable object in $\underline{Rep}(S_{\nu})$ corresponding to the Young diagram $\T$ by $X_{\T}$.

For non-negative integer $\nu =:n$, we have: the partitions $\lambda$ for which $X_{\lambda}$ has a non-zero image in the quotient $\underline{Rep}(S_{\nu \InnaA{=n}})/\idealI_{\nu\InnaA{=n}} \cong Rep(S_n)$ are exactly the $\lambda$ for which $\lambda_1+\abs{\lambda} \leq n$.

If $\lambda_1+\abs{\lambda} \leq n$, then the image of $\lambda$ in $Rep(S_n)$ is the irreducible representation of $S_n$ corresponding to the Young diagram $\tilde{\lambda}(n)$ (c.f. notation in Section \ref{sec:notation}).

\InnaA{This allows one to intuitively treat} the indecomposable objects of $\underline{Rep}(S_{\nu})$ as if they were parametrized by ``Young diagrams with a very long top row''. The indecomposable object $X_{\lambda}$ would be treated as if it corresponded to $\tilde{\lambda}(\nu)$, i.e. a Young diagram obtained by adding a very long top row (``of size $\nu -\abs{\lambda}$''). This point of view is useful to understand how to extend constructions for $S_n$ involving Young diagrams to $\underline{Rep}(S_{\nu})$.

\begin{example}
 The indecomposable object $X_{\lambda}$, where $\lambda={\tiny \yng(6,5,4,1)}$ can be thought of as a Young diagram with a ``very long top row of length $(\nu- 16)$'':
$$ \yng(35,6,5,4,1)$$
\end{example}

%
%

%
%
\subsection{Lifting objects}\label{ssec:nu_class_and_lift}
We start with an equivalence relation on the set of all Young diagrams, defined in \cite[Definition 5.1]{CO}:
\begin{definition}
Let $\lambda$ be any Young diagram, and set
$$\mu_{\lambda}(\nu) = (\nu -\abs{\lambda}, \lambda_1-1, \lambda_2-2, ...)$$
Given two Young diagrams $\lambda, \lambda'$, denote $\mu_{\lambda}(\nu) =: (\mu_0, \mu_1, ...), \mu_{\lambda'}(\nu) =: (\mu'_0, \mu'_1, ...)$.

We put $\lambda \stackrel{\nu}{\sim} \lambda'$ if there exists a bijection $f: \bZ_+ \rightarrow \bZ_+$ such that $\mu_i = \mu'_{f(i)}$ for any $i \geq 0$.

\end{definition}
We will call a $\stackrel{\nu}{\sim}$-class {\it trivial} if it contains exactly one Young diagram.

The following lemma is proved in \cite[Corollary 5.6, Proposition 5.8]{CO}:
\begin{lemma}\label{lem:nu_classes_struct}
\mbox{}
 \begin{enumerate}
 \item If $\nu \notin \bZ_+$, then any Young diagram $\lambda$ lies in a trivial $\stackrel{\nu}{\sim}$-class.
  \item The non-trivial $\stackrel{\nu}{\sim}$-classes are parametrized by all Young diagrams $\lambda$ such that $\tilde{\lambda}(\nu)$ is a Young diagram (in particular, $\nu \in \bZ_+$), and are of the form $ \{\lambda^{(i)}\}_i$, with
\begin{align*}                                                                                                                                                                                                                                       &\lambda=\lambda^{(0)} \subset \lambda^{(1)} \subset \lambda^{(2)} \subset ... \\
&\text{ and } \lambda^{(i+1)} \setminus \lambda^{(i)} = \text{ strip in row } i+1 \text{ of length } \lambda_i -\lambda_{i+1} +1 \text{ for } i>0\\
&\text{ and } \lambda^{(1)} \setminus \lambda^{(0)} = \text{ strip in row } 1 \text{ of length } \nu -\abs{\lambda} -\lambda_1 +1                                                                                                                                                                                                                                       \end{align*}
 \end{enumerate}

\end{lemma}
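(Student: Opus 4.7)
The plan is to rephrase the relation $\lambda \stackrel{\nu}{\sim} \lambda'$ as equality of the multisets $M_\lambda := \{\nu - \abs{\lambda}\} \cup \{\lambda_i - i : i \geq 1\}$, which is just the definition of $\stackrel{\nu}{\sim}$ unpacked. A crucial observation I will use throughout is that the sequence $(\lambda_i - i)_{i \geq 1}$ is strictly decreasing (from $\lambda_i \geq \lambda_{i+1}$) and eventually equals $-i$ for $i > \ell(\lambda)$, so its entries are pairwise distinct integers.

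Part (1) is then immediate: when $\nu \notin \bZ_+$, the entry $c := \nu - \abs{\lambda}$ is the unique non-integer in $M_\lambda$, so a multiset equality $M_\lambda = M_{\lambda'}$ forces $c = c'$, hence $\abs{\lambda} = \abs{\lambda'}$; the remaining strictly decreasing integer sequences then coincide term by term, yielding $\lambda = \lambda'$.

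For Part (2), I would first dispose of the degenerate case: if $c = \lambda_j - j$ for some $j \geq 1$, then $M_\lambda$ has a repeated value, and since the tail $(\lambda'_i - i)_{i \geq 1}$ of any equivalent $\lambda'$ is strictly decreasing (hence repeat-free), the extra copy must coincide with $c' = \nu - \abs{\lambda'}$; this forces $c' = c$ and so $\lambda' = \lambda$, giving a trivial class. Otherwise all entries of $M_\lambda$ are distinct, and elements of the class are in bijection with choices $c' \in M_\lambda$: reconstruct $\lambda'$ by sorting the remaining entries decreasingly as $(\lambda'_i - i)_{i \geq 1}$. The identity $c' + \abs{\lambda'} = \nu$ then follows from a truncation argument: for large $N$ one has $c + \sum_{i=1}^{N}(\lambda_i - i) = c' + \sum_{i=1}^{N}(\lambda'_i - i)$ (both sides sum the entries of the same multiset lying in $[-N, \infty)$), and rearranging yields $\abs{\lambda'} - \abs{\lambda} = c - c'$. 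The minimum of the class (smallest $\abs{\lambda'}$) corresponds to $c' = \max M_\lambda$; since $\lambda_1 - 1 \geq \lambda_j - j$ for all $j \geq 1$, $c$ is the maximum iff $c \geq \lambda_1$, equivalently iff $\tilde\lambda(\nu)$ is a Young diagram, giving the parametrization of non-trivial classes.

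To verify the strip structure, I would index the elements of the class by $i \geq 0$ via $c^{(0)} := c$ and $c^{(i)} := \lambda_i - i$ for $i \geq 1$; the remaining sequence $M_\lambda \setminus \{c^{(i)}\}$, sorted decreasingly using $c > \lambda_1 - 1 > \lambda_2 - 2 > \ldots$, yields $\lambda^{(i)}_1 = c + 1$, $\lambda^{(i)}_j = \lambda_{j-1} + 1$ for $2 \leq j \leq i$, and $\lambda^{(i)}_j = \lambda_j$ for $j > i$. Taking differences between consecutive $\lambda^{(i)}$'s produces exactly the horizontal strips of the stated lengths. The main obstacle I anticipate is the bookkeeping in the distinct-entries case, particularly showing that every such arrangement yields a genuine Young diagram (the condition $\lambda'_i \geq 0$, i.e. $a'_i \geq -i$, must be checked, and relies on $M_\lambda$ eventually agreeing with $\{-i : i \geq 1\}$) and handling the infinite-multiset identity $c' + \abs{\lambda'} = \nu$ via the finite truncation above.
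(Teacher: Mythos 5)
The paper does not prove this lemma: it cites \cite[Corollary 5.6, Proposition 5.8]{CO} (Comes--Ostrik) and takes the statement as a black box. So there is no in-paper proof to compare against; you are supplying one where the paper defers to a reference. That said, your multiset reformulation of $\stackrel{\nu}{\sim}$ is the natural unpacking of the definition, and the main structural argument (strictly decreasing integer tail, picking which element plays the role of $\nu-\abs{\lambda'}$, the truncation argument for $c' + \abs{\lambda'}=\nu$, and the explicit formula for $\lambda^{(i)}$) is correct and yields precisely the strip description in the statement.

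There is one genuine gap, in Part~(1). You argue that for $\nu \notin \bZ_+$ the entry $c := \nu - \abs{\lambda}$ is ``the unique non-integer in $M_\lambda$''. But $\bZ_+$ here means non-negative integers, so $\nu \notin \bZ_+$ includes the case where $\nu$ is a negative integer; there $c$ is an integer and the uniqueness claim is simply false. The statement is still true in that case, but for a different reason, namely the repeat mechanism you already set up at the start of Part~(2): if $\nu \in \bZ_{<0}$, then $m := \abs{\lambda} - \nu \geq \abs{\lambda}+1 > \ell(\lambda)$, so $c = -m = \lambda_m - m$ is a tail entry and $M_\lambda$ has a repeated value, forcing triviality of the class. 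So you should split Part~(1) into the two subcases $\nu \notin \bZ$ (your argument) and $\nu \in \bZ_{<0}$ (reduce to the repeat argument), rather than presenting it as a single uniform step.

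On the verification you flagged but did not carry out: it does in fact go through. In the distinct-entries case with $\nu \in \bZ_+$, the position $k$ of $c$ in the sorted multiset satisfies $k \leq \ell(\lambda)+1$ (otherwise $c$ would be a negative integer below $-\ell(\lambda)-1$ and hence equal to a tail entry, contradicting distinctness); this bounds $b_{i+1}$ from below by $-i$ for all $i$, so every choice of $c'$ yields $a_i \geq -i$, i.e.\ a genuine Young diagram. Supplying that one inequality is all that remains to make the argument airtight.
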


We now consider Deligne's category $\underline{Rep}(S_T)$, where $T$ is a formal variable (c.f. \cite[Section 3.2]{CO}). This category is $\bC((T-\nu))$-linear, but otherwise it is very similar to Deligne's category $\underline{Rep}(S_{\nu})$ for generic $\nu$. For instance, as a $\bC((T-\nu))$-linear Karoubian tensor category, $\underline{Rep}(S_T)$ is generated by one object, again denoted by $\fh$.

One can show that $\underline{Rep}(S_T)$ is \InnaA{split} semisimple and thus abelian, and its simple objects are parametrized by Young diagrams of arbitrary size.

In \cite[Section 3.2]{CO}, Comes and Ostrik defined a map
 \begin{align*}
lift_{\nu}: \{\substack{\text{objects in } \underline{Rep}(S_{\nu}) \\ \text{up to isomorphism}}\}\rightarrow \{\substack{\text{objects in } \underline{Rep}(S_{T}) \\ \text{up to isomorphism}}\}
 \end{align*}

We will not give the precise definition of this map, but will list some of its useful properties. It is defined to be additive (i.e. $lift_{\nu}(A \oplus B) \cong lift_{\nu}(A) \oplus lift_{\nu}(B)$ for any $A, B \in \underline{Rep}(S_{\nu})$) and satisfies $lift_{\nu}(\fh)\cong \fh$. Moreover, we have:

\begin{proposition}\label{prop:lift_properties}
Let $A, B$ be two objects in $\underline{Rep}(S_{\nu})$.
 \begin{enumerate}
 \item $lift_{\nu}(A \otimes B) \cong lift_{\nu}(A) \otimes lift_{\nu}(B)$.
  \item $\dim_{\underline{Rep}(S_{\nu})} A = (\dim_{\underline{Rep}(S_T)} lift_{\nu}(A))\rvert_{T=\nu}$.
  \item $\dim_{\bC} \Hom_{\underline{Rep}(S_{\nu})} (A, B) = \dim_{Frak(\bC[[T]])} \Hom_{\underline{Rep}(S_{T})} (lift_{\nu}(A), lift_{\nu}(B))$.
  \item The map $lift_{\nu}$ is injective.
  \item \InnaA{For any $\lambda$,} $lift_{\nu}(X_{\lambda})= X_{\lambda}$ for all but finitely many $\nu \in \bC$.
 \end{enumerate}
 \end{proposition}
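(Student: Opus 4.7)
The plan is to derive all five items from the construction of $lift_{\nu}$ via idempotent lifting in the partition category $\mathcal{P}$ (the category whose Hom spaces have a basis of partition diagrams and whose composition involves a parameter that is set to $\nu$, respectively $T$, in the two relevant specializations). Since every object of $\underline{Rep}(S_{\nu})$ is by definition a summand of some $\fh^{\otimes m}$, cut out by an idempotent $e \in \End_{\underline{Rep}(S_{\nu})}(\fh^{\otimes m})$, and since $\End(\fh^{\otimes m})$ has a partition-diagram basis independent of the parameter, I would first recall that there is a canonical $\bC[[T-\nu]]$-form $\mathcal{P}_{\bC[[T-\nu]]}$ of the relevant endomorphism algebras, with the property that $\mathcal{P}_{\bC[[T-\nu]]} \otimes_{\bC[[T-\nu]]} \bC = \End_{\underline{Rep}(S_{\nu})}(\fh^{\otimes m})$ and $\mathcal{P}_{\bC[[T-\nu]]} \otimes_{\bC[[T-\nu]]} \mathrm{Frac}(\bC[[T-\nu]]) = \End_{\underline{Rep}(S_T)}(\fh^{\otimes m})$. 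Since $\bC[[T-\nu]]$ is a complete local ring, idempotents lift uniquely from the residue field, and this uniqueness is exactly what makes $lift_{\nu}$ well-defined on objects (up to isomorphism).

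With this setup in hand, I would carry out the items in the following order. For (1), the tensor product of objects corresponds to concatenating idempotents $e \otimes e'$ in a larger partition algebra, and this concatenation is defined by the same partition-diagram formula over $\bC$, $\bC[[T-\nu]]$, and $\mathrm{Frac}(\bC[[T-\nu]])$; so $lift_{\nu}(e \otimes e')$ is an idempotent lift of $e \otimes e'$, and by uniqueness of idempotent lifts in the local ring $\bC[[T-\nu]]$, it agrees with $lift_{\nu}(e) \otimes lift_{\nu}(e')$. For (2), the categorical dimension is $\tr(\id_A) = \tr(e)$, and the trace in the partition category is a polynomial in the parameter with integer coefficients (essentially counting loops in the diagrammatic trace), hence specialization commutes with tracing, giving $\dim_{\underline{Rep}(S_{\nu})}(A) = \tr(e) = \tr(lift_{\nu}(e))\rvert_{T=\nu}$. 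For (3), after passing to the idempotent cutouts, $\Hom(A, B)$ is $f \mathcal{P} e$ for suitable $e, f$; this is a direct summand of the free $\bC[[T-\nu]]$-module $\mathcal{P}_{\bC[[T-\nu]]}$, and its rank is preserved under base change to both the residue field and the fraction field (the latter via Nakayama plus the fact that a projective module over a DVR is free of constant rank). Item (4) is then immediate: if $lift_{\nu}(A) \cong lift_{\nu}(B)$, the isomorphism is represented by an invertible morphism between the lifted idempotents, and reducing modulo $T-\nu$ yields an invertible morphism between $A$ and $B$ in $\underline{Rep}(S_{\nu})$ (invertibility being preserved since the reduction of the inverse is a two-sided inverse of the reduction).

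For (5), I would use Lemma \ref{lem:nu_classes_struct} together with the known parametrization of indecomposables. The indecomposable $X_{\lambda}$ in $\underline{Rep}(S_{\nu})$ is cut out by a primitive idempotent in a Hecke-like subalgebra of the partition algebra whose structure constants are polynomial in the parameter. Over $\bC[[T-\nu]]$, the primitive idempotent $e_{\lambda}$ for $\lambda$ either remains primitive after reduction (in which case $lift_{\nu}(X_{\lambda}) = X_{\lambda}$) or splits further; the latter happens only when $\lambda$ belongs to a non-trivial $\stackrel{\nu}{\sim}$-class, which by Lemma \ref{lem:nu_classes_struct}(1) requires $\nu \in \bZ_+$ and, more specifically, only finitely many $\nu$ put any fixed $\lambda$ into a non-trivial class (since the non-trivial class containing $\lambda$ as its minimal element is determined by the relation $\lambda_1 + \abs{\lambda} \leq \nu$, and for a fixed $\lambda$ only finitely many strictly smaller partitions $\mu$ have $\lambda \in$ class of $\mu$ at some integer $\nu$).

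The main obstacle, and the only place where nontrivial input beyond the formal setup is needed, is verifying that the Hom spaces in the integral form $\mathcal{P}_{\bC[[T-\nu]]}$ are genuinely free of the same rank as a partition-diagram basis would suggest, and that idempotent lifting matches the blockwise decomposition from Lemma \ref{lem:nu_classes_struct}; both of these are the substance of Comes--Ostrik's construction and can be cited from \cite{CO}. Once freeness is accepted, the remaining arguments are essentially formal consequences of Hensel-type idempotent lifting in the complete local ring $\bC[[T-\nu]]$.
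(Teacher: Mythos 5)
The paper itself provides no proof of this proposition: it consists of the single line ``C.f.\ \cite[Proposition 3.12]{CO}.'' Your proposal is therefore an independent reconstruction, and your overall framework --- viewing $lift_{\nu}$ as Hensel-type idempotent lifting in the diagrammatic endomorphism algebras over the complete local ring $\bC[[T-\nu]]$, then base-changing to the residue field and the fraction field --- is exactly the framework Comes and Ostrik use, and is the right one. Items (1), (2), (3) are argued correctly in this framework: tensoring idempotents, diagrammatic traces, and freeness of direct summands of free modules over a DVR all behave as you say.

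Item (4), however, has a genuine gap. An isomorphism $lift_{\nu}(A) \cong lift_{\nu}(B)$ is by definition a pair of mutually inverse morphisms living over the \emph{fraction field} $\mathrm{Frac}(\bC[[T-\nu]])$, not over $\bC[[T-\nu]]$. You assert that ``the isomorphism is represented by an invertible morphism between the lifted idempotents'' and then reduce modulo $T-\nu$, but nothing you have said guarantees that the isomorphism (or some isomorphism) can be chosen to lie in the $\bC[[T-\nu]]$-lattice $\tilde{f}\,\mathcal{P}_{\bC[[T-\nu]]}\,\tilde{e}$ rather than only in its fraction-field extension; a priori the matrix entries could have poles at $T=\nu$. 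Equivalently, you are claiming without argument that the base-change functor from finitely generated projective $\mathcal{P}_{\bC[[T-\nu]]}$-modules to $\mathcal{P}_{\mathrm{Frac}(\bC[[T-\nu]])}$-modules is injective on isomorphism classes; for a general $R$-order this can fail. The standard fix is to prove that the classes $[lift_{\nu}(X_\lambda)]$ are linearly independent in the split Grothendieck group of $\underline{Rep}(S_T)$, which in this paper follows from the explicit description in Lemma~\ref{lem:nu_classes_lift} (where $lift_\nu(X_{\lambda^{(i)}}) = X_{\lambda^{(i)}} \oplus X_{\lambda^{(i-1)}}$, an ``upper-triangular'' relation, hence invertible), or alternatively from cellularity of the partition algebras --- but this is substantive input that your formal Hensel-lifting argument does not supply.

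Item (5) is also phrased loosely: the primitive lift $\tilde{e}_\lambda$ over $\bC[[T-\nu]]$ is automatically primitive because its reduction $e_\lambda$ is (this is not ``remaining primitive after reduction'' --- the direction of your statement is reversed), and what can fail is primitivity of $\tilde{e}_\lambda$ after extension to the fraction field. The combinatorial content you invoke --- that a fixed $\lambda$ lies in a non-trivial $\stackrel{\nu}{\sim}$-class for only finitely many $\nu$, because the multiset $\mu_\lambda(\nu) = (\nu - \abs{\lambda}, \lambda_1 - 1, \lambda_2 - 2, \dots)$ has a repeat only when $\nu - \abs{\lambda} = \lambda_j - j$ for one of finitely many $j$ --- is correct and is the right finiteness mechanism, but should be stated more precisely than ``only finitely many strictly smaller partitions $\mu$ have $\lambda$ in the class of $\mu$ at some integer $\nu$,'' which reads circularly.
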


 \begin{proof}
C.f. \cite[Proposition 3.12]{CO}.
 \end{proof}

\begin{remark}
It was proved both in \cite[Section 7.2]{D} and in \cite[Proposition 3.28]{CO} that the dimensions of the indecomposable objects $X_{\lambda}$ in $\underline{Rep}(S_{T})$ are polynomials in $T$ whose coefficients depend on $\lambda$ (given $\lambda$, this polynomial can be written down explicitly). Such polynomials are denoted by $P_{\lambda}(T)$.

Furthermore, it was proved in \cite[Proposition 5.12]{CO} that given $d \in \bZ_+$ and a Young diagram $\lambda$, $\lambda$ belongs to a trivial $\stackrel{d}{\sim}$-class iff $P_{\lambda}(d)=0$.
\end{remark}

The following result is proved in \cite[Lemma 5.20]{CO}, and is a stronger version of the statement in Proposition \ref{prop:lift_properties}(e):

\begin{lemma}[Comes, Ostrik]\label{lem:nu_classes_lift}
Consider the $\stackrel{\nu}{\sim}$-equivalence relation on Young diagrams.

\begin{itemize}
 \item Whenever $\lambda$ lies in a trivial $\stackrel{\nu}{\sim}$-class, $lift_{\nu}(X_{\lambda}) = X_{\lambda}$.
\item For a non-trivial $\stackrel{\nu}{\sim}$-class $\{ \lambda^{(i)} \}_i$,
\begin{align*}
& lift_{\nu}(X_{\lambda^{(0)}}) = X_{\lambda^{(0)}},
& lift_{\nu}(X_{\lambda^{(i)}}) = X_{\lambda^{(i)}} \oplus X_{\lambda^{(i-1)}} \, \forall i \geq 1
\end{align*}
\end{itemize}
\end{lemma}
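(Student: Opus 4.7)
The plan is to proceed by induction on $\abs{\lambda}$, using the identity $lift_{\nu}(\fh^{\otimes n}) \cong \fh^{\otimes n}$ together with the additivity, multiplicativity and dimension-preserving properties of $lift_{\nu}$ from Proposition \ref{prop:lift_properties}. The key observation is that in both $\underline{Rep}(S_{\nu})$ and $\underline{Rep}(S_{T})$ the tensor power of the generator decomposes with \emph{parameter-independent} multiplicities,
$$\fh^{\otimes n} \cong \bigoplus_{\abs{\lambda} \leq n} m_{\lambda}^{(n)} X_{\lambda},$$
since the combinatorial description of these multiplicities (in terms of set partitions) makes no reference to $\nu$ or $T$. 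Since $lift_{\nu}(\fh) = \fh$ and $lift_{\nu}$ is additive and monoidal, we obtain in $\underline{Rep}(S_{T})$ the equality
$$\bigoplus_{\abs{\lambda} \leq n} m_{\lambda}^{(n)} \, lift_{\nu}(X_{\lambda}) \;\cong\; \bigoplus_{\abs{\lambda} \leq n} m_{\lambda}^{(n)} X_{\lambda}.$$
Because $\underline{Rep}(S_T)$ is split semisimple, the Krull--Schmidt theorem applies and gives an exact accounting.

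Using that $X_{\lambda}$ first appears in $\fh^{\otimes \abs{\lambda}}$ with multiplicity one, the inductive step proceeds by subtracting from the above identity all contributions corresponding to $\mu$ with $\abs{\mu} < \abs{\lambda}$, which are already known by induction. This expresses $lift_{\nu}(X_{\lambda})$ as an explicit non-negative integer combination of indecomposables $X_{\lambda'}$ in $\underline{Rep}(S_T)$, determined by the bookkeeping in $\fh^{\otimes \abs{\lambda}}$. The base case $\abs{\lambda} = 0$ is immediate since $X_{\emptyset} = \mathbf{1}$ is fixed by any additive monoidal functor.

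To identify exactly which $X_{\lambda'}$ appear, I would use the block structure of $\underline{Rep}(S_{\nu})$, which is known (cf.\ \cite{CO}) to be governed precisely by the $\stackrel{\nu}{\sim}$-equivalence relation: two indecomposables $X_{\mu}, X_{\mu'}$ lie in the same block iff $\mu \stackrel{\nu}{\sim} \mu'$. Combined with property (3) of Proposition \ref{prop:lift_properties},
$$\dim_{\bC} \Hom_{\underline{Rep}(S_{\nu})}(X_{\mu}, X_{\lambda}) = \dim \Hom_{\underline{Rep}(S_{T})}(lift_{\nu}(X_{\mu}), lift_{\nu}(X_{\lambda})),$$
this forces $lift_{\nu}(X_{\lambda})$ to be supported on indecomposables $X_{\lambda'}$ with $\lambda' \stackrel{\nu}{\sim} \lambda$. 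For a trivial $\stackrel{\nu}{\sim}$-class there is only one such diagram, so $lift_{\nu}(X_{\lambda}) = X_{\lambda}$; the multiplicity equals one by induction and the tensor power identity.

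The main obstacle is the non-trivial class case: showing that $lift_{\nu}(X_{\lambda^{(i)}})$ picks up exactly $X_{\lambda^{(i-1)}}$ (and no other $X_{\lambda^{(j)}}$) for $i \geq 1$. The idea would be to compare dimensions polynomially: using the explicit description of non-trivial classes in Lemma \ref{lem:nu_classes_struct}, together with the dimension polynomials $P_{\lambda^{(j)}}(T)$ and their specializations at $T=\nu$, one shows that the equation
$$\dim_{\underline{Rep}(S_{\nu})} X_{\lambda^{(i)}} \;=\; \sum_{j} c_{j} \, P_{\lambda^{(j)}}(\nu), \qquad c_{j} \in \bZ_{\geq 0},$$
together with the Hom-space constraints, admits only the solution $c_{i} = c_{i-1} = 1$ and all other $c_{j} = 0$. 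The induction on $\abs{\lambda}$ (processing the class $\{\lambda^{(0)}, \lambda^{(1)}, \ldots \}$ in order of increasing size) is what ensures that the ``shift by one'' in the class appears rather than some other pairing; in particular, the smallest element $\lambda^{(0)}$ is treated first and forced to satisfy $lift_{\nu}(X_{\lambda^{(0)}}) = X_{\lambda^{(0)}}$ since no smaller diagram in its class is available as a partner.
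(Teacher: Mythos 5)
The paper does not actually prove this lemma; it is a direct citation to \cite[Lemma 5.20]{CO}, so there is no in-paper proof to compare against. That said, your proposal has a fatal gap: the claim that $\fh^{\otimes n}$ decomposes into indecomposables with \emph{parameter-independent} multiplicities is false. The set-partition combinatorics determine $\dim \Hom(\fh^{\otimes m}, \fh^{\otimes n})$, not the multiplicities of indecomposable summands; the latter depend on the idempotent structure of $\End(\fh^{\otimes n})$, which degenerates at integral $\nu$. Concretely, $\End_{\underline{Rep}(S_\nu)}(\fh) \cong \bC[b]/(b^2-\nu b)$ is local when $\nu=0$, so $\fh \cong X_{(1)}$ is indecomposable in $\underline{Rep}(S_0)$, whereas $\fh \cong X_\emptyset \oplus X_{(1)}$ in $\underline{Rep}(S_T)$; the multiplicity of $X_\emptyset$ in $\fh$ is $0$ at $\nu=0$ and $1$ generically. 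In fact $lift_0(X_{(1)}) = lift_0(\fh) = \fh = X_\emptyset \oplus X_{(1)}$, so this jump in multiplicity is \emph{exactly} the content of the lemma in the simplest nontrivial case: the decomposability of $lift_\nu(X_\lambda)$ is the same phenomenon, not something you can assume away.

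Once the two sides of your displayed identity carry different coefficients $m_\lambda^{(n,\nu)}$ and $m_\lambda^{(n,T)}$, the induction no longer closes: passing from $\fh^{\otimes(n-1)}$ to $\fh^{\otimes n}$ you acquire two families of unknowns (the $m_\lambda^{(n,\nu)}$ for $\abs{\lambda}\leq n$ and $lift_\nu(X_\lambda)$ for $\abs{\lambda}=n$) and only one identity, so the "subtraction" step is underdetermined. There is also a potential circularity: within this paper's logical order the block description in Theorem \ref{thrm:blocks_S_nu} is deduced from Lemmas \ref{lem:nu_classes_struct} and \ref{lem:nu_classes_lift}, so if you want to use block structure to constrain the support of $lift_\nu(X_\lambda)$ you must source it from \cite{CO} directly and verify it is established there independently of the present statement. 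The actual argument in \cite[Lemma 5.20]{CO} analyzes the lifted idempotents explicitly (via the combinatorics of $\stackrel{\nu}{\sim}$-classes and the associated bead/abacus bookkeeping), which is necessarily finer than Krull--Schmidt accounting on $\fh^{\otimes n}$.
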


Based on Lemmas \ref{lem:nu_classes_struct}, \ref{lem:nu_classes_lift}, Comes and Ostrik prove the following theorem (c.f. \cite[Theorem 5.3, Proposition 5.22, Theorems 6.4, 6.10]{CO}, \cite[Proposition 2.7]{CO2}):

\begin{theorem}\label{thrm:blocks_S_nu}
 The indecomposable objects $X_{\lambda}, X_{\lambda'}$ belong to the same block of $\underline{Rep}(S_{\nu})$ iff $\lambda \stackrel{\nu}{\sim} \lambda'$. The structure of the blocks of $\underline{Rep}(S_{\nu})$ is described below:
 \begin{itemize}[leftmargin=*]
  \item For a trivial $\stackrel{\nu}{\sim}$-class $\{ \lambda \}$, the object $X_{\lambda}$ satisfies:
  $$\dim \End_{\underline{Rep}(S_{\nu})} (X_{\lambda}) =1$$
  and the block of $\underline{Rep}(S_{\nu})$ corresponding to $\{\lambda\}$ is equivalent to the category $Vect_{\bC}$ of finite dimensional complex vector spaces (in particular, it is a semisimple abelian category, so we will call these blocks {\it semisimple}).
  \item Let $ \{\lambda^{(i)}\}_i$ be a non-trivial $\stackrel{\nu}{\sim}$-class, and let $i \geq 1, j \geq 0$. Then the block corresponding to $ \{\lambda^{(i)}\}_i$ is not an abelian category (in particular, not semisimple), and the objects $X_{\lambda^{(i)}}$ satisfy:
\begin{align*}
&\dim \Hom_{\underline{Rep}(S_{\nu})} \left( X_{\lambda^{(j)}}, X_{\lambda^{(i)}} \right) = 0 \text{ if } \InnaA{\abs{j -i} \geq 2}\\
&\dim \Hom_{\underline{Rep}(S_{\nu})} \left( X_{\lambda^{(j)}}, X_{\lambda^{(i)}} \right) = 1 \text{ if } \InnaA{\abs{j -i} =1}\\
&\dim \End_{\underline{Rep}(S_{\nu})} \left( X_{\lambda^{(i)}} \right) = 2 \text{ for } i\geq 1 \\
& \dim \End_{\underline{Rep}(S_{\nu})} \left( X_{\lambda^{(0)}} \right) = 1
\end{align*}

This block has the following associated quiver:
$$ X_{\lambda^{(0)}} \substack{ \alpha_0 \\ \leftrightarrows \\ \beta_0} X_{\lambda^{(1)}}  \substack{ \alpha_1 \\ \leftrightarrows \\ \beta_1} X_{\lambda^{(2)}}  \substack{ \alpha_2 \\ \leftrightarrows \\ \beta_2} ...$$
with relations \InnaA{$ \alpha_0 \circ \beta_0 =0$}, $\beta_i \circ \beta_{i-1} =0$, \InnaA{$\alpha_i \circ \alpha_{i-1} = 0$, $\beta_i \circ \alpha_i = \alpha_{i+1} \circ \beta_{i+1}$ for $i \geq 0$}.
 \end{itemize}

\end{theorem}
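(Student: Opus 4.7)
The strategy is to reduce all Hom computations in $\underline{Rep}(S_\nu)$ to the split semisimple category $\underline{Rep}(S_T)$ via the lifting map. Proposition \ref{prop:lift_properties}(c) identifies
$$\dim_\bC \Hom_{\underline{Rep}(S_\nu)}(X_\lambda, X_{\lambda'}) \;=\; \dim \Hom_{\underline{Rep}(S_T)}(lift_\nu X_\lambda,\, lift_\nu X_{\lambda'}),$$
and since $\underline{Rep}(S_T)$ is split semisimple with pairwise non-isomorphic simples $\{X_\mu\}$, the right-hand side is simply the number of common simple summands of the two lifts. Lemma \ref{lem:nu_classes_lift} records these summands explicitly: $lift_\nu(X_\lambda) = X_\lambda$ if $\lambda$ lies in a trivial $\stackrel{\nu}{\sim}$-class, while in a non-trivial class $lift_\nu(X_{\lambda^{(0)}}) = X_{\lambda^{(0)}}$ and $lift_\nu(X_{\lambda^{(i)}}) = X_{\lambda^{(i)}} \oplus X_{\lambda^{(i-1)}}$ for $i\geq 1$. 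Plugging in and counting common summands would directly yield every Hom dimension stated in the theorem.

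From these dimensions the block structure drops out immediately. Objects in distinct $\stackrel{\nu}{\sim}$-classes have disjoint lift-decompositions, hence zero Homs in both directions, and therefore belong to different blocks; within a non-trivial class the non-zero $1$-dimensional Homs between adjacent members of the chain $X_{\lambda^{(i)}}, X_{\lambda^{(i+1)}}$ link the whole class into one block. Trivial classes produce blocks with a single indecomposable and $\End = \bC$, equivalent to $\mathrm{Vect}_\bC$; this already establishes both the ``$\stackrel{\nu}{\sim}$ $\Leftrightarrow$ same block'' characterization and the description of semisimple blocks.

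For the quiver presentation in a non-trivial block, I would fix non-zero generators $\alpha_i, \beta_i$ of the $1$-dimensional Hom spaces between $X_{\lambda^{(i)}}$ and $X_{\lambda^{(i+1)}}$. The ``two steps in the same direction'' relations $\alpha\alpha = 0$, $\beta\beta = 0$ are automatic since the source and target of such a composition sit at chain-distance $2$ and have zero Hom. The scalar relation living in the $1$-dimensional $\End X_{\lambda^{(0)}}$ is forced by indecomposability: a non-zero scalar there would split one of the generators and exhibit $X_{\lambda^{(0)}}$ as a direct summand of the non-isomorphic indecomposable $X_{\lambda^{(1)}}$, a contradiction. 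For the remaining ``forth-and-back equals back-and-forth'' relation in the $2$-dimensional local ring $\End X_{\lambda^{(i+1)}}$, I would observe that both such compositions are non-units of this local ring, hence lie in its $1$-dimensional Jacobson radical, so they must be scalar multiples of a common non-zero element; a coordinated rescaling of the $\alpha_i, \beta_i$ along the chain absorbs the proportionality constants and yields the stated equality.

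The main obstacle I foresee is the non-vanishing step needed to carry out that last rescaling: if both radical-valued compositions on some $X_{\lambda^{(i+1)}}$ were zero, $\End X_{\lambda^{(i+1)}}$ would be forced down to $\bC \cdot \mathrm{id}$, contradicting its computed dimension $2$ --- so at least one is non-zero --- but to match them up consistently for all $i$ one needs to show both are non-zero and choose the rescalings globally. I would handle this by lifting to $\underline{Rep}(S_T)$ and using the explicit idempotent decomposition of $lift_\nu(X_{\lambda^{(i+1)}}) = X_{\lambda^{(i+1)}} \oplus X_{\lambda^{(i)}}$ in the semisimple ambient category to trace each composition to a specific non-zero morphism between summands, thus forcing the corresponding compositions in $\underline{Rep}(S_\nu)$ to be non-zero. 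Once the non-vanishing is established, induction along the chain gives a compatible rescaling, completing the quiver-with-relations description.
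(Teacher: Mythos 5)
The theorem you are proving is not proved in this paper at all: it is quoted from Comes--Ostrik (cf.\ \cite[Theorem~5.3, Proposition~5.22, Theorems~6.4, 6.10]{CO}, \cite[Proposition~2.7]{CO2}), so there is no ``paper's proof'' to match. Your first two paragraphs, however, are a clean and essentially complete argument for the Hom-dimension computations and the block decomposition. Proposition~\ref{prop:lift_properties}(c) reduces the Hom dimension to counting common simple summands of the two lifts, Lemma~\ref{lem:nu_classes_lift} lists those summands, and the block characterization then falls out. That part is sound, as is the observation that $\alpha_0\circ\beta_0=0$ in the $1$-dimensional $\End(X_{\lambda^{(0)}})$ follows from Krull--Schmidt: a non-zero scalar would split $\beta_0$ and exhibit $X_{\lambda^{(0)}}$ as a direct summand of $X_{\lambda^{(1)}}$.

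There is a genuine gap in the last step. You claim that if both $\beta_i\circ\alpha_i$ and $\alpha_{i+1}\circ\beta_{i+1}$ were zero, then $\End(X_{\lambda^{(i+1)}})$ would be forced down to $\bC\cdot\mathrm{id}$, contradicting its computed dimension $2$. This does not follow. The radical of the $2$-dimensional local ring $\End(X_{\lambda^{(i+1)}})$ is indeed $1$-dimensional regardless of what happens to those compositions, but its generator $\gamma$ need not be a composition through a neighbour; a priori $\gamma$ could be an \emph{irreducible} radical endomorphism, i.e.\ a loop at $X_{\lambda^{(i+1)}}$ in the quiver. The data you have --- Hom dimensions alone --- are perfectly consistent with an algebra in which $\beta_i\alpha_i = \alpha_{i+1}\beta_{i+1} = 0$ and $\gamma$ is a new generator with $\gamma^2 = \gamma\beta_i = \alpha_i\gamma = 0$. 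Ruling this out, i.e.\ proving the quiver has no loops and that the radical of each $\End(X_{\lambda^{(i)}})$ is spanned by a composition through a neighbour, is precisely the content you would need and cannot extract from the dimensions. The fallback you propose --- lifting to $\underline{Rep}(S_T)$ and ``tracing each composition'' --- is not available from the tools in this paper: as set out around Proposition~\ref{prop:lift_properties}, $lift_\nu$ is a map on \emph{isomorphism classes of objects} satisfying a numerical identity on Hom dimensions, not a functor, so there is no canonical way to transport a specific morphism of $\underline{Rep}(S_\nu)$ to one of $\underline{Rep}(S_T)$. Comes and Ostrik's proof relies on finer information (explicit idempotents, translation-type functors, and the Temperley--Lieb combinatorics), not on the dimension counts alone. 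A minor additional omission: you do not address the claim that the non-semisimple blocks are not abelian categories, which is also part of the statement.
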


\subsection{Objects $\Delta_k$}\label{ssec:Delta_obj}
In this subsection we define the objects $\Delta_k$ in the category $\underline{Rep}(S_{\nu})$, and list some of their properties. These objects are defined for any $k \in \bZ_+$ and any $\nu \in \bC$.

By definition, $\Delta_k$ is the image of an idempotent $x_k \in \End_{\underline{Rep}(S_{\nu})}(\fh^{\otimes k})$ (the latter is given explicitly in \cite[Section 3.1]{CO2}), and satisfies:

\begin{lemma}\label{lem:funct_F_n_Delta_k}
 $\mathcal{S}_n(\Delta_k) \cong \bC Inj(\{1,...,k\} , \{1,...,n\}) \cong Ind_{S_{n-k} \times S_k \times S_k}^{S_n \times S_k} \operatorname{\InnaA{\bC}}$.
\end{lemma}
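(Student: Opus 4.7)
The plan is to trace the construction of $\Delta_k$ through the functor $\mathcal{S}_n$ and then match the resulting $S_n \times S_k$-representation against the space of injective maps.

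First, I would identify $\mathcal{S}_n(\fh^{\otimes k})$ concretely. Since $\mathcal{S}_n$ is a symmetric monoidal functor sending $\fh$ to the permutation module $\bC^n$, one has $\mathcal{S}_n(\fh^{\otimes k}) = (\bC^n)^{\otimes k}$. Under the standard identification $e_{i_1} \otimes \cdots \otimes e_{i_k} \leftrightarrow (j \mapsto i_j)$, this becomes the $S_n \times S_k$-module $\bC\, \mathrm{Map}(\{1,\dots,k\},\{1,\dots,n\})$, with $S_n$ acting by post-composition and $S_k$ by pre-composition with $\sigma^{-1}$. The subspace $\bC\, Inj(\{1,\dots,k\},\{1,\dots,n\})$ is clearly stable under both actions.

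Second, I would compute $\mathcal{S}_n(x_k)$. Recall that $x_k$ is given in \cite[Section 3.1]{CO2} as an explicit element of the partition algebra $\End_{\underline{Rep}(S_\nu)}(\fh^{\otimes k})$, built by an inclusion--exclusion over set partitions of $\{1,\dots,k\}$. Applying $\mathcal{S}_n$ to this expression yields a concrete endomorphism of $(\bC^n)^{\otimes k}$ whose action I would evaluate on the basis $\{e_{i_1}\otimes\cdots\otimes e_{i_k}\}$. The key point to verify is that $\mathcal{S}_n(x_k)$ annihilates every basis vector corresponding to a non-injective map $j \mapsto i_j$ and acts as the identity on the injective basis vectors. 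Granted this, $\mathcal{S}_n(\Delta_k) = \mathrm{im}\,\mathcal{S}_n(x_k) = \bC\, Inj(\{1,\dots,k\},\{1,\dots,n\})$ as an $S_n \times S_k$-module, and the $S_k$-equivariance comes for free since $x_k$ is a morphism in $\underline{Rep}(S_\nu)$ and the $S_k$-action on $\fh^{\otimes k}$ is natural in the categorical sense.

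Third, I would identify this permutation module with an induced representation. Because $S_n \times S_k$ acts transitively on $Inj(\{1,\dots,k\},\{1,\dots,n\})$, one has $\bC\, Inj \cong \mathrm{Ind}_H^{S_n \times S_k} \bC$, where $H$ is the stabilizer of the standard injection $\iota_0(i)=i$. A short orbit--stabilizer calculation shows that $(g,h) \in S_n \times S_k$ fixes $\iota_0$ iff $g$ preserves $\{1,\dots,k\}$ and $g|_{\{1,\dots,k\}} = h$, so $H$ is canonically isomorphic to $S_{n-k} \times \Delta(S_k) \subset S_n \times S_k$, where $\Delta(S_k)$ denotes the diagonal copy of $S_k$ inside $S_k \times S_k$. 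This yields the claimed induced-representation formula.

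The main obstacle is the second step: verifying that the inclusion--exclusion formula for $x_k$ projects precisely onto the injective-maps subspace. This reduces to a M\"obius-function identity on the partition lattice of $\{1,\dots,k\}$ and can be checked by direct evaluation on basis elements; the rest of the argument is then a routine exercise in orbit--stabilizer.
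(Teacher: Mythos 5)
The paper does not actually prove this lemma: it simply cites it, stating ``This is part of the definition of the functor $\mathcal{S}_n$ in [D, Theorem~6.2],'' and notes in the adjacent remark that $\mathcal{S}_n(\fh^{\otimes k}) = \bC\,Fun(\{1,\dots,k\},\{1,\dots,n\})$ is part of the definition in [CO]. Your proposal is a genuine proof that unpacks what that citation contains, so it takes a different (more explicit) route than the paper. The structure is sound: identifying $\mathcal{S}_n(\fh^{\otimes k})$ with the span of all maps, applying $\mathcal{S}_n$ to the idempotent $x_k$, and then the orbit--stabilizer identification of the permutation module $\bC\,Inj$ with $\mathrm{Ind}_{S_{n-k}\times \Delta(S_k)}^{S_n\times S_k}\bC$ are each correct, and you have correctly located the only nontrivial computation (that $\mathcal{S}_n(x_k)$ is exactly the projection onto the injective-maps subspace). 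What your route buys is a self-contained verification rather than an appeal to Deligne's construction; what the citation buys is brevity and consistency with how $\underline{Rep}(S_\nu)$ and $\Delta_k$ are actually built.

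One small point worth tightening: you assert that $\mathcal{S}_n(x_k)$ ``annihilates every non-injective basis vector and acts as the identity on the injective ones.'' That is indeed what happens, but strictly speaking the lemma only requires the image of the idempotent to be isomorphic as an $S_n\times S_k$-module to $\bC\,Inj$. Since you flag this computation as the crux and reduce it to a M\"obius-inversion identity on the partition lattice, the gap is bookkeeping rather than conceptual --- but if you carry it out, you should either verify that $\mathcal{S}_n(x_k)$ is literally the coordinate projection onto $\bC\,Inj$, or note that equality of $S_n\times S_k$-characters (equivalently, equality of the $T$-polynomial dimension $T(T-1)\cdots(T-k+1)$ with $|Inj|$ at $T=n$, together with multiplicity-freeness) suffices to pin down the isomorphism type of the image.
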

This is part of the definition of the functor $\mathcal{S}_n$ in \cite[Theorem 6.2]{D}.

\begin{remark}
 The tensor functor $\mathcal{S}_n$ takes $ \fh^{\otimes k} $ to $\bC Fun(\{1,...,k\} , \{1,...,n\})$ (\cite{CO} uses this as part of the definition).
\end{remark}

\begin{example}
 $\Del_0 \cong \InnaA{ \mathbf{1}}$ \InnaA{(unit object in monoidal category $\underline{Rep}(S_{\nu})$)}, $\Del_1 \cong \fh$.
\end{example}

\begin{remark}
 Deligne in \cite{D} denotes the full subcategory of $\underline{Rep}(S_{\nu})$ whose objects are $\{\Delta_k\}_{k\geq 0}$ by $Rep_0 (S_{\nu})$. This subcategory is a tensor subcategory (with respect to the tensor product in $\underline{Rep}(S_{\nu})$), and it is used as the first step in defining the category $\underline{Rep}(S_{\nu})$. Namely, one first describes the structure of $Rep_0 (S_{\nu})$ as a $\bC$-linear rigid symmetric monoidal category (see \cite[Section 2]{D}) and then defines $\underline{Rep}(S_{\nu})$ as the Karoubi envelope of $Rep_0 (S_{\nu})$.

 Comes and Ostrik, on the other hand, consider the full subcategory (denoted by $\underline{Rep}_0 (S_{\nu})$) of $\underline{Rep}(S_{\nu})$ whose objects are $\{\fh^{\otimes k} \}_{k\geq 0}$. This is also a tensor subcategory. They start by defining the structure of $\underline{Rep}_0 (S_{\nu})$ as a $\bC$-linear rigid symmetric monoidal category (see \cite[Section 2]{CO}) and then define $\underline{Rep}(S_{\nu})$ as the Karoubi envelope of $\underline{Rep}_0 (S_{\nu})$.

In \cite[Section 8.2]{D}, Deligne showed that these two definitions are equivalent.
\end{remark}

We now describe the $\Hom$-spaces between the objects $\Delta_k$. We start by introducing the following notation (see \cite[Section 2]{CO}):

\begin{notation}
\mbox{}
\begin{itemize}[leftmargin=*]
\item By a {\it partition} $\pi$ of a set $S$ we will denote a collection $\{ \pi_i \}_{i \in I}, \pi_i \subset S$, such that $\pi_i \cap \pi_j = \emptyset$ if $i \neq j$, and $\bigcup_{i \in I} \pi_i =S$. The subsets $\pi_i$ will be called {\it parts} of $\pi$. The number of parts of $\pi$ will be denoted by $l(\pi)$.
 \item Let $P_{r,s}$ be the set of all partitions of the set $\{1,..., r, 1', ..., s'\}$; $P_{0,s}$ is then the set of all partitions of $\{1', ..., s'\}$, $P_{r, 0}$ is the set of all partitions of $\{1,..., r\}$, $P_{0, 0} := \{ \text{empty partition} \}$.
 \item Let $\bar{P}_{r,s}$ be the subset of $P_{r,s}$ consisting of all the partitions $\pi$ such that $i, j$ do not lie in the same part of $\pi$ whenever $i \neq j$, and similarly for $i', j'$.
 \item The following diagrammatic notation will be used for elements of $P_{r,s}$ (resp. $\bar{P}_{r,s}$): let $\pi \in P_{r,s}$. We will represent $\pi$ by any graph whose vertices are labeled $1,...,r, 1',..., s'$, and whose connected components partition the vertices into disjoint subsets corresponding to parts of $\pi$.

 For our convenience, we will always present such graphs as graphs with two rows of aligned vertices: the top row contains $r$ vertices labeled by numbers $1,...,r$, and the bottom row contains $s$ vertices labeled by numbers $1',...,s'$.

\end{itemize}
\end{notation}

\begin{remark}
 In this diagrammatic representation, partitions $\pi \in \bar{P}_{r,s}$ are exactly those which are represented by bipartite graphs with $deg(v) \leq 1$ for any vertex $v$. These partitions have exactly one diagram which represents them.
\end{remark}

\begin{example}
\mbox{}
\begin{enumerate}[leftmargin=*]
 \item Let $\pi \in P_{6,3}, \pi:= \{\{1, 1', 3\}, \{2, 4, 5\}, \{ 2', 3'\}, \{6\}\}$. The diagram representing $\pi$ can be drawn as:
$$ \xymatrix{ &1 \ar@{-}@/_1.3pc/[rdru] \ar@{-}[d] & 2 \ar@{-}@/_1.3pc/[rdru] & 3  & 4 \ar@{-}[r] & 5 & 6 \\ &1' & 2' \ar@{-}[r] &3'}$$

 \item Let $\pi' \in P_{6,3}, \pi':= \{\{1, 1'\}, \{2, 3'\}, \{ 2'\}, \{ 3 \} , \{4 \}, \{5\}, \{6 \}\}$. The diagram representing $\pi'$ is:
$$ \xymatrix{ &1 \ar@{-}[d] &2  \ar@{-}[dr] &3  &4 &5 &6 \\ &1' & 2' &3'}$$
\end{enumerate}

Notice that $\pi \notin \bar{P}_{6,3}$, but $\pi' \in \bar{P}_{6,3}$.
\end{example}

We now describe how to ``glue'' two diagrams together to obtain a new diagram.

Let $\pi \in P_{r,s}, \rho \in P_{s,t}$. We will denote the vertices in the top (resp. bottom) row of $\pi$ by $1,...,r$ (resp. $1',...,s'$),and the vertices in the top (resp. bottom) row of $\rho$ by $1',...,s'$ (resp. $1'', ...,t''$).

We draw the diagram of $\pi$ on top of the diagram of $\rho$, with the bottom row of $\pi$ (vertices $1', ...,s'$) identified with the top row of $\rho$. We will call the diagram obtained {\it the gluing of $\pi, \rho$}, and will denote it by $D_{\pi, \rho}$.

We next consider the diagram induced by $D_{\pi, \rho}$ on the vertices $1,...,r, 1'',...,t''$ (by ``induced diagram'' we mean the diagram in which two vertices lie in the same connected component iff they were in the same connected component of $D_{\pi, \rho}$). This diagram (and the partition in $P_{r,t}$ it represents) will be denoted by $\rho \star \pi$.

The second piece of information we want to retain from the diagram $D_{\pi, \rho}$ is the number of connected components lying entirely in the middle row. We will denote this number by $n(\rho, \pi)$. Thus
$$\sharp \text{ connected components of } D_{\pi, \rho} = l(D_{\pi, \rho}) = n(\rho, \pi) + l(\rho \star \pi)$$

\begin{example}
 Let $\pi \in P_{6,5}, \pi:= \{\{1, 1', 3\}, \{2, 4, 5\}, \{ 2', 3'\}, \{4'\}, \{5'\}, \{6\}\}$,

 $\rho \in P_{5,4}, \rho=\{ \{1', 2'', 4', 4'' \}, \{2', 3' \}, \{5'\}, \{1'', 3''\}\}$.

 Then the diagrams of $\pi, \rho$ can be drawn as:
$$ \pi = \xymatrix{ 1 \ar@{-}@/_1.3pc/[rdru] \ar@{-}[d] & 2 \ar@{-}@/_1.3pc/[rdru] & 3  & 4 \ar@{-}[r] & 5 & 6 \\ 1' & 2' \ar@{-}@/^1.3pc/[r] &3' &4' &5'}$$

$$ \rho = \xymatrix{ 1' \ar@{-}@/_1.3pc/[rdrru] \ar@{-}[rd] & 2' \ar@{-}[r] & 3'  & 4' & 5'  \\ 1'' \ar@{-}@/^1.3pc/[rurd] & 2'' \ar@{-}@/^1.3pc/[rurd] &3'' & 4''}$$

Next, we draw the gluing of $\pi, \rho$, denoted by $D_{\pi, \rho}$:
$$ D_{\pi, \rho} = \xymatrix{ 1 \ar@{-}@/_1.3pc/[rdru] \ar@{-}[d] & 2 \ar@{-}@/_1.3pc/[rdru] & 3  & 4 \ar@{-}[r] & 5 & 6 \\ 1' \ar@{-}@/_1.3pc/[rdrru]  \ar@{-}[rd] & 2'\ar@{-}@/^1.3pc/[r] \ar@{-}[r] &3' &4' &5' \\  1'' \ar@{-}@/^1.3pc/[rurd] & 2'' \ar@{-}@/^1.3pc/[rurd] &3'' & 4''}$$

Then
$$ \rho \star \pi = \xymatrix{ 1 \ar@{-}@/_1.3pc/[rdru] \ar@{-}[rd] & 2 \ar@{-}@/_1.3pc/[rdru] & 3  & 4 \ar@{-}[r] & 5 & 6 \\ 1'' \ar@{-}@/^1.3pc/[rurd] & 2'' \ar@{-}@/^1.3pc/[rurd] &3'' & 4''}$$
i.e. $ \rho \star \pi = \{ \{1, 2'', 3, 4''\}, \{1'', 3''\}, \{2,4,5 \}, \{ 6\}\}$ (partition of the set $\{ 1,...,6,1'',...,4''\}$), and  $n(\rho, \pi) = 2$.
\end{example}

The following statement is used by Comes and Ostrik in \cite[Definition 2.11]{CO} as the definition in the construction of $\underline{Rep}(S_{\nu})$; Deligne derives it in \cite[Proposition 8.3]{D}.

\begin{definition}\label{def:ten_power_h_homs}
 Let $r, s \geq 1$. The space $\Hom_{\underline{Rep}(S_{\nu})} (\fh^{\otimes r}, \fh^{\otimes s}) $ is defined to be $\bC P_{r,s}$, and the composition of morphisms between tensor powers of $\fh$ is bilinear and given by the following formula: for $\pi \in P_{r,s}, \rho \in P_{s,t}$,
 $$ \rho \circ \pi := \nu^{n(\rho, \pi)}   \rho \star \pi \in \bC P_{r,t}$$
\end{definition}

The following statement is used as a definition in \cite[Definition 3.12]{D}, and can easily be derived from the definition of $\Delta_k$ (c.f. \cite[Section 3.1]{CO2}) and from Definition \ref{def:ten_power_h_homs}.
\begin{lemma}\label{lem:Delta_k_homs}
 Let $r, s \geq 1$. The space $\Hom_{\underline{Rep}(S_{\nu})} (\Del_r, \Del_s) $ is $\bC \bar{P}_{r,s}$, and the composition of morphisms between the objects $\Delta_k$ is given by the following formula: for $\pi \in \bar{P}_{r,s}, \rho \in \bar{P}_{s,t}$,
 $$ \rho \circ \pi = \sum_{ \tau \in \bar{P}_{r,t}: \rho \star \pi \subset \T} \mathit{p}_{\rho, \pi, \T}(\nu)   \T  \in \bC \bar{P}_{r,t}$$
 where
 \begin{itemize}[leftmargin=*]
  \item For $\tau \in \bar{P}_{r,t}$, $ \rho \star \pi \subset \T$ means that the diagram of $\T$ contains the diagram of $ \rho \star \pi$ as a subgraph (equivalently, $ \T$ is a coarser partition of the set $\{1,...,r, 1'', ...,t''\}$ than $ \rho \star \pi$),
  \item $\mathit{p}_{\rho, \pi, \T}$ is the polynomial
 $$\mathit{p}_{\rho, \pi, \T}(x) = (x- l(\T))(x-  l(\T) -1)...(x -  l( \T) - n(\rho, \pi)+1)$$
 \end{itemize}

\end{lemma}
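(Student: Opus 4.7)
The plan is to read off both the identification $\Hom_{\underline{Rep}(S_\nu)}(\Delta_r, \Delta_s) \cong \bC \bar P_{r,s}$ and the composition formula from the construction of $\Delta_k$ as the image of an explicit idempotent $x_k \in \End_{\underline{Rep}(S_\nu)}(\fh^{\otimes k}) = \bC P_{k,k}$ (as given in \cite[Section 3.1]{CO2}), combined with the bilinear composition rule on $\bC P$ from Definition \ref{def:ten_power_h_homs}. The idempotent $x_k$ is built via Möbius inversion over the partition lattice of $[k]$, so that under $\mathcal{S}_n$ it becomes the natural projection $\bC \operatorname{Fun}([k],[n]) \twoheadrightarrow \bC \operatorname{Inj}([k],[n])$; this is the intuition that will dictate every step.

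First, since $\Delta_k = \operatorname{Im}(x_k)$, one has $\Hom(\Delta_r, \Delta_s) = x_s \circ \bC P_{r,s} \circ x_r$. For each $\pi \in \bar P_{r,s}$ this produces a canonical element $[\pi]$ of this Hom-space. To see that $\{[\pi]\}_{\pi \in \bar P_{r,s}}$ is actually a basis, I would do a dimension count: applying $\mathcal{S}_n$ for $n \gg r+s$ and using Lemma \ref{lem:funct_F_n_Delta_k}, Mackey's formula gives $\dim \Hom_{S_n}(\bC \operatorname{Inj}([r],[n]), \bC \operatorname{Inj}([s],[n])) = |\bar P_{r,s}|$ (the $S_n$-orbits on the product of injection sets being indexed by partial matchings). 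Combining with the polynomiality of Hom-dimensions in $\nu$ (from Proposition \ref{prop:lift_properties}) gives $\dim \Hom_{\underline{Rep}(S_\nu)}(\Delta_r,\Delta_s) = |\bar P_{r,s}|$ for every $\nu$, so $\bC \bar P_{r,s}$ is indeed the correct Hom-space.

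For the composition formula, view $\pi \in \bar P_{r,s}$ and $\rho \in \bar P_{s,t}$ first as morphisms of tensor powers of $\fh$; by Definition \ref{def:ten_power_h_homs} we have $\rho \circ \pi = \nu^{n(\rho,\pi)}(\rho\star\pi) \in \bC P_{r,t}$. A direct inspection of the gluing $D_{\pi,\rho}$, using that parts of elements of $\bar P$ contain at most one top and one bottom vertex, shows that $\rho \star \pi$ already lies in $\bar P_{r,t}$. The remaining task is to re-expand this element in the basis $\{[\tau]\}_{\tau \in \bar P_{r,t}}$ by applying the idempotents $x_r, x_t$; the Möbius inversion implicit in these idempotents, combined with the Stirling-type identity rewriting $\nu^{n(\rho,\pi)}$ as a signed combination of falling factorials $(\nu - l(\tau))(\nu - l(\tau) - 1)\cdots(\nu - l(\tau) - n(\rho,\pi) + 1)$ indexed by coarsenings $\tau$ of $\rho\star\pi$, produces exactly the claimed polynomials $p_{\rho,\pi,\tau}(\nu)$.

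The main obstacle is the combinatorial bookkeeping in that last step: matching up the Möbius coefficients and the falling factorials exactly. A cleaner alternative, which I would probably use, is to verify the formula first in the classical category $Rep(S_n)$ for every sufficiently large integer $n$. There $\mathcal{S}_n(\Delta_k) = \bC \operatorname{Inj}([k],[n])$, the basis element $[\pi]$ acts by summing over injective extensions compatible with the matching encoded by $\pi$, and an elementary $S_n$-equivariant calculation of $[\rho] \circ [\pi]$ (grouping injective extensions of an injection $f:[r]\hookrightarrow[n]$ according to which induced partition they realize) recovers the identity with $\nu = n$. Since both sides of the claimed formula are polynomials in $\nu$ that agree at infinitely many integers, they agree as polynomial identities, and hence as morphisms in $\underline{Rep}(S_\nu)$ for every $\nu \in \bC$.
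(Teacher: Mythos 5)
Your overall architecture is reasonable and is, in outline, what the paper itself gestures at (the paper gives no proof, merely citing Deligne's Definition 3.12 and saying the statement ``can easily be derived'' from the Comes--Ostrik idempotent and Definition~\ref{def:ten_power_h_homs}): identify $\Hom(\Delta_r,\Delta_s)$ inside $x_s\bC P_{r,s}x_r$, count dimensions via $\mathcal{S}_n$ for $n\gg 0$ and the $\mathrm{lift}_\nu$ machinery, note $\rho\star\pi\in\bar P_{r,t}$, and then pin down the composition rule by verifying it at infinitely many integers $n$ and appealing to polynomiality in $\nu$. All of these ingredients are sound, and your observation that every part of an element of $\bar P$ meets each row in at most one vertex (so $\rho\star\pi\in\bar P_{r,t}$) is correct.

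There are, however, two genuine gaps that keep this from being a proof. First, you never fix the identification $\bar P_{r,s}\cong\Hom(\Delta_r,\Delta_s)$ precisely; you propose $[\pi]:=x_s\circ\pi\circ x_r$, but it is not automatic that this is the basis Deligne uses, and the composition formula is extremely sensitive to the choice. The dimension count shows the Hom-space has the right size, not which morphism each diagram names. Second --- and this is the more serious issue --- the ``elementary $S_n$-equivariant calculation'' is asserted but not carried out, and it is precisely there that the delicate content of the lemma lives. Concretely: fix $f\colon[r]\hookrightarrow[n]$ and $g''\colon[t]\hookrightarrow[n]$ realizing a matching $\tau\supset\rho\star\pi$ and count the injections $g'\colon[s]\hookrightarrow[n]$ simultaneously compatible with $(f,\pi)$ and $(g'',\rho)$. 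Forced images of the middle vertices can collide with forbidden values whenever $\tau$ merges a singleton of $\rho\star\pi$ that is \emph{not} an isolated vertex of the glued diagram $D_{\pi,\rho}$ (try $r=s=t=1$, $\pi=\{\{1\},\{1'\}\}$, $\rho=\{\{1',1''\}\}$: here $n(\rho,\pi)=0$, the unique extra coarsening $\{\{1,1''\}\}$ produces a collision, yet the displayed falling factorial does not vanish). Reconciling this with the stated sum over \emph{all} coarsenings of $\rho\star\pi$ is exactly what the explicit count has to do; skipping it means the argument never engages with the combinatorial content of the lemma. Until the count in $\Rep(S_n)$ is actually performed (and the basis fixed so the two sides are comparing the same objects), the proof is incomplete.
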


\begin{example}
Let $\pi \in \bar{P}_{5,5}, \rho \in \bar{P}_{5, 4}, \pi := \{\{1, 1'\}, \{2, 3'\}, \{ 2', 4\}, \{ 3 \}, \{4' \}, \{5\}, \{5'\}\}$,

$\rho:=\{ \{1', 3''\}, \{1'', 2'\}, \{2''\}, \{3', 4''\},  \{4'\}, \{5'\}\}$. The diagrams representing $\pi, \rho$ can be drawn as:
$$\pi= \xymatrix{ 1 \ar@{-}[d] & 2  \ar@{-}[dr] & 3  & 4  \ar@{-}[dll] & 5 \\ 1' & 2' & 3' & 4' & 5'}$$
$$\rho = \xymatrix{ 1' \ar@{-}[drr] & 2'  \ar@{-}[dl] & 3' \ar@{-}[dr] & 4' & 5' \\ 1'' & 2'' & 3'' & 4''}$$
Gluing $\pi$ and $\rho$ together, we get:
$$ D_{\pi, \rho} = \xymatrix{  1 \ar@{-}[d] & 2  \ar@{-}[dr] & 3  & 4  \ar@{-}[dll] & 5  \\  1' \ar@{-}[drr] & 2'  \ar@{-}[dl] & 3' \ar@{-}[dr] & 4' & 5' \\ 1'' & 2'' & 3'' & 4''}$$
Then $$\rho \star \pi = \xymatrix{  1 \ar@{-}[drr] & 2  \ar@{-}[drr] & 3  & 4  \ar@{-}[dlll] & 5 \\ 1'' & 2'' & 3'' & 4''}$$
i.e. $\rho \star \pi = \{\{1, 3''\}, \{1'', 4\}, \{2''\}, \{ 2, 4''\}, \{3\}, \{5\} \}$ and $n(\rho, \pi) = 2$.

Next, we are looking for $\tau \in \bar{P}_{5,4}$ such that the diagram of $\tau$ contains $\rho \star \pi$ as a subgraph. There are three such partitions $\tau$:
\begin{itemize}
 \item $ \tau_1 = \rho \star \pi$, in which case $\mathit{p}_{\rho, \pi, \T_1}(x) = (x- 6)(x-  7)$.
 \item $$\tau_2 = \xymatrix{  1 \ar@{-}[drr] & 2  \ar@{-}[drr] & 3 \ar@{-}[dl]  & 4  \ar@{-}[dlll] & 5 \\ 1'' & 2'' & 3'' & 4''}$$ in which case $\mathit{p}_{\rho, \pi, \T_2}(x) = (x- 5)(x-6)$.
 \item $$\tau_3 = \xymatrix{  1 \ar@{-}[drr] & 2  \ar@{-}[drr] & 3  & 4  \ar@{-}[dlll] & 5 \ar@{-}[dlll]  \\ 1'' & 2'' & 3'' & 4''}$$ in which case $\mathit{p}_{\rho, \pi, \T_3}(x) =  (x- 5)(x-6)$.
\end{itemize}

Thus $\rho \circ \pi =  (\nu- 6)(\nu-7) \tau_1 + (\nu-5)(\nu-6) \tau_2 + (\nu-5)(\nu-6) \tau_3$.
\end{example}

The following morphisms between the objects $\Delta_k$ will be used frequently.

Let $r\geq 0$, $k \geq 1$, $ 1 \leq l \leq k$.
\begin{definition}\label{def:res_morphisms}
Denote by $res_l$ the morphism $ \Del_{k+1} \rightarrow \Del_{k}$ given by the diagram
$$  \xymatrix{ 1 \ar@{-}[d] & 2 \ar@{-}[d] & 3 \ar@{-}[d] & ... & l-1 \ar@{-}[d] & l  & l+1 \ar@{-}[dl] & l+2 \ar@{-}[dl] & ... & k+1 \ar@{-}[dl] \\ 1 & 2 & 3 & ... & l-1 & l & l+1 & ... & k }$$

  By abuse of notation, we will also denote by $res_l$ the maps $\bar{P}_{r,k+1} \rightarrow \bar{P}_{r,k}$, $ \bC \bar{P}_{r,k+1} \rightarrow \bC \bar{P}_{r,k}$ given by $\pi \mapsto res_l \circ \pi$.

  Notice that given $\pi \in \bar{P}_{r,k+1}$, a diagram describing the partition $res_l \circ \pi \in \bar{P}_{r,k}$ can be obtained by removing a vertex (labeled $l'$) from position $l$ of the bottom row of the diagram of $\pi$, and shifting the labels of the vertices lying to the right. If the vertex removed was connected to another vertex by an edge, then the edge is removed as well, but the second vertex stays.
\end{definition}
\begin{definition}\label{def:res_star_morphisms}

Denote by $res_l^*$ the morphism $ \Del_k \rightarrow \Del_{k+1}$ given by the diagram
  $$  \xymatrix{ 1 \ar@{-}[d] & 2 \ar@{-}[d] & 3 \ar@{-}[d] & ... & l-1 \ar@{-}[d] & l \ar@{-}[dr] & l+1 \ar@{-}[dr] & ... & k \ar@{-}[dr] \\ 1 & 2 & 3 & ... & l-1 & l & l+1 & l+2& ... & k+1}$$

  By abuse of notation, we will also denote by $res_l^*$ the maps $\bar{P}_{r,k} \rightarrow \bar{P}_{r,k+1}$, $ \bC \bar{P}_{r,k} \rightarrow \bC \bar{P}_{r,k+1}$ given by $\pi \mapsto res_l^* \circ \pi$.

  Notice that given $\pi \in \bar{P}_{r,k}$, a diagram describing the partition $res_l^* \circ \pi \in \bar{P}_{r,k+1}$ can be obtained by inserting a solitary vertex (labeled $l'$) in position $l$ of the bottom row of the diagram of $\pi$, and shifting the labels of the vertices lying to the right.

\end{definition}

\begin{remark}\label{rmrk:res_l_image_under_S_n_functor}
Let $n \in \bZ_+$. Fix $k, l$ such that $1 \leq k \leq n-1,  1 \leq l \leq k$. Denote $$\mathtt{res}_l := \mathcal{S}_n(res_l): \, \bC Inj(\{1,...,k+1\} , \{1,...,n\}) \rightarrow \bC Inj(\{1,...,k\} , \{1,...,n\})$$
$$\mathtt{res}_l^* := \mathcal{S}_n(res_l^*): \, \bC Inj(\{1,...,k\} , \{1,...,n\}) \rightarrow \bC Inj(\{1,...,k+1\} , \{1,...,n\})$$

Denote by $\iota_l$ the injection 
$$ \{1,...,k\}\hookrightarrow \{1,...,k+1\}, \, i \mapsto \begin{cases} i &\text{ if } i < l \\
i+1 &\text{ if } i \geq l                                                                                                  \end{cases}$$ 

Then given $g: \{1,...,k+1\} \hookrightarrow \{1,...,n\}$, we have $\mathtt{res}_l(g) = g \circ \iota_l$, and given $f: \{1,...,k+1\} \hookrightarrow \{1,...,n\}$,  we have $$\mathtt{res}_l^*(f) = \sum_{\substack{\InnaA{g} \in Inj(\{1,...,k+1\} , \{1,...,n\}):\\ g \circ \iota_l =f}} g$$
\end{remark}

\begin{example}
 Let $\pi \in \bar{P}_{5,5}, \pi := \{\{1, 1'\}, \{2' \}, \{2, 3'\},  \{ 3 \}, \{ 4, 4'\}, \{5\}, \{5'\}\}$, i.e.  $$\pi= \xymatrix{ 1 \ar@{-}[d] & 2  \ar@{-}[dr] & 3  & 4  \ar@{-}[d] & 5 \\ 1' & 2' & 3' & 4' & 5'}$$

 Then $$res^*_3(\pi) = \xymatrix{ 1 \ar@{-}[d] & 2  \ar@{-}[drr] & 3  & 4  \ar@{-}[dr] & 5 \\ 1' & 2' & 3' & 4' & 5' & 6'}$$ and $$res_3(\pi) = \xymatrix{ 1 \ar@{-}[d] & 2 & 3  & 4  \ar@{-}[dl] & 5 \\ 1' & 2' & 3' & 4' }$$
\end{example}

\begin{remark}
One can define an endomorphism $\bar{x}_k \in \End_{\underline{Rep}(S_{T})}(\fh^{\otimes k})$ similar to the idempotent $x_k$, so that $\Im(\bar{x}_k) \cong lift_{\nu}(\Delta_k)$ for any $\nu$ (this is a direct consequence of the definition of $lift_{\nu}$).

By abuse of notation, we will denote $\Im(\bar{x}_k)$ by $\Delta_k$ as well, and use the isomorphism $\Delta_k \cong lift_{\nu}(\Delta_k)$ in Lemma \ref{lem:hom_X_tau_Delta_k}.
\end{remark}

\subsection{Abelian envelope}\label{ssec:S_nu_abelian_env}
This section follows \cite{CO2}, \cite[Proposition 8.19]{D}.

As it was mentioned before, the category $\underline{Rep}(S_{\nu})$ is defined as a Karoubian category. For $\nu \notin \bZ_+$, it is semisimple and thus abelian, but for $\nu \in \bZ_+$, it is not abelian. Fortunately, it has been shown that $\underline{Rep}(S_{\nu})$ possesses an ``abelian envelope'', that is, that it can be embedded in an abelian tensor category, and this abelian tensor category has a universal mapping property.

The following result was conjectured by Deligne in \cite[8.21.2]{D}, and proved by Comes and Ostrik in \cite[Theorem 1.2]{CO2}:

\begin{theorem}
 Let $n \in \bZ_+$. There exists an abelian $\bC$-linear rigid symmetric monoidal category $\underline{Rep}^{ab}(S_{\nu=n})$, and an embedding (fully faithful tensor functor) 
 
 $\iota: \underline{Rep}(S_{\nu=n}) \rightarrow \underline{Rep}^{ab}(S_{\nu=n})$ which makes the pair $(\underline{Rep}^{ab}(S_{\nu=n}), \iota )$ the ``abelian envelope'' of $\underline{Rep}(S_{\nu=n})$ in the following sense:

 Let $\mathcal{T}$ be an abelian $\bC$-linear rigid symmetric monoidal category such that all $\Hom$-spaces are finite-dimensional and all objects have finite length; in addition, let there be a tensor functor of Karoubian categories $\mathcal{G}:\underline{Rep}(S_{\nu=n}) \rightarrow \mathcal{T}$. Then the functor $\mathcal{G}$ factors through one of the following:

 \begin{enumerate}
  \item The functor $\mathcal{S}_n: \underline{Rep}(S_{\nu=n}) \rightarrow Rep(S_n)$ (this happens iff $\mathcal{G}(\Delta_{n+1}) = 0$).
  \item The functor $\iota: \underline{Rep}(S_{\nu=n}) \rightarrow \underline{Rep}^{ab}(S_{\nu=n})$ (this happens iff $\mathcal{G}(\Delta_{n+1}) \neq 0$).
 \end{enumerate}

\end{theorem}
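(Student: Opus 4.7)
The plan is to construct $\underline{Rep}^{ab}(S_{\nu=n})$ block by block using the explicit quiver description of Theorem~\ref{thrm:blocks_S_nu}, and then to check both that the abelian structure carries a compatible rigid symmetric monoidal tensor product and that the universal property holds. The starting observation is that $\underline{Rep}(S_{\nu=n})$ fails to be abelian only within the non-semisimple blocks; every semisimple block is already equivalent to $Vect_{\bC}$ and should be preserved unchanged. So the task reduces to ``fattening'' each non-semisimple block into an abelian category in the unique way dictated by the quiver with relations.

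Concretely, for each non-trivial $\stackrel{n}{\sim}$-class $\{\lambda^{(i)}\}_{i\ge 0}$ with quiver and relations as in Theorem~\ref{thrm:blocks_S_nu}, I would define the corresponding block of $\underline{Rep}^{ab}(S_{\nu=n})$ to be the category of finite-dimensional (equivalently, finite-length) modules over the path algebra of that quiver modulo the stated relations. The indecomposable objects $X_{\lambda^{(i)}}$ get reinterpreted as projective-injective (tilting) modules, and on top of them appear genuine simple objects $L_{\lambda^{(i)}}$ together with the standard/costandard objects demanded by a highest weight structure indexed by $\{\lambda^{(i)}\}_{i\ge 0}$. I would then define $\underline{Rep}^{ab}(S_{\nu=n})$ as the direct sum of all the (unchanged) semisimple blocks with the newly constructed non-semisimple blocks, and define $\iota$ to be the obvious inclusion that sends $X_{\lambda}$ to itself; full faithfulness of $\iota$ is immediate since the morphism spaces between the $X_{\lambda^{(i)}}$'s were already determined by the quiver.

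The next step is to extend the rigid symmetric monoidal structure. The strategy here is to work with $lift_{\nu}$ and the generic category $\underline{Rep}(S_T)$: tensor products of indecomposables in $\underline{Rep}(S_T)$ decompose as direct sums of indecomposables, and after specialization at $T=n$ the decomposition descends to a tensor product in the envelope by reading off which simples appear in a (co)standard filtration of the corresponding tilting modules. In other words, the structure constants for the tensor product in $\underline{Rep}^{ab}(S_{\nu=n})$ are forced by interpolation from the semisimple situation via Proposition~\ref{prop:lift_properties} and Lemma~\ref{lem:nu_classes_lift}. One must then check associator/unit/braiding/duality compatibilities block by block; since each non-semisimple block is tiny (modules over an explicit path algebra) this is a finite bookkeeping exercise modulo the coherence theorem for symmetric monoidal categories.

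Finally, for the universal property, let $\mathcal{G}:\underline{Rep}(S_{\nu=n})\to\mathcal{T}$ be a tensor functor into an abelian rigid symmetric monoidal $\mathcal{T}$ with finite-dimensional Homs and finite length. If $\mathcal{G}(\Delta_{n+1})=0$, then (by the tensor-ideal structure of $\idealI_n$, which is generated as a tensor ideal by $\id_{\Delta_{n+1}}$) $\mathcal{G}$ kills all negligible morphisms and therefore factors through $\mathcal{S}_n$. Conversely, if $\mathcal{G}(\Delta_{n+1})\ne 0$, the goal is to extend $\mathcal{G}$ through $\iota$ by sending the newly introduced simple $L_{\lambda^{(i)}}$ to the (unique up to unique isomorphism) simple subquotient of $\mathcal{G}(X_{\lambda^{(i)}})$ that is not already in the image of $\mathcal{G}(X_{\lambda^{(j)}})$ for $j<i$; the key input is that $\mathcal{G}(\Delta_{n+1})\ne 0$ forces $\mathcal{G}$ to be faithful on every non-semisimple block (otherwise some $\alpha_i\circ\beta_i$ would become zero and propagate to kill $\Delta_{n+1}$), so none of the indecomposables in a non-semisimple block collapse and the quiver/relations transport intact into $\mathcal{T}$. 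The main obstacle, and the step where the most care is required, is precisely this last extension: one has to verify that the constructed extension is functorial, monoidal, and unique, which amounts to analyzing how the mirror-image of the quiver structure embeds into $\mathcal{T}$ and using rigidity of $\mathcal{T}$ to recover all the relations automatically.
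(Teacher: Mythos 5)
This theorem is not proved in the paper at all; it is cited as \cite[Theorem 1.2]{CO2} (Comes--Ostrik), after being conjectured by Deligne. So there is no in-paper proof to compare against. Your proposal is therefore attempting to reconstruct a substantial theorem from scratch, and as written it has several genuine gaps that prevent it from being a proof.

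The most serious problem is the tensor structure. You build $\underline{Rep}^{ab}(S_{\nu=n})$ block by block as module categories over explicit quivers, which is fine as a construction of an \emph{abelian} category, but the tensor product does not act block by block: $X_{\lambda}\otimes X_{\mu}$ already scatters across many blocks, and the newly introduced non-tilting objects (simples, standards, costandards) have no tensor product a priori. Your proposal to ``read off which simples appear in a (co)standard filtration'' after interpolating from $\underline{Rep}(S_T)$ does not produce a bifunctor: it at best suggests Grothendieck-group level structure constants, and does not define $\otimes$ on morphisms, let alone give the associator/unitor/braiding or exactness of $\otimes$ in each variable (which is forced by rigidity and must hold). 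This is not a coherence bookkeeping exercise; it is the core difficulty, and Comes--Ostrik handle it by constructing the envelope globally (not block by block) and only afterwards identifying its blocks with those of $\mathcal{C}_q$ --- exactly the reverse of your order of operations.

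The universal property argument also assumes what needs proof. Sending $\mathbf{L}(\lambda^{(i)})$ to ``the simple subquotient of $\mathcal{G}(X_{\lambda^{(i)}})$ not already seen'' does not define a functor: you must produce it on all objects and all morphisms, verify it is additive, exact, monoidal, and that it agrees with $\mathcal{G}\circ\iota^{-1}$ on the image of $\iota$ --- and uniqueness needs a separate argument. The claim that $\mathcal{G}(\Delta_{n+1})\ne 0$ ``forces $\mathcal{G}$ to be faithful on every non-semisimple block (otherwise some $\alpha_i\circ\beta_i$ would become zero and propagate to kill $\Delta_{n+1}$)'' is precisely the nontrivial dichotomy that drives the whole theorem; it is asserted, not proved. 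In short, your sketch is a reasonable high-level outline of \emph{what must be true}, but it does not close the gaps that make this a theorem of Comes--Ostrik rather than an exercise, and the block-local strategy for the tensor structure is a route that does not obviously converge.
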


For $\nu \notin \bZ_+$, we will put $(\underline{Rep}^{ab}(S_{\nu}), \iota ) := (\underline{Rep}(S_{\nu}), \id_{\underline{Rep}(S_{\nu})} )$.

An explicit construction of the category $\underline{Rep}^{ab}(S_{\nu=n})$ is given in \cite{CO2}. We will only list the results which will be used in this paper.

\begin{remark}\label{rmrk:ab_env_preTannakian}
 The category $\underline{Rep}^{ab}(S_{\nu})$ is a pre-Tannakian category (c.f. \cite[Section 2.1, Corollary 4.7]{CO2}). This means, in particular, that the objects in $\underline{Rep}^{ab}(S_{\nu})$ have finite length, and the $\Hom$-spaces are finite-dimensional.
\end{remark}

We start by introducing the category $\mathcal{C}_q$ of finite-dimensional representations of the quantum $SL(2)$.
The category $\mathcal{C}_q$ is a $\bC$-linear abelian category, and has the structure of a highest weight category \InnaA{(with infinitely many weights). When $q$ is a root of unity, this category can be non-semisimple, and this is the case which will be of interest to us.}

This category has a structure very similar to the structure of the category $\underline{Rep}^{ab}(S_{\nu})$; moreover, the $\bC$-linear Karoubian category $TL(q)$ of tilting modules in $\mathcal{C}_q$ (also known to be equivalent to the Temperley-Lieb category) has a structure very similar to that of $\underline{Rep}(S_{\nu})$. See \cite[Par. 1.4, 2.3, 4.3.3]{CO2} for more details.

We will use the description of the structure of $\mathcal{C}_q$ given in \cite{CO2}, \cite{A}, \cite{APW}. The main facts about $\mathcal{C}_q$ which will be used are concentrated in the following lemma.

\begin{lemma}\label{lem:TL_cat_prop}
\mbox{}
 \begin{enumerate}[leftmargin=*]
  \item All the projective modules in $\mathcal{C}_q$ are injective and conversely. \InnaA{Thus} all the projective modules of $\mathcal{C}_q$ are tilting modules, i.e. lie in the category $TL(q)$.

 \item For $q \neq \pm 1$ being a root of unity of even order, $TL(q)$ has at least one non-semisimple block (all of the non-semisimple blocks of $TL(q)$ are equivalent as Karoubian categories); the isomorphism classes of indecomposable objects in this block can be labeled $\mathrm{Q}_0, \mathrm{Q}_1, ...$. Each $\mathrm{Q}_i$ has a unique highest weight.

 Denote by $\mathrm{L}_i$ the simple module in $\mathcal{C}_q$ having the same highest weight as $\mathrm{Q}_i$, and by $\mathrm{M}_i, \mathrm{M}^{\vee}_i, \mathrm{P}_i$ the corresponding standard, co-standard and indecomposable projective modules in $\mathcal{C}_q$. With these notations, we have:

 \begin{itemize}
 \item For any $i \geq 0$, there exists an injective map $\mathrm{M}_i \rightarrow \mathrm{Q}_i$. Moreover, $[\mathrm{Q}_i :\mathrm{L}_i]=1$.
  \item The module $\mathrm{Q}_0$ is standard, co-standard and simple.
 \item The module $\mathrm{Q}_i$ is a projective module iff $i \geq 1$. Furthermore, $\{\mathrm{Q}_i\}_{i\geq 1}$ is the complete set of isomorphism classes of indecomposable projective modules in the corresponding block of $\mathcal{C}_q$.
 \end{itemize}

 \end{enumerate}
\end{lemma}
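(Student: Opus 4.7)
The plan is to deduce both parts from the well-developed structure theory of the category $\mathcal{C}_q$ of finite dimensional $U_q(\mathfrak{sl}_2)$-modules at roots of unity, as worked out in \cite{A}, \cite{APW}, rather than reconstruct the theory from scratch.

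For part (1), the essential input is that $\mathcal{C}_q$ is a Frobenius category, i.e.\ the classes of indecomposable projectives and indecomposable injectives coincide. This can be obtained either from the observation that the small quantum group $u_q(\mathfrak{sl}_2)$ is a finite dimensional Hopf algebra (hence Frobenius), or from Andersen's explicit construction in \cite{A} of projective covers via translation functors applied to Steinberg modules. Once projective equals injective is established, any projective module $P$ automatically has a $\Delta$-filtration (a standard fact in highest weight categories) and, being injective, also a $\nabla$-filtration; this is precisely the definition of a tilting module, so $P \in TL(q)$.

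For part (2), the approach is to invoke the linkage principle for $U_q(\mathfrak{sl}_2)$ at a primitive $2\ell$-th root of unity with $\ell \geq 2$. The dot-action of the affine Weyl group partitions the dominant integral weights into orbits; the non-singular orbits form infinite chains $\lambda_0 < \lambda_1 < \lambda_2 < \ldots$ with $\lambda_0$ lying on a wall of the fundamental alcove, and these chains label the non-semisimple blocks, all of which are equivalent as Karoubian categories via translation functors. Setting $\mathrm{Q}_i := T(\lambda_i)$, the injection $\mathrm{M}_i \hookrightarrow \mathrm{Q}_i$ and the multiplicity $[\mathrm{Q}_i : \mathrm{L}_i] = 1$ are general properties of tilting modules, since the $\Delta$-filtration of a tilting module begins with the standard module of its highest weight; the identity $\mathrm{Q}_0 = \mathrm{L}_0 = \mathrm{M}_0 = \mathrm{M}_0^{\vee}$ follows from $\lambda_0$ being singular, so that $\mathrm{M}_0$ is already simple.

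The most substantive step, and the main potential obstacle, is the identification $\mathrm{Q}_i \cong \mathrm{P}_{i-1}$ for $i \geq 1$, a form of Ringel self-duality for a block of affine type $\widetilde{A}_1$. One would verify this by matching Loewy structures: the $\Delta$-filtration $0 \subset \mathrm{M}_{i-1} \subset T(\lambda_i)$ with quotient $\mathrm{M}_i$ forces the Loewy series of $T(\lambda_i)$ to be $\mathrm{L}_{i-1}, \mathrm{L}_i, \mathrm{L}_{i-1}$ from top to bottom, which coincides with the known Loewy series of $\mathrm{P}_{i-1}$ in such a block; indecomposability together with a common simple top $\mathrm{L}_{i-1}$ then forces the two modules to be isomorphic. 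Combined with the enumeration of the dominant weights appearing in the block, this exhibits $\{\mathrm{Q}_i\}_{i \geq 1}$ as a complete set of representatives of indecomposable projectives. All these verifications ultimately reduce to explicit character and multiplicity computations available in \cite{A}, \cite{APW}, and \cite[Section 4]{CO2}.
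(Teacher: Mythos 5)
Your approach is broadly sound and in part overlaps with the paper's, but it takes a genuinely different route at the most substantive point, and it has a couple of small but real technical slips worth flagging.

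For part (1), your proposed deduction from the Frobenius property of $u_q(\mathfrak{sl}_2)$ needs care: $\mathcal{C}_q$ is the category of finite-dimensional modules over the \emph{full} quantum $SL(2)$, not the small quantum group, and the Frobenius property of $u_q$ does not immediately propagate to the big category (it gives you ``proj = inj'' for $u_q$-modules, not for $U_q$-modules with rational $K$-action). The paper avoids this by simply citing \cite[Theorem 9.12]{APW} and \cite[5.7]{A}, which is exactly your second proposed route. Your downstream argument that a projective, being injective, has both a $\Delta$- and a $\nabla$-filtration, hence is tilting, is fine and is the same standard reasoning the paper uses.

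For part (2), the crucial last bullet (that $\mathrm{Q}_i$ is projective iff $i \geq 1$, and that these exhaust the indecomposable projectives in the block) is where your path diverges most sharply from the paper's. The paper argues via quantum dimension: every projective is a direct summand of $St_q \otimes E$ (\cite[Lemma 9.10, Theorem 9.12]{APW}); a tilting module is such a summand iff it has quantum dimension zero; and $\mathrm{Q}_i$ has quantum dimension zero iff $i > 0$ (\cite[Lemma 2.11]{CO2}). That argument is soft, numerical, and never computes a Loewy series. You instead propose an explicit identification $\mathrm{Q}_i \cong \mathrm{P}_{i-1}$ via Loewy structure --- which proves \emph{more} than the lemma asserts (the paper deliberately postpones that identification to Proposition \ref{prop:obj_ab_env}). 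Your computation, however, has two errors: the $\Delta$-filtration of $T(\lambda_i)$ runs $0 \subset \mathrm{M}_i \subset T(\lambda_i)$ with quotient $\mathrm{M}_{i-1}$ (the standard module of the \emph{highest} weight embeds, not the smaller one), and for $i \geq 2$ the Loewy series of $T(\lambda_i)$ is $\mathrm{L}_{i-1};\ \mathrm{L}_i \oplus \mathrm{L}_{i-2};\ \mathrm{L}_{i-1}$ (four composition factors, not three) --- your three-factor series $\mathrm{L}_{i-1}, \mathrm{L}_i, \mathrm{L}_{i-1}$ is correct only for $i=1$. Once corrected, the Loewy structure does match that of $\mathrm{P}_{i-1}$, and the indecomposability-plus-common-head argument closes the gap, so the strategy survives. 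But it is noticeably heavier than the quantum dimension argument the paper actually uses, and it requires care that the matching of composition factors plus a common simple top genuinely forces an isomorphism, which you should justify (e.g.\ by noting both are indecomposable with the same Jordan--Hölder content, simple socle $\mathrm{L}_{i-1}$, and simple head $\mathrm{L}_{i-1}$, and that $\mathrm{P}_{i-1}$ maps onto any module with head $\mathrm{L}_{i-1}$).
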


\begin{proof}
 \begin{enumerate}[leftmargin=*]
  \item This follows from \cite[Theorem 9.12]{APW}, \cite[5.7]{A}.
  \item For general information on non-semisimple blocks of $TL(q)$ and the indecomposable tilting modules, c.f. \cite[Lemma 2.11, Section 4.3.3]{CO2}, \cite[Theorem 2.5, Corollary 2.6]{A}.
  \begin{itemize}[leftmargin=*]
   \item $\mathrm{Q}_i$ is a standardly filtered having the same highest weight as $\mathrm{L}_i$ (by definition), therefore there exists an injective map $\mathrm{M}_i \rightarrow \mathrm{Q}_i$ (c.f. \cite[Proposition 3.7]{H}). Also, from \cite[Theorem 2.5]{A} we know that the highest weight of $\mathrm{Q}_i$ occurs with multiplicity $1$, therefore $[\mathrm{Q}_i :\mathrm{L}_i]=1$.
   \item C.f. \cite[Section 4]{A}.
   \item Let $St_q$ be the Steinberg module (c.f. \cite[Section 5]{A}, \cite[Section 4.3.3]{CO2}).

   By \cite[Lemma 9.10]{APW}, for any finite dimensional module $E \in \mathcal{C}_q$, the module $St_q \otimes E$ is projective. Furthermore, \cite[Theorem 9.12]{APW} implies that any projective module in $\mathcal{C}_q$ is a direct summand of $St_q \otimes E$ for some $E$.

   Next, it is known that a module $M \in TL(q)$ has quantum dimension zero iff it is a direct summand of $St_q \otimes E$ for some $E \in TL(q)$ (c.f. \cite[Section 4.3.3]{CO2}). Thus a module $M \in TL(q)$ has quantum dimension zero iff it is projective.

   Finally, \cite[Lemma 2.11]{CO2} tells us that $\mathrm{Q}_i$ has quantum dimension zero iff $i>0$. Thus $\mathrm{Q}_i$ is a projective module iff $i>0$. From Part (1) we deduce that any indecomposable projective module in the corresponding block of $\mathcal{C}_q$ is isomorphic to $\mathrm{Q}_i$ for some $i>0$.

  \end{itemize}
   \end{enumerate}

\end{proof}

We can now describe the blocks of the category $\underline{Rep}^{ab}(S_{\nu})$ (c.f. \cite[Proposition 2.9, Section 4]{CO2}).

\begin{theorem}\label{thrm:blocks_ab_envelope}
 The blocks of the category $\underline{Rep}^{ab}(S_{\nu})$, just like the blocks of $\underline{Rep}(S_{\nu})$, are parametrized by $\stackrel{\nu}{\sim}$-equivalence classes. For each $\stackrel{\nu}{\sim}$-equivalence class $C$, the block $\underline{Rep}^{ab}(S_{\nu})_C$ corresponding to $C$ contains $\iota(\underline{Rep}(S_{\nu})_C)$ (the block of $\underline{Rep}(S_{\nu})$ corresponding to $C$, namely, the indecomposable objects $X_{\lambda}$ such that $\lambda \in C$).

 \begin{itemize}[leftmargin=*]
  \item For a trivial $\stackrel{\nu}{\sim}$-class $C=\{ \lambda \}$, the block $\underline{Rep}^{ab}(S_{\nu})_C$ is equivalent to the category $Vect_{\bC}$ of finite dimensional complex vector spaces (and is also equivalent to $\underline{Rep}(S_{\nu})_C$).

  \item For a non-trivial $\stackrel{\nu}{\sim}$-class $C = \{ \lambda^{(i)} \}_{i \geq 0}$, the block $\underline{Rep}^{ab}(S_{\nu})_C$ is equivalent, as an abelian category, to (any) non-semisimple block of the category $\mathcal{C}_q$ (such a block exists if $q \neq \pm 1$ is a root of unity of even order). The indecomposable object $X_{\lambda^{(i)}}$ corresponds to the indecomposable tilting module $\mathrm{Q}_i$ (using the notation of Lemma \ref{lem:TL_cat_prop}).
 \end{itemize}

\end{theorem}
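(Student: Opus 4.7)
The plan is to first reduce to the case $\nu = n \in \bZ_+$ (for $\nu \notin \bZ_+$ there is nothing to prove, since $\underline{Rep}^{ab}(S_\nu) = \underline{Rep}(S_\nu)$ and every $\stackrel{\nu}{\sim}$-class is trivial, so the claim follows directly from Theorem \ref{thrm:blocks_S_nu}). Then I would establish the block correspondence: since $\iota$ is fully faithful, every non-zero morphism between indecomposables of $\underline{Rep}(S_\nu)$ persists in $\underline{Rep}^{ab}(S_\nu)$, so the $\stackrel{\nu}{\sim}$-classes label distinct blocks; conversely, because $\iota(\underline{Rep}(S_\nu))$ is a Karoubi-generating subcategory of $\underline{Rep}^{ab}(S_\nu)$ (a built-in feature of the abelian envelope), every simple object of $\underline{Rep}^{ab}(S_\nu)$ is a subquotient of some $\iota(X_\mu)$, so every block of $\underline{Rep}^{ab}(S_\nu)$ contains at least one $\iota(X_\mu)$. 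This reduces the theorem to analyzing each block $\underline{Rep}^{ab}(S_\nu)_C$ individually.

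For a trivial class $C = \{\lambda\}$, Theorem \ref{thrm:blocks_S_nu} gives $\End_{\underline{Rep}(S_\nu)}(X_\lambda) = \bC$. Since $\iota$ is fully faithful, $\iota(X_\lambda)$ is an indecomposable object with one-dimensional endomorphism ring in the abelian category $\underline{Rep}^{ab}(S_\nu)$; any non-zero endomorphism is a scalar, so it is either invertible or zero, forcing $\iota(X_\lambda)$ to be simple. By the block correspondence of paragraph one, this is the unique simple object of $\underline{Rep}^{ab}(S_\nu)_C$, whence $\underline{Rep}^{ab}(S_\nu)_C \simeq Vect_\bC$.

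For a non-trivial class $C = \{\lambda^{(i)}\}_{i \geq 0}$, I would exploit the explicit quiver presentation from Theorem \ref{thrm:blocks_S_nu}. In $\underline{Rep}^{ab}(S_\nu)$ every morphism has a kernel and a cokernel, so the relations $\alpha_0\beta_0 = 0$, $\alpha_{i+1}\alpha_i = 0 = \beta_i\beta_{i+1}$, $\beta_i\alpha_i = \alpha_{i+1}\beta_{i+1}$ force each $\iota(X_{\lambda^{(i)}})$ to carry a non-trivial filtration whose composition factors I would denote $L_i$. The endomorphism count $\dim \End(X_{\lambda^{(i)}}) = 2$ for $i \geq 1$ together with the indecomposability forces $\iota(X_{\lambda^{(i)}})$ to be uniserial of length four with socle/cosocle $L_{i-1}$ and head filtration matching the standard and costandard modules $\mathrm{M}_{i-1},\mathrm{M}_i,\mathrm{M}_i^\vee,\mathrm{M}_{i-1}^\vee$ of a non-semisimple block of $\mathcal{C}_q$ (with a minor modification at $i=0$, where $\iota(X_{\lambda^{(0)}}) = L_0$ is simple, matching $\mathrm{Q}_0 = \mathrm{M}_0 = \mathrm{L}_0$). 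At this stage the abelian structure of $\underline{Rep}^{ab}(S_\nu)_C$ is determined up to equivalence.

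The hardest step will be upgrading this matching of composition data to an actual equivalence of abelian categories. The cleanest way is to produce a projective (pro-)generator: I would take $P = \bigoplus_{i\geq 1} \iota(X_{\lambda^{(i)}})$ (working in the Ind-completion, or truncating and passing to the limit) and prove it is projective in its block by showing that $\iota$ sends the tilting objects of the Karoubian block to tilting (and thus in this block, projective-injective) objects of $\underline{Rep}^{ab}(S_\nu)_C$. Once this is shown, $\underline{Rep}^{ab}(S_\nu)_C$ is equivalent to finite-length modules over $\End(P)^{op}$. The same algebra (a zig-zag path algebra with exactly the relations listed in Theorem \ref{thrm:blocks_S_nu}) is the endomorphism algebra of the corresponding projective generator on the $\mathcal{C}_q$ side, yielding the desired equivalence with $X_{\lambda^{(i)}} \leftrightarrow \mathrm{Q}_i$. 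Verifying that the two endomorphism algebras really are isomorphic as $\bC$-algebras with the indicated quiver relations is the main technical obstacle, and would rely on the explicit diagrammatic calculus of Lemma \ref{lem:Delta_k_homs} on the $\underline{Rep}(S_\nu)$ side and the standard tilting calculus for $U_q(\mathfrak{sl}_2)$ on the $\mathcal{C}_q$ side.
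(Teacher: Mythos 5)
The paper does not actually prove Theorem~\ref{thrm:blocks_ab_envelope}: it is stated as a citation to Comes--Ostrik (``c.f.\ \cite[Proposition 2.9, Section 4]{CO2}''), and there the argument uses the explicit construction of $\underline{Rep}^{ab}(S_\nu)$ and a direct comparison of blocks with $\mathcal{C}_q$. Your attempt tries to reconstruct the theorem internally from the quiver presentation of $\underline{Rep}(S_\nu)$ (Theorem~\ref{thrm:blocks_S_nu}) together with the universal property of the abelian envelope. That approach has real gaps.

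First, in the trivial-class case, the step ``$\End(\iota(X_\lambda)) = \bC$ forces $\iota(X_\lambda)$ to be simple'' is false in general: an indecomposable object in an abelian $\bC$-linear category can have one-dimensional endomorphism ring without being simple (e.g.\ the nonsimple indecomposable in $\mathrm{Rep}(\bullet\to\bullet)$). What you actually need is some form of projectivity or injectivity of $\iota(X_\lambda)$, but that is precisely the content of Proposition~\ref{prop:proj_in_ab_envelope}, which in the paper is deduced \emph{from} Theorem~\ref{thrm:blocks_ab_envelope}, so you cannot invoke it without circularity. Second, your description of $\iota(X_{\lambda^{(i)}})$ as ``uniserial of length four'' is incorrect: by Proposition~\ref{prop:obj_ab_env} the socle filtration of $X_{\lambda^{(i)}}$ for $i\geq 2$ has a semisimple middle layer $\mathbf{L}(\lambda^{(i)})\oplus\mathbf{L}(\lambda^{(i-2)})$, so the object is not uniserial, and for $i=1$ the length is three, not four. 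Third, and most seriously, the ``hardest step'' as you state it (showing the $\iota(X_{\lambda^{(i)}})$ are projective, so that $\bigoplus_i \iota(X_{\lambda^{(i)}})$ is a progenerator) is not a sub-step you can defer to ``tilting calculus'': it is essentially equivalent to the theorem itself. The quiver relations in $\underline{Rep}(S_\nu)$ only describe $\Hom$-spaces between compact objects; they do not, without extra input about projectivity or about which objects of $\Dab$ occur as subquotients of objects of $\underline{Rep}(S_\nu)$, determine the abelian structure. Finally, the block-separation argument in your first paragraph (``$\stackrel{\nu}{\sim}$-classes label distinct blocks'') needs that every object of $\Dab$ is a subquotient of some $\iota(X_\mu)$; that fact comes from the explicit construction in \cite{CO2}, not from the universal mapping property, so it needs to be established rather than asserted as a ``built-in feature.'' In short, the reconstruction-from-the-quiver route would need the nontrivial input that $\iota$ preserves (and detects) projectivity --- which is exactly what the cited source proves directly from its explicit model of $\underline{Rep}^{ab}(S_\nu)$.
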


Using the theorem above, we can prove different properties of the category $\Dab$ in the following way:
\begin{enumerate}
 \item Reduce the proof to a block-by-block check;
 \item Prove the property for the semisimple blocks by checking that it holds for the category $Vect_{\bC}$.
 \item Prove the property for the non-semisimple blocks by importing the relevant result for the category $\mathcal{C}_q$.
\end{enumerate}

Using this approach, we prove that $\underline{Rep}^{ab}(S_{\nu})$ is a highest weight category \InnaA{(with infinitely many weights)}.

\begin{proposition}\label{prop:ab_envelope_highest_weight}
 The category $\underline{Rep}^{ab}(S_{\nu})$ is a highest weight category corresponding to the partially ordered set $(\{ \text{Young diagrams} \}, \geq )$, where
 $$\lambda \geq \mu \text{ iff } \lambda \stackrel{\nu}{\sim} \mu, \lambda \subset \mu$$
 (namely, $\lambda^{(i)} \geq \lambda^{(j)}$ if $i \leq j$).

\end{proposition}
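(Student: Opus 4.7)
The plan is to follow the three-step strategy outlined just before the statement: reduce to a block-by-block check, dispatch the semisimple blocks trivially, and import the highest-weight structure from $\mathcal{C}_q$ for the non-semisimple blocks. Since a highest weight structure on a category $\mathcal{A}$ with the indicated poset $(\Lambda, \leq)$ depends only on the poset restricted to each connected component (intervals of $\Lambda$ corresponding to blocks), and since in our case the order $\geq$ is defined so that two Young diagrams are comparable only when they are $\stackrel{\nu}{\sim}$-equivalent, the poset $(\{\text{Young diagrams}\}, \geq)$ splits as a disjoint union of posets, one per $\stackrel{\nu}{\sim}$-class. By Theorem \ref{thrm:blocks_ab_envelope}, the block decomposition of $\Dab$ matches this decomposition of $\Lambda$, so it suffices to check the highest weight axioms on each block separately.

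For a trivial $\stackrel{\nu}{\sim}$-class $\{\lambda\}$, the corresponding block is equivalent to $Vect_{\bC}$, whose single isomorphism class of simple objects trivially realizes a highest weight category over the one-element poset $\{\lambda\}$: the standard, costandard, simple, and indecomposable projective objects all coincide with $\bC$. For a non-trivial class $C = \{\lambda^{(i)}\}_{i \geq 0}$, the restriction of $\geq$ to $C$ is the total order $\lambda^{(0)} > \lambda^{(1)} > \lambda^{(2)} > \cdots$ (opposite of the inclusion chain of Young diagrams), which is order-isomorphic to $(\bZ_+, \leq^{op})$. Theorem \ref{thrm:blocks_ab_envelope} gives an equivalence of abelian categories between the block $\Dab_C$ and a non-semisimple block of $\mathcal{C}_q$ under which $X_{\lambda^{(i)}} \leftrightarrow \mathrm{Q}_i$, and by Lemma \ref{lem:TL_cat_prop} the $\mathcal{C}_q$-block is a highest weight category with weights indexed by the same set $\bZ_+$, simple objects $\mathrm{L}_i$, standard and costandard objects $\mathrm{M}_i, \mathrm{M}^{\vee}_i$, and indecomposable projectives $\mathrm{P}_i = \mathrm{Q}_i$ for $i \geq 1$, together with the required filtrations.

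I would then transport this structure across the equivalence: define the simple object $L_{\lambda^{(i)}}$ in $\Dab_C$ as the image of $\mathrm{L}_i$, and analogously $M_{\lambda^{(i)}}$, $M^{\vee}_{\lambda^{(i)}}$, $P_{\lambda^{(i)}}$. The only substantive verification is that the order in which these simples appear as composition factors of $M_{\lambda^{(i)}}$ and $M^{\vee}_{\lambda^{(i)}}$ is compatible with the order defined in the proposition, i.e.\ that the natural highest weight on $\mathrm{Q}_i$ in the $\mathcal{C}_q$-block corresponds to the smaller diagram $\lambda^{(i)}$ (so that decreasing in our order $\geq$ matches decreasing weight in $\mathcal{C}_q$). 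This follows from the explicit identification in \cite[Section 4]{CO2}, \cite[Lemma 2.11]{CO2}, which matches the labelling $X_{\lambda^{(i)}} \leftrightarrow \mathrm{Q}_i$ with the ordering used in the $\mathcal{C}_q$-block, where $\mathrm{Q}_i$ has highest weight strictly below that of $\mathrm{Q}_{i-1}$ in the weight poset of the Temperley-Lieb block.

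The main obstacle is precisely this last bookkeeping check: making sure that the increasing chain $\lambda^{(0)} \subset \lambda^{(1)} \subset \ldots$ on the Deligne side corresponds to a \emph{decreasing} chain of highest weights on the $\mathcal{C}_q$ side (so that our order $\lambda^{(i)} \geq \lambda^{(j)} \iff i \leq j$ coincides with the standard highest weight order). Once this orientation is settled, every axiom of a highest weight category, namely the existence of standard and costandard objects with the correct composition multiplicities, the standard filtration of projectives with the correct multiplicities controlled by the BGG reciprocity, and the $\Ext$-vanishing conditions, transfers directly along the abelian equivalence from Theorem \ref{thrm:blocks_ab_envelope}, together with Lemma \ref{lem:TL_cat_prop}.
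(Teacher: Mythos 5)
Your proposal is correct and takes essentially the same route as the paper: reduce to a block-by-block check, dispatch semisimple blocks trivially, and transport the highest weight structure from the equivalent non-semisimple blocks of $\mathcal{C}_q$. The paper's proof is a one-line sketch citing Theorem \ref{thrm:blocks_S_nu} and the fact that $\mathcal{C}_q$ is highest weight; you fill in the same argument in more detail, in particular making explicit the orientation check (that the inclusion-increasing chain $\lambda^{(0)} \subset \lambda^{(1)} \subset \cdots$ corresponds to decreasing highest weight under $X_{\lambda^{(i)}} \leftrightarrow \mathrm{Q}_i$), which the paper leaves implicit.
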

\begin{proof}
 \InnaA{As it was said before,} this can be proved by checking each block separately. The semisimple blocks obviously satisfy the requirement; for the non-semisimple blocks, the theorem follows from the fact that $\mathcal{C}_q$ is a highest weight category and from Theorem \ref{thrm:blocks_S_nu}.
\end{proof}

\begin{proposition}\label{prop:proj_in_ab_envelope}
\mbox{}
\begin{enumerate}
 \item 
 In the category $\underline{Rep}^{ab}(S_{\nu})$ all projective objects are injective and conversely.
 \item All projective objects of $\underline{Rep}^{ab}(S_{\nu})$ lie in $\underline{Rep}(S_{\nu})$.
\end{enumerate}
\end{proposition}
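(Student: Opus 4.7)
The plan is to follow precisely the three-step recipe outlined just before the statement: reduce both assertions to a block-by-block check, dispose of the semisimple blocks by inspection, and import the required facts for the non-semisimple blocks from the category $\mathcal{C}_q$ via Theorem \ref{thrm:blocks_ab_envelope}. Both "projective" and "injective" are properties detected block-by-block, so the reduction is immediate: an object $P$ of $\underline{Rep}^{ab}(S_{\nu})$ is projective (resp.\ injective) if and only if each of its summands in each block is projective (resp.\ injective) in that block.

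For a trivial $\stackrel{\nu}{\sim}$-class $C = \{\lambda\}$, the block $\underline{Rep}^{ab}(S_{\nu})_C$ is equivalent to $Vect_{\bC}$, in which every object is simultaneously projective and injective, and the unique (up to iso) indecomposable object corresponds under $\iota$ to $X_{\lambda} \in \underline{Rep}(S_{\nu})$. So both (1) and (2) hold automatically on semisimple blocks.

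For a non-trivial class $C = \{\lambda^{(i)}\}_{i \geq 0}$, Theorem \ref{thrm:blocks_ab_envelope} gives an equivalence of abelian categories between $\underline{Rep}^{ab}(S_{\nu})_C$ and a non-semisimple block of $\mathcal{C}_q$, which sends the indecomposable object $X_{\lambda^{(i)}}$ to the indecomposable tilting module $\mathrm{Q}_i$. An equivalence of abelian categories preserves projectivity and injectivity, so part (1) follows by transporting Lemma \ref{lem:TL_cat_prop}(1), which asserts that projective and injective objects in $\mathcal{C}_q$ coincide.

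For part (2), Lemma \ref{lem:TL_cat_prop}(2) tells us that the indecomposable projectives in the relevant block of $\mathcal{C}_q$ are exactly $\{\mathrm{Q}_i\}_{i \geq 1}$; under the equivalence these correspond to $\{X_{\lambda^{(i)}}\}_{i \geq 1}$, every one of which is an object of $\iota(\underline{Rep}(S_{\nu}))$. Since an arbitrary projective of $\underline{Rep}^{ab}(S_{\nu})$ is a finite direct sum of such indecomposables (using that $\underline{Rep}^{ab}(S_{\nu})$ has finite-length objects and finite-dimensional $\Hom$-spaces by Remark \ref{rmrk:ab_env_preTannakian}), it lies in $\underline{Rep}(S_{\nu})$. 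There is no serious obstacle here; the only subtle point is verifying that block-decomposition, Krull--Schmidt, and the transfer of (in)jectivity along an equivalence are all valid in this pre-Tannakian setting — but these are standard consequences of the highest weight structure established in Proposition \ref{prop:ab_envelope_highest_weight}.
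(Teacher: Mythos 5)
Your proof is correct and takes essentially the same route as the paper: reduce to blocks, observe the semisimple case is trivial, and transport Lemma \ref{lem:TL_cat_prop} across the equivalence with $\mathcal{C}_q$ from Theorem \ref{thrm:blocks_ab_envelope}. The only cosmetic difference is that the paper phrases part (2) by noting the equivalence restricts to one between $\underline{Rep}(S_{\nu})$ and $TL(q)$ and that projectives of $\mathcal{C}_q$ lie in $TL(q)$, whereas you identify the indecomposable projectives explicitly as $\{X_{\lambda^{(i)}}\}_{i \geq 1}$ and conclude from there — the content is the same.
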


\begin{proof}
The statement is obvious for semisimple blocks. For non-semisimple blocks, the first part follows from \cite[Theorem 9.12]{APW}.

To prove second part of the Proposition, we recall that the equivalence between non-semisimple blocks of $\underline{Rep}^{ab}(S_{\nu}), \mathcal{C}_q$, by definition restricts to an equivalence between non-semisimple blocks of $\underline{Rep}(S_{\nu}), TL(q)$ (see \cite{CO2}). Lemma \ref{lem:TL_cat_prop} states that the corresponding statement is true for $\mathcal{C}_q, \InnaA{TL(q)}$, and we are done.
\end{proof}
%

We will use the following notation for simple, standard, co-standard, and indecomposable projective objects in $\underline{Rep}^{ab}(S_{\nu})$:

\begin{notation}

Let $\lambda$ be any Young diagram. We will denote the simple (resp. standard, co-standard, indecomposable projective) object corresponding to $\lambda$ by $\mathbf{L}(\lambda)$ (resp. $\mathbf{M}(\lambda)$, \InnaB{$\mathbf{M}(\lambda)^*$}, $\mathbf{P}(\lambda)$).
\end{notation}
\begin{remark}
 \InnaB{We will show in Corollary \ref{cor:duality_abel_env} that the co-standard object $\mathbf{M}(\lambda)^*$ is the dual (in terms of the tensor structure of $\Dab$) of the standard object $\mathbf{M}(\lambda)$. This justifies the notation $\mathbf{M}(\lambda)^*$.}
\end{remark}

\begin{remark}
 Notice that if $\lambda$ lies in a non-trivial $\stackrel{\nu}{\sim}$-class, the objects $\mathbf{L}(\lambda^{(i)})$, $\mathbf{M}(\lambda^{(i)})$, $\InnaB{\mathbf{M}(\lambda^{(i)})^*}$, $\mathbf{P}(\lambda^{(i)})$ correspond to the modules $\mathrm{L}_i, \mathrm{M}_i, \mathrm{M}^{\vee}_i, \mathrm{P}_i \in \mathcal{C}_q$, respectively.
\end{remark}

\begin{proposition}\label{prop:obj_ab_env}
\mbox{}
\begin{enumerate}
   \item Assume $\lambda$ lies in a trivial $\stackrel{\nu}{\sim}$-class. Then $$X_{\lambda} \cong \mathbf{P}(\lambda) \cong \InnaB{\mathbf{M}(\lambda)^*} \cong \mathbf{M}(\lambda) \cong \mathbf{L}(\lambda) $$
 \item Let $ \{\lambda^{(i)}\}_{\InnaA{i \geq 0}}$ be a non-trivial $\stackrel{\nu}{\sim}$-class, and \InnaA{$\mathcal{B}_{\lambda}$} the corresponding block of $\Dab$. Then

 \begin{itemize}[leftmargin=*]
 \item $X_{\lambda^{(0)}} \cong \mathbf{L}(\lambda^{(0)}) \cong \mathbf{M}(\lambda^{(0)}) \cong \InnaB{\mathbf{M}(\lambda^{(0)})^*}$.
  \item For any $i \geq 0$, $\mathbf{P}(\lambda^{(i)}) \cong X_{\lambda^{(i+1)}}$.
  \item For any $i \geq 0$, we have short exact sequences
$$ 0 \rightarrow \mathbf{M}(\lambda^{(i+1)}) \rightarrow \mathbf{P}(\lambda^{(i)}) \rightarrow \mathbf{M}(\lambda^{(i)}) \rightarrow 0$$
$$ 0 \rightarrow \InnaB{\mathbf{M}(\lambda^{(i)})^*} \rightarrow \mathbf{P}(\lambda^{(i)}) \rightarrow \InnaB{\mathbf{M}(\lambda^{(i+1)})^*} \rightarrow 0$$
  \item The socle filtration of $\mathbf{P}(\lambda^{(i)})$ has successive quotients
$$\mathbf{L}(\lambda^{(i)}); \mathbf{L}(\lambda^{(i+1)}) \oplus \mathbf{L}(\lambda^{(i-1)}); \mathbf{L}(\lambda^{(i)})$$
if $i>0$,
and successive quotients $\mathbf{L}(\lambda^{(0)}); \mathbf{L}(\lambda^{(1)}); \mathbf{L}(\lambda^{(0)})$
if $i=0$.
 \end{itemize}
\end{enumerate}

\end{proposition}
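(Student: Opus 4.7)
The plan is to reduce the proof to a block-by-block verification via Theorem \ref{thrm:blocks_ab_envelope}, which describes each block of $\underline{Rep}^{ab}(S_\nu)$ as either equivalent to $Vect_\bC$ (trivial $\stackrel{\nu}{\sim}$-class) or to a non-semisimple block of $\mathcal{C}_q$ (non-trivial class), with the explicit correspondence $X_{\lambda^{(i)}} \leftrightarrow \mathrm{Q}_i$. Part (1) is then immediate: the block is equivalent to $Vect_\bC$, which has a unique nonzero indecomposable object, necessarily simple, standard, co-standard, and projective; since $X_\lambda$ is this unique indecomposable, all five objects in the displayed chain must coincide.

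For part (2), the idea is to transport the known structure of non-semisimple blocks of $\mathcal{C}_q$ (see \cite{A}, \cite{APW}, \cite{CO2}) across the equivalence of Theorem \ref{thrm:blocks_ab_envelope}. The case $i=0$ follows directly from Lemma \ref{lem:TL_cat_prop}(2), which states that $\mathrm{Q}_0$ is simple, standard and co-standard, combined with the fact that $\lambda^{(0)}$ is the maximum of the partial order of Proposition \ref{prop:ab_envelope_highest_weight} on its block (so $\mathbf{M}(\lambda^{(0)}) = \mathbf{L}(\lambda^{(0)}) = \mathbf{M}(\lambda^{(0)})^*$ formally from the highest-weight axioms). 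To establish $\mathbf{P}(\lambda^{(i)}) \cong X_{\lambda^{(i+1)}}$, I use Lemma \ref{lem:TL_cat_prop}(2), which asserts that $\{\mathrm{Q}_i\}_{i \geq 1}$ exhausts the isomorphism classes of indecomposable projectives. To determine which highest-weight index $\lambda^{(j)}$ labels $\mathrm{Q}_{i+1}$ viewed as a projective cover, I appeal to the standard filtration $0 \to \mathrm{M}_{i+1} \to \mathrm{Q}_{i+1} \to \mathrm{M}_i \to 0$ (a well-known feature of tilting modules in rank-one non-semisimple blocks of $\mathcal{C}_q$), which shows that $\mathrm{Q}_{i+1}$ has head $\mathrm{L}_i$ and hence equals $\mathrm{P}_i$.

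The two short exact sequences then follow at once: the first is the transported standard filtration above, and the second is its dual, arising from the co-standard filtration of $\mathrm{Q}_{i+1}$ as a tilting module. The socle filtration of $\mathbf{P}(\lambda^{(i)})$ is read off directly from the explicit characteristic \emph{diamond} pattern for projectives in non-semisimple blocks of $\mathcal{C}_q$ (cf.\ \cite{A}), or can alternatively be reconstructed by combining the two short exact sequences with the quiver and relations of Theorem \ref{thrm:blocks_S_nu}, lifted from $\underline{Rep}(S_\nu)$ to $\underline{Rep}^{ab}(S_\nu)$ using Proposition \ref{prop:proj_in_ab_envelope}(2). The main obstacle I anticipate is a bookkeeping one: the tilting modules $\mathrm{Q}_i$ are indexed by their highest weight, while the projective covers $\mathrm{P}_j$ are indexed by their head, and these indices differ by one, so the precise matching $\mathrm{Q}_{i+1} = \mathrm{P}_i$ must be verified with care—this accounts for the shift by $1$ between the indices of $\mathbf{P}(\lambda^{(i)})$ and $X_{\lambda^{(i+1)}}$ in the statement.
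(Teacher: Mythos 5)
Your overall strategy (block-by-block reduction via Theorem \ref{thrm:blocks_ab_envelope}, transport from $\mathcal{C}_q$) matches the paper's, and part (1) and the $i=0$ case of part (2) are handled identically. Where you diverge is in how the index matching $\mathbf{P}(\lambda^{(i)}) \cong X_{\lambda^{(i+1)}}$ and the two short exact sequences are established. You invoke, as an external fact from the $\mathcal{C}_q$ literature, the explicit standard filtration $0 \to \mathrm{M}_{i+1} \to \mathrm{Q}_{i+1} \to \mathrm{M}_i \to 0$ of the tilting module $\mathrm{Q}_{i+1}$ (and its dual co-standard filtration), and then read off the head, the two sequences, and the socle filtration from the diamond picture. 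This is correct and expedient, but it quietly uses more than the paper's Lemma \ref{lem:TL_cat_prop} actually records about $\mathcal{C}_q$: that lemma only gives the injection $\mathrm{M}_i \hookrightarrow \mathrm{Q}_i$, the multiplicity $[\mathrm{Q}_i:\mathrm{L}_i]=1$, and the fact that $\{\mathrm{Q}_i\}_{i\geq 1}$ exhausts the indecomposable projectives. The paper therefore does not get the filtration for free; it first \emph{derives} the index shift from these weaker facts — it introduces the unknown bijection $f$ with $X_{\lambda^{(f(i))}} \cong \mathbf{P}(\lambda^{(i)})$, uses the nonzero composite $X_{\lambda^{(f(f(i)))}} \twoheadrightarrow \mathbf{M}(\lambda^{(f(i))}) \hookrightarrow X_{\lambda^{(f(i))}}$ together with the $\Hom$-dimension formulas of Theorem \ref{thrm:blocks_S_nu} to conclude $\lvert f(i)-i\rvert\leq 1$, pins down $f(0)=1$, and deduces $f(i)=i+1$ — and only then uses BGG reciprocity (Sublemma \ref{sublem:BGG_Deligne_cat}) to obtain the composition factors, the two short exact sequences, and the socle filtration. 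So the paper's route is more self-contained relative to its stated lemmas, while yours buys brevity at the cost of importing the full tilting-module structure of $\mathcal{C}_q$ as a black box. Both are sound; you should just be aware that if you were asked to keep the proof internal to the paper's Lemma \ref{lem:TL_cat_prop}, you would have to reproduce the paper's combinatorial index-chasing argument rather than quote the filtration directly.
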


\begin{proof}
\begin{enumerate}[leftmargin=*]
 \mbox{}
 \item Follows directly from Theorem \ref{thrm:blocks_ab_envelope}, part (1).

\item From Lemma \ref{lem:TL_cat_prop} and from the equivalence described in Theorem \ref{thrm:blocks_ab_envelope}, we immediately conclude that
 $$X_{\lambda^{(0)}} \cong \mathbf{L}(\lambda^{(0)}) \cong \mathbf{M}(\lambda^{(0)}) \cong \InnaB{\mathbf{M}(\lambda^{(0)})^*}$$

  From Lemma \ref{lem:TL_cat_prop}, we know that $\{X_{\lambda^{(i)}}\}_{i\geq 1}$ is the set of isomorphism classes of indecomposable projective objects in the block \InnaA{$\mathcal{B}_{\lambda}$}. In other words, there exists a bijective map $f: \bZ_+ \rightarrow \bZ_{>0}$ such that $X_{\lambda^{(f(i))}} \cong \mathbf{P}(\lambda^{(i)})$ for any $i>0$.

Lemma \ref{lem:TL_cat_prop} also tells us that there exists an injective map $\psi_{j}: \mathbf{M}(\lambda^{(j)}) \hookrightarrow X_{\lambda^{(j)}}$ for any $j \geq 0$.

  Composing $\psi_{f(i)}$ with the map $X_{\lambda^{(f(f(i)))}} \cong \mathbf{P}(\lambda^{(f(i))}) \twoheadrightarrow \mathbf{M}(\lambda^{(f(i))})$, we get a non-zero map $X_{\lambda^{(f(f(i)))}} \rightarrow X_{\lambda^{(f(i))}}$. Using Theorem \ref{thrm:blocks_S_nu}, we see that $\abs{ f(f(i)) - f(i) } \leq 1$ for any $i \geq 0$, i.e. $\abs{ f(i) - i } \leq 1$ for any $i \geq 1$ (since $f$ is surjective).

  Notice that $$\dim \Hom_{\Dab}(\mathbf{P}(\lambda^{(0)}),  \mathbf{M}(\lambda^{(0)})) = \dim \Hom_{\Dab}(X_{\lambda^{(f(0))}}, X_{\lambda^{(0)}}) \geq 1$$ which means that $f(0)=1$. Together with the condition: $\abs{ f(i) - i } \leq 1$ for any $i \geq 1$, this implies that $f(i) = i+1$ for any $i \geq 1$.

Thus we proved that for any $i \geq 0$, $\mathbf{P}(\lambda^{(i)}) \cong X_{\lambda^{(i+1)}}$.
We now use the BGG reciprocity for the highest weight category $\Dab$:
  \begin{sublemma}\label{sublem:BGG_Deligne_cat}
  \mbox{}
  \begin{enumerate}[label=(\alph*)]
   \item For any $M \in \Dab$, we have: $$\dim \Hom_{\Dab} (\mathbf{P}(\lambda^{(j)}), M) = \left[ M \InnaA{:} \mathbf{L}(\lambda^{(j)}) \right]$$ for any $j \geq 0$.
   \item $$\left( \mathbf{P}(\lambda^{(j)}) \InnaA{:} \mathbf{M}(\lambda^{(i)}) \right) = \left[ \mathbf{M}(\lambda^{(i)}) \InnaA{:} \mathbf{L}(\lambda^{(j)}) \right] = \left[ \InnaB{\mathbf{M}(\lambda^{(i)})^*} \InnaA{:} \mathbf{L}(\lambda^{(j)}) \right]$$ for any $i, j \geq 0$ (the brackets in left hand side denote multiplicity in the standard filtration).
\end{enumerate}
  \end{sublemma}
  \begin{proof}
   The proof is standard (see e.g. \cite[Theorem 3.9(c), Theorem 3.11]{H}).

 \end{proof}

  Applying Sublemma \ref{sublem:BGG_Deligne_cat}(a) to $M :=  \mathbf{P}(\lambda^{(i)}) \cong X_{\lambda^{(i+1)}}$ and using Theorem \ref{thrm:blocks_S_nu}, we see that the composition factors of $\mathbf{P}(\lambda^{(i)})$ are $$\mathbf{L}(\lambda^{(i)}), \mathbf{L}(\lambda^{(i+1)}), \mathbf{L}(\lambda^{(i-1)}), \mathbf{L}(\lambda^{(i)}) \, \text{ if  } i>0$$
and $$\mathbf{L}(\lambda^{(0)}), \mathbf{L}(\lambda^{(1)}), \mathbf{L}(\lambda^{(0)}) \, \text{ if  } i=0$$
  (notice that the cosocle of $\mathbf{P}(\lambda^{(i)})$ is necessarily $ \mathbf{L}(\lambda^{(i)})$ for any $i \geq 0$).

  Fix $i \geq 1$. We have a map $\mathbf{P}(\lambda^{(i)}) \twoheadrightarrow \mathbf{M}(\lambda^{(i)}) $ and a map $ \mathbf{M}(\lambda^{(i)}) \stackrel{\psi_i}{\hookrightarrow} \mathbf{P}(\lambda^{(i-1)})$.

  Comparing the composition factors of $\mathbf{P}(\lambda^{(i)}), \mathbf{P}(\lambda^{(i+1)})$, we conclude that one of the following holds:
  \begin{itemize}
   \item $\mathbf{M}(\lambda^{(i)}) \cong \mathbf{L}(\lambda^{(i)})$.
   \item The socle filtration of $\mathbf{M}(\lambda^{(i)})$ has successive quotients $\mathbf{L}(\lambda^{(i-1)}); \mathbf{L}(\lambda^{(i)})$.
  \end{itemize}

  Applying Sublemma \ref{sublem:BGG_Deligne_cat}(b) to $j:=i-1$, we see that $$ \left[ \mathbf{M}(\lambda^{(i)}) \InnaA{:} \mathbf{L}(\lambda^{(i-1)}) \right] = \left( \mathbf{P}(\lambda^{(i-1)}) \InnaA{:} \mathbf{M}(\lambda^{(i)}) \right) = 1$$
  Thus we conclude that the socle filtration of $\mathbf{M}(\lambda^{(i)})$ has successive quotients $\mathbf{L}(\lambda^{(i-1)}); \mathbf{L}(\lambda^{(i)})$.

  The socle filtration of the standard objects immediately implies that the following sequence is exact:
  $$ 0 \rightarrow \mathbf{M}(\lambda^{(i+1)}) \stackrel{\psi_{i+1}}{\longrightarrow} \mathbf{P}(\lambda^{(i)}) \longrightarrow \mathbf{M}(\lambda^{(i)}) \rightarrow 0$$

  Next, the socle filtration of $\InnaB{\mathbf{M}(\lambda^{(i)})^*}$ can be obtained from the socle filtration of $\mathbf{M}(\lambda^{(i)})$, and it has successive quotients $\mathbf{L}(\lambda^{(i)}); \mathbf{L}(\lambda^{(i-1)})$.

  Since $\mathbf{P}(\lambda^{(i-1)})$ is projective, we get a map $\phi: \mathbf{P}(\lambda^{(i-1)}) \rightarrow \InnaB{\mathbf{M}(\lambda^{(i)})^*}$ such that the following diagram is commutative:

  $$\xymatrix{ &\mathbf{P}(\lambda^{(i-1)}) \ar@{>>}[d]  \ar[r]^-{\phi} &\InnaB{\mathbf{M}(\lambda^{(i)})^*}   \ar@{>>}[dl]  \\ &\mathbf{L}(\lambda^{(i-1)})}$$
  The socle filtration of $\InnaB{\mathbf{M}(\lambda^{(i)})^*}$ then implies that the map $\phi$ is surjective, and we get an exact sequence $$ 0 \rightarrow \InnaB{\mathbf{M}(\lambda^{(i-1)})^*} \rightarrow \mathbf{P}(\lambda^{(i-1)}) \rightarrow \InnaB{\mathbf{M}(\lambda^{(i)})^*} \rightarrow 0$$

  Finally, the socle filtration of $\mathbf{P}(\lambda^{(i)})$ can be deduced from the above exact sequences and the socle filtrations of the standard and the co-standard objects.
 \end{enumerate}

\end{proof}

\begin{corollary}\label{cor:duality_abel_env}
 \InnaB{The co-standard object $\InnaB{\mathbf{M}(\lambda)^*}$ is the dual (in terms of the tensor structure of $\Dab$) of the standard object  $\InnaB{\mathbf{M}}(\lambda)$.}
\end{corollary}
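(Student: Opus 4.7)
The proof handles the semisimple and non-semisimple blocks separately. For a trivial $\stackrel{\nu}{\sim}$-class $\{\lambda\}$, Proposition \ref{prop:obj_ab_env}(1) identifies $\mathbf{M}(\lambda)$, $\InnaB{\mathbf{M}(\lambda)^*}$, $\mathbf{L}(\lambda)$, and $X_\lambda$, so the statement reduces to the self-duality of $X_\lambda \in \underline{Rep}(S_\nu)$, which is automatic from the rigid structure. Throughout, I write $(\cdot)^\vee$ for the tensor dual in $\Dab$ (what the notation section calls $(\cdot)^*$) in order to avoid clashing notationally with the co-standard objects $\InnaB{\mathbf{M}(\lambda)^*}$; the goal is precisely to prove $\mathbf{M}(\lambda)^\vee \cong \InnaB{\mathbf{M}(\lambda)^*}$.

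For a non-trivial class $\{\lambda^{(i)}\}_{i\geq 0}$ the key inputs are two self-duality assertions. First, each indecomposable $X_\mu \in \underline{Rep}(S_\nu)$ is self-dual, $X_\mu^\vee \cong X_\mu$: the generator $\fh$ is self-dual via the diagrammatic evaluation and coevaluation implicit in Definition \ref{def:ten_power_h_homs}, and the idempotent cutting out $X_\mu$ from the self-dual object $\fh^{\otimes \abs{\mu}}$ is fixed (up to conjugation giving an isomorphic image) by the antiinvolution of $\End(\fh^{\otimes \abs{\mu}})$ induced by rigidity, which on partition diagrams is vertical reflection. Since $\iota:\underline{Rep}(S_\nu)\hookrightarrow \Dab$ preserves duals, Proposition \ref{prop:obj_ab_env}(2) yields $\mathbf{P}(\lambda^{(i)})^\vee \cong X_{\lambda^{(i+1)}}^\vee \cong X_{\lambda^{(i+1)}} \cong \mathbf{P}(\lambda^{(i)})$. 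Second, each simple object is self-dual: the cosocle of $\mathbf{P}(\lambda^{(j)})$ is $\mathbf{L}(\lambda^{(j)})$, hence the socle of $\mathbf{P}(\lambda^{(j)})^\vee \cong \mathbf{P}(\lambda^{(j)})$ equals $\mathbf{L}(\lambda^{(j)})^\vee$; the socle filtration of Proposition \ref{prop:obj_ab_env} identifies this socle as $\mathbf{L}(\lambda^{(j)})$, forcing $\mathbf{L}(\lambda^{(j)})^\vee \cong \mathbf{L}(\lambda^{(j)})$.

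The rest is formal. Applying the exact contravariant functor $(\cdot)^\vee$ to the standard exact sequence of Proposition \ref{prop:obj_ab_env},
$$0 \to \mathbf{M}(\lambda^{(i+1)}) \to \mathbf{P}(\lambda^{(i)}) \to \mathbf{M}(\lambda^{(i)}) \to 0,$$
and using $\mathbf{P}(\lambda^{(i)})^\vee \cong \mathbf{P}(\lambda^{(i)})$ produces
$$0 \to \mathbf{M}(\lambda^{(i)})^\vee \to \mathbf{P}(\lambda^{(i)}) \to \mathbf{M}(\lambda^{(i+1)})^\vee \to 0,$$
exactly parallel to the co-standard sequence from Proposition \ref{prop:obj_ab_env}. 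For $i=0$ we have $\mathbf{M}(\lambda^{(0)})^\vee \cong \mathbf{L}(\lambda^{(0)})^\vee \cong \mathbf{L}(\lambda^{(0)}) \cong \InnaB{\mathbf{M}(\lambda^{(0)})^*}$. For $i \geq 1$, both $\mathbf{M}(\lambda^{(i)})^\vee$ and $\InnaB{\mathbf{M}(\lambda^{(i)})^*}$ are indecomposable length-two subobjects of $\mathbf{P}(\lambda^{(i)})$ with socle $\mathbf{L}(\lambda^{(i)})$ and cosocle $\mathbf{L}(\lambda^{(i-1)})$ (invoking self-duality of simples); reading the socle filtration of $\mathbf{P}(\lambda^{(i-1)})$ from Proposition \ref{prop:obj_ab_env} yields $\dim \Ext^1(\mathbf{L}(\lambda^{(i-1)}), \mathbf{L}(\lambda^{(i)})) = 1$, so every non-split such extension is isomorphic, giving $\mathbf{M}(\lambda^{(i)})^\vee \cong \InnaB{\mathbf{M}(\lambda^{(i)})^*}$.

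The main obstacle is the self-duality $X_\mu^\vee \cong X_\mu$ in $\underline{Rep}(S_\nu)$. While morally clear from the diagrammatic picture, a clean proof should either verify it directly from the explicit formula for the idempotent defining $X_\mu$ inside $\fh^{\otimes \abs{\mu}}$, by checking that it is fixed by the reflection antiinvolution of $\End(\fh^{\otimes \abs{\mu}})$ coming from rigidity; or first verify self-duality in the semisimple category $\underline{Rep}(S_T)$ using the functor $\mathcal{S}_n$ for sufficiently large integer $n$ together with the classical self-duality of irreducible $S_n$-representations, and then descend via $lift_\nu$ using Proposition \ref{prop:lift_properties} and Lemma \ref{lem:nu_classes_lift}.
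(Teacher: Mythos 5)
Your proof is correct and follows the same overall strategy as the paper's: reduce to blocks, invoke the self-duality (in the sense of the tensor structure) of objects of $\underline{Rep}(S_{\nu})$ — in particular of the indecomposable projectives $\mathbf{P}(\lambda^{(i)}) \cong X_{\lambda^{(i+1)}}$ — and apply the exact contravariant duality functor to the standard exact sequence of Proposition \ref{prop:obj_ab_env}. The differences are in the finishing step. The paper works with the two-term projective presentation $\mathbf{P}(\lambda^{(i+1)}) \to \mathbf{P}(\lambda^{(i)}) \to \mathbf{M}(\lambda^{(i)}) \to 0$, observes via autoduality of projectives that the tensor dual of $\mathbf{M}(\lambda^{(i)})$ is the kernel of a nonzero map $\mathbf{P}(\lambda^{(i)}) \to \mathbf{P}(\lambda^{(i+1)})$, and recognizes this kernel as $\mathbf{M}(\lambda^{(i)})^*$ directly from the co-standard sequence (using that, by Theorem \ref{thrm:blocks_S_nu}, such a nonzero map is unique up to scalar). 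You instead first derive self-duality of the simples from self-duality of projectives (a correct and clean little argument, though not needed by the paper), and then compare socles, cosocles and $\dim\Ext^1(\mathbf{L}(\lambda^{(i-1)}),\mathbf{L}(\lambda^{(i)})) = 1$; this gives the same conclusion with a bit more bookkeeping. Both routes rest on the same structural input from Proposition \ref{prop:obj_ab_env}, so the choice is a matter of taste.

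One point worth flagging: you correctly identify the self-duality of each $X_{\mu}$ in $\underline{Rep}(S_{\nu})$ as the real substance here, but the argument you sketch — that the idempotent cutting $X_{\mu}$ out of $\fh^{\otimes\abs{\mu}}$ is ``fixed up to conjugation'' by the reflection anti-involution — asserts exactly what needs proof and is not verified. The paper sidesteps this by citing \cite[Section 2.16]{D} and \cite[Section 2.2]{CO}; your alternative route (prove self-duality in $\underline{Rep}(S_T)$ via $\mathcal{S}_n$ for $n \gg 0$ and descend along $lift_{\nu}$ using Proposition \ref{prop:lift_properties} and Lemma \ref{lem:nu_classes_lift}) is sound in outline and would give a self-contained proof, but as written it is only a proposal, not an argument.
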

\begin{proof}
 \InnaB{By the construction of the Deligne category $\underline{Rep}(S_{\nu})$, all the objects in $\underline{Rep}(S_{\nu})$ are automatically self-dual (c.f. \cite[Section 2.16]{D}, \cite[Section 2.2]{CO}). So Proposition \ref{prop:obj_ab_env} immediately implies that $\InnaB{\mathbf{M}(\lambda)^*}$ is the dual of $\InnaB{\mathbf{M}}(\lambda)$ if whenever $\lambda$ lies in a trivial $\stackrel{\nu}{\sim}$-class, or is minimal in its non-trivial $\stackrel{\nu}{\sim}$-class.
 
 It remains to check the case when $\lambda$ lies in a non-trivial $\stackrel{\nu}{\sim}$-class $ \{\lambda^{(i)}\}_{\InnaA{i \geq 0}}$, and $\lambda = \lambda^{(i)}, i>0$. Then we have an exact sequence 
 $$\mathbf{P}(\lambda^{(i+1)}) \stackrel{f}{\longrightarrow} \mathbf{P}(\lambda^{(i)}) \longrightarrow \mathbf{M}(\lambda^{(i)}) \rightarrow 0$$
 Since the category $\Dab$ is pre-Tannakian, the duality functor $X \mapsto X^*$ is contravariant and exact, and we conclude that the dual of $ \mathbf{M}(\lambda^{(i)})$ is the kernel of the map $ f^*: \mathbf{P}^*(\lambda^{(i)}) \rightarrow \mathbf{P}^*(\lambda^{(i+1)}) $. The autoduality of $\mathbf{P}(\lambda^{(i+1)}), \mathbf{P}(\lambda^{(i)})$, together with Proposition \ref{prop:obj_ab_env} and the fact that $f^* \neq 0$, immediately implies that the dual of $ \mathbf{M}(\lambda^{(i)})$ is $ \mathbf{M}(\lambda^{(i)})^*$, as wanted.
 }
\end{proof}

\section{Parabolic category $\co$}\label{sec:par_cat_o}
In this section, we present the results on the parabolic category $\co$ which we will use throughout the paper. The material of this section is mostly based on \cite[Chapter 9]{H}.

We start with some definitions.
\begin{definition}
 A unital vector space is a vector space $V$ with a distinguished \InnaB{non-zero vector $\InnaB{ \triv}$}.
\end{definition}

Fix a unital vector space $(V, \InnaB{\triv})$ with $\dim V  <\infty$.

\InnaB{Throughout the paper, we will use Notation \ref{notn:par_subalg} for the parabolic subalgebra $\mathfrak{p}_{(V, \InnaB{\bC \triv})}$ of $\gl(V)$, as well as the mirabolic subgroup $\bar{\mathfrak{P}}_{\triv}$, the mirabolic subalgebra $\bar{\mathfrak{p}}_{\InnaB{\bC \triv}}$, the unipotent group $\mathfrak{U}_{\triv}$ and the nilpotent subalgebra $\mathfrak{u}_{\mathfrak{p}}^{+}$.}

\begin{remark}
 Notice that $\mathfrak{p}$ has a one-dimensional center (scalar endomorphisms of $V$), and we have: $\mathfrak{p} \cong \bC \id_V \oplus \bar{\mathfrak{p}}_{\InnaB{\bC \triv}}$. 
\end{remark}

\InnaB{
We now want to talk about a subcategory of the category of finite-dimensional representations of the mirabolic group $\bar{\mathfrak{P}}_{\triv}$. For this, we will use the following lemma:

\begin{lemma}\label{lem:mirabolic_irr_repr}
 \begin{enumerate}[label=(\alph*)]
  \item There is a short exact sequence of groups $$ 1 \rightarrow \mathfrak{U}_{\triv} \longrightarrow \bar{\mathfrak{P}}_{\triv} \longrightarrow GL \left( \quotient{V}{\bC \triv} \right) \rightarrow 1$$
  \item For any irreducible finite-dimensional algebraic representation $\rho: \bar{\mathfrak{P}}_{\triv} \rightarrow \Aut(E)$ of the mirabolic subgroup, $\mathfrak{U}_{\triv}$ acts trivially on $E$, and thus $\rho$ factors through $GL \left( V / \bC \triv \right)$.
 \end{enumerate}
 
\end{lemma}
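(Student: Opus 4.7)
For part (a), the plan is to define the quotient map $q\colon \bar{\mathfrak{P}}_{\triv} \to GL(V/\bC\triv)$ by sending $\Phi$ to the automorphism it induces on $V/\bC\triv$. This is well defined because every $\Phi \in \bar{\mathfrak{P}}_{\triv}$ preserves the line $\bC\triv$ (it fixes $\triv$), and it is clearly a homomorphism of algebraic groups. To identify the kernel, I would unwind the defining conditions: $\Phi \in \ker q$ iff $\Phi(\triv) = \triv$ and $\Phi(v) - v \in \bC\triv$ for every $v \in V$, i.e. $\Im(\Phi - \id_V) \subset \bC\triv$, which is exactly the definition of $\mathfrak{U}_{\triv}$. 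For surjectivity, choose any splitting $V = \bC\triv \oplus U$; then every $A \in GL(U) \cong GL(V/\bC\triv)$ lifts to $\id_{\bC\triv} \oplus A \in \bar{\mathfrak{P}}_{\triv}$.

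For part (b), the key observation is that $\mathfrak{U}_{\triv}$ is a normal unipotent subgroup of $\bar{\mathfrak{P}}_{\triv}$. Normality is automatic since it is the kernel of $q$; unipotence follows from the fact that every element of $\mathfrak{U}_{\triv}$ is of the form $\id_V + T$ with $\Im T \subset \bC\triv \subset \ker T$, so $T^2 = 0$ and $\id_V + T$ is unipotent. (One can even identify $\mathfrak{U}_{\triv}$ with the additive group of $(V/\bC\triv)^*$ via $(\id_V + T) \mapsto T$.) The plan is then to invoke the standard fact that any finite-dimensional algebraic representation of a unipotent algebraic group has a non-zero fixed vector (Lie--Kolchin / Engel's theorem in its algebraic-group form).

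Given an irreducible algebraic representation $\rho\colon \bar{\mathfrak{P}}_{\triv} \to \Aut(E)$, consider the subspace $E^{\mathfrak{U}_{\triv}}$ of $\mathfrak{U}_{\triv}$-fixed vectors. By the previous paragraph, $E^{\mathfrak{U}_{\triv}} \neq 0$. Normality of $\mathfrak{U}_{\triv}$ implies that $E^{\mathfrak{U}_{\triv}}$ is stable under the full group $\bar{\mathfrak{P}}_{\triv}$: for $g \in \bar{\mathfrak{P}}_{\triv}$, $v \in E^{\mathfrak{U}_{\triv}}$, $u \in \mathfrak{U}_{\triv}$, one has $\rho(u)\rho(g)v = \rho(g)\rho(g^{-1}ug)v = \rho(g)v$. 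By irreducibility, $E^{\mathfrak{U}_{\triv}} = E$, i.e. $\mathfrak{U}_{\triv}$ acts trivially on $E$. Hence $\rho$ factors through $\bar{\mathfrak{P}}_{\triv}/\mathfrak{U}_{\triv} \cong GL(V/\bC\triv)$ by part (a).

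No step looks genuinely difficult; the only mildly subtle point is making sure that the unipotence of $\mathfrak{U}_{\triv}$ is justified as an algebraic-group statement (so that the fixed-vector theorem for unipotent groups applies to algebraic representations), which is why I would make explicit the identification of $\mathfrak{U}_{\triv}$ with the additive vector group $(V/\bC\triv)^*$.
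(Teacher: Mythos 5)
Your proof is correct and follows essentially the same route as the paper. In part (a) the paper likewise checks exactness after choosing a splitting $V = \bC\triv \oplus U$ (and observes the resulting isomorphism $\bar{\mathfrak{P}}_{\triv} \cong GL(U) \ltimes U^*$), and in part (b) the paper likewise invokes unipotence of $\mathfrak{U}_{\triv}$ to obtain a nonzero fixed vector $v$ and then concludes by irreducibility; the paper phrases the last step as "$E$ is spanned by vectors $\rho(x)v$, on each of which $\mathfrak{U}_{\triv}$ acts trivially," which is a compressed form of exactly the normality computation $\rho(u)\rho(x)v = \rho(x)\rho(x^{-1}ux)v = \rho(x)v$ that you make explicit via the $\bar{\mathfrak{P}}_{\triv}$-stability of $E^{\mathfrak{U}_{\triv}}$. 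Your version is a bit more careful in spelling out why normality is used and in justifying that $\mathfrak{U}_{\triv} \cong (V/\bC\triv)^*$ is unipotent as an algebraic group, both of which the paper leaves implicit.
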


\begin{proof}
 In part (a), one only needs to check that this sequence is exact at $ \bar{\mathfrak{P}}_{\triv}$. This is obvious once we choose a splitting $V = \bC \triv \oplus U$; with a chosen splitting, this short exact sequence splits and we get an isomorphism $\bar{\mathfrak{P}}_{\triv} \cong GL(U) \ltimes U^*$.
 
 In part (b), recall that the group $\mathfrak{U}_{\triv}$ is unipotent (and even abelian), so there exists a non-zero vector $v \in E$ which is fixed by $\mathfrak{U}_{\triv}$. 
 
 Since $E$ is an irreducible finite-dimensional representation of $ \bar{\mathfrak{P}}_{\triv}$, $E$ has a basis consisting of vectors of the form $\rho(x)v$ for some $x \in \bar{\mathfrak{P}}_{\triv}$. On each such vector $\rho(x)v$, $\mathfrak{U}_{\triv}$ acts trivially; thus $\mathfrak{U}_{\triv}$ acts trivially on $E$.
 
%


\end{proof}


We now consider the category $Rep(\bar{\mathfrak{P}}_{\triv})$ of finite-dimensional algebraic representations of $\bar{\mathfrak{P}}_{\triv}$. In this category, we define:

\begin{definition}\label{def:poly_rep_mirabolic}
\mbox{}
\begin{itemize}[leftmargin=*]
 \item Let $\rho: \bar{\mathfrak{P}}_{\triv} \rightarrow \Aut(E)$ be an irreducible finite-dimensional algebraic representation of the mirabolic subgroup. The above lemma states that $\rho$ factors through $GL \left( V / \bC \triv \right)$. We say that $\rho$ is a {\it $GL \left( V / \bC \triv \right)$-polynomial representation of $\bar{\mathfrak{P}}_{\triv}$} if $\rho: GL \left( V / \bC \triv \right) \rightarrow \Aut(E)$ is a polynomial representation (c.f. Notation \ref{notn:action_on_tes_powers}). Recall that the latter condition is equivalent to saying that $\rho$ extends to an algebraic map $\End \left( V / \bC \triv \right) \rightarrow \End(E)$.
 
 \item The category of {\it $GL \left( V / \bC \triv \right)$-polynomial representations of $\bar{\mathfrak{P}}_{\triv}$} is defined to be the Serre subcategory of $Rep(\bar{\mathfrak{P}}_{\triv})$ generated by the irreducible $GL \left( V / \bC \triv \right)$-polynomial representations of $\bar{\mathfrak{P}}_{\triv}$.
 
 That is, a finite-dimensional algebraic representation $E$ of $\bar{\mathfrak{P}}_{\triv}$ is called {\it $GL \left( V / \bC \triv \right)$-polynomial} if the Jordan-Holder components of $E$ are $GL \left( V / \bC \triv \right)$-polynomial representations of $\bar{\mathfrak{P}}_{\triv}$.
 
 We denote the category of $GL \left( V / \bC \triv \right)$-polynomial representations of $\bar{\mathfrak{P}}_{\triv}$ by $Rep(\bar{\mathfrak{P}}_{\triv})_{GL \left( V / \bC \triv \right)-poly}$.
 
\end{itemize}
\end{definition}

We are now ready to give a definition of the parabolic category $\co$ which we are going to consider in this paper:}
\begin{definition}\label{def:par_cat_O}
 We define the parabolic category $\co$ for $(\gl(V), \mathfrak{p})$, denoted by $\co^{\mathfrak{p}}_{V}$, to be the full subcategory of $Mod_{\mathcal{U}(\gl(V))}$ whose objects $M$ satisfy the following conditions:
 \begin{itemize}
  \item \InnaB{$M$ is a Harish-Chandra module for the pair $(\gl(V), \bar{\mathfrak{P}}_{\triv})$, i.e. the action of the Lie subalgebra ${\mathfrak{p}}_{\InnaB{\bC \triv}}$ on $M$ integrates to the action of the mirabolic group $\bar{\mathfrak{P}}_{\triv}$.
 
 Furthermore, we require that as a representation of $\bar{\mathfrak{P}}_{\triv}$, $M$ be a filtered colimit of $GL \left( V / \bC \triv \right)$-polynomial representations, i.e. $$M \rvert_{\bar{\mathfrak{P}}_{\triv}} \in Ind-Rep(\bar{\mathfrak{P}}_{\triv})_{GL \left( V / \bC \triv \right)-poly}$$
  }
  \item \InnaA{$M$ is a finitely generated $\mathcal{U}(\gl(V))$-module.}
 \end{itemize}

 \InnaA{We will also use the notation $Ind-\co^{\mathfrak{p}}_{V}$ to denote the Ind-completion of $\co^{\mathfrak{p}}_{V}$ (i.e.  full subcategory of $Mod_{\mathcal{U}(\gl(V))}$ whose objects $M$ satisfy the first of the above conditions).}
\end{definition}
When the space $V$ is fixed, we will sometimes omit the subscript $V$ and write $\co^{\mathfrak{p}}$ for short.

We now fix a splitting $V = \InnaB{\bC \triv} \oplus U$. The Lie algebra $\InnaA{\bar{\mathfrak{p}}_{\InnaB{\bC \triv}}}$ can then be expressed as $\InnaA{\bar{\mathfrak{p}}_{\InnaB{\bC \triv}}} \cong \mathfrak{gl}(U) \ltimes U^*$, and we have: $\mathfrak{u}_{\mathfrak{p}}^{+} \cong U^*$, $\bar{\mathfrak{p}}_{\InnaB{\bC \triv}} \cong \mathfrak{u}_{\mathfrak{p}}^{+} \oplus \gl(U)$.

Moreover, in that case we have a splitting $\gl(V) \cong \mathfrak{p} \oplus \mathfrak{u}_{\mathfrak{p}}^{-}$, where $\mathfrak{u}_{\mathfrak{p}}^{-} \cong U$. This gives us an analogue of the triangular decomposition:
$$\gl(V) \cong \bC   \id_V \oplus \mathfrak{u}_{\mathfrak{p}}^{-} \oplus \mathfrak{u}_{\mathfrak{p}}^{+}  \oplus \gl(U)$$

We can now rewrite the above definition of the parabolic category $\co$ (compare with the usual definition in \cite[Section 9.3]{H}):
\begin{definition}\label{def:par_cat_O_splitting}
 We define the parabolic category $\co$ for $(\gl(V), \mathfrak{p})$, denoted by $\co^{\mathfrak{p}}_{V}$, to be the full subcategory of $Mod_{\mathcal{U}(\gl(V))}$ whose objects $M$ satisfy the following conditions:
 \begin{itemize}
 \item Viewed as a $\mathcal{U}(\gl(U))$-module, $M$ is a direct sum of polynomial simple $\mathcal{U}(\gl(U))$-modules (that is, $M$ belongs to $Ind-Mod_{\mathcal{U}(\gl(U)), poly}$).
  \item $M$ is locally finite over $\mathfrak{u}_{\mathfrak{p}}^{+}$.
  \item \InnaA{$M$ is a finitely generated $\mathcal{U}(\gl(V))$-module.}
 \end{itemize}
\end{definition}
\begin{remark}
 \InnaA{One can replace the requirement that $\mathfrak{u}_{\mathfrak{p}}^{+}$ act locally finitely on $M$ by the requirement that $\mathcal{U}(\mathfrak{u}_{\mathfrak{p}}^{+})$ act locally nilpotently on $M$}.
\end{remark}

The next propositions are based on \cite[Section 9.3]{H} as well:
\begin{proposition}
 The category $\co^{\mathfrak{p}}_{V}$ (resp. $Ind-\co^{\mathfrak{p}}_{V}$) is closed under taking direct sums, submodules, quotients and extensions in $\co_{\gl(V)}$, as well as tensoring with finite dimensional $\gl(V)$-modules. 
\end{proposition}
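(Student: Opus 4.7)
The plan is to work with Definition~\ref{def:par_cat_O_splitting}, which, after fixing a splitting $V=\bC\triv\oplus U$, cuts out $\co^{\mathfrak{p}}_{V}$ by three separately checkable conditions: (i) $M\rvert_{\gl(U)}\in Ind\text{-}Mod_{\mathcal{U}(\gl(U)),poly}$; (ii) $\mathfrak{u}_{\mathfrak{p}}^{+}$ acts locally finitely on $M$; (iii) $M$ is finitely generated over $\mathcal{U}(\gl(V))$. Membership in $Ind\text{-}\co^{\mathfrak{p}}_{V}$ is cut out by (i) and (ii) alone. I would verify each of the five closure properties one condition at a time.

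Conditions (i) and (ii) are handled by routine inspection. The subcategory $Mod_{\mathcal{U}(\gl(U)),poly}$ is semisimple abelian, hence closed under subobjects, quotients, finite direct sums, extensions, and tensor products with its finite-dimensional objects (the last step being a Littlewood--Richardson check); all these stability properties pass to the Ind-completion. Local finiteness of $\mathfrak{u}_{\mathfrak{p}}^{+}$ is obviously inherited by submodules, quotients, and direct sums, is preserved under extensions by lifting a vector through the short exact sequence and invoking local finiteness on both the sub and the quotient, and persists under tensor with a finite-dimensional $F$. These observations already establish all of the asserted stability properties for $Ind\text{-}\co^{\mathfrak{p}}_{V}$.

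The main obstacle is preserving (iii) under passage to submodules; for quotients, finite direct sums, extensions, and tensor products with finite-dimensional $F$, finite generation is automatic. For submodules I would combine (i), (ii), and a standard $\mathfrak{p}$-local finiteness argument to show that any $M\in\co^{\mathfrak{p}}_{V}$ is generated over $\mathcal{U}(\gl(V))$ by a finite-dimensional $\mathfrak{p}$-stable subspace $M_{0}$ (take finitely many $\mathcal{U}(\gl(V))$-generators, and replace each by the $\mathcal{U}(\mathfrak{p})$-submodule it spans, which is finite-dimensional because $\gl(U)$ acts locally finitely, $\mathfrak{u}_{\mathfrak{p}}^{+}$ is abelian and locally finite, and $\id_V$ commutes with $\gl(U)$). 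The triangular decomposition $\gl(V)=\mathfrak{u}_{\mathfrak{p}}^{-}\oplus \mathfrak{p}$ together with PBW then yields $M=\mathcal{U}(\mathfrak{u}_{\mathfrak{p}}^{-})\cdot M_{0}$; since $\mathfrak{u}_{\mathfrak{p}}^{-}\cong U$ is an abelian Lie algebra, $\mathcal{U}(\mathfrak{u}_{\mathfrak{p}}^{-})\cong S(U)$ is a Noetherian commutative ring, and $M$ is a finitely generated $S(U)$-module. Any $\gl(V)$-submodule $N\subset M$ is a fortiori an $S(U)$-submodule, hence finitely generated over $S(U)$ by Noetherianity, hence over $\mathcal{U}(\gl(V))$. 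Assembling these checks completes the proof; the Noetherianity step above is where essentially all the real work lies, the remainder being bookkeeping with semisimplicity of polynomial $\gl(U)$-modules and with local finiteness of $\mathfrak{u}_{\mathfrak{p}}^{+}$.
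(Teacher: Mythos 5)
Your reconstruction parallels the paper's treatment (a bare citation to Humphreys, Chapter~9) and is the standard proof, with Noetherianity of $S(U)$ carrying the weight. One step, however, is not justified as stated. You claim that $\mathcal{U}(\mathfrak{p})\cdot v$ is finite-dimensional for each generator $v$, citing local finiteness of $\gl(U)$, local finiteness of $\mathfrak{u}_{\mathfrak{p}}^{+}$, and the fact that $\id_V$ commutes with $\gl(U)$; but centrality of $\id_V$ does not give local finiteness of $\id_V$, and none of the three conditions of Definition~\ref{def:par_cat_O_splitting} forces it. (For instance, inducing from the $\mathfrak{p}$-module $\bC[x]$ with $\gl(U)$ and $\mathfrak{u}_{\mathfrak{p}}^{+}$ acting by zero and $\id_V$ by multiplication by $x$ produces a finitely generated module in $\co^{\mathfrak{p}}_{V}$ on which $\bC[\id_V]\cdot v$ is infinite-dimensional.) The repair is cheap: take $M_{0}:=\mathcal{U}(\bar{\mathfrak{p}}_{\bC\triv})\cdot S$, which your argument does show to be finite-dimensional, and run the Noetherian step over the commutative ring $\mathcal{U}(\mathfrak{u}_{\mathfrak{p}}^{-})\cdot\bC[\id_V]\cong S(U\oplus\bC)$, noting that $M=S(U\oplus\bC)\cdot M_{0}$ and that any $\gl(V)$-submodule is in particular an $S(U\oplus\bC)$-submodule. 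Alternatively, import Humphreys' definition verbatim, which asks for semisimplicity over the full Levi $\bC\id_V\oplus\gl(U)$ and so forces local finiteness of $\id_V$.

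A second, more cosmetic point: closure under tensoring by an \emph{arbitrary} finite-dimensional $\gl(V)$-module $F$ is in tension with the polynomiality condition. For $F=V^{*}$ the restriction $F\rvert_{\gl(U)}$ contains $U^{*}$, which is not a polynomial $\gl(U)$-module, so $M\otimes F$ can leave $Ind-Mod_{\mathcal{U}(\gl(U)),poly}$. Your Littlewood--Richardson check is correct precisely for those $F$ whose restriction to $\gl(U)$ is polynomial (tensor powers of $V$ in particular, which is all the paper uses), and should be stated with that restriction; the literal statement is lifted from Humphreys' $\co^{\mathfrak{p}}$, where no polynomiality is imposed and the issue does not arise.
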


\InnaB{
Recall that in the category $\co$ we have the notion of a duality (c.f. \cite[Section 3.2]{H}): namely, given a $\gl(V)$-module $M$ with finite-dimensional weight spaces, we can consider the twisted action of $\gl(V)$ on the dual space $M^*$, given by $A.f := f \circ A^{T}$, where $A^T$ means the transpose of $A \in \gl(V)$. This gives us a $\gl(V)$-module $M^{\ast}$. We then take $M^{\vee}$ to be the maximal submodule of $M^{\ast}$ lying in the category $\co$. The module $M^{\vee}$ is called the dual of $M$ in $\co$, and we get an exact functor $(\cdot)^{\vee}: \co \rightarrow \co^{op}$. 

\begin{proposition}
 The category \InnaA{$\co^{\mathfrak{p}}_{V}$ is closed under taking duals, and} the duality functor $(\cdot)^{\vee}: \co^{\mathfrak{p}}_{V} \rightarrow \left( \co^{\mathfrak{p}}_{V} \right)^{op}$ is an equivalence of categories.
\end{proposition}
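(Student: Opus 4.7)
The plan is to reduce everything to the standard duality on the ambient category $\co$ and then carefully verify that it preserves the three defining conditions of $\co^{\mathfrak{p}}_V$. Once we show that $M^{\vee} \in \co^{\mathfrak{p}}_V$ whenever $M \in \co^{\mathfrak{p}}_V$, the rest is automatic: the natural isomorphism $(M^{\vee})^{\vee} \cong M$ and the exactness of $(\cdot)^{\vee}$ on $\co$ recalled from \cite[Section 3.2]{H} will immediately upgrade the resulting contravariant functor $\co^{\mathfrak{p}}_V \to (\co^{\mathfrak{p}}_V)^{op}$ to an equivalence. Thus the core of the proof is checking that the twisted $\gl(V)$-action $A.f := f \circ A^T$ on $M^{\vee}$ leaves the parabolic, polynomial, and finite-generation conditions intact.

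To perform these checks, I would fix a splitting $V = \bC \triv \oplus U$ and use the equivalent description in Definition \ref{def:par_cat_O_splitting}. Observe that the transpose anti-involution of $\gl(V)$ restricts to the transpose on the Levi $\bC \id_V \oplus \gl(U)$ and swaps $\mathfrak{u}_{\mathfrak{p}}^{+}$ with $\mathfrak{u}_{\mathfrak{p}}^{-}$.
\begin{enumerate}
\item[(a)] \emph{Polynomial $\gl(U)$-semisimplicity of $M^{\vee}$.} Since $\id_V \in \mathfrak{h}$ and the Cartan $\mathfrak{h}$ is pointwise fixed by transpose, the $\mathfrak{h}$-weight space decomposition is preserved under duality with the same weights and the same (finite) dimensions. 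Restricting to $\gl(U)$, this means the $\mathfrak{h}_U$-character of $M^{\vee}$ equals that of $M$. Since the transpose is the Chevalley-type anti-involution of $\gl(U)$, each polynomial simple $S^{\lambda}U$ is isomorphic to its own transpose-twisted dual; combining these two facts gives $M^{\vee}|_{\gl(U)} \cong M|_{\gl(U)}$.
\item[(b)] \emph{Local $\mathfrak{u}_{\mathfrak{p}}^{+}$-finiteness of $M^{\vee}$.} Any $f \in M^{\vee}$ is supported on finitely many weight spaces $M_{\mu_1}^{*}, \dots, M_{\mu_r}^{*}$. For $X \in \mathfrak{u}_{\mathfrak{p}}^{+}$, we have $(X^n.f)(v) = f((X^T)^n v)$ with $X^T \in \mathfrak{u}_{\mathfrak{p}}^{-}$; since $\mathfrak{u}_{\mathfrak{p}}^{-}$ shifts weights by roots lying in one open half-space, iterated application pushes every weight space of $M$ out of $\{\mu_1, \dots, \mu_r\}$ after finitely many steps, so $X$ acts locally nilpotently on $f$. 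Because $\mathfrak{u}_{\mathfrak{p}}^{+}$ is finite-dimensional and abelian, $\mathcal{U}(\mathfrak{u}_{\mathfrak{p}}^{+}).f$ is finite-dimensional.
\item[(c)] \emph{Finite generation of $M^{\vee}$.} Objects of $\co^{\mathfrak{p}}_V$ have finite length (this is a standard consequence of the highest weight structure of parabolic $\co$, analogous to the Artinianness stated for $\co^{\mathfrak{p}}_{t,V}$). Since $(\cdot)^{\vee}$ is an exact contravariant functor that takes simples to simples (being involutive), $M^{\vee}$ has the same length as $M$, and in particular is finitely generated.
\end{enumerate}

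The main obstacle I anticipate is step (a): the $\mathfrak{h}_U$-weight spaces of a module in $\co^{\mathfrak{p}}_V$ need not be finite-dimensional (only the finer $\mathfrak{h}$-weight spaces are), so the character argument must be carried out at the level of the $\mathfrak{h}$-character and transferred to the $\gl(U)$-decomposition via the polynomial structure. The cleanest route is to package this through the mirabolic integrability: the transpose preserves the Levi subgroup $GL(U) \subset \bar{\mathfrak{P}}_{\triv}$ (sending $g \in GL(U)$ to $g^T$), so polynomiality of the $GL(U)$-action on each weight-graded piece is inherited by $M^{\vee}$, and $GL(U)$-integrability of the action passes to the dual since its kernel in $\gl(U)$ is the same. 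The remaining conditions (b) and (c) are then straightforward given the standard framework of duality in category $\co$.
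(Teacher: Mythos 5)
Your proof is correct. The paper gives no explicit argument here --- it simply states the proposition after recalling the duality $(\cdot)^{\vee}$ from \cite[Section 3.2]{H}, deferring to the standard treatment of parabolic category $\co$ in \cite[Section 9.3]{H}. Your proposal supplies the details along exactly the expected lines: reduce to the ambient duality on $\co$, use the fact that the transpose anti-involution fixes $\mathfrak{h}$ pointwise, preserves the Levi $\bC\id_V \oplus \gl(U)$, and swaps $\mathfrak{u}_{\mathfrak{p}}^{+}$ with $\mathfrak{u}_{\mathfrak{p}}^{-}$, then verify the three conditions of Definition \ref{def:par_cat_O_splitting}. The one genuinely paper-specific verification is polynomiality in (a), since the paper's $\co^{\mathfrak{p}}_V$ is cut out of the usual parabolic category $\co$ by an additional requirement that the Levi part be a direct sum of \emph{polynomial} simples; you handle this correctly by observing that the transpose preserves highest weights of $\gl(U)$-simples, so $S^{\lambda}U$ is self-dual under the twisted duality.

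One small remark on the obstacle you flag in (a): it is not actually there. Because $M$ is finitely generated, $\id_V$ has only finitely many eigenvalues on $M$, so $M$ decomposes as a finite direct sum of degree-$\nu$ pieces; within each such piece the $\mathfrak{h}_U$-weight determines the full $\mathfrak{h}$-weight (since $\lambda_0 = \nu - \sum_{i\geq 1}\lambda_i$), hence the $\mathfrak{h}_U$-weight spaces are themselves finite-dimensional. Your workaround of arguing at the level of the $\mathfrak{h}$-character is still valid, just not strictly necessary. The rest of the argument --- local nilpotence of $\mathfrak{u}_{\mathfrak{p}}^{+}$ on $M^{\vee}$ via weight bounds as in Humphreys, finite generation via finite length and exactness/involutivity of duality, and the automatic upgrade to an equivalence once closure is established --- is sound.
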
}

\begin{definition}\label{def:degree_gl_module}
 A module $M$ over the Lie algebra $\gl(V)$ will be said to be of \InnaA{degree} $K \in \bC$ if $\id_V \in \gl(V)$ acts by $K   \id_M$ on $M$.
\end{definition}

We will denote by $\co^{\mathfrak{p}}_{\nu, V}$ the full subcategory of $\co^{\mathfrak{p}}_{V}$ whose objects are modules of \InnaA{degree} $\nu$.

\subsection{Structure of the category $\co^{\mathfrak{p}}_{\nu, V}$}\label{ssec:structure_cat_O}

In this subsection, we present some basic facts about the parabolic category $\co$ for $(\gl(V), \mathfrak{p})$ of \InnaA{degree} $\nu$ and the indecomposable objects inside it.

Fix $\nu \in \bC$, \InnaB{and let $(V, \triv)$ be a finite-dimensional unital vector space with a fixed splitting $V = \bC \triv \oplus U$.}

\begin{definition}\label{def:parabolic_Verma_mod}
Let $\lambda$ be a Young diagram.
\InnaA{
 $M_{\mathfrak{p}}(\nu-\abs{\lambda}, \lambda)$ is defined to be the $\gl(V)$-module $$\mathcal{U}(\gl(V)) \otimes_{\mathcal{U}(\mathfrak{p})} S^{\lambda} U$$ where $\gl(U)$ acts naturally on $S^{\lambda} U$, $\id_V \in \mathfrak{p}$ acts on $S^{\lambda} U$ by scalar $\nu$, and $\mathfrak{u}_{\mathfrak{p}}^{+}$ acts on $S^{\lambda} U$ by zero.
 
 Thus $M_{\mathfrak{p}}(\nu-\abs{\lambda}, \lambda)$ is the parabolic Verma module for $(\mathfrak{gl}(V), \mathfrak{p})$ with highest weight $(\nu-\abs{\lambda}, \lambda)$ iff $\dim V -1 \geq \ell(\lambda)$, and zero otherwise.
}
\end{definition}

 We will sometimes refer to the parabolic Verma modules as ``standard modules''.

\begin{definition}
 $L(\nu-\abs{\lambda}, \lambda)$ is defined to be zero (if $\dim V -1 < \ell(\lambda)$), or the simple module for $\mathfrak{gl}(V)$ of highest weight $(\nu-\abs{\lambda}, \lambda)$ otherwise.
\end{definition}

The following basic lemma will be very helpful:
\begin{lemma}\label{lem:gl_u_struct_o_cat}
 Let $\lambda$ such that $  \ell(\lambda) \leq \dim V -1$. We then have an isomorphism of $\gl(U)$-modules:
$$M_{\mathfrak{p}}(\nu-\abs{\lambda}, \lambda) \cong SU \otimes S^{\lambda} U$$
\end{lemma}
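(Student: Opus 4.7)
The plan is to deduce the isomorphism directly from the PBW theorem applied to the triangular-type decomposition $\gl(V) \cong \mathfrak{p} \oplus \mathfrak{u}_{\mathfrak{p}}^{-}$ that was recorded right before the lemma.

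First, I would invoke PBW for the pair $\mathfrak{p} \subset \gl(V)$: since the vector space decomposition $\gl(V) = \mathfrak{u}_{\mathfrak{p}}^{-} \oplus \mathfrak{p}$ holds and $\mathfrak{u}_{\mathfrak{p}}^{-}$ is a Lie subalgebra of $\gl(V)$, multiplication gives an isomorphism of right $\mathcal{U}(\mathfrak{p})$-modules
$$\mathcal{U}(\mathfrak{u}_{\mathfrak{p}}^{-}) \otimes_{\bC} \mathcal{U}(\mathfrak{p}) \;\xrightarrow{\sim}\; \mathcal{U}(\gl(V)).$$
Tensoring both sides over $\mathcal{U}(\mathfrak{p})$ with $S^{\lambda} U$ yields an isomorphism of $\bC$-vector spaces
$$M_{\mathfrak{p}}(\nu-\abs{\lambda},\lambda) \;\cong\; \mathcal{U}(\mathfrak{u}_{\mathfrak{p}}^{-}) \otimes_{\bC} S^{\lambda} U.$$

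Next, since $\mathfrak{u}_{\mathfrak{p}}^{-}$ is an abelian Lie subalgebra (as $[\mathfrak{u}_{\mathfrak{p}}^{-}, \mathfrak{u}_{\mathfrak{p}}^{-}] = 0$ is immediate from the description of its elements), its universal enveloping algebra is just its symmetric algebra: $\mathcal{U}(\mathfrak{u}_{\mathfrak{p}}^{-}) \cong S(\mathfrak{u}_{\mathfrak{p}}^{-})$. The identification $\mathfrak{u}_{\mathfrak{p}}^{-} \cong U$ used throughout is, crucially, an isomorphism of $\gl(U)$-modules, because the adjoint action of $\gl(U) \subset \mathfrak{p}$ on $\mathfrak{u}_{\mathfrak{p}}^{-}$ (via the bracket in $\gl(V)$) corresponds under the splitting $V = \bC\triv \oplus U$ to the tautological action on $U$. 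Passing to symmetric algebras gives $\mathcal{U}(\mathfrak{u}_{\mathfrak{p}}^{-}) \cong SU$ as $\gl(U)$-modules.

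Finally, I would verify that the above $\bC$-linear isomorphism is compatible with the $\gl(U)$-action on both sides. On the left, $\gl(U)$ acts through its embedding $\gl(U) \hookrightarrow \mathfrak{p}$ by left multiplication in $\mathcal{U}(\gl(V))$; rewriting a typical element $x \cdot (u \otimes s)$ with $x \in \gl(U)$, $u \in \mathcal{U}(\mathfrak{u}_{\mathfrak{p}}^{-})$, $s \in S^{\lambda} U$, and using $[x,u] \in \mathcal{U}(\mathfrak{u}_{\mathfrak{p}}^{-})$ together with $[\mathfrak{u}_{\mathfrak{p}}^{-}, \mathfrak{u}_{\mathfrak{p}}^{-}] = 0$, one obtains $x \cdot (u \otimes s) = \mathrm{ad}(x)(u) \otimes s + u \otimes x.s$, which is exactly the tensor product action on $SU \otimes S^{\lambda}U$. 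This is a short routine check and I do not anticipate any real obstacle; the only thing worth highlighting is the role of the hypothesis $\ell(\lambda) \leq \dim V - 1$, which is needed precisely so that $S^{\lambda} U \neq 0$ and the parabolic Verma module is genuinely nonzero.
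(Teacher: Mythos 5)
Your proof is correct and is precisely the argument the paper compresses into one line ("Follows directly from the definition of $M_{\mathfrak{p}}(\nu-\abs{\lambda}, \lambda)$ and the PBW theorem for $\gl(V)$"): PBW for the decomposition $\gl(V)=\mathfrak{u}_{\mathfrak{p}}^{-}\oplus\mathfrak{p}$, the identification $\mathcal{U}(\mathfrak{u}_{\mathfrak{p}}^{-})\cong SU$ via abelianness of $\mathfrak{u}_{\mathfrak{p}}^{-}\cong U$, and the $\gl(U)$-equivariance of the resulting vector-space isomorphism. The equivariance check you sketch (the adjoint action of $\gl(U)\subset\mathfrak{p}$ on $\mathfrak{u}_{\mathfrak{p}}^{-}$ being the tautological action on $U$) is the one substantive point worth spelling out, and you handle it correctly.
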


\begin{proof}
Follows directly from the definition of $M_{\mathfrak{p}}(\nu-\abs{\lambda}, \lambda)$ and the PBW theorem for $\gl(V)$.
\end{proof}

\begin{proposition}\label{prop:par_cat_O_Vermas}
\mbox{}
\begin{enumerate}
\item Let $\mu, \T$ be two Young diagrams. Then $$\dim \Hom_{\co^{\mathfrak{p}}_{\nu, V}} \left( M_{\mathfrak{p}}(\nu-\abs{\mu}, \mu), M_{\mathfrak{p}}(\nu-\abs{\T}, \T) \right) =0$$
if $\mu$, $\T$ lie in different $\stackrel{\nu}{\sim}$-classes.

\item Fix a non-trivial $\stackrel{\nu}{\sim}$-class $ \{\lambda^{(i)}\}$, $\lambda^{(0)} \subset \lambda^{(1)} \subset \lambda^{(2)} \subset ...$. For any $i, j \in \bZ_+$, we have
\begin{align*}
&\dim \Hom_{\co^{\mathfrak{p}}_{\nu, V}} \left( M_{\mathfrak{p}}(\nu-\abs{\lambda^{(j)}}, \lambda^{(j)}), M_{\mathfrak{p}}(\nu-\abs{\lambda^{(i)}}, \lambda^{(i)}) \right) = 0 \, \text{ if } j \neq i, i+1, \text{ and }\\
&\dim \Hom_{\co^{\mathfrak{p}}_{\nu, V}} \left( M_{\mathfrak{p}}(\nu-\abs{\lambda^{(i+1)}}, \lambda^{(i+1)}), M_{\mathfrak{p}}(\nu-\abs{\lambda^{(i)}}, \lambda^{(i)}) \right) = 1 \, \text{ if } \dim V  -1  \geq \ell(\lambda^{(i+1)})
\end{align*}
\item \InnaA{For any Young diagram $\lambda$ such that $\dim V  -1  \geq \ell(\lambda) $, we have:
$$\dim \End_{\co^{\mathfrak{p}}_{\nu, V}} \left( M_{\mathfrak{p}}(\nu-\abs{\lambda}, \lambda) \right) =1$$}
\end{enumerate}
\end{proposition}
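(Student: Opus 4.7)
My plan is to treat (1) via a central-character argument, (3) by a direct Frobenius reciprocity computation, and (2) by combining the same reciprocity with a Pieri-rule analysis and a Shapovalov-type singular-vector construction. For (1), I will compute the central character of $M_{\mathfrak{p}}(\nu-\abs{\lambda},\lambda)$ via the Harish-Chandra isomorphism: it is determined by the multiset $\{\nu-\abs{\lambda},\lambda_1-1,\ldots,\lambda_{n-1}-(n-1)\}$ (with $n=\dim V$) modulo permutation. This is exactly the first $n$ entries of the Comes--Ostrik sequence $\mu_\lambda(\nu)$, while the tail $-n,-(n+1),\ldots$ is independent of $\lambda$ once $\ell(\lambda)\le n-1$. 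Hence central characters agree precisely on $\stackrel{\nu}{\sim}$-classes, so modules from different classes lie in different blocks of $\co^{\mathfrak{p}}_{\nu,V}$ and admit no morphisms between them.

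For (3), I will apply Frobenius reciprocity to identify $\End\bigl(M_{\mathfrak{p}}(\nu-\abs{\lambda},\lambda)\bigr)\cong\Hom_{\mathfrak{p}}\bigl(S^{\lambda}U,\,M_{\mathfrak{p}}(\nu-\abs{\lambda},\lambda)\bigr)$. Combining Lemma~\ref{lem:gl_u_struct_o_cat} with the Pieri rule, as $\gl(U)$-modules $M_{\mathfrak{p}}(\nu-\abs{\lambda},\lambda)\cong\bigoplus_{\sigma\in\mathcal{I}^{+}_{\lambda}}S^{\sigma}U$, and $S^{\lambda}U$ occurs there exactly once---at $\mathfrak{u}_{\mathfrak{p}}^{-}$-depth zero, where $\abs{\sigma}=\abs{\lambda}$ forces $\sigma=\lambda$. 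This top copy is annihilated by $\mathfrak{u}_{\mathfrak{p}}^{+}$ by construction and carries the correct $\id_V$-weight, yielding the unique $\mathfrak{p}$-equivariant map up to scalar, so $\dim\End=1$.

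For (2), the same reciprocity reduces the question to detecting $\mathfrak{u}_{\mathfrak{p}}^{+}$-invariant copies of $S^{\lambda^{(j)}}U$ inside $M_{\mathfrak{p}}(\nu-\abs{\lambda^{(i)}},\lambda^{(i)})$. By Pieri, $S^{\lambda^{(j)}}U$ occurs in $SU\otimes S^{\lambda^{(i)}}U$ only when $\lambda^{(j)}\supseteq\lambda^{(i)}$ and $\lambda^{(j)}/\lambda^{(i)}$ is a horizontal strip; by the chain structure of Lemma~\ref{lem:nu_classes_struct} this forces $j\ge i$, and for $j\ge i+2$ the successive added row-strips overlap in a common column (the row-$(i+1)$ strip starts at the column where the row-$(i+2)$ strip ends), so $\lambda^{(j)}/\lambda^{(i)}$ is not horizontal and $\Hom$ already vanishes at the $\gl(U)$-level. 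The main obstacle is the case $j=i+1$: Pieri gives a unique $\gl(U)$-copy of $S^{\lambda^{(i+1)}}U$ inside $M_{\mathfrak{p}}(\nu-\abs{\lambda^{(i)}},\lambda^{(i)})$, but its $\mathfrak{u}_{\mathfrak{p}}^{+}$-invariance is not automatic---it is here that the $\stackrel{\nu}{\sim}$-linkage condition enters. I will construct the required singular vector by a Shapovalov-type calculation using $\mathfrak{u}_{\mathfrak{p}}^{+}\cong U^{*}$: the length of the strip $\lambda^{(i+1)}/\lambda^{(i)}$, as prescribed by Lemma~\ref{lem:nu_classes_struct}, is exactly what causes the relevant Shapovalov form to degenerate in the appropriate weight and produces a one-dimensional kernel of $\mathfrak{u}_{\mathfrak{p}}^{+}$ in the Pieri copy. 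Uniqueness then follows from Schur's lemma applied to $\gl(U)$-equivariance.
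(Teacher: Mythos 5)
Your treatments of parts (1) and (3) match the paper's approach. For (1), the central-character computation via the Harish-Chandra isomorphism is precisely the $W$-linkage argument the paper uses --- the multiset you describe is exactly what linkage preserves, and the observation that the tail is fixed once $\ell(\lambda)\le\dim V-1$ makes the correspondence with $\stackrel{\nu}{\sim}$-classes transparent. For (3), the Frobenius reciprocity/Pieri computation showing $S^{\lambda}U$ occurs once, at the top grade, is what the paper extracts from Lemma~\ref{lem:gl_u_struct_o_cat}.

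For (2), the nonexistence statement ($j\neq i,i+1$) is handled by the same Pieri column-overlap argument. The divergence is in proving the \emph{existence} of a nonzero morphism $M_{\mathfrak{p}}(\nu-\abs{\lambda^{(i+1)}},\lambda^{(i+1)})\to M_{\mathfrak{p}}(\nu-\abs{\lambda^{(i)}},\lambda^{(i)})$. You propose a Shapovalov-form degeneracy computation, but this is the genuinely hard part of the proposition and is left as a sketch: the claimed degeneracy of the contravariant pairing at the Pieri copy of $S^{\lambda^{(i+1)}}U$, for exactly the strip length dictated by Lemma~\ref{lem:nu_classes_struct}, is a nontrivial calculation that would need to be carried out explicitly, and as written this is a gap.

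The paper avoids this computation with a soft induction. For the base case, $M_{\mathfrak{p}}(\nu-\abs{\lambda^{(0)}},\lambda^{(0)})$ cannot be simple: $(\nu-\abs{\lambda^{(0)}},\lambda^{(0)})$ is a dominant integral weight, so $L(\nu-\abs{\lambda^{(0)}},\lambda^{(0)})$ is finite-dimensional, while the parabolic Verma module is infinite-dimensional by Lemma~\ref{lem:gl_u_struct_o_cat}. For the inductive step, the nonzero map $M_{\mathfrak{p}}(\nu-\abs{\lambda^{(i)}},\lambda^{(i)})\to M_{\mathfrak{p}}(\nu-\abs{\lambda^{(i-1)}},\lambda^{(i-1)})$ supplied by the inductive hypothesis is never injective (the $\gl(U)$-decompositions are incompatible), so $M_{\mathfrak{p}}(\nu-\abs{\lambda^{(i)}},\lambda^{(i)})$ has a proper nonzero submodule; by part~(1) and the Frobenius-reciprocity upper bound already established, the only parabolic Verma module that can map nontrivially into it (other than itself) is $M_{\mathfrak{p}}(\nu-\abs{\lambda^{(i+1)}},\lambda^{(i+1)})$, giving the desired nonzero morphism. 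This buys you the existence for free, whereas your route requires an explicit weight-space determinant; I recommend the substitution.
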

\begin{proof}
\begin{enumerate}[leftmargin=*]
\item Consider a $\mathfrak{gl}(V)$-morphism $M_{\mathfrak{p}}(\nu-\abs{\mu}, \mu) \longrightarrow M_{\mathfrak{p}}(\nu-\abs{\T}, \T)$, and assume it is not zero. Then the weights $(\nu-\abs{\mu}, \mu)$, $(\nu-\abs{\T}, \T)$ are $W$-linked, i.e. there exists an element $w$ in the Weyl group such that $ w ((\nu-\abs{\mu}, \mu) +\rho) = (\nu-\abs{\T}, \T) +\rho$, where $\rho = (\dim V , \dim V -1,\dim V -2, ...) = \dim V (1, 1, 1,...) -(1, 2, 3,...)$.

This is equivalent to saying that $w(\nu-\abs{\mu}, \mu_1-1, \mu_2-2, ...) = (\nu-\abs{\T}, \T_1-1, \T_2-2, ...)$, which means that $\mu$, $\T$ lie in the same $\stackrel{\nu}{\sim}$-class (in fact, we get that $w =(1,2, ..., k) \in S_{\dim V } = Weyl(\mathfrak{gl}(V))$ for some $k>1$).

\item
Consider a non-trivial $\stackrel{\nu}{\sim}$-class $ \{\lambda^{(i)}\}_i, \lambda^{(0)} \subset \lambda^{(1)} \subset \lambda^{(2)} \subset ...$ (recall that this can occur only if $\nu \in \bZ_+$). Let $i, j \geq 0$, and assume there is a non-zero $\mathfrak{gl}(V)$-morphism $M_{\mathfrak{p}}(\nu-\abs{\lambda^{(j)}}, \lambda^{(j)}) \longrightarrow M_{\mathfrak{p}}(\nu-\abs{\lambda^{(i)}}, \lambda^{(i)})$.

Frobenius reciprocity then gives us a morphism of $\mathfrak{gl}(U)$-modules: $$S^{ \lambda^{(j)}}U \longrightarrow M_{\mathfrak{p}}(\nu-\abs{\lambda^{(i)}}, \lambda^{(i)}) \rvert_{\mathfrak{gl}(U)}$$ By Lemma \ref{lem:gl_u_struct_o_cat}, the \InnaA{$\gl(U)$-module $M_{\mathfrak{p}}(\nu-\abs{\lambda^{(i)}}, \lambda^{(i)}) \rvert_{\mathfrak{gl}(U)}$} is either zero (if $\dim(U) = \dim V  -1  < \ell(\lambda^{(i)})$), or isomorphic to $$ S U \otimes S^{\lambda^{(i)}} U \cong \bigoplus_{\mu \in \mathcal{I}^{+}_{\lambda^{(i)}}} S^{\mu} U$$

We immediately conclude that $\lambda^{(i)} \subset \lambda^{(j)}$ (which means that $i \leq j$), and that $\lambda^{(j)} \in \mathcal{I}^{+}_{\lambda^{(i)}}$. In fact, Lemma \ref{lem:nu_classes_struct} implies that $j=i+1$ in that case, since for $j \geq i+2$, we get:  $\lambda^{(j)}_k = \lambda^{(i)}_k+1$ for any $k=i+2, ..., j$, contradicting $\lambda^{(j)} \in \mathcal{I}^{+}_{\lambda^{(i)}}$.

It remains to check that
$$\dim \Hom_{\co^{\mathfrak{p}}_{\nu, V}} \left( M_{\mathfrak{p}}(\nu-\abs{\lambda^{(i+1)}}, \lambda^{(i+1)}), M_{\mathfrak{p}}(\nu-\abs{\lambda^{(i)}}, \lambda^{(i)}) \right) = 1 \, \text{ if } \dim V  -1  \geq \ell(\lambda^{(i+1)})
$$
We start by noticing that the same Frobenius reciprocity argument used above guarantees us that $$\dim \Hom_{\co^{\mathfrak{p}}_{\nu, V}} \left( M_{\mathfrak{p}}(\nu-\abs{\lambda^{(i+1)}}, \lambda^{(i+1)}), M_{\mathfrak{p}}(\nu-\abs{\lambda^{(i)}}, \lambda^{(i)}) \right) \leq 1$$ so we only need to check that if $\dim V  -1  \geq \ell(\lambda^{(i+1)})$, then there exists a non-zero morphism $$M_{\mathfrak{p}}(\nu-\abs{\lambda^{(i+1)}}, \lambda^{(i+1)}) \longrightarrow M_{\mathfrak{p}}(\nu-\abs{\lambda^{(i)}}, \lambda^{(i)})$$

This statement can be proved by induction on $i \geq 0$.

Base: Assume $\dim V  -1  \geq \ell(\lambda^{(0)})$. We need to check that $M_{\mathfrak{p}}(\nu-\abs{\lambda^{(0)}}, \lambda^{(0)})$ is not simple, i.e. isn't equal to $L(\nu-\abs{\lambda^{(0)}}, \lambda^{(0)})$. But the latter is finite-dimensional (since $(\nu-\abs{\lambda^{(0)}}, \lambda^{(0)})$ is an integral dominant weight), while $M_{\mathfrak{p}}(\nu-\abs{\lambda^{(0)}}, \lambda^{(0)})$ clearly isn't finite-dimensional (due to Lemma \ref{lem:gl_u_struct_o_cat}, for example).

Step: Let $i \geq 1$, and assume $\dim V  -1  \geq \ell(\lambda^{(i+1)})$. If there exists a non-zero morphism $M_{\mathfrak{p}}(\nu-\abs{\lambda^{(i)}}, \lambda^{(i)}) \longrightarrow M_{\mathfrak{p}}(\nu-\abs{\lambda^{(i-1)}}, \lambda^{(i-1)})$, then this morphism is not injective (can be seen from Lemma \ref{lem:gl_u_struct_o_cat}), therefore $M_{\mathfrak{p}}(\nu-\abs{\lambda^{(i)}}, \lambda^{(i)})$ is not simple; so there exists a non-zero morphism $M_{\mathfrak{p}}(\nu-\abs{\lambda^{(i+1)}}, \lambda^{(i+1)}) \longrightarrow M_{\mathfrak{p}}(\nu-\abs{\lambda^{(i)}}, \lambda^{(i)})$, as needed.
\item \InnaA{This statement follows immediately from Lemma \ref{lem:gl_u_struct_o_cat}, which gives us an isomorphism of of $\gl(U)$-modules:
$$M_{\mathfrak{p}}(\nu-\abs{\lambda}, \lambda) \cong SU \otimes S^{\lambda} U \cong \bigoplus_{\mu \in \mathcal{I}^{+}_{\lambda}} S^{\mu} U$$}

\end{enumerate}

\end{proof}

The previous proposition immediately implies:
\begin{corollary}\label{cor:parab_Verma_simple}

Let $\lambda$ lie in a trivial $\stackrel{\nu}{\sim}$-class. Then $M_{\mathfrak{p}}(\nu-\abs{\lambda}, \lambda)$ is either zero (iff $\dim V -1 < \ell(\lambda)$), or a simple $\mathfrak{gl}(V)$-module. In particular, if $\nu \notin \bZ_+$, this is true for any Young diagram $\lambda$.
\end{corollary}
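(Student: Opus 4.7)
The corollary should be a direct consequence of Proposition \ref{prop:par_cat_O_Vermas}, so I would split the statement into its three pieces and dispatch each by a short appeal to what has already been established. First, the vanishing when $\dim V - 1 < \ell(\lambda)$ is immediate from Definition \ref{def:parabolic_Verma_mod}: in that range $S^{\lambda} U = 0$, so the induced module is zero. Likewise, the closing ``in particular'' assertion is just Lemma \ref{lem:nu_classes_struct}(1), which guarantees that every Young diagram sits in a trivial $\stackrel{\nu}{\sim}$-class whenever $\nu \notin \bZ_+$.

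The substantive step is simplicity of $M := M_{\mathfrak{p}}(\nu - \abs{\lambda}, \lambda)$ under the assumptions $\dim V - 1 \geq \ell(\lambda)$ and $\lambda$ trivially $\stackrel{\nu}{\sim}$-classified. I plan to argue by contradiction. Suppose $N \subsetneq M$ is a nonzero submodule. Since $N$ belongs to $\co^{\mathfrak{p}}_{\nu, V}$, Definition \ref{def:par_cat_O_splitting} tells us that $N$ is a sum of polynomial simple $\gl(U)$-modules and that $\mathfrak{u}_{\mathfrak{p}}^{+}$ acts locally nilpotently on it. A standard argument (take any $\gl(U)$-submodule that generates a finite-dimensional $\mathfrak{u}_{\mathfrak{p}}^{+}$-stable subspace and pass to the kernel of $\mathfrak{u}_{\mathfrak{p}}^{+}$) then produces a simple $\gl(U)$-submodule $S^{\mu} U \subset N$ annihilated by $\mathfrak{u}_{\mathfrak{p}}^{+}$. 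Because $\id_V$ acts by $\nu$ throughout $M$, Frobenius reciprocity for $\mathfrak{p} \hookrightarrow \gl(V)$ converts this into a nonzero $\gl(V)$-map $f \colon M_{\mathfrak{p}}(\nu - \abs{\mu}, \mu) \to M$ whose image lies in $N$.

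Part (1) of Proposition \ref{prop:par_cat_O_Vermas} now forces $\mu \stackrel{\nu}{\sim} \lambda$, and triviality of the class yields $\mu = \lambda$. Consequently $f$ is a nonzero endomorphism of $M$, so by Part (3) it is a nonzero scalar multiple of $\id_M$, hence an isomorphism. But then $\Im(f) = M \subset N$, contradicting the properness of $N$. This rules out proper nonzero submodules and establishes simplicity.

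The only ingredient that is not a verbatim invocation of Proposition \ref{prop:par_cat_O_Vermas} is the extraction of the highest-weight vector $S^{\mu} U \subset N$, and that is routine from the local nilpotency of $\mathfrak{u}_{\mathfrak{p}}^{+}$ together with the $\gl(U)$-semisimplicity built into the definition of $\co^{\mathfrak{p}}_{\nu, V}$; I therefore do not anticipate a genuine obstacle.
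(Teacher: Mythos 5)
Your proof is correct and is precisely the unpacking of what the paper's terse ``follows from Proposition~\ref{prop:par_cat_O_Vermas}'' has in mind: extract a $\mathfrak{u}_{\mathfrak{p}}^{+}$-highest $\gl(U)$-isotypic component from a putative proper submodule, apply Frobenius reciprocity to get a nonzero morphism from a parabolic Verma module, then use parts~(1) and~(3) of that proposition together with triviality of the $\stackrel{\nu}{\sim}$-class to force the morphism to be a scalar multiple of the identity, contradicting properness. No gaps; the highest-weight extraction is routine given local nilpotency of $\mathfrak{u}_{\mathfrak{p}}^{+}$ and $\gl(U)$-semisimplicity built into Definition~\ref{def:par_cat_O_splitting}.
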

\begin{proof}
 Recall that since $\nu \notin \bZ_+$, each Young diagram $\lambda$ lies in a trivial $\stackrel{\nu}{\sim}$-class (see Lemma \ref{lem:nu_classes_struct}). The result follows from Proposition \ref{prop:par_cat_O_Vermas}.
\end{proof}

\begin{remark}
 Note that Proposition \ref{prop:par_cat_O_Vermas} implies that the category $\co^{\mathfrak{p}}_{\nu, V}$ decomposes into blocks (each of the blocks is an abelian category in its own right). To each $\stackrel{\nu}{\sim}$-class of Young diagrams corresponds a block of $\co^{\mathfrak{p}}_{\nu, V}$ (if for all Young diagrams $\lambda$ in this $\stackrel{\nu}{\sim}$-class, $\ell(\lambda) > \dim V  -1$, then the corresponding block is zero), and to each non-zero block of $\co^{\mathfrak{p}}_{\nu, V}$ corresponds a unique $\stackrel{\nu}{\sim}$-class. 
 
 Proposition \ref{prop:par_cat_O_Vermas} also implies that the block corresponding to a trivial $\stackrel{\nu}{\sim}$-class is either semisimple (i.e. equivalent to the category $Vect_{\bC}$), or zero. 
\end{remark}

Now fix a non-trivial $\stackrel{\nu}{\sim}$-class $ \{\lambda^{(i)}\}_i$, and $i \geq 0$ such that $ \ell(\lambda^{(i\InnaA{+1})}) \leq \dim V -1 $.

Proposition \ref{prop:par_cat_O_Vermas} implies that the maximal non-trivial submodule of $M_{\mathfrak{p}}(\nu-\abs{\lambda^{(i)}}, \lambda^{(i)})$ is $L(\nu-\abs{\lambda^{(i+1)}}, \lambda^{(i+1)})$. We conclude that

\begin{corollary}\label{cor:parab_Verma_ses}
Let $ \{\lambda^{(i)}\}_i$ be a non-trivial $\stackrel{\nu}{\sim}$-class, and $i \geq 0$ be such that $ \ell(\lambda^{(i)}) \leq \dim V -1 $.

Then there is a short exact sequence
$$ 0 \rightarrow L(\nu-\abs{\lambda^{(i+1)}}, \lambda^{(i+1)}) \rightarrow M_{\mathfrak{p}}(\nu-\abs{\lambda^{(i)}}, \lambda^{(i)}) \rightarrow L(\nu-\abs{\lambda^{(i)}}, \lambda^{(i)}) \rightarrow 0$$

\end{corollary}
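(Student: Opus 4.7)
The short exact sequence reduces to two independent claims: (i) the parabolic Verma $M_{\mathfrak{p}}(\nu-\abs{\lambda^{(i)}}, \lambda^{(i)})$ has unique simple quotient $L(\nu-\abs{\lambda^{(i)}}, \lambda^{(i)})$ (a standard fact, since it is generated by a $\mathfrak{p}$-highest weight vector), and (ii) its unique maximal proper submodule is isomorphic to $L(\nu-\abs{\lambda^{(i+1)}}, \lambda^{(i+1)})$, interpreted as zero when $\ell(\lambda^{(i+1)}) > \dim V - 1$. The corollary follows immediately from (ii), which is the substantive content.

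I would establish (ii) by descending induction on $i$, starting from the maximal $i^*$ for which $\ell(\lambda^{(i^*)}) \leq \dim V - 1$ (this exists because by Lemma \ref{lem:nu_classes_struct} the lengths $\ell(\lambda^{(k)})$ grow with $k$). For the base case $i = i^*$, one has $M_{\mathfrak{p}}(\nu-\abs{\lambda^{(i^*+1)}}, \lambda^{(i^*+1)}) = 0$, and a Frobenius reciprocity argument shows $M_{\mathfrak{p}}(\nu-\abs{\lambda^{(i^*)}}, \lambda^{(i^*)})$ is simple: any non-zero proper submodule would contain a simple submodule whose $\mathfrak{p}$-highest weight vector yields a non-zero $\gl(V)$-map from some parabolic Verma $M_{\mathfrak{p}}(\nu-\abs{\mu}, \mu)$, but Proposition \ref{prop:par_cat_O_Vermas}(2) rules out all such maps except those from $M_{\mathfrak{p}}(\nu-\abs{\lambda^{(i^*)}}, \lambda^{(i^*)})$ itself, and the identity does not land in a proper submodule.

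For the inductive step, Proposition \ref{prop:par_cat_O_Vermas}(2) provides a non-zero $\phi \colon M_{\mathfrak{p}}(\nu-\abs{\lambda^{(i+1)}}, \lambda^{(i+1)}) \to M_{\mathfrak{p}}(\nu-\abs{\lambda^{(i)}}, \lambda^{(i)})$, whose image sits inside the maximal proper submodule $N$. Since $\ker\phi$ is a proper submodule of the source, the inductive hypothesis places it inside $L(\nu-\abs{\lambda^{(i+2)}}, \lambda^{(i+2)})$; in fact $\ker\phi$ must equal this, for otherwise $\operatorname{Im}(\phi)$ would contain a copy of $L(\nu-\abs{\lambda^{(i+2)}}, \lambda^{(i+2)})$, producing a non-zero element of the Hom-space $\Hom(M_{\mathfrak{p}}(\nu-\abs{\lambda^{(i+2)}}, \lambda^{(i+2)}), M_{\mathfrak{p}}(\nu-\abs{\lambda^{(i)}}, \lambda^{(i)}))$, contradicting Proposition \ref{prop:par_cat_O_Vermas}(2). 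Therefore $\operatorname{Im}(\phi) \cong L(\nu-\abs{\lambda^{(i+1)}}, \lambda^{(i+1)})$ embeds into $N$.

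The most delicate step is promoting this inclusion to an equality. I would do this via a $\gl(U)$-character count: Lemma \ref{lem:gl_u_struct_o_cat} gives $\operatorname{ch}_{\gl(U)} M_{\mathfrak{p}}(\nu-\abs{\lambda^{(i)}}, \lambda^{(i)}) = \operatorname{ch}_{\gl(U)}(SU \otimes S^{\lambda^{(i)}} U)$, and the same descending induction yields the character formula $\operatorname{ch}_{\gl(U)} L(\nu-\abs{\lambda^{(j)}}, \lambda^{(j)}) = \operatorname{ch}_{\gl(U)} M_{\mathfrak{p}}(\nu-\abs{\lambda^{(j)}}, \lambda^{(j)}) - \operatorname{ch}_{\gl(U)} L(\nu-\abs{\lambda^{(j+1)}}, \lambda^{(j+1)})$ for $j \geq i$; additivity of $\gl(U)$-characters on short exact sequences then forces $N = L(\nu-\abs{\lambda^{(i+1)}}, \lambda^{(i+1)})$. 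The main obstacle is coordinating this character bookkeeping consistently through the induction; an alternative route is an $\Ext^1$ analysis of the sequence $0 \to \operatorname{Im}(\phi) \to M_{\mathfrak{p}}(\nu-\abs{\lambda^{(i)}}, \lambda^{(i)}) \to M_{\mathfrak{p}}(\nu-\abs{\lambda^{(i)}}, \lambda^{(i)})/\operatorname{Im}(\phi) \to 0$, using Proposition \ref{prop:par_cat_O_Vermas}(2) to forbid extra composition factors in the quotient.
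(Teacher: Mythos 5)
Your descending-induction skeleton is sound through the base case and through the identification of $\operatorname{Im}(\phi)\cong L(\nu-\abs{\lambda^{(i+1)}},\lambda^{(i+1)})$ inside the maximal proper submodule $N$, but the final ``$\gl(U)$-character count'' does not actually close the argument. The inductive hypothesis furnishes the equality $\operatorname{ch}_{\gl(U)} L(\lambda^{(j)}) = \operatorname{ch}_{\gl(U)} M_{\mathfrak{p}}(\lambda^{(j)}) - \operatorname{ch}_{\gl(U)} L(\lambda^{(j+1)})$ only for $j>i$; for $j=i$ that identity \emph{is} the assertion under proof. From the available data you have the inequality $\operatorname{ch}_{\gl(U)} N \geq \operatorname{ch}_{\gl(U)} L(\lambda^{(i+1)})$ together with the tautology $\operatorname{ch}_{\gl(U)} N = \operatorname{ch}_{\gl(U)} M_{\mathfrak{p}}(\lambda^{(i)}) - \operatorname{ch}_{\gl(U)} L(\lambda^{(i)})$; without an independent lower bound on the character of $L(\lambda^{(i)})$ (equivalently, an upper bound on $N$), ``additivity of characters'' cannot force $N=L(\lambda^{(i+1)})$. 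The reasoning is circular at exactly the spot you flag as the most delicate.

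The missing ingredient is a direct composition-factor bound, which incidentally makes the descending induction unnecessary. By Lemma \ref{lem:gl_u_struct_o_cat} and Pieri, $M_{\mathfrak{p}}(\nu-\abs{\lambda^{(i)}},\lambda^{(i)})$ restricts as a $\gl(U)$-module to $\bigoplus_{\mu\in\mathcal{I}^{+}_{\lambda^{(i)}}} S^{\mu}U$, multiplicity-free. Any composition factor $L(\nu-\abs{\lambda^{(j)}},\lambda^{(j)})$ must contribute its $\gl(U)$-highest-weight constituent $S^{\lambda^{(j)}}U$ to this restriction, forcing $\lambda^{(j)}\in\mathcal{I}^{+}_{\lambda^{(i)}}$. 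But the description of non-trivial $\stackrel{\nu}{\sim}$-classes in Lemma \ref{lem:nu_classes_struct} shows that $\lambda^{(j)}\in\mathcal{I}^{+}_{\lambda^{(i)}}$ only for $j\in\{i,i+1\}$: for $j\geq i+2$ the skew shape $\lambda^{(j)}\setminus\lambda^{(i)}$ contains two boxes in the same column. Combined with linkage (Proposition \ref{prop:par_cat_O_Vermas}) and the multiplicity-one bound from the restriction, the composition factors of $M_{\mathfrak{p}}(\lambda^{(i)})$ are exactly $L(\lambda^{(i)})$ once and $L(\lambda^{(i+1)})$ at most once, whence the short exact sequence. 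This is essentially what the paper's one-line assertion---that Proposition \ref{prop:par_cat_O_Vermas} identifies the maximal proper submodule---is implicitly using; your ``$\Ext^1$ analysis'' alternative gestures in the right direction (forbidding extra factors) but is not developed to the point of replacing the character bound.
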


\begin{remark}
 Notice that for $i : =\max \{ i\geq 0 \mid \ell(\lambda^{(i)}) \leq \dim V  -1 \}$, we have $$M_{\mathfrak{p}}(\nu-\abs{\lambda^{(i)}}, \lambda^{(i)}) \cong M^{\vee}_{\mathfrak{p}}(\nu-\abs{\lambda^{(i)}}, \lambda^{(i)}) \cong L(\nu-\abs{\lambda^{(i)}}, \lambda^{(i)})$$ 
\end{remark}

We also get the BGG resolution in category $\co^{\mathfrak{p}}_{\nu, V}$ as an immediate corollary:

\begin{corollary}\label{cor:parab_Verma_les}
Let $ \{\lambda^{(i)}\}_i$ be a non-trivial $\stackrel{\nu}{\sim}$-class. Then there is a long exact sequence of $\mathfrak{gl}(V)$-modules (BGG resolution of $L(\nu-\abs{\lambda^{(i)}}, \lambda^{(i)})$ by parabolic Verma modules)
\begin{align*}
 &... \rightarrow M_{\mathfrak{p}}(\nu-\abs{\lambda^{(i+2)}}, \lambda^{(i+2)}) \rightarrow M_{\mathfrak{p}}(\nu-\abs{\lambda^{(i+1)}}, \lambda^{(i+1)}) \rightarrow M_{\mathfrak{p}}(\nu-\abs{\lambda^{(i)}}, \lambda^{(i)}) \rightarrow \\
&\rightarrow L(\nu-\abs{\lambda^{(i)}}, \lambda^{(i)}) \rightarrow 0
\end{align*}
\end{corollary}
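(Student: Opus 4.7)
The plan is to deduce this long exact sequence by splicing together the short exact sequences provided by Corollary \ref{cor:parab_Verma_ses}. Fix a non-trivial $\stackrel{\nu}{\sim}$-class $\{\lambda^{(j)}\}_{j \geq 0}$, and let $j_0 := \max\{j \geq 0 \mid \ell(\lambda^{(j)}) \leq \dim V -1\}$ (this maximum exists and is finite since, by Lemma \ref{lem:nu_classes_struct}, the diagrams $\lambda^{(j)}$ grow by adding a strip in row $j$ for each $j \geq 2$, so the lengths $\ell(\lambda^{(j)})$ strictly increase past any bound). For any $j$ with $j_0 \geq j \geq i$, Corollary \ref{cor:parab_Verma_ses} supplies the short exact sequence
\[ 0 \rightarrow L(\nu-\abs{\lambda^{(j+1)}}, \lambda^{(j+1)}) \stackrel{\iota_j}{\longrightarrow} M_{\mathfrak{p}}(\nu-\abs{\lambda^{(j)}}, \lambda^{(j)}) \stackrel{\pi_j}{\longrightarrow} L(\nu-\abs{\lambda^{(j)}}, \lambda^{(j)}) \rightarrow 0, \]
using the convention that $L(\nu-\abs{\lambda^{(j_0+1)}}, \lambda^{(j_0+1)}) = 0$ (so that the sequence for $j=j_0$ collapses to the statement $M_{\mathfrak{p}}(\nu-\abs{\lambda^{(j_0)}}, \lambda^{(j_0)}) \cong L(\nu-\abs{\lambda^{(j_0)}}, \lambda^{(j_0)})$).

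Next, I would define the differential $d_j : M_{\mathfrak{p}}(\nu-\abs{\lambda^{(j+1)}}, \lambda^{(j+1)}) \rightarrow M_{\mathfrak{p}}(\nu-\abs{\lambda^{(j)}}, \lambda^{(j)})$ as the composition $\iota_j \circ \pi_{j+1}$, for $i \leq j \leq j_0 - 1$. Likewise let $\epsilon := \pi_i : M_{\mathfrak{p}}(\nu-\abs{\lambda^{(i)}}, \lambda^{(i)}) \twoheadrightarrow L(\nu-\abs{\lambda^{(i)}}, \lambda^{(i)})$. The resulting finite sequence
\[ 0 \rightarrow M_{\mathfrak{p}}(\nu-\abs{\lambda^{(j_0)}}, \lambda^{(j_0)}) \stackrel{d_{j_0-1}}{\longrightarrow} \cdots \stackrel{d_i}{\longrightarrow} M_{\mathfrak{p}}(\nu-\abs{\lambda^{(i)}}, \lambda^{(i)}) \stackrel{\epsilon}{\longrightarrow} L(\nu-\abs{\lambda^{(i)}}, \lambda^{(i)}) \rightarrow 0 \]
is then the desired BGG resolution (the dots in the statement of the corollary should thus be understood to terminate after finitely many steps).

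Exactness is routine to verify. At $L(\nu-\abs{\lambda^{(i)}}, \lambda^{(i)})$, surjectivity of $\epsilon$ is given. At $M_{\mathfrak{p}}(\nu-\abs{\lambda^{(i)}}, \lambda^{(i)})$, one has $\ker \epsilon = \ker \pi_i = \operatorname{Im} \iota_i = \operatorname{Im}(\iota_i \circ \pi_{i+1}) = \operatorname{Im} d_i$, where the middle equality uses surjectivity of $\pi_{i+1}$. At a middle term $M_{\mathfrak{p}}(\nu-\abs{\lambda^{(j+1)}}, \lambda^{(j+1)})$ with $j < j_0 - 1$, one has $\operatorname{Im} d_{j+1} = \operatorname{Im}(\iota_{j+1} \circ \pi_{j+2}) = \operatorname{Im} \iota_{j+1} = \ker \pi_{j+1} = \ker(\iota_j \circ \pi_{j+1}) = \ker d_j$, where the last equality uses injectivity of $\iota_j$. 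Injectivity of $d_{j_0-1}$ follows because $d_{j_0-1} = \iota_{j_0-1} \circ \pi_{j_0}$ with $\iota_{j_0-1}$ injective and $\pi_{j_0}$ an isomorphism (by the collapsed sequence at $j = j_0$).

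There is no real obstacle here beyond keeping track of the truncation: the entire content is Corollary \ref{cor:parab_Verma_ses}, and the BGG resolution is obtained by the standard splicing construction. The only thing worth emphasizing is that because $\ell(\lambda^{(j)})$ exceeds $\dim V - 1$ for large $j$, the resolution is actually finite, in contrast with the infinite-looking display in the statement.
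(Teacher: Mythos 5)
Your proof is correct and matches the paper's approach — the paper simply says the long exact sequence "follows immediately from Corollary~\ref{cor:parab_Verma_ses}," and you have spelled out the standard splicing construction that makes this immediate, together with the useful (and correct) observation that the resolution terminates once $\ell(\lambda^{(j)})$ exceeds $\dim V - 1$.
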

\begin{proof}
 Follows immediately from Corollary \ref{cor:parab_Verma_ses}.
\end{proof}
\begin{remark}
 For $i=0$, such a resolution is a special case of BGG resolutions in parabolic category $\co$ discussed in \cite[Chapter 9, Par. 16]{H}.
\end{remark}

We now consider the projective cover $P_{\mathfrak{p}}(\nu-\abs{\lambda}, \lambda)$ of $L(\nu-\abs{\lambda}, \lambda)$ in $\co^{\mathfrak{p}}_{\nu, V}$. The existence of $P_{\mathfrak{p}}(\nu-\abs{\lambda}, \lambda)$ and some of its properties are listed in the following proposition:

\begin{proposition}\label{prop:proj_parab_cat_O_gen}
 \mbox{}
 \begin{enumerate}[leftmargin=*, label=(\alph*)]
  \item Category $\co^{\mathfrak{p}}_{\nu, V}$ has enough projectives; in particular, there exists a projective cover of $L(\nu-\abs{\lambda}, \lambda)$, which will be denoted by $P_{\mathfrak{p}}(\nu-\abs{\lambda}, \lambda)$.
    \item For any Young diagram $\lambda$, the following equality holds:
  $$\dim \Hom_{\co^{\mathfrak{p}}_{\nu, V}}(P_{\mathfrak{p}}(\nu-\abs{\lambda}, \lambda), M) = [M: L(\nu-\abs{\lambda}, \lambda)]$$
  \item The projective module $P_{\mathfrak{p}}(\nu-\abs{\lambda}, \lambda)$ is indecomposable and standardly filtered (i.e. has a filtration where all the successive quotients are parabolic Verma modules).
  \item (BGG reciprocity) The following equality holds for any Young diagrams $\lambda, \mu$:
  $$\left( P_{\mathfrak{p}}(\nu-\abs{\lambda}, \lambda): M_{\mathfrak{p}}(\nu-\abs{\mu}, \mu)\right) = \left[ M_{\mathfrak{p}}(\nu-\abs{\mu}, \mu):L(\nu-\abs{\lambda}, \lambda) \right] $$
  (the brackets in left hand side denote multiplicity in the standard filtration).
  \item The duality functor $(\cdot)^{\vee}: \co^{\mathfrak{p}}_{\nu, V} \rightarrow \left( \co^{\mathfrak{p}}_{\nu, V} \right)^{op}$ takes projective modules to injective modules and vice versa. In particular, there are enough injectives in the category $\co^{\mathfrak{p}}_{\nu, V}$, and the indecomposable injective modules are exactly $P^{\vee}_{\mathfrak{p}}(\nu-\abs{\lambda}, \lambda)$ (which is the injective hull of $L(\nu-\abs{\lambda}, \lambda)$).
  \item Whenever $\lambda$ is not the minimal Young diagram in a non-trivial $\stackrel{\nu}{\sim}$-class, the modules $P_{\mathfrak{p}}(\nu-\abs{\lambda}, \lambda)$ are self-dual and therefore injective. In these cases the following equality holds:
  $$\dim \Hom_{\co^{\mathfrak{p}}_{\nu, V}}(M, P_{\mathfrak{p}}(\nu-\abs{\lambda}, \lambda)) = [M: L(\nu-\abs{\lambda}, \lambda)]$$
 \end{enumerate}

\end{proposition}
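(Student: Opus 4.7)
The plan for parts (a)--(e) is to adapt the standard arguments for parabolic category $\co$ from \cite[Chapters 3 and 9]{H}. For (a), I would produce projectives by the usual translation-functor construction: tensor a parabolic Verma $M_{\mathfrak{p}}(\mu)$ that is already projective in its block (such Vermas exist by Corollary~\ref{cor:parab_Verma_simple}, as Vermas attached to trivial $\stackrel{\nu}{\sim}$-classes generate semisimple blocks) with a finite-dimensional $\gl(V)$-module and then project onto the block of $\co^{\mathfrak{p}}_{\nu,V}$ determined by the desired central character. Taking the indecomposable summand whose cosocle is $L(\nu-\abs{\lambda},\lambda)$ yields the projective cover $P_{\mathfrak{p}}(\nu-\abs{\lambda},\lambda)$. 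Part (b) is then immediate from projectivity: $\Hom_{\co^{\mathfrak{p}}_{\nu,V}}(P_{\mathfrak{p}}(\nu-\abs{\lambda},\lambda),-)$ is exact and sends $L(\nu-\abs{\mu},\mu)$ to $\delta_{\lambda\mu}\bC$ because $L(\nu-\abs{\lambda},\lambda)$ is the unique simple quotient of $P_{\mathfrak{p}}(\nu-\abs{\lambda},\lambda)$.

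For (c), indecomposability follows because $P_{\mathfrak{p}}(\nu-\abs{\lambda},\lambda)$ is a projective cover of a simple, and the standard filtration is inherited from the construction in (a) since parabolic Vermas are closed, up to filtration, under tensoring with finite-dimensional $\gl(V)$-representations. The BGG reciprocity in (d) is the classical double-counting argument in the Grothendieck group using (b) and (c). Part (e) follows from the fact that the duality functor $(-)^{\vee}$ is an exact contravariant self-equivalence of $\co^{\mathfrak{p}}_{\nu,V}$ fixing simples, so it matches projective covers of simples with injective envelopes of the same simples and in particular identifies the $L(\nu-\abs{\lambda},\lambda)$-multiplicity formula of (b) with its dual statement.

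The substantive step is (f). When $\lambda$ lies in a trivial $\stackrel{\nu}{\sim}$-class the claim is automatic, since Proposition~\ref{prop:par_cat_O_Vermas} forces the block to be semisimple and so $P_{\mathfrak{p}}(\nu-\abs{\lambda},\lambda) = L(\nu-\abs{\lambda},\lambda)$ is self-dual. For $\lambda = \lambda^{(i)}$ with $i \geq 1$ in a non-trivial class $\{\lambda^{(j)}\}_{j\geq 0}$, (d) combined with Corollary~\ref{cor:parab_Verma_ses} gives a standard filtration $0 \to M_{\mathfrak{p}}(\lambda^{(i-1)}) \to P_{\mathfrak{p}}(\lambda^{(i)}) \to M_{\mathfrak{p}}(\lambda^{(i)}) \to 0$, and the composition factors of $P_{\mathfrak{p}}(\lambda^{(i)})$ are $L(\lambda^{(i-1)}), L(\lambda^{(i)}), L(\lambda^{(i)}), L(\lambda^{(i+1)})$. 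My plan is to upgrade $P_{\mathfrak{p}}(\lambda^{(i)})$ to an indecomposable tilting module by exhibiting also a costandard filtration with subquotients $M_{\mathfrak{p}}^{\vee}(\lambda^{(i-1)})$ and $M_{\mathfrak{p}}^{\vee}(\lambda^{(i)})$; since $P_{\mathfrak{p}}^{\vee}(\lambda^{(i)})$ is automatically tilting (its standard filtration being dual to the costandard filtration of $P_{\mathfrak{p}}(\lambda^{(i)})$ given by (e)), the uniqueness of the indecomposable tilting with highest weight $\lambda^{(i)}$ will force $P_{\mathfrak{p}}(\lambda^{(i)}) \cong P_{\mathfrak{p}}^{\vee}(\lambda^{(i)})$.

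The main obstacle is producing the costandard filtration, which reduces to the vanishing $\Ext^1_{\co^{\mathfrak{p}}_{\nu,V}}(M_{\mathfrak{p}}(\lambda^{(i)}), M_{\mathfrak{p}}^{\vee}(\lambda^{(j)})) = 0$ in the relevant degrees. I would verify this by using the BGG resolution of Corollary~\ref{cor:parab_Verma_les} as a standard resolution of simples, reducing the computation of Ext to Hom-calculations between parabolic Vermas and costandards; these are controlled by the Loewy structure from Corollary~\ref{cor:parab_Verma_ses}, and the crucial combinatorial input is that a non-trivial $\stackrel{\nu}{\sim}$-class is linearly ordered with $\lambda^{(j+1)}$ the unique immediate successor of $\lambda^{(j)}$ (Lemma~\ref{lem:nu_classes_struct}). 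Once self-duality $P_{\mathfrak{p}}(\nu-\abs{\lambda},\lambda) \cong P_{\mathfrak{p}}^{\vee}(\nu-\abs{\lambda},\lambda)$ is established, the final multiplicity formula in (f) follows by applying (b) in the opposite category via (e).
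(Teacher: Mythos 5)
Your treatment of parts (a)--(e) recapitulates the standard Humphreys arguments that the paper cites, and is fine. The interesting part is (f), where you propose a genuinely different-looking route via tilting modules rather than invoking Humphreys 9.14 directly. Unfortunately the key step has a gap. You claim that producing the costandard filtration of $P_{\mathfrak{p}}(\nu-\abs{\lambda^{(i)}},\lambda^{(i)})$ ``reduces to the vanishing $\Ext^1(M_{\mathfrak{p}}(\lambda^{(i)}), M^{\vee}_{\mathfrak{p}}(\lambda^{(j)})) = 0$.'' But $\Ext^1(\Delta,\nabla)=0$ is true in any highest weight category and carries no information here. The actual criterion is $\Ext^1(M_{\mathfrak{p}}(\lambda^{(j)}), P_{\mathfrak{p}}(\lambda^{(i)}))=0$ for all $j$, and this does \emph{not} follow from the $\Delta$--$\nabla$ vanishing unless you already know that $P_{\mathfrak{p}}(\lambda^{(i)})$ is costandardly filtered --- which is circular. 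The BGG resolution of Corollary \ref{cor:parab_Verma_les} does not rescue this: it resolves $L_i$ by parabolic Vermas, not by projectives, so it cannot be used to compute $\Ext^{\bullet}(L_i, -)$ against an arbitrary module, only against costandardly filtered ones --- again circular.

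No Ext computation is needed, and the argument the paper is pointing to (via Humphreys \S 9.14 together with Corollary \ref{cor:parab_Verma_ses}) is a socle-plus-length count. From (d), one has $M_{\mathfrak{p}}(\lambda^{(i-1)})\hookrightarrow P_{\mathfrak{p}}(\lambda^{(i)})$; from Corollary \ref{cor:parab_Verma_ses} and the indecomposability of parabolic Vermas (Proposition \ref{prop:par_cat_O_Vermas}, part 3), the socle of $M_{\mathfrak{p}}(\lambda^{(i-1)})$ is exactly $L(\lambda^{(i)})$. Hence $L(\lambda^{(i)})$ lies in $\operatorname{Soc}\, P_{\mathfrak{p}}(\lambda^{(i)})$, and the universal property of the injective hull (using (e), so that the indecomposable injective envelope of $L(\lambda^{(i)})$ is $P^{\vee}_{\mathfrak{p}}(\lambda^{(i)})$) yields an embedding $P_{\mathfrak{p}}(\lambda^{(i)})\hookrightarrow P^{\vee}_{\mathfrak{p}}(\lambda^{(i)})$; both sides have the same finite length (computed from the standard filtration and its dual using Corollary \ref{cor:parab_Verma_ses}), so this embedding is an isomorphism. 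Your proposed tilting-module reformulation, and the slip of calling $\lambda^{(i)}$ the highest weight of the tilting (it is in fact $\lambda^{(i-1)}$), would then be redundant. The last clause --- deriving the $\Hom$-count in (f) by applying (b) to $P^{\vee}$ via duality --- is fine.
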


\begin{proof}
 The proofs of (a) - (e) can be found in \cite[Chapter 9, Par. 8 and Chapter 3, Par. 9-11]{H}; the proof of the first part of (f) is based on \cite[Chapter 9, Par. 14]{H} and on Corollary \ref{cor:parab_Verma_ses}. The equality in (f) can then be proved in the same way as the equality in (b).
\end{proof}

We can now describe the standard filtration of the indecomposable projectives 

$P_{\mathfrak{p}}(\nu-\abs{\lambda}, \lambda)$, and their other useful properties:
\begin{proposition}\label{prop:proj_parab_cat_O_struct}
 \mbox{}
 \begin{enumerate}[leftmargin=*, label=(\alph*)]
 \item Assume $\lambda$ lies in a trivial $\stackrel{\nu}{\sim}$-class. Then $$P_{\mathfrak{p}}(\nu-\abs{\lambda}, \lambda) \cong M_{\mathfrak{p}}(\nu-\abs{\lambda}, \lambda) = L(\nu-\abs{\lambda}, \lambda) $$
 \item Let $ \{\lambda^{(i)}\}_i$ be a non-trivial $\stackrel{\nu}{\sim}$-class. Then $$P_{\mathfrak{p}}(\nu-\abs{\lambda^{(0)}}, \lambda^{(0)}) \cong M_{\mathfrak{p}}(\nu-\abs{\lambda^{(0)}}, \lambda^{(0)})$$
  \item Let $ \{\lambda^{(i)}\}_i$ be a non-trivial $\stackrel{\nu}{\sim}$-class and let $i \geq 1$. Then for $i$ such that $\ell(\lambda^{(i)}) \leq \dim V  -1$, we have short exact sequences
$$ 0 \rightarrow M_{\mathfrak{p}}(\nu-\abs{\lambda^{(i-1)}}, \lambda^{(i-1)}) \rightarrow P_{\mathfrak{p}}(\nu-\abs{\lambda^{(i)}}, \lambda^{(i)}) \rightarrow M_{\mathfrak{p}}(\nu-\abs{\lambda^{(i)}}, \lambda^{(i)}) \rightarrow 0$$

$$ 0 \rightarrow M^{\vee}_{\mathfrak{p}}(\nu-\abs{\lambda^{(i)}}, \lambda^{(i)}) \rightarrow P_{\mathfrak{p}}(\nu-\abs{\lambda^{(i)}}, \lambda^{(i)}) \rightarrow M^{\vee}_{\mathfrak{p}}(\nu-\abs{\lambda^{(i-1)}}, \lambda^{(i-1)}) \rightarrow 0$$

and the socle filtration of $P_{\mathfrak{p}}(\nu-\abs{\lambda^{(i)}}, \lambda^{(i)})$ has successive quotients
$$L(\nu-\abs{\lambda^{(i)}}, \lambda^{(i)}); L(\nu-\abs{\lambda^{(i+1)}}, \lambda^{(i+1)}) \oplus L(\nu-\abs{\lambda^{(i-1)}}, \lambda^{(i-1)}); L(\nu-\abs{\lambda^{(i)}}, \lambda^{(i)})$$

For $i$ such that $\ell(\lambda^{(i)}) > \dim V  -1$, $$P_{\mathfrak{p}}(\nu-\abs{\lambda^{(i)}}, \lambda^{(i)}) = M_{\mathfrak{p}}(\nu-\abs{\lambda^{(i)}}, \lambda^{(i)}) = M^{\vee}_{\mathfrak{p}}(\nu-\abs{\lambda^{(i)}}, \lambda^{(i)}) = L(\nu-\abs{\lambda^{(i)}}, \lambda^{(i)}) =0$$
  \item Let $ \{\lambda^{(i)}\}_i$ be a non-trivial $\stackrel{\nu}{\sim}$-class, and let $i \geq 1, j \geq 0$. Then
   \InnaA{$$ \dim \Hom_{\co^{\mathfrak{p}}_{\nu, V}} \left( P_{\mathfrak{p}}(\nu-\abs{\lambda^{(j)}}, \lambda^{(j)}), P_{\mathfrak{p}}(\nu-\abs{\lambda^{(i)}}, \lambda^{(i)}) \right) = 
  \begin{cases}                                                                                                                                                                       2 &\text{ if } i=j,  \\
 &\text{    }\ell(\lambda^{(i)}) \leq  \dim V  -1\\                                                                                                                                                                   1 &\text{ if } \abs{i-j}=1, \\
 &\text{    } \ell(\lambda^{(i)}), \ell(\lambda^{(j)}) \leq  \dim V  -1\\        
  0 &\text{ else}
  \end{cases}
  $$}
%
 \end{enumerate}

\end{proposition}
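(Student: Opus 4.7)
The proof will proceed by systematically applying BGG reciprocity (Proposition \ref{prop:proj_parab_cat_O_gen}(d)) together with the information about composition factors of parabolic Verma modules from Corollary \ref{cor:parab_Verma_ses} and the block decomposition of Proposition \ref{prop:par_cat_O_Vermas}(1). The self-duality statement from Proposition \ref{prop:proj_parab_cat_O_gen}(f) will be the crucial extra ingredient for understanding the socle structure.

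For parts (a) and (b), the plan is to compute the standard filtration of $P_{\mathfrak{p}}(\nu-|\lambda|, \lambda)$ via BGG reciprocity: the multiplicity $(P_{\mathfrak{p}}(\nu-|\lambda|, \lambda) : M_{\mathfrak{p}}(\nu-|\mu|, \mu))$ equals $[M_{\mathfrak{p}}(\nu-|\mu|, \mu) : L(\nu-|\lambda|, \lambda)]$. By Proposition \ref{prop:par_cat_O_Vermas}(1), this is zero unless $\mu$ lies in the same $\stackrel{\nu}{\sim}$-class as $\lambda$. In the trivial class case (a), $\mu = \lambda$ is forced, and by Corollary \ref{cor:parab_Verma_simple} the standard $M_{\mathfrak{p}}(\nu-|\lambda|, \lambda)$ is simple (hence equal to $L(\nu-|\lambda|, \lambda)$) and appears with multiplicity $1$; thus $P_{\mathfrak{p}} = M_{\mathfrak{p}} = L$. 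In case (b), by Corollary \ref{cor:parab_Verma_ses} the simple $L(\nu-|\lambda^{(0)}|, \lambda^{(0)})$ appears only as the head of $M_{\mathfrak{p}}(\nu-|\lambda^{(0)}|, \lambda^{(0)})$ (no $\lambda^{(-1)}$ exists), so again $P_{\mathfrak{p}} \cong M_{\mathfrak{p}}$.

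For part (c), fix $i \geq 1$ with $\ell(\lambda^{(i)}) \leq \dim V - 1$. By BGG reciprocity and Corollary \ref{cor:parab_Verma_ses}, $L(\nu-|\lambda^{(i)}|, \lambda^{(i)})$ occurs as a composition factor exactly in $M_{\mathfrak{p}}(\nu-|\lambda^{(i)}|, \lambda^{(i)})$ (as head) and in $M_{\mathfrak{p}}(\nu-|\lambda^{(i-1)}|, \lambda^{(i-1)})$ (as socle), each with multiplicity one. Thus $P_{\mathfrak{p}}(\nu-|\lambda^{(i)}|, \lambda^{(i)})$ has a two-step standard filtration; since the quotient to the head must come from the standard whose head is $L(\nu-|\lambda^{(i)}|, \lambda^{(i)})$, namely $M_{\mathfrak{p}}(\nu-|\lambda^{(i)}|, \lambda^{(i)})$, this yields the first short exact sequence. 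Dualizing this sequence and using self-duality of $P_{\mathfrak{p}}(\nu-|\lambda^{(i)}|, \lambda^{(i)})$ from Proposition \ref{prop:proj_parab_cat_O_gen}(f) gives the second short exact sequence. The socle filtration then follows by combining the known composition series of the standard and costandard modules (from Corollary \ref{cor:parab_Verma_ses}) with the fact that the socle equals the head (by self-duality); the middle layer is determined by taking the radical of the cosocle quotient and the socle of the subquotient. For $i$ with $\ell(\lambda^{(i)}) > \dim V - 1$, Lemma \ref{lem:gl_u_struct_o_cat} forces $M_{\mathfrak{p}}(\nu-|\lambda^{(i)}|, \lambda^{(i)}) = 0$, and so $L(\nu-|\lambda^{(i)}|, \lambda^{(i)})$ and $P_{\mathfrak{p}}(\nu-|\lambda^{(i)}|, \lambda^{(i)})$ are likewise zero.

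For part (d), we apply Proposition \ref{prop:proj_parab_cat_O_gen}(b) to get
\[
\dim \Hom_{\co^{\mathfrak{p}}_{\nu, V}}\bigl(P_{\mathfrak{p}}(\nu-|\lambda^{(j)}|, \lambda^{(j)}),\, P_{\mathfrak{p}}(\nu-|\lambda^{(i)}|, \lambda^{(i)})\bigr) = \bigl[P_{\mathfrak{p}}(\nu-|\lambda^{(i)}|, \lambda^{(i)}) : L(\nu-|\lambda^{(j)}|, \lambda^{(j)})\bigr],
\]
and read off the multiplicities directly from the socle filtration established in part (c): $L(\nu-|\lambda^{(i)}|, \lambda^{(i)})$ appears twice (head and socle), each of $L(\nu-|\lambda^{(i\pm 1)}|, \lambda^{(i\pm 1)})$ appears once (in the middle layer, provided the corresponding diagram satisfies the length condition), and all other simples have multiplicity zero. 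The main subtlety to watch — and the most error-prone bookkeeping — is the interplay between the length conditions $\ell(\lambda^{(i)}) \leq \dim V - 1$, $\ell(\lambda^{(j)}) \leq \dim V -1$, which must be tracked carefully to see that the formula above vanishes exactly in the claimed cases; once part (c) is correctly set up with all edge cases, part (d) is a direct consequence.
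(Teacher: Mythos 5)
Your proposal is correct and follows essentially the same route as the paper: BGG reciprocity together with Proposition \ref{prop:par_cat_O_Vermas} and Corollary \ref{cor:parab_Verma_ses} to pin down the two-step standard filtration, dualizing via Proposition \ref{prop:proj_parab_cat_O_gen}(f) for the second exact sequence, then simple socle plus the two embeddings to deduce the socle filtration, and Proposition \ref{prop:proj_parab_cat_O_gen}(b) to read off the $\Hom$ dimensions. Your wording of how the middle socle layer is extracted ("radical of the cosocle quotient and socle of the subquotient") is a bit loose — the cleaner phrasing, as in the paper, is that $\Hom(L_j, P_i)$ is $\delta_{ij}$ by self-injectivity, so $Soc(P_i)\cong L_i$, and then $M_{i-1}+M_i^{\vee}$ (intersection $L_i$) exhausts the radical — but the underlying argument and all the ingredients are the same.
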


\begin{proof}
Parts (a), (b) follow directly from the fact that $ P_{\mathfrak{p}}(\nu-\abs{\lambda^{(i)}}, \lambda^{(i)})$ is standardly filtered, the BGG reciprocity (see Proposition \ref{prop:proj_parab_cat_O_gen}) and Proposition \ref{prop:par_cat_O_Vermas}. The BGG reciprocity also implies that for a non-trivial $\stackrel{\nu}{\sim}$-class, denoted by  $ \{\lambda^{(i)}\}_i$, we have the short exact sequence
$$ 0 \rightarrow M_{\mathfrak{p}}(\nu-\abs{\lambda^{(i-1)}}, \lambda^{(i-1)}) \rightarrow P_{\mathfrak{p}}(\nu-\abs{\lambda^{(i)}}, \lambda^{(i)}) \rightarrow M_{\mathfrak{p}}(\nu-\abs{\lambda^{(i)}}, \lambda^{(i)}) \rightarrow 0$$ whenever $i \geq 1$. Taking duals in of the modules in this sequence, we obtain a short exact sequence
$$ 0 \rightarrow M^{\vee}_{\mathfrak{p}}(\nu-\abs{\lambda^{(i)}}, \lambda^{(i)}) \rightarrow P_{\mathfrak{p}}(\nu-\abs{\lambda^{(i)}}, \lambda^{(i)}) \rightarrow M^{\vee}_{\mathfrak{p}}(\nu-\abs{\lambda^{(i-1)}}, \lambda^{(i-1)}) \rightarrow 0$$

To compute the socle filtration, notice that $$Soc(P_{\mathfrak{p}}(\nu-\abs{\lambda^{(i)}}, \lambda^{(i)})) \cong L(\nu-\abs{\lambda^{(i)}}, \lambda^{(i)})$$ since $$\dim \Hom_{\co^{\mathfrak{p}}_{\nu, V}}(L(\nu-\abs{\lambda^{(j)}}, \lambda^{(j)}), P_{\mathfrak{p}}(\nu-\abs{\lambda^{(i)}}, \lambda^{(i)})) = [L(\nu-\abs{\lambda^{(j)}}, \lambda^{(j)}): L(\nu-\abs{\lambda^{(i)}}, \lambda^{(i)})] = \delta_{i,j}$$ (see Proposition \ref{prop:proj_parab_cat_O_gen}(f)).

The short exact sequences above then imply that $$Soc^2(P_{\mathfrak{p}}(\nu-\abs{\lambda^{(i)}}, \lambda^{(i)})) \cong L(\nu-\abs{\lambda^{(i+1)}}, \lambda^{(i+1)}) \oplus L(\nu-\abs{\lambda^{(i-1)}}, \lambda^{(i-1)})$$
The socle filtration description then follows, and this proves Part (c).

The dimensions of the $\Hom$-spaces between the indecomposable projectives can be inferred from the socle filtration and Proposition \ref{prop:proj_parab_cat_O_gen}(b, f), which proves Part (d).
\end{proof}

\section{Complex tensor powers of a unital object}\label{sec:comp_tens_power}
\InnaB{In this section, we fix a finite-dimensional unital vector space $(V, \triv)$. The goal of this section is to define an object $V^{\underline{\otimes}  \nu}$ which will be an interpolation of the tensor powers $V^{\otimes n}$ for $n \in \bZ_+$ to arbitrary $\nu \in \bC$.}

\subsection{\InnaA{Description} of the category $Ind-(\underline{Rep}^{ab}(S_{\nu}) \boxtimes \co^{\mathfrak{p}}_{\nu, V})$}\label{ssec:def_cat_Del_dash_par_O}

In this subsection we will \InnaB{describe the category $Ind-(\underline{Rep}^{ab}(S_{\nu}) \boxtimes \co^{\mathfrak{p}}_{\nu, V})$, which will be used to define the complex tensor power of the vector space $V$. 

The notation $\boxtimes$ stands for the Deligne tensor product of abelian categories (c.f. \cite[Section 5]{D1}); in this subsection, we will explain that this category can also be described as the category of Ind-objects of $\underline{Rep}^{ab}(S_{\nu})$ carrying an ``$\co^{\mathfrak{p}}_{\nu, V}$-type'' action of $\gl(V)$.}

\InnaB{Fix a splitting $V = \bC \triv \oplus U$.}
\begin{definition}
 Let $X \in Ind-\underline{Rep}^{ab}(S_{\nu})$. We say that $\gl(V)$ acts on $X$ if given a homomorphism of $\bC$-algebras $$\rho_X: \mathcal{U}(\gl(V)) \rightarrow \End_{Ind-\underline{Rep}^{ab}(S_{\nu})}(X)$$

 We say that this action is an $\co^{\mathfrak{p}}_{\nu, V}$-action if:
 \begin{itemize}
  \item $\gl(U) \subset \gl(V)$ acts polynomially on $X$, i.e. $X$ decomposes as a direct sum $X \cong \bigoplus_{i \in I} Y_i \otimes E_i$ in $Ind-\underline{Rep}^{ab}(S_{\nu})$, with
  \begin{enumerate}
   \item $Y_i \in Ind-\underline{Rep}^{ab}(S_{\nu})$,
   \item $E_i$ being polynomial $\gl(U)$-modules,
  \end{enumerate}
and the following commutative diagram holds for any $a \in \mathcal{U}(\gl(U))$:
$$ \begin{CD}
    X @>>> \bigoplus_{i \in I} Y_i \otimes E_i \\
    @V\rho_X(a)VV @V{\oplus_i a\rvert_{E_i}}VV \\
    X @>>> \bigoplus_{i \in I} Y_i \otimes E_i
   \end{CD}
$$
\item $\mathcal{U}(\mathfrak{u}_{\mathfrak{p}}^{+})$ acts locally finitely on $X$, i.e. for any $Y \in \Dab$, $f: Y \rightarrow X$, we have: $$ \sum_{a \in \mathcal{U}(\mathfrak{u}_{\mathfrak{p}}^{+})} (\rho_X(a)\circ f)(Y)$$ belongs to $\Dab$ (i.e. is a compact object).
\item $\id_V$ acts on $X$ by the morphism $\nu \cdot \id_X$.
\end{itemize}

\end{definition}

 The category $Ind-(\underline{Rep}^{ab}(S_{\nu}) \boxtimes \co^{\mathfrak{p}}_{\nu, V})$ is the category of pairs $(X, \rho_X)$ where $X \in Ind-\underline{Rep}^{ab}(S_{\nu})$ and $\rho_X$ is an $\co^{\mathfrak{p}}_{\nu, V}$-action on $X$. The morphisms in $Ind-(\underline{Rep}^{ab}(S_{\nu}) \boxtimes \co^{\mathfrak{p}}_{\nu, V})$ are
 $$\Hom((X, \rho_X), (Y, \rho_Y)):=\{f \in \Hom_{Ind-\underline{Rep}^{ab}(S_{\nu})}(X, Y) \, \rvert  \, f \circ \rho_X(a) = \rho_Y(a) \circ f \, \forall a \in \mathcal{U}(\gl(V)) \}$$

\InnaA{
\begin{remark}
 Inside the category $Ind-(\underline{Rep}^{ab}(S_{\nu}) \boxtimes \co^{\mathfrak{p}}_{\nu, V})$ we have the full subcategory $Ind-(\underline{Rep}(S_{\nu}) \boxtimes \co^{\mathfrak{p}}_{\nu, V})$ whose objects are $(X, \rho_X)$ where $X \in Ind-\underline{Rep}(S_{\nu})$ and $\rho_X$ is an $\co^{\mathfrak{p}}_{\nu, V}$-action on $X$. 
\end{remark}

Now let $\nu =n \in \bZ_+$.
%

By \cite[Corollary 6.3.2]{KS}, there exists a unique (up to unique isomorphism) functor $$\hat{\mathcal{S}}_n: (Ind-\underline{Rep}(S_{\nu=n})) \longrightarrow Ind-Rep(S_n)$$
which commutes with (small) filtered colimits and satisfies $$\hat{\mathcal{S}}_n \circ \iota_{\underline{Rep}(S_{\nu}) \rightarrow (Ind-\underline{Rep}(S_{\nu}))} \cong \mathcal{S}_n$$ (the notation is the same as in Section \ref{sec:notation} and Subsection \ref{ssec:S_nu_general}). 

Now consider the category $Ind- (Rep(S_n) \boxtimes \co^{\mathfrak{p}}_{n, V})$; it is the full subcategory of the category of \InnaA{modules $Mod_{\bC[S_n]\otimes_{\bC} \mathcal{U}(\gl(V))}$} whose objects $M$ satisfy: as a $\bC[S_n]$-module, $M$ is a direct sum of finite dimensional simple modules, while as a $\mathcal{U}(\gl(V))$-module, $M \in \co^{\mathfrak{p}}_{n, V}$.

We can define the functor $$\hat{\mathcal{S}}_n: Ind- (\underline{Rep}(S_{\nu=n}) \boxtimes \co^{\mathfrak{p}}_{n, V}) \longrightarrow Ind -(Rep(S_n)\boxtimes \co^{\mathfrak{p}}_{n, V})$$
by setting $\hat{\mathcal{S}}_n(X, \rho_X) :=\hat{\mathcal{S}}_n(X)$ (with action of $\gl(V)$ given by $\hat{\mathcal{S}}_n(\rho_X)$). The above description of $Ind- (\underline{Rep}(S_{\nu=n}) \boxtimes \co^{\mathfrak{p}}_{n, V})$ guarantees that this functor is well-defined.
}

\subsection{Definition of a complex tensor power: split unital vector space}\label{ssec:def_comp_ten_power_split}
Fix $\nu \in \bC$, and fix a splitting $V = \InnaB{\bC \triv} \oplus U$.

\begin{definition}[Complex tensor power]\label{def:complex_ten_power_splitting}
 
 Define the object $V^{\underline{\otimes}  \nu}$ of $Ind-(\underline{Rep}^{ab}(S_{\nu}) \boxtimes \co^{\mathfrak{p}}_{\nu, V})$ \InnaA{by setting}
$$ V^{\underline{\otimes}  \nu} := \bigoplus_{k \geq 0} (U^{\otimes k} \otimes \Del_k)^{S_k}$$

The action on $\mathfrak{gl}(V)$ on $V^{\underline{\otimes}  \nu}$ is given as follows:

\InnaA{$$ \entrymodifiers={+!!<0pt,\fontdimen22\textfont2>} \xymatrix{ &{\mathbf{1}\;} \ar@/^1pc/[r]^U &{\phantom{UUU} U \otimes \Del_1 \phantom{UU}}  \ar@/^1pc/[r]^U \ar@/^1pc/[l]^{U^*} \ar@(dl,dr)_{\gl(U)} &{\,\phantom{UU} (U^{\otimes 2} \otimes \Del_2)^{S_2} \phantom{UU}} \ar@/^1pc/[r]^U \ar@/^1pc/[l]^{U^*} \ar@(dl,dr)_{\gl(U)} &{\,\phantom{UU} (U^{\otimes 3} \otimes \Del_3)^{S_3} \phantom{UUU}} \ar@/^1pc/[r]^-U \ar@/^1pc/[l]^{U^*} \ar@(dl,dr)_{\gl(U)} &{\,\phantom{U} \ldots \,} \ar@/^1pc/[l]^{U^*} }$$}

\begin{itemize}[leftmargin=*]
 \item $\id_{V} \in \mathfrak{gl}(V)$ acts by scalar $\nu$,
\item $u \in  U \cong \mathfrak{u}_{\mathfrak{p}}^{-}$ acts by operator
\begin{align*}
 (U^{\otimes k} \otimes \Del_k)^{S_k}  &\stackrel{F_u}{\longrightarrow} (U^{\otimes k+1} \otimes \Del_{k+1})^{S_{k+1}}  \\
F_u := \frac{1}{k+1}\sum_{1 \leq l \leq k+1} u^{(l)} \otimes res_l^*
\end{align*}

Here $res_l^*: \Del_k \rightarrow \Del_{k+1}$ as in Definition \ref{def:res_star_morphisms}, $u^{(l)}$ as in Notation \ref{notn:action_on_tes_powers} and $k \geq 0$.

\item $f \in  U^* \cong \mathfrak{u}_{\mathfrak{p}}^{+}$ acts by operator
\begin{align*}
 (U^{\otimes k} \otimes \Del_k)^{S_k}  &\stackrel{E_f}{\longrightarrow} (U^{\otimes k-1} \otimes \Del_{k-1})^{S_{k-1}} \\
E_f := \sum_{1 \leq l \leq k} f^{(l)} \otimes res_l
\end{align*}

Here $res_l: \Del_k \rightarrow \Del_{k-1}$ as in Definition \ref{def:res_morphisms}, $f^{(l)}$ as in Notation \ref{notn:action_on_tes_powers} and $k \geq 1$. The action of $f$ on $(U^{\otimes 0} \otimes \InnaB{\mathbf{1}})^{S_0} \cong  \InnaB{\mathbf{1}}$ is zero.

\item $A \in \mathfrak{gl}(U) \subset \mathfrak{gl}(V)$ acts on $(U^{\otimes k} \otimes \Del_k)^{S_k} $ by

\begin{align*}
\sum_{1 \leq i \leq k} A^{(i)} \rvert_{U^{\otimes k}} \otimes \id_{\Del_k}: (U^{\otimes k} \otimes \Del_k)^{S_k}  &\longrightarrow (U^{\otimes k} \otimes \Del_k)^{S_k}
\end{align*}

\end{itemize}
\end{definition}



\begin{lemma}\label{lem:complex_ten_power_action_well_def}
The action of $\gl(V)$ described in Definition \ref{def:complex_ten_power_splitting} is well-defined.
\end{lemma}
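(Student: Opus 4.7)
The plan is to verify three things in order: (i) each of the four operators $F_u$, $E_f$, the $\gl(U)$-action and $\id_V$-action is a well-defined morphism in $Ind\text{-}\underline{Rep}^{ab}(S_{\nu})$ whose image lies in the correct $S_{k\pm 1}$-invariant summand; (ii) the operators satisfy the Lie algebra relations of $\gl(V)$ expressed through the triangular decomposition $\gl(V) \cong \bC\id_V \oplus \mathfrak{u}_{\mathfrak{p}}^{-} \oplus \mathfrak{u}_{\mathfrak{p}}^{+} \oplus \gl(U)$; (iii) the resulting action is of $\co^{\mathfrak{p}}_{\nu, V}$-type. For (i), the morphisms $res_l, res_l^*$ are morphisms in $\underline{Rep}(S_\nu)$ (Definitions \ref{def:res_morphisms}, \ref{def:res_star_morphisms}), and tensoring with the linear operators $u^{(l)}, f^{(l)}$ produces morphisms in $Ind\text{-}\underline{Rep}^{ab}(S_\nu)$. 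Summing over $l$ as in the definition of $F_u$ (respectively $E_f$) makes the result $S_{k+1}$- (respectively $S_{k-1}$-) equivariant on the $S_k$-invariant part, so it indeed lands in the required invariant summand; the $\gl(U)$- and $\id_V$-actions preserve each graded piece and commute with $S_k$ by construction.

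For (ii), I would first dispose of the easy relations: $\id_V$ is central by construction; the relations inside $\gl(U)$ reduce to the standard diagonal action on $U^{\otimes k}$; linearity of $u \mapsto F_u$ and $f \mapsto E_f$ is immediate; the relations $[A, F_u] = F_{Au}$ and $[A, E_f] = -E_{f\circ A}$ for $A \in \gl(U)$ follow because $A^{(i)}$ acts only on the $U^{\otimes k}$ factor and commutes with $res_l, res_l^*$; the relations $[F_u, F_{u'}] = 0$ and $[E_f, E_{f'}] = 0$ follow by reindexing the double sums and using the diagrammatic identities $res_{l'}^* \circ res_l^* = res_l^* \circ res_{l'-1}^*$ for $l < l'$ and its dual, together with the commutativity $u^{(l)} u'^{(l')} = u'^{(l')} u^{(l)}$ when $l \neq l'$. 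Finally, for (iii), once the action is known to be a Lie algebra action, the decomposition $V^{\underline{\otimes}\nu} = \bigoplus_k (U^{\otimes k} \otimes \Del_k)^{S_k}$ exhibits $\gl(U)$ as acting polynomially, $E_f$ lowers the grading by one so $\mathfrak{u}_{\mathfrak{p}}^{+}$ acts locally nilpotently, and $\id_V$ acts by the scalar $\nu$ by fiat.

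The principal obstacle is the remaining relation $[E_f, F_u] = \rho([f,u])$, where in $\gl(V)$ one computes $[f, u] = f(u)(\id_V - \id_U) - T_{f,u}$. Expanding the double sum
\[
[E_f, F_u]\bigr|_{(U^{\otimes k} \otimes \Del_k)^{S_k}} = \tfrac{1}{k+1}\!\!\sum_{l,l'=1}^{k+1}\!\! f^{(l')} u^{(l)} \otimes (res_{l'} \circ res_l^*) \;-\; \tfrac{1}{k}\!\sum_{l,l'=1}^{k}\! u^{(l')} f^{(l)} \otimes (res_{l'}^* \circ res_l),
\]
one separates diagonal terms ($l = l'$) from off-diagonal ones. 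The off-diagonal terms simplify via the diagrammatic identities $res_{l'} \circ res_l^* = res_{l-\epsilon}^* \circ res_{l'-\epsilon'}$ (with appropriate index shifts) and combine, after renormalization, to produce $-\sum_{i=1}^k T_{f,u}^{(i)} \otimes \id_{\Del_k}$. The diagonal terms are controlled by the key identity $res_l \circ res_l^* = (\nu - k)\,\id_{\Del_k}$, extracted directly from Lemma \ref{lem:Delta_k_homs}; this is the place where the parameter $\nu$ enters and accounts for the scalar $f(u)(\nu - k)$ on the $k$-th graded piece. Matching with $\rho([f, u])\rvert_k = f(u)(\nu - k)\id - \sum_{i=1}^k T_{f,u}^{(i)}$---using $\rho(\id_V) = \nu\,\id$ and $\rho(\id_U)\rvert_k = k\,\id$---gives the desired equality. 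This combinatorial bookkeeping, which is the one genuinely non-trivial part of the verification, is what is deferred to Appendix \ref{app:complex_ten_power_action_well_def}.
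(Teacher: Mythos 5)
Your outline follows the paper's strategy: verify that the four operators land in the invariant summands, then check the $\gl(V)$ relations via the decomposition $\gl(V)\cong\bC\id_V\oplus\mathfrak{u}_{\mathfrak{p}}^-\oplus\mathfrak{u}_{\mathfrak{p}}^+\oplus\gl(U)$, with all the weight on the cross-relation $[E_f,F_u]$. You correctly single out the identity $res_l\circ res_l^* = (\nu-k)\,\id_{\Del_k}$ as the source of the scalar $(\nu-k)f(u)$, and you correctly compute $[f,u]=f(u)(\id_V-\id_U)-T_{f,u}$. Up to there the plan is fine.

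The gap is in the off-diagonal bookkeeping for $[E_f,F_u]$. You write the mixed compositions as $res_{l'}\circ res_l^* = res_{l-\epsilon}^*\circ res_{l'-\epsilon'}$ with no correction, and then assert that the off-diagonal terms "combine, after renormalization, to produce $-\sum_i T_{f,u}^{(i)}$." But the $\bar P$-composition rule (Lemma \ref{lem:Delta_k_homs}) is not the naive diagram gluing: for $l_1<l_2$ one in fact has
\[
res_{l_2}\circ res_{l_1}^* \;=\; res_{l_1}^*\circ res_{l_2-1} \;-\; C_{(l_1,\dots,l_2-1)}
\]
as endomorphisms of $\Del_k$, where $C_{(l_1,\dots,l_2-1)}$ is the action of a cycle in $S_k$, and similarly for $l_1>l_2$. (One can check this already on $\mathcal{S}_n$: for $k=1$, $\mathtt{res}_2\mathtt{res}_1^*$ sends $b$ to $\sum_{a\neq b}a$, whereas $\mathtt{res}_1^*\mathtt{res}_1$ sends $b$ to $\sum_a a$; the difference is exactly $-\id$.) These cycle correction terms, paired with the matching identity $f^{(l_2)}\circ u^{(l_1)} = T_{f,u}^{(l_1)}\circ C_{(l_1,\dots,l_2-1)}$ on $U^{\otimes k}$, are precisely what yields $-T_{f,u}$ once one restricts to the $S_k$-invariant subspace, and the remaining pieces assemble into $F_uE_f$. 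With the uncorrected identity you state, the off-diagonal terms of $E_fF_u$ would just reindex into $F_uE_f$ (up to the $\tfrac1{k+1}$ vs.\ $\tfrac1{k}$ normalization, which is absorbed by the $l_1=l_2$ terms), and the $-T_{f,u}$ term never appears; the relation you need to prove would then be false as computed. So this is not merely a deferral of routine bookkeeping: the identity as you wrote it is incorrect, and the corrected version is the crux of the whole verification.

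Two smaller points. First, the assertion in step (i) that averaging over $l$ makes $F_u$ land in the $S_{k+1}$-invariant summand is true but requires an argument: one needs the intertwining relation $\sigma\circ res_l^* = res_{\sigma(l)}^*\circ\rho_l(\sigma)$ (and its companion for $u^{(l)}$) for a suitable $\rho_l(\sigma)\in S_k$, which is a short but genuine lemma and not automatic. Second, deferring "the combinatorial bookkeeping" to Appendix \ref{app:complex_ten_power_action_well_def} is circular here, since that appendix is where the lemma's proof lives; the bookkeeping is the proof.
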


\begin{proof}
 See Appendix \ref{appssec:complex_ten_power_action_well_def}.
\end{proof}

\begin{remark}
 Notice that $\mathfrak{gl}(U)$ acts semisimply on $V^{\underline{\otimes}  \nu}$, $U^* \cong \mathfrak{u}_{\mathfrak{p}}^{+}$ acts locally finitely on $V^{\underline{\otimes}  \nu}$, thus making it an object of $Ind-(\underline{Rep}^{ab}(S_{\nu}) \boxtimes \co^{\mathfrak{p}}_{\nu, V})$ (in fact, an object of the category $Ind-(\underline{Rep}(S_{\nu=n})) \boxtimes \co^{\mathfrak{p}}_{n, V})$, since $\Delta_k \in \underline{Rep}(S_{\nu})$ for any $k \in \bZ_+$).
\end{remark}

\begin{remark}
 The definition of $V^{\underline{\otimes}  \nu}$ makes it a $\bZ_+$-graded object in \InnaA{$Ind-\underline{Rep}(S_{\nu})$}. This grading corresponds to the natural grading on $V^{\otimes n}$ seen as a tensor power of the graded object $V = \InnaB{\bC \triv} \oplus U, gr_0(V):=\InnaB{\bC \triv}, gr_1(V):=U$.
\end{remark}

\begin{remark}
 \InnaB{In Subsection \ref{ssec:comp_tens_power_indep_splitting}, we will show that the object $V^{\underline{\otimes}  \nu}$ of $Ind-(\underline{Rep}^{ab}(S_{\nu}) \boxtimes \co^{\mathfrak{p}}_{\nu, V})$ does not really depend on the splitting of $V$, but rather only on the pair $(V, \triv)$. In the case when $\nu \notin \bZ_+$, one can actually give an equivalent definition of $V^{\underline{\otimes}  \nu}$ without using a splitting (c.f. \cite{E1} and Appendix \ref{app:complex_ten_power_generic_nu}).}
\end{remark}

The following technical lemma will be useful to us later on. The meaning of this lemma is that the operator $F_u$ acting on $V^{\underline{\otimes}  \nu}$ is ``almost injective''.
\begin{lemma}\label{lem:tens_power_weak_torsion_free}
Let \InnaA{$l \leq k$, and consider a non-zero morphism in $\underline{Rep}(S_{\nu})$} $$\phi: U^{\otimes l} \otimes \Del_l \longrightarrow U^{\otimes k} \otimes \Del_k $$ Let $u \in U \cong \mathfrak{u}_{\mathfrak{p}}^{-}, u \neq 0$.
Then $F_u \circ \phi \neq 0$, where $F_u \circ \phi := \frac{1}{k+1}\sum_{1 \leq l \leq k+1} (u^{(l)} \otimes res_l^*) \circ \phi$.
\end{lemma}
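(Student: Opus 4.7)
The statement is equivalent to the injectivity of the linear map
$$\tilde{F}_u\colon \phi \mapsto F_u \circ \phi, \quad \Hom_{\underline{Rep}(S_\nu)}(U^{\otimes l}\otimes\Delta_l,\, U^{\otimes k}\otimes\Delta_k) \to \Hom_{\underline{Rep}(S_\nu)}(U^{\otimes l}\otimes\Delta_l,\, U^{\otimes k+1}\otimes\Delta_{k+1}).$$
By Lemma~\ref{lem:Delta_k_homs}, both Hom-spaces admit $\nu$-independent bases coming from the partition diagrams $\bar{P}_{*,*}$ tensored with $\bC$-linear maps between the $U^{\otimes\cdot}$-factors, and the composition pairing is polynomial in $\nu$. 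Consequently, the matrix of $\tilde{F}_u$ in these fixed bases has entries polynomial in $\nu$, and the set of $\nu \in \bC$ where $\tilde{F}_u$ fails to be injective is Zariski-closed in $\bC$.

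My plan is to first establish injectivity for every $\nu = n \in \bZ_{\geq 2(k+1)}$. For such $n$, the remark in Subsection~\ref{ssec:S_nu_general} says that $\mathcal{S}_n$ is fully faithful on $\underline{Rep}(S_{\nu=n})^{(n/2)}$, which contains both $U^{\otimes l}\otimes\Delta_l$ and $U^{\otimes k+1}\otimes\Delta_{k+1}$ (since $l \leq k+1 \leq n/2$). Combining Remark~\ref{rmrk:res_l_image_under_S_n_functor} with the forthcoming identification $\mathcal{S}_n(V^{\underline{\otimes}\nu=n}) \cong V^{\otimes n}$ of Subsection~\ref{ssec:comp_tens_power_compatible_classical}, the operator $F_u$ is identified with the natural Lie-algebra action of $u \in \mathfrak{u}_{\mathfrak{p}}^-\subset\gl(V)$ on $V^{\otimes n}$, which replaces a $\triv$-factor by $u$ in pure tensors.

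To prove injectivity of this classical action on the relevant graded piece, I would use the commutation relation $[E_{f_u^*},\,F_u] = [f_u^*, u]_{\gl(V)} = E_{\triv,\triv} - E_{u,u}$ (with $f_u^* \in U^*$ dual to $u$). This commutator acts on the $(k,q)$-bigraded piece of $V^{\otimes n}$ (with $q$ the number of $u$-factors among the $k$ positions holding $U$-vectors) as the scalar $\nu - k - q$. For $n$ sufficiently large relative to $k$, this scalar is non-zero, so $(\nu-k-q)^{-1}E_{f_u^*}$ gives a left inverse to $F_u$ modulo a non-vanishing constant, yielding injectivity of $\tilde{F}_u$ at $\nu = n$.

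Since $\tilde{F}_u$ is injective on the infinite set $\{n \geq 2(k+1)\}\subset\bC$ and the bad locus is Zariski-closed, this locus is finite. To rule out exceptional values and conclude injectivity for \emph{all} $\nu \in \bC$, I would upgrade the naive left inverse $(\nu-k-q)^{-1}E_{f_u^*}$ --- which has poles at $\nu = k+q$ --- to a uniform expression by tracking the $\gl(U)$-weight decomposition of $V^{\underline{\otimes}\nu}$ and combining commutation identities across degrees, so that the problematic scalars are never simultaneously zero on a given weight subspace. The main obstacle is precisely this last step: constructing an algebraic left inverse valid uniformly in $\nu$, as the scalars from the natural commutator-based formula can vanish at finitely many integer values of $\nu$.
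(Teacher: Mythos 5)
Your strategy is genuinely different from the paper's, and it contains the gap you yourself flag at the end. Let me sharpen why that gap is real as stated, but also observe that it is closable with one observation you missed. You note that the matrix of $\tilde{F}_u$ in the diagram bases has entries ``polynomial in $\nu$'' and therefore only conclude injectivity for $\nu$ outside a finite (Zariski-closed) set. But in fact the matrix is \emph{constant in $\nu$}: by Definition~\ref{def:res_star_morphisms}, for any $\pi \in \bar{P}_{l,k}$ the composite $res_s^* \circ \pi$ is a single partition diagram (obtained by inserting a solitary vertex in the bottom row), with no $\nu$-dependent coefficient --- the gluing $D_{\pi, res_s^*}$ has no connected components confined to the middle row, so $n(res_s^*, \pi) = 0$ and no powers of $\nu$ appear. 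Once you note this, your Stage~1 (injectivity at a single large integer $n$, via the classical $\mathfrak{sl}_2$-triple $\langle F_u, E_{f_u^*}, [E_{f_u^*},F_u]\rangle$ acting on $V^{\otimes n}$ and the positivity of the $h$-weight $n-k-q$) already implies injectivity at \emph{every} $\nu \in \bC$, and the ``uniform left inverse'' you were struggling to construct is unnecessary. A smaller caveat: your Stage~1 invokes Proposition~\ref{prop:comp_tens_power_F_n}, which appears later in the paper; there is no circularity (that proposition's proof does not use the present lemma), but the argument becomes less self-contained than the paper's.

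For comparison, the paper's own proof in Appendix~\ref{appssec:tens_power_weak_torsion_free} does not pass through $\mathcal{S}_n$ or classical Schur--Weyl theory at all. It works directly inside $\bC\bar{P}_{l,k}\otimes U^{\otimes k}\otimes {U^*}^{\otimes l}$: it picks a basis of $U$ with $u$ as the first vector, writes $\phi$ as a linear combination $\sum \alpha_{x,I}\, x\otimes u_{i_1}\otimes\cdots\otimes u_{i_k}$, assumes $F_u\circ\phi=0$, and shows all $\alpha_{x,I}$ vanish by a two-fold descending induction on the combinatorial quantities $m(x)$ (the length of the longest run of consecutive solitary bottom-row vertices in the diagram $x$) and $M(I,x)$ (the longest run of $1$'s in $I$ inside that window). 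This is $\nu$-free by inspection and requires no appeal to the abelian structure at integer parameter. Your approach, once repaired, is a legitimate alternative; the paper's is more elementary in the sense that it never leaves the combinatorics of the diagram category.
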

\begin{proof}
C.f. Appendix \ref{appssec:tens_power_weak_torsion_free}.
\end{proof}

\subsection{}\label{ssec:comp_tens_power_compatible_classical}

Finally, we prove that the definition of a complex tensor power of a split unital vector space is compatible with the usual notion of a tensor power of a vector space.

\InnaB{
We continue with a fixed splitting $V = \InnaB{\bC \triv} \oplus U$.
Let $V^{\underline{\otimes}  n}$ be as in Definition \ref{def:complex_ten_power_splitting}. 

}
\mbox{}
Define the action of $\gl(V)$ on the space $\bigoplus_{k=0,...,n} (U^{\otimes k} \otimes \bC Inj(\{1,...,k\} , \{1,...,n\}))^{S_k}$ via the decomposition $$\gl(V) \cong \bC \id_V \oplus \mathfrak{u}_{\mathfrak{p}}^{-} \oplus \mathfrak{u}_{\mathfrak{p}}^{+} \oplus \gl(U)$$ by setting
\begin{itemize}[leftmargin=*]
 \item $\id_V$ acts by the scalar $n$,
 \item $u \in  U \cong \mathfrak{u}_{\mathfrak{p}}^{-}$ acts by operator \InnaA{
\begin{align*}
(U^{\otimes k} \otimes \bC Inj(\{1,...,k\} , \{1,...,n\}))^{S_{k}} &\stackrel{F_u}{\longrightarrow} (U^{\otimes k+1} \otimes \bC  Inj(\{1,...,k+1\} , \{1,...,n\}))^{S_{k+1}}\\
F_u = \frac{1}{k+1}\sum_{1 \leq l \leq k+1} u^{(l)} \otimes \mathtt{res}_l^*
\end{align*}
for any $k \geq 0$.
Here $\mathtt{res}_l^*$ is the map defined in Remark \ref{rmrk:res_l_image_under_S_n_functor}, $u^{(l)}$ as in Notation \ref{notn:action_on_tes_powers}.
\item $f \in  U^* \cong \mathfrak{u}_{\mathfrak{p}}^{+}$ acts by operator
\begin{align*}
(U^{\otimes k} \otimes \bC Inj(\{1,...,k\} , \{1,...,n\}))^{S_{k}} &\stackrel{E_f}{\longrightarrow} (U^{\otimes k-1} \otimes \bC Inj(\{1,...,k-1\} , \{1,...,n\}))^{S_{k-1}} \\
E_f = \sum_{1 \leq l \leq k} f^{(l)} \otimes \mathtt{res}_l
\end{align*}}
whenever $k \geq 1$. Here $\mathtt{res}_l$ is the map defined in Remark \ref{rmrk:res_l_image_under_S_n_functor}, $f^{(l)}$ as in Notation \ref{notn:action_on_tes_powers}.

The action of $f$ on $(U^{\otimes 0} \otimes \bC)^{S_0} \cong \bC$ is zero.
 \item $ \gl(U)$ acts naturally on each summand $(U^{\otimes k} \otimes \bC Inj(\{1,...,k\} , \{1,...,n\}))^{S_k}$: 
 
 $A \in \mathfrak{gl}(U)$ acts by
\begin{align*}
&\sum_{1 \leq i \leq k} A^{(i)} \rvert_{U^{\otimes k}} \otimes \id_{\bC Inj(\{1,...,k\} , \{1,...,n\})}: \\
&( U^{\otimes k} \otimes \bC Inj(\{1,...,k\} , \{1,...,n\}))^{S_k} &\longrightarrow (U^{\otimes k} \otimes \bC Inj(\{1,...,k\} , \{1,...,n\}))^{S_k} 
\end{align*}
\end{itemize}

\InnaA{
Notice that the space $\bigoplus_{k=0,...,n} (U^{\otimes k} \otimes \bC Inj(\{1,...,k\} , \{1,...,n\}))^{S_k}$ automatically possesses a structure of a $\bC[S_n] \otimes_{\bC} \mathcal{U}(\gl(V))$-module: the group $S_n$ acts on each summand $\bC Inj(\{1,...,k\} , \{1,...,n\}))$ through its action on the set $\{1,...,n\}$.
}
\begin{lemma}\label{lem:gl_V_action_usual_tens_power}
There is an isomorphism of $\mathfrak{gl}(V)$-modules \InnaA{$$\Phi: V^{\otimes n} \stackrel{\sim}{\longrightarrow} \bigoplus_{k=0,...,n} (U^{\otimes k} \otimes \bC Inj(\{1,...,k\} , \{1,...,n\}))^{S_k}$$
where $\Phi (\triv \otimes \triv \otimes ... \otimes \triv) =1$ \InnaB{(lies in degree zero of the right hand side)}.

Moreover, this isomorphism is an isomorphism of $\bC[S_n] \otimes_{\bC} \mathcal{U}(\gl(V))$-modules.}
\end{lemma}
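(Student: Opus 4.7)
The plan is to construct $\Phi$ explicitly from the splitting and then verify it is an isomorphism of vector spaces that intertwines both the $S_n$-action and the $\mathfrak{gl}(V)$-action. First I would decompose
$$V^{\otimes n} = (\bC \triv \oplus U)^{\otimes n} = \bigoplus_{S \subseteq \{1,\ldots,n\}} W_S,$$
where $W_S$ is spanned by pure tensors having $\triv$ in every position outside $S$ and an element of $U$ in each position indexed by $S$; canonically $W_S \cong U^{\otimes S}$. For each subset $S$ with $\abs{S}=k$ I would define
$$\Phi_S : W_S \longrightarrow (U^{\otimes k} \otimes \bC\,\mathrm{Inj}(\{1,\ldots,k\},\{1,\ldots,n\}))^{S_k}$$
by sending $u \in U^{\otimes S}$ to the $S_k$-invariant element $\sum_g u_g \otimes g$, the sum running over all bijections $g:\{1,\ldots,k\} \xrightarrow{\sim} S$, where $u_g \in U^{\otimes k}$ is $u$ re-indexed along $g$. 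Summing the $\Phi_S$ over all $S$ and all $k$ yields $\Phi$, and the normalization $\Phi(\triv^{\otimes n}) = 1$ is automatic since $W_\emptyset$ maps into the $k=0$ summand $\bC$.

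Next I would establish bijectivity by a dimension count combined with an explicit inverse: the two dimensions agree as $\sum_k \binom{n}{k} (\dim U)^k$, since $\bC\,\mathrm{Inj}(\{1,\ldots,k\},\{1,\ldots,n\})$ is a free $S_k$-module whose $S_k$-orbits are indexed by the $k$-subsets of $\{1,\ldots,n\}$, and the inverse of $\Phi_S$ picks any $g$ in the support and reads off $u_g \in U^{\otimes k}$ as an element of $U^{\otimes S}$ (independence of the choice being a consequence of $S_k$-invariance). The $S_n$-equivariance is then immediate: on the left, $\tau \in S_n$ sends $W_S$ to $W_{\tau(S)}$ by permuting the tensor factors, while on the right it acts by postcomposition on $\mathrm{Inj}(\{1,\ldots,k\},\{1,\ldots,n\})$, sending $\mathrm{Inj}_S$ to $\mathrm{Inj}_{\tau(S)}$, and the two actions intertwine the $\Phi_S$ in the obvious way.

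I would then verify $\mathfrak{gl}(V)$-equivariance piece by piece along the triangular decomposition $\mathfrak{gl}(V) \cong \bC\,\id_V \oplus \mathfrak{u}_{\mathfrak{p}}^{-} \oplus \mathfrak{gl}(U) \oplus \mathfrak{u}_{\mathfrak{p}}^{+}$. The scalar $\id_V$ acts by $n$ on both sides. For $A \in \mathfrak{gl}(U)$ the diagonal action $\sum_i A^{(i)}$ on $V^{\otimes n}$ kills any factor equal to $\triv$ (since $A(\triv)=0$) and acts as the natural $\mathfrak{gl}(U)$-action on the $U$-entries of $W_S$, matching the action on $U^{\otimes k}$ on the right-hand side. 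For $u \in U \cong \mathfrak{u}_{\mathfrak{p}}^{-}$, viewed as the operator $\triv \mapsto u,\ U \mapsto 0$, the diagonal action replaces $\triv$ by $u$ at each $\triv$-slot, producing a sum of $n-k$ terms lying in $\bigoplus_{m \notin S} W_{S \cup \{m\}}$; I would then identify $\Phi$ applied to this sum with $F_u = \tfrac{1}{k+1} \sum_l u^{(l)} \otimes \mathtt{res}_l^*$, using the description of $\mathtt{res}_l^*(g)$ as the sum over all extensions of $g$ to an injection of $\{1,\ldots,k+1\}$ whose new input $l$ takes any value outside $\mathrm{im}(g)$. A dual analysis handles $f \in U^* \cong \mathfrak{u}_{\mathfrak{p}}^{+}$ acting via $E_f = \sum_l f^{(l)} \otimes \mathtt{res}_l$.

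The hard part will be the combinatorial matching in the $\mathfrak{u}_{\mathfrak{p}}^{\pm}$-checks: I expect to need to reconcile the normalizing factor $\tfrac{1}{k+1}$ in $F_u$ with the $(k+1)$ choices of insertion slot $l$ and the $S_{k+1}$-symmetrization built into $\Phi$, so that each single term obtained on the $V^{\otimes n}$-side by inserting $u$ at a position $m \notin S$ is recovered on the right as the symmetrized sum over $l$ of $u^{(l)} \otimes g_{l,m}$, where $g_{l,m}$ is the unique extension of $g$ with $g_{l,m}(l) = m$. Once this combinatorial identification is in place, both sides agree on generators of $\mathfrak{gl}(V)$ and $\Phi$ is the desired isomorphism in the required category.
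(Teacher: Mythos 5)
Your construction follows the same route as the paper's: decompose $V^{\otimes n}=\bigoplus_{S}W_S$ by the positions of the $U$-factors, send each $W_S$ into $S_k$-invariants of $U^{\otimes k}\otimes\bC\,\mathrm{Inj}(\{1,\dots,k\},\{1,\dots,n\})$ by symmetrization over the bijections $\{1,\dots,k\}\xrightarrow{\sim}S$, and verify $\gl(V)$-equivariance along the triangular decomposition. The $S_n$, $\id_V$ and $\gl(U)$ checks are fine. However, there is a normalization error in $\Phi_S$ that the argument cannot recover from: you map $u\in U^{\otimes S}$ to $\sum_g u_g\otimes g$ (an unweighted sum over the $k!$ bijections), whereas the correct map is the $S_k$-\emph{average} $e_{S_k}(v_{j_1}\otimes\cdots\otimes v_{j_k}\otimes f_{j_1<\cdots<j_k})$ with $e_{S_k}=\tfrac{1}{k!}\sum_{\sigma\in S_k}\sigma$, i.e.\ your $\Phi$ is $k!$ times the right one on the grade-$k$ piece. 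Both versions are $S_k$-invariant and both are linear isomorphisms, so the discrepancy is invisible to every check that preserves the grade; but for the grade-raising operator a short calculation gives $\Phi\bigl(F_u(x)\bigr)=(k+1)\,F_u\bigl(\Phi(x)\bigr)$ for $x$ of degree $k$, and dually $\Phi\bigl(E_f(x)\bigr)=\tfrac{1}{k}\,E_f\bigl(\Phi(x)\bigr)$, so your $\Phi$ is \emph{not} a $\gl(V)$-map.

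A minimal counterexample: take $\dim U=1$, $n=2$, and $u$ spanning $U$. Then $\Phi(u\otimes\triv)=u\otimes g_1$ (with $g_1\colon 1\mapsto 1$), while $\Phi(u\otimes u)=u\otimes u\otimes(\operatorname{id}+\tau)$; but $F_u(u\otimes g_1)=\tfrac12\bigl(u\otimes u\otimes\operatorname{id}+u\otimes u\otimes\tau\bigr)$, since $\mathtt{res}^*_1(g_1)=\tau$, $\mathtt{res}^*_2(g_1)=\operatorname{id}$, and $F_u$ carries a prefactor $\tfrac12$. This is half of $\Phi\bigl(F_u.(u\otimes\triv)\bigr)=\Phi(u\otimes u)$. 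The ``combinatorial reconciliation'' between the prefactor $\tfrac{1}{k+1}$ and the $k+1$ insertion slots that you anticipate in the last paragraph is indeed where the issue hides, but it cannot be made to work with your $\Phi_S$: the $\tfrac{1}{k!}$ must already be built into the symmetrization. Replacing $\sum_g u_g\otimes g$ by $\tfrac{1}{k!}\sum_g u_g\otimes g$ fixes the construction and makes it agree (after identifying the set of bijections onto $S$ with $\{f_S\circ\sigma:\sigma\in S_k\}$) with the paper's map $\Phi_J$.
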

 
\begin{proof}
C.f. Appendix \ref{appssec:action_on_usual_tensor_power_lemmas}.
\end{proof}

\InnaA{

\begin{proposition}\label{prop:comp_tens_power_F_n}
 Consider the functor $$\hat{\mathcal{S}}_n: Ind-(\underline{Rep}(S_{\nu=n}) \InnaB{\boxtimes} \co^{\mathfrak{p}}_{n, V}) \longrightarrow Ind-(Rep(S_n)\InnaB{\boxtimes} \co^{\mathfrak{p}}_{n, V})$$ (c.f. Subsection \ref{ssec:def_cat_Del_dash_par_O}). Then $\hat{\mathcal{S}}_n (V^{\underline{\otimes}  n}) \cong V^{\otimes n}$.
\end{proposition}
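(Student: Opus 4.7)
The plan is to apply the functor $\hat{\mathcal{S}}_n$ termwise to the defining direct sum $V^{\underline{\otimes} n} = \bigoplus_{k\geq 0}(U^{\otimes k}\otimes \Delta_k)^{S_k}$ and then match the result with the decomposition of $V^{\otimes n}$ supplied by Lemma~\ref{lem:gl_V_action_usual_tens_power}. Since $\hat{\mathcal{S}}_n$ commutes with filtered colimits (in particular with direct sums), and the tensor product with the fixed finite-dimensional vector space $U^{\otimes k}$ as well as the Schur functor $(-)^{S_k}$ (an idempotent in characteristic zero) both commute with $\hat{\mathcal{S}}_n$, we obtain
\[
\hat{\mathcal{S}}_n(V^{\underline{\otimes} n}) \;\cong\; \bigoplus_{k\geq 0} \bigl(U^{\otimes k}\otimes \hat{\mathcal{S}}_n(\Delta_k)\bigr)^{S_k}.
\]

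Next, I invoke Lemma~\ref{lem:funct_F_n_Delta_k} to rewrite $\hat{\mathcal{S}}_n(\Delta_k)\cong \bC\,Inj(\{1,\dots,k\},\{1,\dots,n\})$; in particular this is zero for $k>n$, so the sum truncates at $k=n$ and the underlying $S_n$-module becomes
\[
\bigoplus_{k=0}^{n} \bigl(U^{\otimes k}\otimes \bC\,Inj(\{1,\dots,k\},\{1,\dots,n\})\bigr)^{S_k},
\]
which is precisely the target of the isomorphism $\Phi$ in Lemma~\ref{lem:gl_V_action_usual_tens_power}.

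It remains to verify that the $\gl(V)$-action transported through $\hat{\mathcal{S}}_n$ coincides with the $\gl(V)$-action described above Lemma~\ref{lem:gl_V_action_usual_tens_power}. Here the key observation is that the generators of the action on $V^{\underline{\otimes}\nu}$ in Definition~\ref{def:complex_ten_power_splitting} are written entirely in terms of the morphisms $res_l,\, res_l^*$ and the natural operators $u^{(l)}, f^{(l)}, A^{(i)}$ on $U^{\otimes k}$, while by Remark~\ref{rmrk:res_l_image_under_S_n_functor} we have $\mathcal{S}_n(res_l)=\mathtt{res}_l$ and $\mathcal{S}_n(res_l^*)=\mathtt{res}_l^*$. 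Because $\hat{\mathcal{S}}_n$ is a tensor functor and the operators $u^{(l)}, f^{(l)}, A^{(i)}$ act only on the $U^{\otimes k}$-factors (and are therefore preserved verbatim), the formulas for $F_u, E_f, A|_{(\cdot)}$ in Definition~\ref{def:complex_ten_power_splitting} are sent to the identically-written formulas defining the $\gl(V)$-action just above Lemma~\ref{lem:gl_V_action_usual_tens_power}. Combining this with the $\bC[S_n]\otimes\mathcal{U}(\gl(V))$-equivariant isomorphism $\Phi$ of Lemma~\ref{lem:gl_V_action_usual_tens_power} yields the desired isomorphism $\hat{\mathcal{S}}_n(V^{\underline{\otimes}n})\cong V^{\otimes n}$ in $Ind\text{-}(Rep(S_n)\boxtimes \co^{\mathfrak{p}}_{n,V})$.

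The only delicate point—and the one I would write out carefully—is the bookkeeping needed to confirm that the natural isomorphism $(U^{\otimes k}\otimes \hat{\mathcal{S}}_n(\Delta_k))^{S_k}\cong (U^{\otimes k}\otimes \bC\,Inj(\{1,\dots,k\},\{1,\dots,n\}))^{S_k}$ intertwines the operator $\hat{\mathcal{S}}_n(F_u)$ with $F_u$ (and similarly for $E_f$); everything reduces to the already-stated identities $\mathcal{S}_n(res_l^*)=\mathtt{res}_l^*$, $\mathcal{S}_n(res_l)=\mathtt{res}_l$, but one should be careful to keep track of the symmetrization and of the normalizing factor $\tfrac{1}{k+1}$ on both sides. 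Once this check is in place, the proposition follows.
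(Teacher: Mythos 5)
Your proposal is correct and follows essentially the same route as the paper: identify $\hat{\mathcal{S}}_n(\Delta_k)\cong\bC\,Inj(\{1,\dots,k\},\{1,\dots,n\})$ via Lemma~\ref{lem:funct_F_n_Delta_k}, observe that the $\gl(V)$-action formulas are transported verbatim since $\mathcal{S}_n(res_l)=\mathtt{res}_l$ and $\mathcal{S}_n(res_l^*)=\mathtt{res}_l^*$, and then invoke Lemma~\ref{lem:gl_V_action_usual_tens_power}. Your version is a bit more explicit about the termwise application of $\hat{\mathcal{S}}_n$ and the bookkeeping of the $S_k$-invariants, but the argument is the same.
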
}

\begin{remark}
 Restricting the action of $\gl(V)$ to $\gl(U)$, it can be seen from the proof we get an isomorphism of $\bZ_+$-graded \InnaA{$\gl(U)$-modules}.
\end{remark}

\begin{proof}
 Recall from Lemma \ref{lem:funct_F_n_Delta_k} that $\mathcal{S}_n (\Delta_k) = \bC Inj(\{1,...,k\} , \{1,...,n\})$ (this is zero if $k >n$). By definition of the action of $\gl(V)$ on $\bigoplus_{k=0,...,n} (U^{\otimes k} \otimes \bC Inj(\{1,...,k\} , \{1,...,n\}))^{S_k}$ given above, we have an isomorphism of \InnaA{$\bC[S_n] \otimes_{\bC} \mathcal{U}(\gl(V))$}-modules
 $$\Psi: \hat{\mathcal{S}}_n (V^{\underline{\otimes}  n}) \rvert_{\gl(U)} \rightarrow \bigoplus_{k=0,...,n} (U^{\otimes k} \otimes \bC Inj(\{1,...,k\} , \{1,...,n\}))^{S_k}$$
 
 Using Lemma \ref{lem:gl_V_action_usual_tens_power} above, we obtain the desired result.

\end{proof}
%
%
%
%

\begin{example}
 Let $\dim U = 1$. In that case $V$ can be viewed as the tautological representation of $\gl_2$, with standard basis $v_0, v_1$. The tensor power $V^{\otimes n}$ is then a span of weight vectors of the form
$$v_{i_1} \otimes v_{i_2} \otimes ...\otimes v_{i_n}, \; i_1,...,i_n \in \{ 0,1\}$$
(the weight of this vector is $(n- \sum_{j=1,..,n} i_j, \sum_{j=1,..,n} i_j)$). This allows us to establish an isomorphism
\begin{align*}
 V^{\otimes n} &\rightarrow \bigoplus_{k=0,...,n} \bC \{ S \subset \{1,...,n\} \rvert \abs{S}=k\} \cong \bigoplus_{k=0,...,n} \bC Inj(\{1,...,k\} , \{1,...,n\})^{S_k}\\
v_{i_1} \otimes v_{i_2} \otimes ...\otimes v_{i_n} &\mapsto  S:=\{j\in \{1,...,n\} \rvert i_j =1\}
\end{align*}

The operators $E \in  U^* \cong \mathfrak{u}_{\mathfrak{p}}^{+}, F \in  U \cong \mathfrak{u}_{\mathfrak{p}}^{-}$ act on $\bigoplus_{k=0,...,n} \bC \{ S \subset \{1,...,n\} \rvert \abs{S}=k\}$ by
$$  E(S) =\sum_{T: T \subset S, \abs{T}=k-1} T, \, F(S) =\frac{1}{k+1} \sum_{T: S \subset T \subset \{1,...,n\}, \abs{T}=k+1} T$$
where $S$ is a subset of $\{1,...,n\}$ of size $k$.
\InnaA{
The operator $\id_{U} \in \End(V)$ acts on $\bigoplus_{k=0,...,n} \bC \{ S \subset \{1,...,n\} \rvert \abs{S}=k\}$ by $S \mapsto \abs{S} \cdot S$.

In particular, we immediately see that
 $$\hat{\mathcal{S}}_n(V^{\underline{\otimes}  n}) \cong \bigoplus_{k \geq 0} \mathcal{S}_n((\Delta_k)^{S_k}) \cong  \bigoplus_{k \geq 0} \bC Inj(\{1,...,k\} , \{1,...,n\})^{S_k} \cong  V^{\otimes n}$$}
\end{example}

\subsection{\InnaB{Independence of $V^{\underline{\otimes} \nu}$ on the choice of splitting}}\label{ssec:comp_tens_power_indep_splitting}
\InnaB{In this subsection, we show that our definition of $V^{\underline{\otimes} \nu}$ does not depend on the choice of splitting $V = \bC \triv \oplus U$ we made earlier, but rather only on the pair $(V, \triv)$.

Consider the following category $\mathbf{Uni}$ of unital vector spaces. The objects of this category will be tuples $(V, \triv, U)$, where $V$ is a finite-dimensional vector space, $\triv \in V \setminus \{0\}$, and $U$ is a subspace of $V$ such that $V = \bC \triv \oplus U$. 

The morphisms in this category are given by $$Mor_{\mathbf{Uni}} \left( (V, \triv, U), (V', \triv', U') \right) := \{ \phi \in \Hom_{\bC}(V, V') : \phi(\triv) = \triv' \}$$

\begin{remark}
 This category has a natural structure of a symmetric monoidal tensor category, with $$(V, \triv, U) \otimes (V', \triv', U') :=  \left( V \otimes V', \triv \otimes \triv', U \otimes \bC \triv' \oplus \bC \triv \otimes U' \oplus U \otimes U' \right)$$
\end{remark}

We now construct a functor 
$$(\cdot)^{\underline{\otimes} \nu}: \mathbf{Uni} \longrightarrow Ind-\underline{Rep}(S_{\nu}) $$ 
On objects of $\mathbf{Uni}$, this is just $(V, \triv, U) \mapsto V^{\underline{\otimes} \nu} := \bigoplus_{k \geq 0} (U^{\otimes k} \otimes \Del_k)^{S_k}$. On morphisms, this functor is
$$\phi: (V, \triv, U) \rightarrow (V', \triv', U') \; \; \mapsto \; \; \Phi: \bigoplus_{k \geq 0} (U^{\otimes k} \otimes \Del_k)^{S_k} \rightarrow \bigoplus_{k \geq 0} (U'^{\otimes k} \otimes \Del_k)^{S_k}$$
with the matrix coefficients $\Phi^{ l, k}: (U^{\otimes k} \otimes \Del_k)^{S_k} \rightarrow (U'^{\otimes l} \otimes \Del_l)^{S_l}$ of $\Phi$ coming from maps $U^{\otimes k} \otimes \Del_k \rightarrow U'^{\otimes l} \otimes \Del_l$ which are defined by the formula
\begin{align*}
&\sum_{\substack{\iota: \{1,..., l \} \rightarrow \{1,..., k \} \\ \text{ strictly increasing} }} \phi_{U, U'}^{(Im(\iota))} \otimes \phi_{U, \triv'}^{(\{1,..., k \} \setminus Im(\iota))} \otimes \left( \Del_k \stackrel{res_{\iota}}{\rightarrow} \Del_l \right)
\end{align*}

Here 
\begin{itemize}[leftmargin=*]
      \item The map $\phi_{U, U'}: U \rightarrow U'$ is the composition $U \hookrightarrow V \stackrel{\phi}{\rightarrow} V' \twoheadrightarrow U'$.
      
      The notation $\phi_{U, U'}^{(Im(\iota))}$ means that we apply the map $\phi_{U, U'}$ only to the factors $\iota(1), \iota(2), ..., \iota(l)$ of $U^{\otimes k}$.
      \item The map $\phi_{U, \triv'}: U \rightarrow \bC$ is defined so that the composition $$U \hookrightarrow V \stackrel{\phi}{\rightarrow} V' \twoheadrightarrow \bC \triv'$$ is the map $u \mapsto \phi_{U, \triv'}(u) \triv'$. The notation $\phi_{U, \triv'}^{(\{1,..., k \} \setminus Im(\iota))}$ means that we apply the map $\phi_{U, \triv'}$ only to those factors $i$ of $U^{\otimes k}$ for which $i \not\in Im(\iota)$.
      \item The map $res_{\iota}$ is the map $\Del_k \rightarrow \Del_l$ given by the diagram $\pi  \in \bar{P}_{k,l}$ with edges $\{(\iota(i), i')\}_{1 \leq i \leq  l}$.
     \end{itemize}

Note that $\Phi$ is upper-triangular in terms of the matrix coefficients $\Phi^{ l, k}$.
     
\begin{lemma}
 The functor $(\cdot)^{\underline{\otimes} \nu}: \mathbf{Uni} \longrightarrow Ind-\underline{Rep}(S_{\nu}) $ is well-defined.
\end{lemma}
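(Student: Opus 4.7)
The plan is to verify three things: that for each morphism $\phi: (V, \triv, U) \to (V', \triv', U')$ in $\mathbf{Uni}$ the given formula defines a well-defined morphism $\Phi$ in $Ind-\underline{Rep}(S_\nu)$; that the assignment sends identities to identities; and that it preserves composition.

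Well-definedness of $\Phi$ has two parts. Finiteness of the matrix coefficient $\Phi^{l,k}: (U^{\otimes k} \otimes \Delta_k)^{S_k} \to (U'^{\otimes l} \otimes \Delta_l)^{S_l}$ is immediate, since the sum runs over the $\binom{k}{l}$ strictly increasing injections $\iota: \{1, \ldots, l\} \to \{1, \ldots, k\}$. To verify that $\Phi^{l,k}$ sends $S_k$-invariants into $S_l$-invariants, I would first extend the formula to arbitrary injections, defining $\phi_{U,U'}^{(Im(\iota))}$ and $res_\iota$ in the obvious way for any injective $\iota$, thereby obtaining a map $\tilde\Phi^{l,k}_{\mathrm{inj}}: U^{\otimes k} \otimes \Delta_k \to U'^{\otimes l} \otimes \Delta_l$ whose sum ranges over all injections. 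One checks that $\tilde\Phi^{l,k}_{\mathrm{inj}} = \sum_{\sigma \in S_l} \sigma \cdot \Phi^{l,k}$ (where $\sigma$ acts on the codomain by permuting tensor factors of $U'^{\otimes l}$ and on $\Delta_l$ by its natural action), so $\tilde\Phi^{l,k}_{\mathrm{inj}}$ is manifestly $S_l$-invariant in the target; combined with the observation that $\tilde\Phi^{l,k}_{\mathrm{inj}}$ agrees with $l! \cdot \Phi^{l,k}$ on $S_k$-invariants, this forces the image of $\Phi^{l,k}$ on invariants to lie in $(U'^{\otimes l} \otimes \Delta_l)^{S_l}$. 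Preservation of identity is then straightforward: for $\phi = \id_V$ we have $\phi_{U,U'} = \id_U$ and $\phi_{U,\triv'} = 0$, so the only surviving term is the one with $\{1,\ldots,k\} \setminus Im(\iota) = \emptyset$, i.e. $\iota = \id$ and $l = k$, which contributes $\id_{U^{\otimes k}} \otimes \id_{\Delta_k}$.

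For composition, I would fix an input summand $(U^{\otimes k} \otimes \Delta_k)^{S_k}$ and expand both $(\psi \circ \phi)^{\underline{\otimes}\nu}$ and $\psi^{\underline{\otimes}\nu} \circ \phi^{\underline{\otimes}\nu}$ as sums of elementary terms. On the left, using $(\psi \circ \phi)_{U,U''} = \psi_{U',U''} \circ \phi_{U,U'}$ together with $(\psi \circ \phi)_{U,\triv''} = \phi_{U,\triv'} + \psi_{U',\triv''} \circ \phi_{U,U'}$, and expanding the $\triv''$-factor tensor-by-tensor, one obtains a sum indexed by pairs $(\iota, T)$, where $\iota: \{1,\ldots,l\} \hookrightarrow \{1,\ldots,k\}$ is strictly increasing and $T \subset \{1,\ldots,k\} \setminus Im(\iota)$ marks the positions on which $\psi_{U',\triv''} \circ \phi_{U,U'}$ (rather than $\phi_{U,\triv'}$) is applied. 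On the right, $\psi^{\underline{\otimes}\nu} \circ \phi^{\underline{\otimes}\nu}$ decomposes as a sum over pairs $(\iota_1, \iota_2)$ of strictly increasing injections. The matching bijection is $Im(\iota_1) = Im(\iota) \cup T$ together with $\iota_1 \circ \iota_2 = \iota$; under it, the tensor parts in $U^{\otimes k} \to U''^{\otimes l}$ agree term by term by construction, and the $\Delta_k \to \Delta_l$ parts agree via the identity $res_{\iota_2} \circ res_{\iota_1} = res_{\iota_1 \circ \iota_2}$ in $\underline{Rep}(S_\nu)$.

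This last identity follows from Lemma \ref{lem:Delta_k_homs}: in the glued diagram of the matchings $res_{\iota_1}, res_{\iota_2}$, every middle-row vertex is connected to a top vertex via $\iota_1$, so $n(\rho,\pi) = 0$ and the polynomial $p_{\rho,\pi,\tau}(\nu)$ is the empty product; moreover every bottom vertex is already matched through $\iota_1 \circ \iota_2$, ruling out any coarsening $\tau \supsetneq \rho \star \pi$. Hence the only contribution is $\tau = \rho \star \pi = res_{\iota_1 \circ \iota_2}$ with coefficient $1$. The main obstacle I expect is the combinatorial bookkeeping of the bijection between $(\iota, T)$ and $(\iota_1, \iota_2)$, together with the careful check that the $\bar{P}$-diagrams match under the composition formula of Lemma \ref{lem:Delta_k_homs}; the remaining verifications are then direct.
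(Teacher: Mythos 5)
Your composition argument is essentially the paper's: expand $(\psi\circ\phi)^{\underline{\otimes}\nu}$ via $(\psi\circ\phi)_{U,U''}=\psi_{U',U''}\circ\phi_{U,U'}$ and $(\psi\circ\phi)_{U,\triv''}=\psi_{U',\triv''}\circ\phi_{U,U'}+\phi_{U,\triv'}$, and exhibit the bijection between pairs $(\iota,T)$ and $(\iota_1,\iota_2)$ with $Im(\iota_1)=Im(\iota)\cup T$ and $\iota_1\circ\iota_2=\iota$. You additionally spell out two points the paper treats as implicit when it declares ``We only need to check that this functor preserves composition'': that $\Phi^{l,k}$ carries $S_k$-invariants into $S_l$-invariants (via your symmetrization argument, which is sound once one notes that for $\sigma\in S_l$ the term for $\iota\circ\sigma^{-1}$ equals the term for $\iota$ precomposed with a permutation in $S_k$, hence they agree on $S_k$-invariants), that identities go to identities, and that $res_{\iota_2}\circ res_{\iota_1}=res_{\iota_1\circ\iota_2}$ holds on the nose in $\underline{Rep}(S_\nu)$ because $n(\rho,\pi)=0$ and every bottom vertex of $\rho\star\pi$ is already matched so no coarsening $\tau$ contributes. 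These additions fill genuine (if minor) gaps in the written proof rather than constituting a different route.
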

\begin{proof}
 We only need to check that this functor preserves composition. Indeed, let $\phi: (V, \triv, U) \rightarrow (V', \triv', U')$, $\psi: (V', \triv', U') \rightarrow (V'', \triv'', U'')$, and denote by $\Phi, \Psi$ their respective images under the functor $(\cdot)^{\underline{\otimes} \nu}$. 
 
 We have:
 $(\psi \circ \phi)_{U, U''} = \psi_{U', U''} \circ \phi_{U, U'}$, and $(\psi \circ \phi)_{U, \triv''} = \psi_{U', \triv''} \circ \phi_{U, U'} + \phi_{U, \triv'}$.
 
 Thus $$(\psi \circ \phi)^{\underline{\otimes} \nu}: \bigoplus_{k \geq 0} (U^{\otimes k} \otimes \Del_k)^{S_k} \rightarrow \bigoplus_{k \geq 0} (U''^{\otimes k} \otimes \Del_k)^{S_k}$$ is a map in $Ind-\underline{Rep}(S_{\nu})$, with matrix coefficients coming from the maps $U^{\otimes k} \otimes \Del_k \rightarrow U''^{\otimes l} \otimes \Del_l$ given by
\begin{align*}
&\sum_{\substack{\iota: \{1,..., l \} \rightarrow \{1,..., k \} \\ \text{ strictly increasing} }} (\psi_{U', U''} \circ \phi_{U, U'})^{(Im(\iota))} \otimes \left( \psi_{U', \triv''} \circ \phi_{U, U'} + \phi_{U, \triv'} \right) ^{(\{1,..., k \} \setminus Im(\iota))} \otimes \left( \Del_k \stackrel{res_{\iota}}{\rightarrow} \Del_l \right)
\end{align*}
Next, 
\begin{align*}
&\sum_{\substack{\iota: \{1,..., l \} \rightarrow \{1,..., k \} \\ \text{ strictly increasing} }} (\psi_{U', U''} \circ \phi_{U, U'})^{(Im(\iota))} \otimes \left( \psi_{U', \triv''} \circ \phi_{U, U'} + \phi_{U, \triv'} \right) ^{(\{1,..., k \} \setminus Im(\iota))} \otimes \left( \Del_k \stackrel{res_{\iota}}{\rightarrow} \Del_l \right) =\\
&=\sum_{l \leq j \leq k} \sum_{\substack{\iota_2: \{1,..., l \} \rightarrow \{1,..., j \} \\ \text{ str. incr.} }} \sum_{\substack{\iota_1: \{1,..., j \} \rightarrow \{1,..., k \} \\ \text{ str. incr.} }} (\psi_{U', U''} \circ \phi_{U, U'})^{(Im(\iota_1 \circ \iota_2))} \otimes \left( \psi_{U', \triv''} \circ \phi_{U, U'} \right)^{(Im(\iota_1) \setminus Im(\iota_1 \circ \iota_2))} \otimes \\
&\otimes \phi_{U, \triv'}^{(\{1,..., k \} \setminus Im(\iota_1))} \otimes \left( \Del_k \stackrel{res_{\iota_1}}{\rightarrow} \Del_j \right) \circ \left( \Del_j \stackrel{res_{\iota_2}}{\rightarrow} \Del_l \right) 
\end{align*}
Thus the $(l, k)$ matrix coefficient of $(\psi \circ \phi)^{\underline{\otimes} \nu}$ is $\sum_{l \leq j \leq k} \Psi^{ l, j} \circ \Phi^{ j, k}$, as wanted.
\end{proof}

 Let $(V, \triv, U) \in \mathbf{Uni}$. Then $\Aut_{\mathbf{Uni}} ((V, \triv, U)) = \bar{\mathfrak{P}}_{V, \triv}$ (the mirabolic subgroup of $GL(V)$ preserving $\triv$; c.f. Definition \ref{notn:par_subalg}). Given two splittings $V = \bC \triv \oplus U$, $V = \bC \triv \oplus W$, we have a map $\id_V: (V, \triv, U) \rightarrow (V, \triv, W)$, and we get a commutative diagram
 $$ \begin{CD}
     \bar{\mathfrak{P}}_{V, \triv} = \Aut_{\mathbf{Uni}} ((V, \triv, U)) @>{(\cdot)^{\underline{\otimes} \nu}}>> \Aut_{Ind-\underline{Rep}(S_{\nu})}(\bigoplus_{k \geq 0} (U^{\otimes k} \otimes \Del_k)^{S_k})\\
     @V{Ad_{\id_V}}VV @V{Ad_{(\id_V)^{\underline{\otimes} \nu}}}VV\\
     \bar{\mathfrak{P}}_{V, \triv} = \Aut_{\mathbf{Uni}} ((V, \triv, W)) @>{(\cdot)^{\underline{\otimes} \nu}}>> \Aut_{Ind-\underline{Rep}(S_{\nu})}(\bigoplus_{k \geq 0} (W^{\otimes k} \otimes \Del_k)^{S_k})
    \end{CD}
$$

Consider the action $$\rho_U: \gl(V) \longrightarrow \End_{Ind-\underline{Rep}(S_{\nu})}(\bigoplus_{k \geq 0} (U^{\otimes k} \otimes \Del_k)^{S_k})$$ given in Section \ref{ssec:def_comp_ten_power_split}. 
\begin{lemma}
 The action $\rho_U \rvert_{\gl(U) \oplus U^*}$ integrates to the action of $\bar{\mathfrak{P}}_{V, \triv}$ on $\bigoplus_{k \geq 0} (U^{\otimes k} \otimes \Del_k)^{S_k})$ given above, i.e. we have a commutative diagram 
 $$ \begin{CD}
 \bar{\mathfrak{p}}_{V, \InnaB{\bC \triv}} \cong \gl(U) \oplus U^* @>{\rho_U}>> \End_{Ind-\underline{Rep}(S_{\nu})}(\bigoplus_{k \geq 0} (U^{\otimes k} \otimes \Del_k)^{S_k})\\
 @V{exp}VV @V{exp}VV \\
 \bar{\mathfrak{P}}_{V, \triv} = \Aut_{\mathbf{Uni}} ((V, \triv, U)) @>{(\cdot)^{\underline{\otimes} \nu}}>> \Aut_{Ind-\underline{Rep}(S_{\nu})}(\bigoplus_{k \geq 0} (U^{\otimes k} \otimes \Del_k)^{S_k}) \end{CD}
$$
\end{lemma}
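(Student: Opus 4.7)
The plan is to verify that the diagram commutes by comparing derivatives at the identity, after first reducing to honest morphisms between finite-dimensional objects. For each $n \geq 0$ set $F_n := \bigoplus_{k=0}^{n} (U^{\otimes k} \otimes \Del_k)^{S_k}$. This $F_n$ is preserved both by $\rho_U(\bar{\mathfrak{p}}_{\bC \triv})$ (the $\gl(U)$-part stabilizes each graded piece while $\mathfrak{u}_{\mathfrak{p}}^{+}$ strictly lowers the degree) and by the image of $\bar{\mathfrak{P}}_{\triv}$ under $(\cdot)^{\underline{\otimes}\nu}$ (since the matrix coefficient $\Phi^{l,k}$ vanishes whenever $l > k$). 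Hence on each $F_n$ we obtain a morphism of complex algebraic groups $\bar{\mathfrak{P}}_{\triv} \to \Aut(F_n)$; since $\bar{\mathfrak{P}}_{\triv} \cong GL(U) \ltimes U^*$ is connected, it will be enough to show that its derivative at the identity agrees with $\rho_U|_{\bar{\mathfrak{p}}_{\bC \triv}}$ restricted to $F_n$, and then pass to the colimit.

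The derivative will be computed separately on the two summands of $\bar{\mathfrak{p}}_{\bC \triv} \cong \gl(U) \oplus \mathfrak{u}_{\mathfrak{p}}^{+}$. For $A \in \gl(U)$ the element $\exp(tA)$ corresponds under the embedding $GL(U) \hookrightarrow \bar{\mathfrak{P}}_{\triv}$ to the pair $\phi_{U, U} = \exp(tA)$, $\phi_{U, \triv} = 0$. The vanishing of $\phi_{U, \triv}$ forces the only surviving $\iota$ in the formula for $\Phi^{l,k}$ to be the identity, so $\Phi^{l,k} = 0$ for $l < k$ and $\Phi^{k,k} = \exp(tA)^{\otimes k} \otimes \id_{\Del_k}$. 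Differentiating at $t = 0$ yields $\bigl(\sum_{i=1}^{k} A^{(i)}\bigr) \otimes \id_{\Del_k}$, which is precisely $\rho_U(A)$ on $(U^{\otimes k} \otimes \Del_k)^{S_k}$.

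For $f \in \mathfrak{u}_{\mathfrak{p}}^{+} \cong U^*$ the element $\exp(tf)$ corresponds to $\phi_{U, U} = \id_U$ and $\phi_{U, \triv} = t f$, so $\Phi^{l,k}$ is a polynomial in $t$ of degree $k - l$. Its derivative at $t = 0$ therefore vanishes unless $k - l = 1$; in the remaining case $l = k-1$, the sum is indexed by the unique position $j \in \{1,\dots,k\}$ omitted from $\Im(\iota)$. Writing $\iota_j$ for the corresponding strictly increasing map, a direct comparison of diagrams with Definition \ref{def:res_morphisms} identifies $res_{\iota_j}$ with $res_j$. Hence $\frac{d}{dt}\big|_{t=0} \Phi^{k-1,k} = \sum_{j=1}^{k} f^{(j)} \otimes res_j = E_f$, which is exactly $\rho_U(f)$ on $(U^{\otimes k} \otimes \Del_k)^{S_k}$.

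The main (modest) obstacle is the combinatorial identification $res_{\iota_j} = res_j$, which requires careful index bookkeeping but is essentially a diagram chase matching the two descriptions of the unique unmatched top vertex. Once the two Lie algebra homomorphisms $\bar{\mathfrak{p}}_{\bC \triv} \to \End(F_n)$ are shown to agree on the summands $\gl(U)$ and $\mathfrak{u}_{\mathfrak{p}}^{+}$, linearity extends the match to all of $\bar{\mathfrak{p}}_{\bC \triv}$; connectedness of $\bar{\mathfrak{P}}_{\triv}$ together with the algebraicity of both morphisms into $\Aut(F_n)$ then forces them to coincide, giving commutativity of the diagram on each $F_n$, and hence on $\bigoplus_{k \geq 0} (U^{\otimes k} \otimes \Del_k)^{S_k}$.
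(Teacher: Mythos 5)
Your proof is correct and follows essentially the same strategy as the paper: compute the $s$-derivative of the matrix coefficient $\bigl(\exp(s\phi)\bigr)^{\underline{\otimes}\nu, (l,k)}$ at $s=0$ and verify it equals $\rho_U(\phi)$, then conclude by functoriality of $\exp$ with respect to the (algebraic) group morphism $(\cdot)^{\underline{\otimes}\nu}$. The paper does this for a general $\phi \in \bar{\mathfrak{p}}_{\bC\triv}$ in one pass, whereas you separate into $\gl(U)$ and $U^*$, but the first-order expansions are identical. The truncation to $F_n$ is a useful touch of rigor (it lets one speak literally of algebraic groups and their differentials before passing to the colimit), which the paper leaves implicit.

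One small point of presentation rather than substance: the phrase ``connectedness of $\bar{\mathfrak{P}}_{\triv}$ together with the algebraicity of both morphisms $\dots$ forces them to coincide'' is slightly off, since there is only one group morphism in play. What is really happening is that $(\cdot)^{\underline{\otimes}\nu}: \bar{\mathfrak{P}}_{\triv} \to \Aut(F_n)$ is a morphism of algebraic groups whose differential you have computed to be $\rho_U|_{\bar{\mathfrak{p}}_{\bC\triv}}$, and the identity $f\circ\exp_G = \exp_H\circ df$ for a morphism $f:G\to H$ then gives the commutativity of the diagram directly, with no appeal to connectedness or to a second, pre-existing, integrated morphism. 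The computation you carry out is exactly what is needed; only the last sentence of the justification should be rephrased along these lines.
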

\begin{proof}
 Let $\phi \in \bar{\mathfrak{p}}_{V, \InnaB{\bC \triv}}$, and denote by $\widetilde{\Phi}_s$ the image of $exp(s\phi)$ ($s \in \bC$) under the functor $(\cdot)^{\underline{\otimes} \nu}$. We want to show that $\frac{d}{ds} \widetilde{\Phi}_s = \rho_U (\phi)$. Writing these expressions in terms of matrix coefficients, we want to show that for any $l, k \geq 0$, we have:
 $$\widetilde{\Phi}_s^{l, k} \stackrel{?}{=} \begin{cases}
                         s \rho_U (\phi) +o(s) &\text{ if } l \neq k \\
                         \id + s \rho_U (\phi) +o(s) &\text{ if } l = k 
                        \end{cases}
$$

Indeed, 
\begin{align*}
 &\sum_{\substack{\iota: \{1,..., l \} \rightarrow \{1,..., k \} \\ \text{ strictly increasing} }} exp(s\phi)_{U, U}^{Im(\iota)} \otimes exp(s\phi)_{U, \triv}^{(\{1,..., k \} \setminus Im(\iota))} \otimes \left( \Del_k \stackrel{res_{\iota}}{\rightarrow} \Del_l \right) = \\
 &= \sum_{\substack{\iota: \{1,..., l \} \rightarrow \{1,..., k \} \\ \text{ strictly increasing} }} (\id_U + s\phi_{U, U} +o(s))^{Im(\iota)} \otimes (s\phi_{U, \triv} +o(s))^{(\{1,..., k \} \setminus Im(\iota))} \otimes \left( \Del_k \stackrel{res_{\iota}}{\rightarrow} \Del_l \right) =\\
 &= \begin{cases}
     s \sum_{i \in \{1,..., k \}} \phi_{U, \triv}^{(i)} \otimes res_i + o(s) &\text{ if } l = k-1 \\
     \id_{U^{\otimes k} \otimes \Del_k} + s \sum_{i \in \{1,..., k \}} \phi_{U, U}^{(i)} \otimes \id_{\Delta_k} +o(s) &\text{ if } l = k \\
      o(s) &\text{ else }           
    \end{cases}
\end{align*}
We conclude that $$\widetilde{\Phi}_s^{l, k} = \begin{cases}
                         s \rho_U (\phi) +o(s) &\text{ if } l \neq k \\
                         \id + s \rho_U (\phi) +o(s) &\text{ if } l = k 
                        \end{cases}
$$ as wanted.
\end{proof}

We obtained an action of the subalgebra $\bar{\mathfrak{p}}_{V, \InnaB{\bC \triv}}$ on the $Ind$-object $V^{\underline{\otimes} \nu}$ of $\underline{Rep}(S_{\nu})$, and this action does not depend on the choice of splitting, in the sense that the diagram 
$$ \xymatrix{
     &{} &{\gl(U) \oplus U^*} \ar[r] &{\End_{Ind-\underline{Rep}(S_{\nu})}(\bigoplus_{k \geq 0} (U^{\otimes k} \otimes \Del_k)^{S_k})}
     \ar[dd]_{Ad_{(\id_V)^{\underline{\otimes} \nu}}}\\
     &\bar{\mathfrak{p}}_{V, \InnaB{\bC \triv}}  \ar[ur] \ar[rd] &{} &{}\\
     &{} &{\gl(W) \oplus W^*} \ar[r] &{\End_{Ind-\underline{Rep}(S_{\nu})}(\bigoplus_{k \geq 0} (W^{\otimes k} \otimes \Del_k)^{S_k}) } }$$ 
is commutative for any two splittings $V = \bC \triv \oplus U$, $V = \bC \triv \oplus W$.

It remains to show the action of $\gl(V)$ on $V^{\underline{\otimes} \nu}$ does not depend on the choice of splitting:
\begin{lemma}
The diagram 
 $$ \xymatrix{
     &{} &{\bar{\mathfrak{p}}_{V, \InnaB{\bC \triv}} \oplus U \oplus \bC \id_V} \ar[r] &{\End_{Ind-\underline{Rep}(S_{\nu})}(\bigoplus_{k \geq 0} (U^{\otimes k} \otimes \Del_k)^{S_k})}
     \ar[dd]_{Ad_{(\id_V)^{\underline{\otimes} \nu}}}\\
     &\gl(V)  \ar[ur] \ar[rd] &{} &{}\\
     &{} &{\bar{\mathfrak{p}}_{V, \InnaB{\bC \triv}} \oplus W \oplus \bC \id_V} \ar[r] &{\End_{Ind-\underline{Rep}(S_{\nu})}(\bigoplus_{k \geq 0} (W^{\otimes k} \otimes \Del_k)^{S_k})} }$$
is commutative.
\end{lemma}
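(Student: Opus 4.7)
The plan is to combine the $\bar{\mathfrak{p}}_{V, \bC\triv}$-case established above with a Schur-type $\mathfrak{p}$-equivariance argument in order to reduce the full statement to a single explicit check outside $\mathfrak{p}$. Set $T:=(\id_V)^{\underline{\otimes}\nu}$ as the isomorphism between $\bigoplus_k(U^{\otimes k}\otimes\Delta_k)^{S_k}$ and $\bigoplus_k(W^{\otimes k}\otimes\Delta_k)^{S_k}$, and write $\rho_U,\rho_W$ for the two candidate $\gl(V)$-actions. Introduce the discrepancy
\[
\Delta(A)\;:=\;T\circ\rho_U(A)\circ T^{-1}\;-\;\rho_W(A),\qquad A\in\gl(V),
\]
a linear map $\gl(V)\to \End\bigl(\bigoplus_{k\ge 0}(W^{\otimes k}\otimes \Delta_k)^{S_k}\bigr)$. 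The previously established commutativity handles $A\in \bar{\mathfrak{p}}_{V, \bC\triv}$, and since $\id_V$ acts by the scalar $\nu$ in both splittings the diagram trivially commutes on $\bC\id_V$; hence $\Delta$ vanishes on all of $\mathfrak{p}=\bar{\mathfrak{p}}_{V, \bC\triv}\oplus \bC\id_V$.

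Both $T\rho_U(\cdot)T^{-1}$ and $\rho_W$ are Lie algebra homomorphisms, so expanding the bracket gives
\[
\Delta([A,B])\;=\;[\rho_W(A),\Delta(B)]\;+\;[\Delta(A),\rho_W(B)]\;+\;[\Delta(A),\Delta(B)].
\]
For $A\in\mathfrak{p}$, where $\Delta(A)=0$, this simplifies to $\Delta([A,B])=[\rho_W(A),\Delta(B)]$, meaning $\Delta$ is $\mathfrak{p}$-equivariant for the adjoint action on $\gl(V)$ and the commutator action via $\rho_W$ on the target. Since $\Delta$ also vanishes on $\mathfrak{p}$, it descends to a $\mathfrak{p}$-equivariant linear map $\bar\Delta:\gl(V)/\mathfrak{p}\to \End\bigl(\bigoplus_{k\ge 0}(W^{\otimes k}\otimes \Delta_k)^{S_k}\bigr)$.

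Next, the $\mathfrak{p}$-module $\gl(V)/\mathfrak{p}\cong \mathfrak{u}_{\mathfrak{p}}^-\cong U$ is irreducible: a direct computation of commutators in $\gl(V)$ shows that both $\bC\id_V$ and $\mathfrak{u}_{\mathfrak{p}}^+$ act by zero on this quotient, while $\gl(U)$ acts by the standard representation (explicitly $\mathrm{ad}_D(F_u^U)=F_{Du}^U$ for $D\in \gl(U)$ and $u\in U$). The kernel of $\bar\Delta$ is therefore a $\mathfrak{p}$-subrepresentation of $\mathfrak{u}_{\mathfrak{p}}^-$ and must be either zero or all of $\mathfrak{u}_{\mathfrak{p}}^-$, so it suffices to exhibit a single nonzero $u\in U$ with $\Delta(F_u^U)=0$, where $F_u^U\in\gl(V)$ denotes the nilpotent sending $\triv\mapsto u$ and killing $U$.

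The main obstacle is this last verification. Writing the change of splitting as $W=\{u'-f(u')\triv:u'\in U\}$ for the functional $f\in U^*$ determined by the two splittings, one decomposes in the $W$-splitting
\[
F_u^U\;=\;\bar q\;+\;f(u)\,\id_V\;+\;F_w^W,
\]
with $w=u-f(u)\triv\in W$ and $\bar q\in\bar{\mathfrak{p}}_{V, \bC\triv}$ the unique endomorphism satisfying $\bar q(\triv)=0$ and $\bar q(w')=-f(u')u-f(u)w'$ for $w'=u'-f(u')\triv\in W$. The required identity $T\circ F_u\circ T^{-1}=\rho_W(\bar q)+f(u)\nu\cdot \id+F_w^W$ then becomes a diagrammatic identity in $Ind-\underline{Rep}(S_\nu)$: using the formula for $T$ coming from the functor $(\cdot)^{\underline{\otimes}\nu}$ (an upper-triangular operator whose matrix coefficients are sums, over strictly increasing injections $\iota$, of tensor products of $u'\mapsto u'-f(u')\triv$ and $f$ with $res_\iota$), both sides unfold into sums of compositions of the restriction morphisms $res_l,res_l^*$ whose coefficients match term by term via the same Pieri-type relations used to prove Lemma~\ref{lem:complex_ten_power_action_well_def}. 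The combinatorial bookkeeping is substantial but routine.
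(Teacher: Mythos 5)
Your discrepancy-and-irreducibility framework is a genuinely different route from the paper's one-line assertion that the identity "follows directly from the definition." The Maurer–Cartan type identity for $\Delta([A,B])$, the $\mathfrak{p}$-equivariance of $\Delta$ once it vanishes on $\mathfrak{p}$, the descent to $\bar\Delta:\gl(V)/\mathfrak{p}\to\End(\cdot)$, the identification of $\gl(V)/\mathfrak{p}\cong V/\bC\triv$ as the standard irreducible $\mathfrak{p}$-module, and the decomposition $F_u^U=\bar q+f(u)\id_V+F_w^W$ with $\bar q\in\bar{\mathfrak{p}}_{V,\bC\triv}$ are all correct. This is a nice way to organize the problem conceptually.

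However, there is a real gap. The entire content of the lemma is concentrated in the single remaining verification that $\Delta(F_u^U)=0$ for one nonzero $u$, and you do not carry it out; "substantial but routine combinatorial bookkeeping" is not a proof, and the assertion that the coefficients "match term by term via the same Pieri-type relations used to prove Lemma~\ref{lem:complex_ten_power_action_well_def}" is not obvious — the identities in Appendix~\ref{appssec:complex_ten_power_action_well_def} verify commutation relations among $E_f,F_u$ and $\gl(U)$ on a fixed copy, whereas what you need here is an intertwining relation across two different copies through the full (triangular, not diagonal) operator $T$, and those are different computations. Moreover the abstract reduction does not actually shrink the work: $\Delta(F_u^U)$ is already linear in $u$, so checking a single $u$ was never the bottleneck; the difficulty, once $u$ is fixed, is matching all the off-diagonal matrix coefficients $T^{l,k}$ against the degree-$(-1)$, $0$, $+1$ pieces of $\rho_W(\bar q)+f(u)\nu\cdot\id+\rho_W(F_w^W)$, and none of that is made easier by knowing $\gl(V)/\mathfrak{p}$ is irreducible. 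A complete proof would need to actually unwind $T\rho_U(F_u^U)T^{-1}$ against $\rho_W(\bar q)+f(u)\nu\cdot\id+\rho_W(F_w^W)$ degree by degree, using the explicit matrix coefficients of $\Phi^{l,k}$ given in Subsection~\ref{ssec:comp_tens_power_indep_splitting}.

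Two smaller points: you call $T$ "upper-triangular," but with the grading convention of the paper $\Phi^{l,k}\neq0$ only for $l\le k$, so it is lower-triangular (degree-nonincreasing); and when you write $T\circ F_u\circ T^{-1}=\rho_W(\bar q)+f(u)\nu\cdot\id+F_w^W$, the $F_u$ and $F_w^W$ on the two sides should be $\rho_U(F_u^U)$ and $\rho_W(F_w^W)$ respectively.
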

\begin{proof}
This follows directly from the definition of action of $W$ (respectively, $U$) on $\bigoplus_{k \geq 0} (W^{\otimes k} \otimes \Del_k)^{S_k}$ (respectively, $\bigoplus_{k \geq 0} (U^{\otimes k} \otimes \Del_k)^{S_k}$).
\end{proof}

Thus we conclude that 
\begin{corollary}
 The definition of the complex tensor power $V \mapsto V^{\underline{\otimes} \nu}$ as an object in $Ind-(\underline{Rep}(S_{\nu=n})) \boxtimes \co^{\mathfrak{p}}_{n, V})$ depends only on the distinguished non-zero vector $\triv$, rather than on the splitting $V = \bC \triv \oplus U$.
\end{corollary}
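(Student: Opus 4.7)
The plan is to assemble the corollary directly from the two lemmas that immediately precede it. Given two splittings $V = \bC \triv \oplus U$ and $V = \bC \triv \oplus W$, the first step is to produce a canonical comparison morphism. Since $\id_V$ defines a morphism $(V,\triv,U) \to (V,\triv,W)$ in $\mathbf{Uni}$, applying the functor $(\cdot)^{\underline{\otimes}\nu}$ gives an explicit morphism
$$(\id_V)^{\underline{\otimes}\nu}: \bigoplus_{k \geq 0} (U^{\otimes k} \otimes \Del_k)^{S_k} \longrightarrow \bigoplus_{k \geq 0} (W^{\otimes k} \otimes \Del_k)^{S_k}$$
in $Ind-\underline{Rep}(S_\nu)$. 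By functoriality applied to $\id_V = \id_V \circ \id_V$ viewed the opposite way, this morphism is an isomorphism, with inverse obtained by swapping the roles of $U$ and $W$.

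Next, I would upgrade $(\id_V)^{\underline{\otimes}\nu}$ to an isomorphism inside $Ind-(\underline{Rep}(S_\nu) \boxtimes \co^{\mathfrak{p}}_{\nu,V})$. The action of $\id_V \in \gl(V)$ is by the scalar $\nu$ regardless of the splitting, so equivariance reduces to equivariance for the complementary summand $\bar{\mathfrak{p}}_{V,\bC\triv} \oplus U$ (or $\oplus W$). For $\bar{\mathfrak{p}}_{V,\bC\triv}$, the first of the preceding two lemmas shows that the Lie algebra action integrates to the mirabolic group action induced by $(\cdot)^{\underline{\otimes}\nu}$, and this group action is manifestly intertwined by $(\id_V)^{\underline{\otimes}\nu}$ (again by functoriality). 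For the remaining piece ($U$ versus $W$), the second of the preceding two lemmas is precisely the required commutative diagram.

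Finally, I would record coherence: for three splittings $U, W, Z$, functoriality of $(\cdot)^{\underline{\otimes}\nu}$ applied to $\id_V \circ \id_V = \id_V$ gives
$$(\id_V)^{\underline{\otimes}\nu}_{W \to Z} \circ (\id_V)^{\underline{\otimes}\nu}_{U \to W} \;=\; (\id_V)^{\underline{\otimes}\nu}_{U \to Z},$$
so the family of comparison isomorphisms is canonical. This yields a well-defined object (up to canonical isomorphism) in $Ind-(\underline{Rep}(S_\nu) \boxtimes \co^{\mathfrak{p}}_{\nu,V})$ attached to the pair $(V,\triv)$.

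There is no substantial obstacle here, as the corollary is mostly a bookkeeping consequence of the two lemmas. The only point that requires mild care is confirming that the matrix-coefficient formula for $(\id_V)^{\underline{\otimes}\nu}$ — which is non-trivially upper-triangular when $U \neq W$, with off-diagonal entries encoding the change-of-splitting projection $U \to \bC \triv$ relative to $W$ — is indeed $\gl(V)$-equivariant; but this is exactly the content of the second preceding lemma, which I would invoke rather than re-verify.
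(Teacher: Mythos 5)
Your proposal is correct and follows the same route as the paper: the comparison isomorphism is $(\id_V)^{\underline{\otimes}\nu}$ coming from the morphism $\id_V:(V,\triv,U)\to(V,\triv,W)$ in $\mathbf{Uni}$ (invertible by functoriality since $\id_V\circ\id_V=\id_V$), the mirabolic part of $\gl(V)$ is handled by the integration lemma plus functoriality, and the remaining piece is exactly the commutativity of the final triangular diagram lemma. The coherence remark you add is already implicit in the paper via the well-definedness of the functor $(\cdot)^{\underline{\otimes}\nu}$.
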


\begin{remark}
 The functor $(\cdot)^{\underline{\otimes} \nu}$ is a symmetric monoidal functor.
%
 
 Indeed, let $(V, \triv,U), (V', \triv', U') \in \mathbf{Uni}$. The canonical isomorphism of $S_n$ representations $$\Upsilon_n: V^{\otimes n} \otimes V'^{\otimes n} \longrightarrow (V \otimes V')^{\otimes n}$$ and its inverse $\Upsilon^{-1}_n$ can be rewritten using the isomorphism in Lemma \ref{lem:gl_V_action_usual_tens_power}; these interpolate easily to morphisms in $Ind-\underline{Rep}(S_{\nu})$:
 $$ \underline{\Upsilon}_{\nu}: V^{\underline{\otimes} \nu} \otimes V'^{\underline{\otimes} \nu} \longrightarrow (V \otimes V')^{\underline{\otimes} \nu}$$
 and $$ \underline{\Upsilon}'_{\nu}: (V \otimes V')^{\underline{\otimes} \nu}\longrightarrow V^{\underline{\otimes} \nu} \otimes V'^{\underline{\otimes} \nu} $$ so that $\underline{\Upsilon}_{\nu} \circ \underline{\Upsilon}'_{\nu} = \id$, $\underline{\Upsilon}'_{\nu} \circ \underline{\Upsilon}_{\nu} = \id$.
\end{remark}








}

\begin{remark}
 \InnaB{We can now consider the category $\mathbf{Uni}'$ of finite-dimensional unital vector spaces: that is, the objects in $\mathbf{Uni}'$ are pairs $(V, \triv)$, with $\dim V < \infty$, $\triv \in V \setminus \{0\}$, and the morphisms are $$\Hom_{\mathbf{Uni}'}((V, \triv) , (V', \triv')) := \{\phi \in \Hom_{\bC}(V, V'): \phi(\triv)=\triv' \}$$
 
 By definition, we have a forgetful functor $Forget: \mathbf{Uni} \rightarrow \mathbf{Uni}'$, and this functor is an equivalence of categories. 
 
 This allows us to define a functor $(\cdot)^{\underline{\otimes} \nu}: \mathbf{Uni}' \rightarrow Ind-\underline{Rep}(S_{\nu})$ for each choice of functor $Forget^{-1}: \mathbf{Uni}' \rightarrow \mathbf{Uni}$; the latter is defined up to isomorphism. We do not currently have a definition of the functor $(\cdot)^{\underline{\otimes} \nu}: \mathbf{Uni}' \rightarrow Ind-\underline{Rep}(S_{\nu})$ which does not involve a choice of $Forget^{-1}$.}
\end{remark}

\section{Schur-Weyl duality in Deligne setting: $\underline{Rep}^{ab}(S_{\nu})$ and $\co^{\mathfrak{p}}_{\nu, V}$}\label{sec:SW_functor}

We fix $\nu \in \bC$, and a finite-dimensional unital vector space $(V, \triv)$ (so $V^{\underline{\otimes}  \nu}$ is defined).
\begin{definition}\label{def:SW_functor}
 Define the Schur-Weyl functor $$SW^{ind}_{\nu}: Ind-\underline{Rep}^{ab}(S_{\nu}) \longrightarrow Mod_{\mathcal{U}(\gl(V))}$$ by $$SW^{ind}_{\nu}:= \Hom_{Ind-\underline{Rep}^{ab}(S_{\nu})}( \cdot , V^{\underline{\otimes}  \nu})$$

 We will also consider the restriction of the functor $SW^{ind}_{\nu}$ to the category $\underline{Rep}^{ab}(S_{\nu})$, which will be denoted by $SW_{\nu}$.
\end{definition}

\begin{remark}
 The functor $SW^{ind}_{\nu}: Ind-\underline{Rep}^{ab}(S_{\nu}) \longrightarrow Mod_{\mathcal{U}(\gl(V))}$ (as well as its restriction $SW_{\nu}$) is a contravariant $\bC$-linear additive left-exact functor.
\end{remark}

It turns out that the image of the functor $SW_{\nu} :\underline{Rep}^{ab}(S_{\nu}) \rightarrow Mod_{\mathcal{U}(\gl(V))}$ lies in $\co^{\mathfrak{p}}_{\nu, V}$, as we will prove in Lemma \ref{lem:image_SW_fin_gen}.

The following technical lemma will be used to perform most of the computations:
\begin{lemma}\label{lem:hom_X_tau_Delta_k}
Let $\T$ be a Young diagram, $k \in \bZ_+$ and $\mu$ a partition of $k$.
\begin{itemize}
 \item Assume $\T$ lies in a trivial $\stackrel{\nu}{\sim}$-class. Then
$$ \dim \Hom_{\underline{Rep}(S_{\nu}) \boxtimes Rep(S_k)}( X_{\T} \otimes \mu, \Delta_k) = \begin{cases} 1 &\text{ if } \mu \in \mathcal{I}^{+}_{\T} \\
 0 &\text{ else }  \\
 \end{cases}$$

\item Assume $\T$ lies in a non-trivial $\stackrel{\nu}{\sim}$-class $ \{\lambda^{(i)}\}_i$, and let $j$ be such that $\T = \lambda^{(j)}$. Then we have:

$$
 \dim \Hom_{\underline{Rep}(S_{\nu}) \boxtimes Rep(S_k)}( X_{\lambda^{(0)}} \otimes \mu, \Delta_k) = \begin{cases} 1 &\text{ if } \mu \in \mathcal{I}^{+}_{\lambda^{(0)}} \\
 0 &\text{ else }  \\
 \end{cases}
$$
and if $j>0$, we have:
$$
 \dim \Hom_{\underline{Rep}(S_{\nu}) \boxtimes Rep(S_k)}( X_{\lambda^{(j)}} \otimes \mu, \Delta_k) = \begin{cases} 2 &\text{ if } \mu \in \mathcal{I}^{+}_{\lambda^{(j)}} \cap \mathcal{I}^{+}_{\lambda^{(j-1)}} \\
1 &\text{ if } \mu \in \left(\mathcal{I}^{+}_{\lambda^{(j)}} \setminus \mathcal{I}^{+}_{\lambda^{(j-1)}}\right) \cup \left(\mathcal{I}^{+}_{\lambda^{(j-1)}} \setminus \mathcal{I}^{+}_{\lambda^{(j)}}\right)\\
 0 &\text{ else }  \\
 \end{cases}
$$
\end{itemize}

\end{lemma}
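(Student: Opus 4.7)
My plan is to reduce everything to the semisimple category $\underline{Rep}(S_T)$ (with $T$ a formal variable) via the lift functor $lift_{\nu}$, and then to compute the generic $\Hom$-dimensions by passing to the classical category $Rep(S_n)$ via $\mathcal{S}_n$ for $n$ sufficiently large. By Proposition \ref{prop:lift_properties}(c), together with the identification $lift_{\nu}(\Delta_k) \cong \Delta_k$ from the remark at the end of Subsection \ref{ssec:Delta_obj}, and using that $lift_{\nu}$ is additive and commutes with tensor products (parts (a) and (e)), we have
$$ \dim \Hom_{\underline{Rep}(S_{\nu}) \boxtimes Rep(S_k)}(X_{\T} \otimes \mu, \Delta_k) = \dim \Hom_{\underline{Rep}(S_T) \boxtimes Rep(S_k)}(lift_{\nu}(X_{\T}) \otimes \mu, \Delta_k). $$
By Lemma \ref{lem:nu_classes_lift}, $lift_{\nu}(X_{\T}) = X_{\T}$ when $\T$ lies in a trivial class (or when $\T = \lambda^{(0)}$ in the non-trivial case), while $lift_{\nu}(X_{\lambda^{(j)}}) = X_{\lambda^{(j)}} \oplus X_{\lambda^{(j-1)}}$ for $j \geq 1$. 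Thus it suffices to compute, for each individual Young diagram $\sigma$,
$$ a(\sigma, \mu) := \dim \Hom_{\underline{Rep}(S_T) \boxtimes Rep(S_k)}(X_{\sigma} \otimes \mu, \Delta_k), $$
and show $a(\sigma, \mu) = 1$ if $\mu \in \mathcal{I}^+_{\sigma}$ and $0$ otherwise, since splitting into the two summands in the $j \geq 1$ case immediately produces the three-case count in the lemma.

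To compute $a(\sigma, \mu)$, I would use that $\underline{Rep}(S_T)$ is semisimple with the $X_{\sigma}$ as simples, so this integer is well defined and agrees with the hom-dimension after specializing $T = n$ for $n$ sufficiently large. Specifically, for $n \gg |\sigma| + |\mu|$, the diagram $\tilde{\sigma}(n)$ exists, $\sigma$ lies in a trivial $\stackrel{n}{\sim}$-class, and $X_{\sigma} \otimes \mu$ together with $\Delta_k$ both lie in the fully faithful subcategory $\underline{Rep}(S_{\nu=n})^{(n/2)}$ on which $\mathcal{S}_n$ is fully faithful. Hence
$$ a(\sigma, \mu) = \dim \Hom_{S_n \times S_k}\bigl(\tilde{\sigma}(n) \boxtimes \mu,\, \bC\,\mathrm{Inj}(\{1,\ldots,k\},\{1,\ldots,n\})\bigr). $$

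Write $\bC\,\mathrm{Inj}(\{1,\ldots,k\},\{1,\ldots,n\}) = \bC[(S_n \times S_k)/H]$, where $H$ is the stabilizer of the tautological injection $i \mapsto i$; this $H$ is isomorphic to $S_{n-k} \times S_k$ with the $S_k$-factor embedded diagonally so that it acts simultaneously on the first $k$ values of $\{1,\ldots,n\}$ and on positions $\{1,\ldots,k\}$. Frobenius reciprocity then gives
$$ a(\sigma, \mu) = \dim \Hom_{S_{n-k} \times S_k}\bigl(\tilde{\sigma}(n) \rvert_{S_{n-k} \times S_k},\, \mathbbm{1} \boxtimes \mu\bigr), $$
which by Pieri's rule is the multiplicity of $\tilde{\sigma}(n)$ in $\mathrm{Ind}_{S_{n-k} \times S_k}^{S_n}(\mathbbm{1} \boxtimes \mu) = \bigoplus_{\lambda/\mu \text{ horizontal strip of size } n-k} \lambda$. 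This multiplicity is $1$ if $\tilde{\sigma}(n)/\mu$ is a horizontal strip and $0$ otherwise. A short combinatorial check shows that, for $n$ sufficiently large, this condition on $\tilde{\sigma}(n)$ is equivalent to $\mu \supset \sigma$ with $\mu/\sigma$ a horizontal strip, i.e.\ $\mu \in \mathcal{I}^+_{\sigma}$.

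The only nontrivial obstacle is the final combinatorial reduction: one must verify that the horizontal-strip condition ``$\tilde{\sigma}(n) \setminus \mu$ has no two cells in the same column'' for large $n$ unwinds into simultaneous conditions $\sigma_i \leq \mu_i \leq \sigma_{i-1}$ (for $i \geq 2$) coming only from rows $\geq 2$ of $\tilde{\sigma}(n)$, the top row contributing no constraint beyond $\mu_1 \leq n - |\sigma|$, which is automatic once $n$ exceeds $|\sigma| + \mu_1$. This is exactly the statement that $\mu/\sigma$ is a horizontal strip, and hence that $\mu \in \mathcal{I}^+_{\sigma}$, completing the proof.
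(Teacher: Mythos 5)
Your proposal is correct and follows essentially the same route as the paper's proof: reduce via $lift_{\nu}$ to $\underline{Rep}(S_T)$ using Lemma \ref{lem:nu_classes_lift}, specialize to $Rep(S_n)$ for $n\gg 0$ using that $\mathcal{S}_n$ is fully faithful on $\underline{Rep}(S_{\nu=n})^{(n/2)}$, and identify the resulting $\Hom$-dimension with the indicator of the horizontal-strip condition $\mu\in\mathcal{I}^+_\tau$. The only cosmetic difference is that you re-derive the decomposition of $\bC\,\mathrm{Inj}(\{1,\dots,k\},\{1,\dots,n\})$ via the stabilizer, Frobenius reciprocity and Pieri, whereas the paper directly cites the decomposition $\bC\,\mathrm{Inj}\cong\bigoplus_{\rho}\bigoplus_{\lambda\in\mathcal{I}^+_\rho}\lambda\otimes\rho$ — these are the same computation.
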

\begin{proof}
The proof will use the $lift_{\nu}$ maps discussed in Subsection \ref{ssec:nu_class_and_lift}.

We know that
\begin{align*}
&\dim \Hom_{\underline{Rep}(S_{\nu}) \boxtimes Rep(S_k)}( X_{\T} \otimes \mu, \Delta_k) = \dim \Hom_{\underline{Rep}(S_{T}) \boxtimes Rep(S_k)}( lift_{\nu}(X_{\T}) \otimes \mu, lift_{\nu}(\Delta_k)) = \\
&=\dim \Hom_{\underline{Rep}(S_{T}) \boxtimes Rep(S_k)}( lift_{\nu}(X_{\T}) \otimes \mu, \Delta_k)
\end{align*}
So it is enough to prove that $$\dim \Hom_{\underline{Rep}(S_{T}) \boxtimes Rep(S_k)}( X_{\T} \otimes \mu, \Delta_k) =\begin{cases} 1 &\text{ if } \mu \in \mathcal{I}^{+}_{\T} \\
 0 &\text{ else }  \\
 \end{cases} $$ and the statement of the lemma will then follow from Lemma \ref{lem:nu_classes_lift}.

To compute $\dim \Hom_{\underline{Rep}(S_{T}) \boxtimes Rep(S_k)}( X_{\T} \otimes \mu, \Delta_k)$, note that
\begin{align*}
&\dim \Hom_{\underline{Rep}(S_{T}) \boxtimes Rep(S_k)}( X_{\T} \otimes \mu, \Delta_k) = \dim \Hom_{\underline{Rep}(S_{\nu=n}) \boxtimes Rep(S_k)}( X_{\T} \otimes \mu, \Delta_k) = \\
&= \dim \Hom_{S_n \times S_k}( \mathcal{S}_n(X_{\T}) \otimes \mu, \mathcal{S}_n(\Delta_k))= \\
&= \dim \Hom_{S_n \times S_k}(\tilde{\T}(n) \otimes \mu, \bC Inj(\{1,...,k\} , \{1,...,n\}))
\end{align*}
for $n >>0, n\in \bZ$ (the first equality follows from Proposition \ref{prop:lift_properties}, while the second relies on the fact that $\mathcal{S}_n$ is fully faithful on $\underline{Rep}(S_{\nu=n})^{(n/2)}$).

But
$$ \bC Inj(\{1,...,k\} , \{1,...,n\}) \cong \bigoplus_{\rho: \abs{\rho} = k} \bigoplus_{\lambda \in \mathcal{I}^{+}_{\rho}, \abs{\lambda}  =n} \lambda \otimes \rho$$
so
$$\dim \Hom_{S_n \times S_k}(\tilde{\T}(n) \otimes \mu, \bC Inj(\{1,...,k\} , \{1,...,n\}))= \begin{cases} 1 &\text{ if } \tilde{\T}(n) \in \mathcal{I}^{+}_{\mu} \\
 0 &\text{ else }  \\
 \end{cases} $$
It remains to check that for $n>>0$, $$\tilde{\T}(n) \in \mathcal{I}^{+}_{\mu} \Leftrightarrow \mu \in \mathcal{I}^{+}_{\T}$$
The first condition is equivalent to saying that
 $$ ... \leq \mu_{i+1} \leq \tilde{\T}(n)_{i+1} \leq \mu_i \leq ...\leq \tilde{\T}(n)_{2} \leq \mu_{1} \leq \tilde{\T}(n)_{1} = n-\abs{\T}$$
which is equivalent to $$ ... \leq \mu_{i+1} \leq \T_{i} \leq \mu_i \leq ...\leq  \T_{1} \leq \mu_{1}$$ (a reformulation of the condition $\mu \in \mathcal{I}^{+}_{\T}$) provided that $n>>0$.
\end{proof}

 \begin{lemma}\label{lem:image_SW_fin_gen}
The image of the functor $SW_{\nu}: \underline{Rep}^{ab}(S_{\nu}) \rightarrow Mod_{\mathcal{U}(\gl(V))}$ lies in $\co^{\mathfrak{p}}_{\nu, V}$. 
 \end{lemma}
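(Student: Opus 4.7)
The plan is to verify each of the defining conditions of $\co^{\mathfrak{p}}_{\nu,V}$ (as rephrased in Definition \ref{def:par_cat_O_splitting}) for $SW_\nu(Y) = \Hom_{\underline{Rep}^{ab}(S_\nu)}(Y, V^{\underline{\otimes}\nu})$. The first step is to rewrite $SW_\nu(Y)$ explicitly. Since $Y$ is compact in $Ind\text{-}\underline{Rep}^{ab}(S_\nu)$, the functor $\Hom(Y, -)$ commutes with direct sums and with the (finite, characteristic $0$) $S_k$-invariants, so
\[
SW_\nu(Y) = \bigoplus_{k\geq 0} \bigl(U^{\otimes k} \otimes \Hom_{\underline{Rep}^{ab}(S_\nu)}(Y,\Del_k)\bigr)^{S_k}.
\]
Decomposing $U^{\otimes k}$ as a $\gl(U)\times S_k$-module via $U^{\otimes k}\cong \bigoplus_{|\mu|=k} S^\mu U\otimes \mu$, this becomes
\[
SW_\nu(Y) \;\cong\; \bigoplus_{\mu} S^\mu U \otimes \Hom_{\underline{Rep}^{ab}(S_\nu)\boxtimes Rep(S_{|\mu|})}(Y\otimes\mu,\Del_{|\mu|})
\]
as $\gl(U)$-modules. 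This immediately exhibits $SW_\nu(Y)$ as a direct sum of polynomial simple $\gl(U)$-modules with multiplicity spaces that are finite-dimensional (since $\underline{Rep}^{ab}(S_\nu)$ is pre-Tannakian and $Y$ has finite length, c.f.\ Remark \ref{rmrk:ab_env_preTannakian}).

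The inherited $\bZ_+$-grading on $SW_\nu(Y)$ (by the index $k$) has finite-dimensional graded components, is preserved by $\gl(U)$, is lowered by operators $E_f$ from $\mathfrak{u}_{\mathfrak{p}}^{+}$, and is raised by operators $F_u$ from $\mathfrak{u}_{\mathfrak{p}}^{-}$. Thus $\mathfrak{u}_{\mathfrak{p}}^{+}$ acts locally nilpotently on $SW_\nu(Y)$. Moreover $\id_V$ acts on $V^{\underline{\otimes}\nu}$ by $\nu$, hence on $SW_\nu(Y)$ by $\nu\cdot\id$. So the only nontrivial thing to check is finite generation over $\mathcal{U}(\gl(V))$.

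For finite generation the plan is to reduce to the case $Y=X_\T$ for an indecomposable object in $\underline{Rep}(S_\nu)$. By Proposition \ref{prop:proj_in_ab_envelope}, every $Y\in\underline{Rep}^{ab}(S_\nu)$ admits an epimorphism $P\twoheadrightarrow Y$ from a projective $P$ lying in $\underline{Rep}(S_\nu)$; the contravariant left-exactness of $SW_\nu$ gives an embedding $SW_\nu(Y)\hookrightarrow SW_\nu(P)$. Using the classical Noetherianity of parabolic category $\co$, submodules of finitely generated objects are finitely generated, so it is enough to verify finite generation for $SW_\nu(X_\T)$, $\T$ arbitrary. Lemma \ref{lem:hom_X_tau_Delta_k} computes the $\gl(U)$-character of $SW_\nu(X_\T)$ explicitly: in the trivial $\stackrel{\nu}{\sim}$-class case one obtains $\sum_{\mu\in\mathcal{I}^+_\T}[S^\mu U]$, which by Lemma \ref{lem:gl_u_struct_o_cat} is exactly the character of $M_{\mathfrak{p}}(\nu-|\T|,\T)$; in the non-trivial case $\T=\lambda^{(j)}$ with $j>0$, collecting multiplicities yields the character of $M_{\mathfrak{p}}(\nu-|\lambda^{(j)}|,\lambda^{(j)})\oplus M_{\mathfrak{p}}(\nu-|\lambda^{(j-1)}|,\lambda^{(j-1)})$.

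The main obstacle is then converting this character match into an actual surjection. The strategy is to identify the $\mathfrak{u}_{\mathfrak{p}}^+$-invariants of $SW_\nu(X_\T)$ at the lowest occurring degrees (namely, copies of $S^\T U$ and, in the non-trivial case, also $S^{\lambda^{(j-1)}}U$) using the description above, then apply Frobenius reciprocity for the induction $\mathcal{U}(\gl(V))\otimes_{\mathcal{U}(\mathfrak{p})}-$ to produce $\gl(V)$-module maps from the relevant parabolic Verma modules into $SW_\nu(X_\T)$. Since the source and target have equal $\gl(U)$-characters (finite in each degree), it suffices to show these maps are injective in each graded component, or equivalently surjective. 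The former can be verified via Lemma \ref{lem:tens_power_weak_torsion_free}, which ensures that $F_u$ applied to any nonzero morphism out of $(U^{\otimes\ell}\otimes\Del_\ell)^{S_\ell}$ stays nonzero in higher degrees, preventing collapse of the images. As a consequence $SW_\nu(X_\T)$ is a quotient of (and, a posteriori, isomorphic to) the projective cover in $\co^{\mathfrak{p}}_{\nu,V}$ described in Proposition \ref{prop:proj_parab_cat_O_struct}, proving finite generation and completing the verification that $SW_\nu(Y)\in\co^{\mathfrak{p}}_{\nu,V}$.
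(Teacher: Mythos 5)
Your overall strategy matches the paper's: verify the $\gl(U)$-polynomiality and local nilpotence of $\mathfrak{u}_{\mathfrak{p}}^{+}$ directly from the grading on $V^{\underline{\otimes}\nu}$, reduce finite generation to indecomposable projectives $X_\T$ via left-exactness and Proposition \ref{prop:proj_in_ab_envelope}, and compute the $\gl(U)$-character of $SW_\nu(X_\T)$ with Lemma \ref{lem:hom_X_tau_Delta_k}. Where you diverge is the final conversion of the character computation into finite generation. The paper stops much earlier: since $\mathfrak{u}_{\mathfrak{p}}^{+}$ lowers the grading, the bottom graded piece of $SW_\nu(X_\T)$ is annihilated by $\mathfrak{u}_{\mathfrak{p}}^{+}$, so Frobenius reciprocity gives a finite filtration whose successive quotients are \emph{images} of parabolic Verma modules; since these have finite length, so does $SW_\nu(X_\T)$, and that is all that is needed. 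You instead aim for the stronger statement that $SW_\nu(X_\T)$ is actually isomorphic to the indecomposable projective, which is indeed true (it is Proposition \ref{prop:images_SW}(c) later in the paper) but is unnecessary here and is the shakiest part of your sketch. In particular, your appeal to Lemma \ref{lem:tens_power_weak_torsion_free} to get injectivity is not quite a proof: that lemma concerns morphisms $U^{\otimes l}\otimes\Del_l\to U^{\otimes k}\otimes\Del_k$, and applying it to elements of $\Hom(X_\T, V^{\underline{\otimes}\nu})$ requires the factor-through-$\Del_m$ manoeuvre the paper carries out later in Lemma \ref{lem:maps_L_0}; moreover, the actual proof of $SW_\nu(\mathbf{P}_i)\cong P_{i+1}$ in the paper uses the adjoint $SW^*_\nu$, which is introduced only after the present lemma. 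In short: your argument is correct in outline, but the last step over-commits to a stronger identification that is harder to justify at this stage, whereas the paper's ``finite filtration by Verma images'' observation is both lighter and sufficient.
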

\begin{proof}
\InnaB{Fix a splitting $V = \InnaB{\bC \triv} \oplus U$. We want to prove that} for any $M \in \Dab$, $SW_{\nu}(M)$ is a Noetherian $\mathcal{U}(\gl(V))$-module \InnaB{of degree $\nu$} on which $\gl(U)$ acts \InnaB{polynomially (i.e. $M\rvert_{\gl(U)} \in Ind-Mod_{\mathcal{U}(\gl(U)), poly}$)} and $\mathfrak{u}_{\mathfrak{p}}^{+}$ acts locally finitely.

%
Recall that for any $M \in \Dab$, $SW_{\nu}(M) = \Hom_{Ind-\Dab}(M, V^{\underline{\otimes}  \nu})$, with the action of $\gl(V)$ coming from its action on $V^{\underline{\otimes}  \nu}$ (c.f. Definition \ref{def:complex_ten_power_splitting}).
So $$SW_{\nu}(M) \rvert_{\gl(U)} = \Hom_{Ind-\Dab}(M, V^{\underline{\otimes}  \nu} \rvert_{\gl(U)}) \cong \bigoplus_{k \geq 0} \Hom_{\Dab}(M, (U^{\otimes k} \otimes \Delta_k)^{S_k}) $$ with $\gl(U)$ acting through its action on each $(U^{\otimes k} \otimes \Delta_k)^{S_k}$. \InnaB{This immediately implies that $SW_{\nu}(M)$ has degree $\nu$.}

Next, $(U^{\otimes k} \otimes \Delta_k)^{S_k}$ is an object of \InnaA{$\Dab \boxtimes Mod_{\mathcal{U}(\gl(U)), poly}$}, so the spaces $\Hom_{\Dab}(M, (U^{\otimes k} \otimes \Delta_k)^{S_k})$ are polynomial $\gl(U)$-modules. Thus $M\rvert_{\gl(U)} \in Ind-Mod_{\mathcal{U}(\gl(U)), poly}$.

The above $\gl(U)$-decomposition of $SW_{\nu}(M)$ gives us a $\bZ_+$-grading on $SW_{\nu}(M)$, with each grade being finite-dimensional. Definition \ref{def:complex_ten_power_splitting} tells us that $\mathfrak{u}_{\mathfrak{p}}^{+}$ acts on this space by operators of degree $-1$, so $\mathfrak{u}_{\mathfrak{p}}^{+}$ acts locally finitely on $SW_{\nu}(M)$.

We now prove that for any $M \in \Dab$, $SW_{\nu}(M)$ is a Noetherian $\mathcal{U}(\gl(V))$-module.
Recall that the functor $SW_{\nu}$ is a contravariant left-exact functor. Using this, together with the fact that the category $\Dab$ has enough projectives, it is enough to prove that for any indecomposable projective $\mathbf{P} \in \Dab$, we have: $SW_{\nu}(\mathbf{P})$ is a Noetherian $\mathcal{U}(\gl(V))$-module.

Next, recall that any indecomposable projective in $\Dab$ is isomorphic to $X_{\lambda}$, where $\lambda$ either lies in a trivial $ \stackrel{\nu}{\sim}$-class, or is not minimal in its non-trivial $\stackrel{\nu}{\sim}$-class (c.f. Proposition \ref{prop:obj_ab_env}).

Lemma \ref{lem:hom_X_tau_Delta_k} tells us that if $\lambda$ lies in a trivial $ \stackrel{\nu}{\sim}$-class, then
\begin{align*}
 &SW_{\nu}(X_{\lambda}) \rvert_{\gl(U)} \cong \Hom_{Ind-\underline{Rep}^{ab}(S_{\nu})}(X_{\lambda}, \bigoplus_{k \geq 0} (U^{\otimes k} \otimes \Delta_k)^{S_k}) \cong\\
 &\cong \bigoplus_{k \geq 0} \bigoplus_{\mu: \abs{\mu}=k} \Hom_{\underline{Rep}(S_{\nu}) \boxtimes Rep(S_k)}(X_{\lambda} \otimes \mu, \Delta_k) \otimes S^{\mu} U \cong \bigoplus_{\mu \in \mathcal{I}^{+}_{\lambda}} S^{\mu} U                                                                                                                                                                                                                                                                                                                                                                                                                                   \end{align*}

If $\lambda$ lies in a non-trivial $ \stackrel{\nu}{\sim}$-class, $\lambda = \lambda^{(i)}, i>0$, then
\begin{align*}
&SW_{\nu}(X_{\lambda^{(i)}}) \rvert_{\gl(U)} \cong \Hom_{Ind-\underline{Rep}^{ab}(S_{\nu})}(X_{\lambda^{(i)}}, \bigoplus_{k \geq 0} (U^{\otimes k} \otimes \Delta_k)^{S_k}) \cong \\
&\cong \bigoplus_{k \geq 0} \bigoplus_{\mu: \abs{\mu}=k} \Hom_{\underline{Rep}(S_{\nu}) \boxtimes Rep(S_k)}(X_{\lambda^{(i)}} \otimes \mu,  \Delta_k) \otimes S^{\mu} U \cong
\bigoplus_{\mu \in \mathcal{I}^{+}_{\lambda^{(i)}}} S^{\mu} U \oplus \bigoplus_{\mu \in \mathcal{I}^{+}_{\lambda^{(i-1)}}} S^{\mu} U
\end{align*}
\InnaA{
In both cases, we can consider $SW_{\nu}(X_{\lambda}) \rvert_{\gl(U)}$ as a $\bZ_+$-graded space, with grade $j$ being the direct sum of those $S^{\mu} U$ for which $\abs{\mu} = j$. Then the non-zero elements of $\mathfrak{u}_{\mathfrak{p}}^+$ act by operators of degree $-1$ (see Definition \ref{def:complex_ten_power_splitting}).
}
Next, recall that in any case, the parabolic Verma module restricted to $\gl(U)$ decomposes as
$$M_{\mathfrak{p}}(\nu-\abs{\lambda}, \lambda)\lvert_{\gl(U)} \cong  \bigoplus_{\mu \in \mathcal{I}^{+}_{\lambda}} S^{\mu} U $$ and has $\mathcal{U}(\gl(V))$-length at most $2$. \InnaA{Using the above property of the action of $\mathfrak{u}_{\mathfrak{p}}^+$, we see that $SW_{\nu}(X_{\lambda})$ has a finite filtration where each quotient is the image of a parabolic Verma module (therefore all the quotients have finite length).
}
\end{proof}

We can now define another Schur-Weyl functor which we will consider: it is the contravariant functor $\widehat{SW}_{\nu}: \underline{Rep}^{ab}(S_{\nu}) \longrightarrow \widehat{\co}^{\mathfrak{p}}_{\nu, V}$ where $$\widehat{\co}^{\mathfrak{p}}_{\nu, V} := \quotient{\co^{\mathfrak{p}}_{\nu, V}}{\InnaA{Mod_{\mathcal{U}(\gl(V)), poly, \nu}}}$$ is the Serre quotient of $\co^{\mathfrak{p}}_{\nu, V}$ by the Serre subcategory \InnaA{$\InnaA{Mod_{\mathcal{U}(\gl(V)), poly, \nu}}$ of polynomial $\gl(V)$-modules of \InnaA{degree} $\nu$}. We will denote the quotient functor by $$\hat{\pi}: \co^{\mathfrak{p}}_{\nu, V} \longrightarrow \widehat{\co}^{\mathfrak{p}}_{\nu, V}$$ and define $$\widehat{SW}_{\nu} := \hat{\pi} \circ SW_{\nu}$$

The main goal of this section is to prove the following theorem:
\begin{theorem}\label{thrm:SW_almost_equiv}
The contravariant functor $\widehat{SW}_{\nu}:\underline{Rep}^{ab}(S_{\nu}) \rightarrow \widehat{\co}^{\mathfrak{p}}_{\nu, V}$ is exact and essentially surjective.

Moreover, the induced contravariant functor $$ \quotient{\underline{Rep}^{ab}(S_{\nu})}{Ker(\widehat{SW}_{\nu})} \rightarrow \widehat{\co}^{\mathfrak{p}}_{\nu, V}$$ is an anti-equivalence of abelian categories, thus making $\widehat{\co}^{\mathfrak{p}}_{\nu, V}$ a Serre quotient of $\underline{Rep}^{ab}(S_{\nu})^{op}$.

\end{theorem}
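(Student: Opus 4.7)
The plan is to prove Theorem \ref{thrm:SW_almost_equiv} block-by-block, using the decomposition of $\Dab$ into blocks indexed by $\stackrel{\nu}{\sim}$-classes of Young diagrams (Theorem \ref{thrm:blocks_ab_envelope}) together with the analogous decomposition of $\co^{\mathfrak{p}}_{\nu, V}$ implicit in Proposition \ref{prop:par_cat_O_Vermas}. A preliminary observation, based on Lemma \ref{lem:hom_X_tau_Delta_k} combined with the $\gl(U)$-decomposition of parabolic Verma modules in Lemma \ref{lem:gl_u_struct_o_cat}, is that $SW_{\nu}$ carries the block of $\Dab$ associated to a class $C$ into the block of $\co^{\mathfrak{p}}_{\nu, V}$ associated to the same class. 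It therefore suffices to establish the claimed anti-equivalence one block at a time.

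For a trivial class $\{\lambda\}$, the $\Dab$-block is equivalent to $Vect_{\bC}$ with generator $X_{\lambda} = \mathbf{L}(\lambda)$. Lemma \ref{lem:hom_X_tau_Delta_k} gives $SW_{\nu}(X_{\lambda})\rvert_{\gl(U)} \cong \bigoplus_{\mu \in \mathcal{I}^{+}_{\lambda}} S^{\mu} U$, which matches $M_{\mathfrak{p}}(\nu-\abs{\lambda}, \lambda)\rvert_{\gl(U)}$ by Lemma \ref{lem:gl_u_struct_o_cat}. When $\ell(\lambda) > \dim V - 1$ this vanishes and the entire block is forced into $Ker(\widehat{SW}_{\nu})$, which matches the corresponding $\co^{\mathfrak{p}}_{\nu, V}$-block being zero. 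Otherwise, Frobenius reciprocity produces a non-zero $\gl(V)$-map $M_{\mathfrak{p}}(\nu-\abs{\lambda}, \lambda) \to SW_{\nu}(X_{\lambda})$, which by Corollary \ref{cor:parab_Verma_simple} (simplicity of the Verma) and a character count is an isomorphism. Since for $\lambda$ in a trivial class the weight $(\nu-\abs{\lambda}, \lambda)$ cannot be an integral dominant weight, $L(\nu-\abs{\lambda}, \lambda)$ is infinite-dimensional and survives in $\widehat{\co}^{\mathfrak{p}}_{\nu, V}$, yielding the desired anti-equivalence of one-object semisimple blocks.

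For a non-trivial class $\{\lambda^{(i)}\}_{i \geq 0}$, set $i_{0} := \max\{i : \ell(\lambda^{(i)}) \leq \dim V - 1\}$ when such an $i$ exists, and $i_{0} := -\infty$ otherwise. If $i_{0} \leq 0$ the corresponding $\widehat{\co}^{\mathfrak{p}}_{\nu, V}$-block vanishes and an argument analogous to the previous paragraph forces the whole $\Dab$-block into $Ker(\widehat{SW}_{\nu})$. Otherwise, the core calculation is to establish, by matching the characters from Lemma \ref{lem:hom_X_tau_Delta_k} with the structural data in Proposition \ref{prop:proj_parab_cat_O_struct}:
$$SW_{\nu}(X_{\lambda^{(j)}}) \cong P_{\mathfrak{p}}(\nu-\abs{\lambda^{(j)}}, \lambda^{(j)}) \quad \text{for } 0 \leq j \leq i_{0},$$
$$SW_{\nu}(X_{\lambda^{(i_{0}+1)}}) \cong L(\nu-\abs{\lambda^{(i_{0})}}, \lambda^{(i_{0})}), \quad SW_{\nu}(X_{\lambda^{(j)}}) = 0 \text{ for } j \geq i_{0}+2.$$
Each isomorphism is obtained by matching $\gl(U)$-characters, producing a non-zero $\gl(V)$-morphism through Frobenius reciprocity, and ruling out a kernel using the ``almost injectivity'' of the $\mathfrak{u}_{\mathfrak{p}}^{-}$-action provided by Lemma \ref{lem:tens_power_weak_torsion_free}. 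Combining these identifications with Proposition \ref{prop:obj_ab_env} shows that $\widehat{SW}_{\nu}$ sends $\mathbf{L}(\lambda^{(i)}) \mapsto L(\nu-\abs{\lambda^{(i+1)}}, \lambda^{(i+1)})$ for $0 \leq i \leq i_{0}-1$ and $\mathbf{L}(\lambda^{(i)}) \mapsto 0$ for $i \geq i_{0}$, which precisely matches the surviving simples of $\widehat{\co}^{\mathfrak{p}}_{\nu, V}$ once the finite-dimensional simple $L(\nu-\abs{\lambda^{(0)}}, \lambda^{(0)})$ is killed by $\hat{\pi}$.

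With these pointwise computations, exactness of $\widehat{SW}_{\nu}$ follows because for $1 \leq j \leq i_{0}$ the image $SW_{\nu}(X_{\lambda^{(j)}}) = P_{\mathfrak{p}}(\nu-\abs{\lambda^{(j)}}, \lambda^{(j)})$ is self-dual, hence injective, by Proposition \ref{prop:proj_parab_cat_O_gen}(f); thus applying the left-exact functor $SW_{\nu}$ to a projective resolution in $\Dab$ produces a sequence which is exact after passing to $\widehat{\co}^{\mathfrak{p}}_{\nu, V}$. Essential surjectivity is immediate, as every simple of $\widehat{\co}^{\mathfrak{p}}_{\nu, V}$ appears in the image. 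Fullness of the induced contravariant functor on $\Dab / Ker(\widehat{SW}_{\nu})$ is verified by matching the $\Hom$-spaces between the surviving $X_{\lambda^{(j)}}$'s (computed in Theorem \ref{thrm:blocks_S_nu}) with those between the $P_{\mathfrak{p}}(\nu-\abs{\lambda^{(j)}}, \lambda^{(j)})$'s (computed in Proposition \ref{prop:proj_parab_cat_O_struct}(d)); the dimensions agree after the index shift established above. The main obstacle is the non-semisimple case, specifically upgrading $\gl(U)$-character identifications to genuine $\gl(V)$-isomorphisms at all indices including the boundary $i_{0}+1$, which is exactly where Lemma \ref{lem:tens_power_weak_torsion_free} and the explicit action described in Definition \ref{def:complex_ten_power_splitting} become indispensable.
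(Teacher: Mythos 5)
Your overall block-by-block strategy and the final pointwise identifications of $\widehat{SW}_{\nu}$ match the paper, but there are two genuine gaps in how you propose to establish them, and a third problem with circularity in the exactness argument.

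The central difficulty you correctly flag — upgrading $\gl(U)$-character identifications to $\gl(V)$-isomorphisms in the non-semisimple blocks — is not resolved by the tools you cite. Matching $\gl(U)$-characters does not distinguish $P_{\mathfrak{p}}(\nu-\abs{\lambda^{(j)}}, \lambda^{(j)})$ from $M_{\mathfrak{p}}(\nu-\abs{\lambda^{(j-1)}}, \lambda^{(j-1)}) \oplus M_{\mathfrak{p}}(\nu-\abs{\lambda^{(j)}}, \lambda^{(j)})$: they have identical $\gl(U)$-decompositions, and Frobenius reciprocity only produces a map out of a single parabolic Verma. Lemma \ref{lem:tens_power_weak_torsion_free} does not help here — its role in the paper is entirely confined to establishing Lemma \ref{lem:maps_L_0}, i.e. ruling out maps of the finite-dimensional simple $L_0$ into $V^{\underline{\otimes}\nu}$, because $\mathfrak{u}_{\mathfrak{p}}^{-}$ acts nilpotently on finite-dimensional modules but never acts nilpotently on non-zero pieces of $V^{\underline{\otimes}\nu}$. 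It says nothing about whether $SW_{\nu}$ of an indecomposable is indecomposable. The argument that actually closes this gap in the paper (Proposition \ref{prop:images_SW}) uses the contravariant adjoint pair $SW_{\nu}, SW^{*}_{\nu}$ and the unit $\epsilon$: if $SW_{\nu}(\mathbf{M}_i)$ were decomposable, then the indecomposability of $\mathbf{M}_i$ and the non-vanishing of $\epsilon_{\mathbf{M}_i}$ on its socle force its image into one direct summand, contradicting the adjunction identity from Lemma \ref{lem:cond_on_units}. Your proposal omits the adjoint functor entirely, which is the essential missing idea.

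Your exactness argument is circular: you invoke $SW_{\nu}(X_{\lambda^{(j)}}) \cong P_{j}$ to argue that applying $SW_{\nu}$ to a projective resolution lands in injectives, but establishing that isomorphism is precisely the hard part, and moreover landing in injectives does not by itself force the cohomology $\Ext^i(M, V^{\underline{\otimes}\nu})$ to vanish in $\widehat{\co}^{\mathfrak{p}}_{\nu, V}$. The paper proves exactness independently and before any identifications (Lemma \ref{lem:overline_SW_exact}), by decomposing $V^{\underline{\otimes}\nu} = \bigoplus_{\lambda} X_{\lambda} \otimes V_{\lambda}$ and observing that $X_{\lambda}$ is injective except for the finitely many $\lambda$ with $\tilde{\lambda}(\nu)$ a Young diagram, and for those the multiplicity space $V_{\lambda}$ is finite-dimensional; hence $\Ext^i(M, V^{\underline{\otimes}\nu})$ is finite-dimensional for $i>0$. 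Finally, your fullness argument is incomplete: agreement of $\Hom$-space dimensions does not imply surjectivity of $\widehat{SW}_{\nu}$ on $\Hom$'s — the paper has to prove fullness on the projective subcategory directly (Sublemma \ref{sublem:SW_full_ess_surj_proj}) using exactness and explicit identification of the images of the morphisms $\alpha_i, \beta_i$, and then bootstrap via projective/injective resolutions (Theorem \ref{thrm:hat_overline_SW_equiv}).
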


The exactness of $\widehat{SW}_{\nu}$ will be proved in Lemma \ref{lem:overline_SW_exact}.

The \InnaA{rest} of the proof of Theorem \ref{thrm:SW_almost_equiv} will be done by considering separately semisimple and non-semisimple blocks in $\underline{Rep}^{ab}(S_{\nu})$. The semisimple block case will be discussed in Subsection \ref{ssec:SW_duality_ss_block}, and the non-semisimple block case will be discussed in Subsection \ref{ssec:SW_duality_non_ss_block} (specifically, Proposition \ref{prop:SW_ess_surj} and Theorem \ref{thrm:hat_overline_SW_equiv}).

\InnaB{We also prove the following proposition (Proposition \ref{prop:dualities_SW_commute}):
\begin{proposition}
 For any $\nu \in \bC$, there is an isomorphism of (covariant) functors $$ \widehat{SW}_{\nu}(( \cdot )^*) \longrightarrow \hat{\pi}(SW_{\nu}( \cdot )^{\vee})$$
\end{proposition}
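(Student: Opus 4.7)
The plan is to first construct a natural transformation
$$ \alpha_M : \widehat{SW}_{\nu}(M^*) \longrightarrow \hat{\pi}(SW_{\nu}(M)^{\vee}) $$
of covariant functors $\Dab \to \widehat{\co}^{\mathfrak{p}}_{\nu, V}$, and then to verify that $\alpha_M$ is an isomorphism by reducing to a check on a generating collection. To construct $\alpha$, I would begin by using the rigidity of $\Dab$ to identify $SW_{\nu}(M^*)$ functorially with $\Hom_{Ind-\Dab}(\mathbf{1}, M \otimes V^{\underline{\otimes} \nu})$. Given $\phi \in \Hom(M^{*}, V^{\underline{\otimes} \nu})$ and $\psi \in \Hom(M, V^{\underline{\otimes} \nu})$, one can attempt to pair them by composing $\phi \otimes \psi \colon M^* \otimes M \to V^{\underline{\otimes} \nu} \otimes V^{\underline{\otimes} \nu}$ with the evaluation $M^* \otimes M \to \mathbf{1}$ and a suitable trace morphism $V^{\underline{\otimes} \nu} \otimes V^{\underline{\otimes} \nu} \to \mathbf{1}$ of $Ind$-objects of $\underline{Rep}(S_{\nu})$, extracted from the self-duality of the $\Delta_k$ (cf.\ Subsection~\ref{ssec:Delta_obj}). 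The resulting pairing should be $\gl(V)$-equivariant in the appropriate sense and produce, after applying $\hat{\pi}$, the desired morphism landing in the category-$\co$ dual.

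Next I would observe that both functors in the proposition are exact. Since $(\cdot)^{\vee}$ preserves finite-dimensional modules, it descends to an exact auto-equivalence of $\widehat{\co}^{\mathfrak{p}}_{\nu, V}$, so $\hat{\pi}(SW_{\nu}(\cdot)^{\vee}) \cong (\cdot)^{\vee} \circ \widehat{SW}_{\nu}$ is a composition of exact contravariant functors and is therefore exact. Similarly $\widehat{SW}_{\nu}((\cdot)^{*})$ is exact by exactness of $\widehat{SW}_{\nu}$ (Lemma~\ref{lem:overline_SW_exact}) and of $(\cdot)^*$ on the pre-Tannakian category $\Dab$ (Remark~\ref{rmrk:ab_env_preTannakian}). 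Since $\Dab$ has enough projectives (Proposition~\ref{prop:proj_in_ab_envelope}), it suffices to check that $\alpha_M$ is an isomorphism when $M$ is an indecomposable projective object.

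On such $M$, both sides collapse. Every indecomposable projective of $\Dab$ lies in $\underline{Rep}(S_{\nu})$ by Proposition~\ref{prop:proj_in_ab_envelope}, and all objects of $\underline{Rep}(S_{\nu})$ are self-dual (cf.\ the proof of Corollary~\ref{cor:duality_abel_env}), so $M^* \cong M$. On the $\co$-side, using the $\gl(U)$-decomposition computed in the proof of Lemma~\ref{lem:image_SW_fin_gen} together with the structure of standardly filtered projectives from Proposition~\ref{prop:proj_parab_cat_O_struct}, one identifies $SW_{\nu}(X_{\lambda^{(i+1)}}) \cong P_{\mathfrak{p}}(\nu - \abs{\lambda^{(i+1)}}, \lambda^{(i+1)})$ for indecomposable projectives in a non-trivial $\stackrel{\nu}{\sim}$-class; this is self-$\vee$-dual by Proposition~\ref{prop:proj_parab_cat_O_gen}(f). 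For $\lambda$ in a trivial class, $SW_{\nu}(X_\lambda)$ is either zero in $\widehat{\co}^{\mathfrak{p}}_{\nu, V}$ or simple, hence also self-$\vee$-dual. In either case, both $\widehat{SW}_{\nu}(M^*)$ and $\hat{\pi}(SW_{\nu}(M)^{\vee})$ reduce to $\widehat{SW}_{\nu}(M)$, and $\alpha_M$ becomes the canonical identification.

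The main obstacle is the construction of the natural transformation itself: the Ind-object $V^{\underline{\otimes}\nu}$ does not carry an obvious self-pairing, and one must extract such a pairing carefully from the self-duality of the $\Delta_k$ in $\underline{Rep}(S_{\nu})$, while tracking $\gl(V)$-equivariance and the passage through the localization functor $\hat{\pi}$. A possible alternative is to use Theorem~\ref{thrm:SW_almost_equiv} directly: both $(\cdot)^{*}$ on $\Dab/\Ker(\widehat{SW}_{\nu})$ and the pullback of $(\cdot)^{\vee}$ along $\widehat{SW}_{\nu}$ are contravariant exact auto-equivalences that fix simple objects up to isomorphism, from which one can try to argue by uniqueness-of-duality considerations that the two must be naturally isomorphic --- though extracting genuine naturality from this requires additional coherence arguments.
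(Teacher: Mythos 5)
Your proposal correctly identifies the skeleton of the argument — exactness of both functors, reduction to a check on projectives, self-duality of projectives in $\Dab$ — and these all match the paper's structure. But there is a genuine gap that you yourself flag: you never actually produce the natural transformation $\alpha$. Your "suffices to check on projectives'' reduction requires $\alpha$ to already exist as a natural transformation between exact functors; only then does a projective-resolution argument bootstrap from projectives to everything. As written, your third paragraph establishes only that the two objects $\widehat{SW}_{\nu}(M^*)$ and $\hat{\pi}(SW_{\nu}(M)^{\vee})$ are abstractly isomorphic for each projective $M$, which is strictly weaker than naturality; "the canonical identification'' is circular, because there is no identification until $\alpha$ has been built. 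Your two proposed routes to $\alpha$ — a trace morphism on $V^{\underline{\otimes}\nu} \otimes V^{\underline{\otimes}\nu}$, or an abstract uniqueness-of-duality argument — are both left as sketches, and the first one in particular has a real subtlety you do not address: the $\co$-duality $(\cdot)^{\vee}$ is the twisted dual restricted to the category, so the pairing would have to be compatible with the transpose twist on the $\gl(V)$-action, which is not automatic from self-duality of the $\Delta_k$.

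The paper avoids constructing any pairing. It works directly on the full subcategory of projectives in a single non-semisimple block $\mathcal{B}_{\lambda}$, using the fact that the morphism spaces $\Hom(\mathbf{P}_j, \mathbf{P}_i)$ are generated by a small list of morphisms $\beta_i, \beta_i^{*}$ with one-dimensional $\Hom$-spaces between adjacent projectives (Theorem~\ref{thrm:blocks_S_nu}, Proposition~\ref{prop:proj_parab_cat_O_struct}). One then picks $\theta_0$ arbitrarily and defines $\theta_i$ iteratively by demanding commutativity of the square involving $SW_{\nu}(\beta_i^{*})$; since the relevant $\Hom$-space is one-dimensional, $\theta_i$ is determined up to scalar and the scalar is fixed by that one constraint. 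Naturality with respect to the remaining generators $\beta_i$ then comes for free from the self-duality $\mathbf{P}_i \cong \mathbf{P}_i^{*}$ together with $\theta_i = \theta_i^{\vee}$. This iterative scalar-matching is precisely the content your proposal is missing: it is how one converts the objectwise isomorphisms you correctly observe into a coherent family. If you want to salvage your pairing idea, you would need to prove directly that it is nondegenerate, $\gl(V)$-equivariant for the transpose twist, and compatible with $\hat{\pi}$; the paper's route is shorter precisely because the one-dimensionality of $\Hom$-spaces lets it sidestep any such global construction.
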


This proposition is proved in Subsection \ref{ssec:dualities_SW_commute}.}
$$ $$

We now introduce the following notation.

\begin{definition}
 We will denote by $SW^{*, ind}_{\nu}$ the contravariant functor $$Mod_{\mathcal{U}(\gl(V))} \longrightarrow Ind-\underline{Rep}^{ab}(S_{\nu})$$ which is right adjoint to $SW^{ind}_{\nu}$ (such a functor exists by Theorem \ref{thrm:ind_completion_prop}, since $SW^{ind}_{\nu}$ obviously commutes with small colimits).

 We will also denote by $SW^*_{\nu}$ the restriction of $SW^{*, ind}_{\nu}$ to $\co^{\mathfrak{p}}_{\nu, V}$.
\end{definition}
\begin{remark}
 The functors $SW^{*, ind}_{\nu}$, $SW^*_{\nu}$ are contravariant, $\bC$-linear, additive and left-exact (due to $SW^{*, ind}_{\nu}$ being a right-adjoint).
\end{remark}

 We will use the following notation:
 \begin{notation}\label{notn:eta_eps_units}
  The unit natural transformations corresponding to the contravariant adjoint functors $SW^{ind}_{\nu}, SW^{*, ind}_{\nu}$ will be denoted by $$\eta: \id_{Mod_{\mathcal{U}(\gl(V))}} \rightarrow SW^{ind}_{\nu} \circ SW^{*, ind}_{\nu}, \, \epsilon: \id_{Ind-\Dab} \rightarrow SW^{*, ind}_{\nu} \circ SW^{ind}_{\nu}$$

 \end{notation}
  In particular, we have the restriction of the natural transformation $\epsilon$:
  $$ \epsilon: \iota_{\Dab \rightarrow Ind-\Dab} \rightarrow SW^{*}_{\nu} \circ SW_{\nu}$$
These transformations satisfy the following conditions (see \cite[Chapter 1, par. 1, Theorem 1]{MacL}):
\begin{lemma}\label{lem:cond_on_units}
\begin{align*}
 &\forall E \in \co^{\mathfrak{p}}_{\nu, V}, \, SW^*_{\nu}(\eta_E) \circ \epsilon_{SW^*_{\nu}(E)} = \id_{SW^{*, ind}_{\nu}(E)}\\
 &\forall X \in \Dab, \, SW_{\nu}(\epsilon_X)  \circ \eta_{SW_{\nu}(X)} = \id_{SW_{\nu}(X)}
\end{align*}

\end{lemma}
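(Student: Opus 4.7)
The plan is to treat this as the standard triangle (zig--zag) identity for the contravariant adjunction $(SW^{ind}_{\nu}, SW^{*,ind}_{\nu})$, and to derive it formally from the defining natural isomorphism
\[
\Phi_{X,E}: \Hom_{Mod_{\mathcal{U}(\gl(V))}}(E, SW^{ind}_{\nu}(X)) \xrightarrow{\sim} \Hom_{Ind-\Dab}(X, SW^{*,ind}_{\nu}(E))
\]
(supplied by Theorem \ref{thrm:ind_completion_prop}) together with the definitions $\eta_E = \Phi^{-1}(\id_{SW^{*,ind}_{\nu}(E)})$ and $\epsilon_X = \Phi(\id_{SW^{ind}_{\nu}(X)})$, which are just the restatements of the bijection $\Phi$ at the identity elements of the two representable sides.

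For the second identity, the plan is to apply $\Phi_{X, SW^{ind}_{\nu}(X)}$ to the composite $SW_{\nu}(\epsilon_X) \circ \eta_{SW_{\nu}(X)}$. Naturality of $\Phi$ in the first argument, specialized to the morphism $\epsilon_X: X \to SW^{*,ind}_{\nu}(SW^{ind}_{\nu}(X))$, converts this composite into $\Phi(\eta_{SW_{\nu}(X)}) \circ \epsilon_X$. The defining property of $\eta$ identifies the first factor with the identity on $SW^{*,ind}_{\nu}(SW^{ind}_{\nu}(X))$, so the expression collapses to $\epsilon_X$. On the other hand, $\Phi(\id_{SW^{ind}_{\nu}(X)}) = \epsilon_X$ by definition of $\epsilon$. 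Since $\Phi$ is a bijection the two preimages coincide, which is the desired identity. The first identity will be obtained by the entirely symmetric argument: apply $\Phi^{-1}_{SW^{*,ind}_{\nu}(E), E}$ to $SW^*_{\nu}(\eta_E) \circ \epsilon_{SW^*_{\nu}(E)}$, invoke naturality of $\Phi$ in the second argument along $\eta_E$, and collapse using the definition of $\epsilon$.

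The only ``obstacle'' is bookkeeping. Since both $SW^{ind}_{\nu}$ and $SW^{*,ind}_{\nu}$ are contravariant, the adjunction has the symmetric form $\Hom(E, F(X)) \cong \Hom(X, G(E))$, and consequently both units point in the direction $\id \to GF$ and $\id \to FG$ rather than forming a unit--counit pair as in the covariant case. Once this convention is pinned down, the proof is a direct transcription of Theorem~1 of \cite[Chapter~1, par.~1]{MacL} to the contravariant setting, and no further ingredients are needed.
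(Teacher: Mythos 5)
Your proposal is correct and is essentially the paper's approach: the paper itself offers no proof for this lemma beyond the citation to MacLane's Theorem 1 on adjunctions, and your argument is exactly the standard derivation of the triangle identities from the defining hom-set bijection, carefully adapted to the contravariant setting (where both $\eta$ and $\epsilon$ are ``units'' $\id \to GF$, $\id \to FG$). You correctly identify that naturality in the $X$-slot along $\epsilon_X$ (resp.\ in the $E$-slot along $\eta_E$) collapses the composite, and that the form of the adjunction $\Hom(E, SW^{ind}_{\nu}(X)) \cong \Hom(X, SW^{*,ind}_{\nu}(E))$ is the one produced by Theorem~\ref{thrm:ind_completion_prop}.
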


We can now demonstrate the exactness of $\widehat{SW}_{\nu}$:

\begin{lemma}\label{lem:overline_SW_exact}
 The functor $\widehat{SW}_{\nu}: \underline{Rep}^{ab}(S_{\nu}) \rightarrow \widehat{\co}^{\mathfrak{p}}_{\nu, V}$ is exact.
\end{lemma}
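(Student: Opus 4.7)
The plan is to observe first that left-exactness of $\widehat{SW}_{\nu}$ is automatic: $SW_{\nu} = \Hom_{Ind-\Dab}(\cdot, V^{\underline{\otimes}\nu})$ is a left-exact contravariant functor (being a Hom-functor), and the Serre quotient functor $\hat\pi$ is exact. The substantive step is therefore right-exactness. Concretely, for any short exact sequence $0 \to A \to B \to C \to 0$ in $\Dab$, I need to show that the cokernel $Q$ of the induced map $SW_{\nu}(B) \to SW_{\nu}(A)$ in $\co^{\mathfrak{p}}_{\nu, V}$ lies in the Serre subcategory $Mod_{\mathcal{U}(\gl(V)), poly, \nu}$.

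For $\nu \notin \bZ_+$ the category $\Dab = \underline{Rep}(S_{\nu})$ is semisimple, every short exact sequence splits, and $Q = 0$ trivially (moreover the Serre subcategory is itself zero). Hence assume $\nu = n \in \bZ_+$. By the long exact sequence of Ext in $Ind-\Dab$, $Q$ embeds as a subobject of $\Ext^{1}_{Ind-\Dab}(C, V^{\underline{\otimes}\nu})$. Since $C \in \Dab$ is compact in $Ind-\Dab$ and Ext commutes with direct sums in the second variable for compact first argument, one gets
\[
\Ext^{1}_{Ind-\Dab}\bigl(C, V^{\underline{\otimes}\nu}\bigr) \;\cong\; \bigoplus_{k \geq 0}\Ext^{1}_{\Dab}\bigl(C,\, (U^{\otimes k}\otimes \Delta_k)^{S_k}\bigr).
\]
The action of $\gl(V)$ on the right-hand side inherits from $V^{\underline{\otimes}\nu}$ a polynomial $\gl(U)$-structure, with $\id_V$ acting by the scalar $\nu$.

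To conclude, I would reduce by dévissage to the case where $C$ is simple, arguing block by block in $\Dab$ (Theorem \ref{thrm:blocks_ab_envelope}). For $\lambda$ in a trivial $\stackrel{\nu}{\sim}$-class, $\mathbf{L}(\lambda) = X_\lambda$ is projective by Proposition \ref{prop:obj_ab_env}, so the Ext vanishes outright. For $\lambda = \lambda^{(i)}$ in a non-trivial class, iterating the short exact sequences of Proposition \ref{prop:obj_ab_env} yields the projective resolution
\[
\cdots \to \mathbf{P}(\lambda^{(i+2)}) \to \mathbf{P}(\lambda^{(i+1)}) \to \mathbf{P}(\lambda^{(i)}) \to \mathbf{L}(\lambda^{(i)}) \to 0.
\]
Applying $SW_{\nu}$, and using Lemma \ref{lem:hom_X_tau_Delta_k} to describe $\Hom_{\Dab}(\mathbf{P}(\lambda^{(j)}), \Delta_k) = \Hom_{\Dab}(X_{\lambda^{(j+1)}}, \Delta_k)$ together with the quiver maps $\alpha_j, \beta_j$ of Theorem \ref{thrm:blocks_S_nu} to identify the differentials, one computes the first cohomology directly and checks that it is a finite-dimensional polynomial $\gl(V)$-module of degree $\nu$—hence an object of $Mod_{\mathcal{U}(\gl(V)), poly, \nu}$ and annihilated by $\hat\pi$. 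The hard part is the last bookkeeping step: explicitly matching the differentials in the complex $SW_{\nu}(\mathbf{P}(\lambda^{(\bullet)}))$ with the maps $\alpha_j, \beta_j$, and verifying that only finitely many indices $k$ contribute non-vanishing terms to the cohomology, so that $Q$ is finite-dimensional of the required polynomial type.
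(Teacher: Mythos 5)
Your proof takes a genuinely different route from the paper's, and it leaves the decisive step undone. The paper works with a different decomposition of $V^{\underline{\otimes}\nu}$: as an object of $Ind$-$\underline{Rep}(S_\nu)$ it is $\bigoplus_\lambda X_\lambda \otimes V_\lambda$, where $V_\lambda$ is the multiplicity space of the indecomposable $X_\lambda$. Since $M$ is compact, $\Ext^i(M, V^{\underline{\otimes}\nu}) \cong \bigoplus_\lambda \Ext^i(M, X_\lambda) \otimes V_\lambda$. By Proposition \ref{prop:obj_ab_env}, $X_\lambda$ is injective in $\Dab$ whenever $\tilde{\lambda}(\nu)$ is not a Young diagram, which eliminates all but finitely many $\lambda$ from the sum; and for the survivors Proposition \ref{prop:comp_tens_power_F_n} gives $V_\lambda \cong S^{\tilde{\lambda}(\nu)}V$, which is finite-dimensional. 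Since all $\Hom$-spaces (hence all $\Ext^i$) between objects of $\Dab$ are finite-dimensional, the conclusion is immediate, with no projective-resolution computation and no block-by-block case analysis.

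The gap in your version is exactly the ``bookkeeping step'' you defer: verifying that only finitely many $k$ contribute and that the resulting $\gl(V)$-module is finite-dimensional is the entire substance of the lemma, not a routine check. Your degree-$k$ decomposition $\bigoplus_k (U^{\otimes k}\otimes \Del_k)^{S_k}$ intertwines indecomposable types across $k$, so no finiteness is visible until one has actually chased the quiver differentials through the whole resolution. The conceptual point that short-circuits this is injectivity: in a non-trivial $\stackrel{\nu}{\sim}$-class the only non-injective indecomposable is $X_{\lambda^{(0)}}$, so $\Ext^1(C, X_\mu) \ne 0$ forces $\mu = \lambda^{(0)}$, and the multiplicity of $X_{\lambda^{(0)}}$ in $V^{\underline{\otimes}\nu}$ is finite (namely $S^{\tilde{\lambda^{(0)}}(\nu)}V$). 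That observation replaces your entire final paragraph. I would also flag, though it is not fatal, that the displayed isomorphism $\Ext^1_{Ind-\Dab}(C, V^{\underline{\otimes}\nu}) \cong \bigoplus_k \Ext^1_{\Dab}(C, (U^{\otimes k}\otimes\Del_k)^{S_k})$ is an isomorphism of vector spaces only: the degree-$k$ decomposition is not $\gl(V)$-stable (the operators $F_u$, $E_f$ shift $k$), so one cannot read off the $\gl(V)$-module structure of $Q$ term-by-term from the right-hand side.
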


\begin{proof}
Let $M \in \underline{Rep}^{ab}(S_{\nu})$, and let $i>0$. We want to show that the $\gl(V)$-module $\Ext^i(M, V^{\underline{\otimes}  \nu})$ is finite dimensional.

Consider $V^{\underline{\otimes}  \nu}$ as an object in $\underline{Rep}(S_{\nu})$. As such, it is a direct sum $ \bigoplus_{\lambda} X_{\lambda} \otimes V_{\lambda}$, where $V_{\lambda}$ is the multiplicity space of $X_{\lambda}$ (in fact, \InnaB{for a fixed splitting $V = \bC\triv \oplus U$,} $V_{\lambda}$ has the structure of a $\gl(U)$-module).

We know from Proposition \ref{prop:obj_ab_env} that $X_{\lambda}$ are injective objects iff $\tilde{\lambda}(\nu)$ is not a Young diagram. Furthermore, there are only finitely many Young diagrams $\lambda$ such that $\tilde{\lambda}(\nu)$ is a Young diagram as well; for these $\lambda$, the space $V_{\lambda}$ is finite dimensional and isomorphic to $S^{\tilde{\lambda}(\nu)} V$ (by Proposition \ref{prop:comp_tens_power_F_n}).

Finally, notice that $\Ext^i(M, X_{\lambda})$ is finite dimensional for any Young diagram $\lambda$, since all the $\Hom$-spaces in $\underline{Rep}^{ab}(S_{\nu})$ are finite-dimensional (c.f. Remark \ref{rmrk:ab_env_preTannakian}). We conclude that \InnaA{$$\Ext^i(M, V^{\underline{\otimes}  \nu}) \cong \bigoplus_{\substack{\lambda:\\ \tilde{\lambda}(\nu) \text{ is a Young diagram}}} \Ext^i(M, X_{\lambda})\otimes S^{\tilde{\lambda}(\nu)} V $$} is finite dimensional.
\end{proof}

\subsection{Case of a semisimple block}\label{ssec:SW_duality_ss_block}
In this subsection we consider a semisimple block in $\underline{Rep}^{ab}(S_{\nu})$. We know that semisimple blocks are parametrized by Young diagrams lying in a trivial $\stackrel{\nu}{\sim}$-class. Let us denote our block by $\mathcal{B}_{\lambda}$, $\lambda$ being the corresponding Young diagram.

The objects of such a block are finite direct sums of the simple object $ X_{\lambda}$, so the block is equivalent to $\mathbf{Vect}_{\bC}$ as an abelian category.

If $\ell(\lambda) \leq \dim V -1$, then the block $\mathfrak{B}_{\lambda}$ corresponding to $\lambda$ in $\co^{\mathfrak{p}}_{\nu, V}$ is also semisimple, and its objects are finite direct sums of the parabolic Verma module $M_{\mathfrak{p}}(\nu-\abs{\lambda}, \lambda)$ (which, in this case, is simple and coincides with $L_{\mathfrak{p}}(\nu-\abs{\lambda}, \lambda)$). 

Notice that $M_{\mathfrak{p}}(\nu-\abs{\lambda}, \lambda)$ is infinite-dimensional and simple, so the functor $\hat{\pi}$ restricted to $\mathfrak{B}_{\lambda}$ is an equivalence of abelian categories.

\InnaA{The proof of Theorem \ref{thrm:SW_almost_equiv} for $\mathcal{B}_{\lambda}$ is then reduced to proving following proposition:}

\begin{proposition}\label{prop:image_SW_ss_case}
 Let $\lambda$ be a Young diagram which lies in a trivial $\stackrel{\nu}{\sim}$-class. Then $SW_{\nu}(X_{\lambda}) \cong M_{\mathfrak{p}}(\nu-\abs{\lambda}, \lambda)$.
\end{proposition}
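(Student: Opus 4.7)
The plan is to exhibit an explicit $\gl(V)$-equivariant map $M_{\mathfrak{p}}(\nu-\abs{\lambda},\lambda) \to SW_{\nu}(X_{\lambda})$ via Frobenius reciprocity, show it is nonzero (hence injective by simplicity of the source), and then conclude surjectivity by a comparison of $\gl(U)$-characters. First I would handle the trivial edge case $\ell(\lambda) > \dim V - 1$ separately: by definition $M_{\mathfrak{p}}(\nu-\abs{\lambda},\lambda)=0$, and by Lemma \ref{lem:hom_X_tau_Delta_k} together with the fact that every $\mu\in\mathcal{I}^{+}_{\lambda}$ satisfies $\ell(\mu)\geq\ell(\lambda)>\dim U$, one gets $SW_{\nu}(X_{\lambda})\rvert_{\gl(U)}=0$, so the proposition is vacuous.

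Assume now $\ell(\lambda)\leq\dim V-1$. Fix a splitting $V=\bC\triv\oplus U$. Using Definition \ref{def:complex_ten_power_splitting} and the $\bZ_{+}$-grading of $V^{\underline{\otimes}\nu}$ I compute
\[
SW_{\nu}(X_{\lambda})\rvert_{\gl(U)}\;\cong\;\bigoplus_{k\geq 0}\Hom_{\Dab}\!\bigl(X_{\lambda},(U^{\otimes k}\otimes\Del_{k})^{S_{k}}\bigr)\;\cong\;\bigoplus_{\mu\in\mathcal{I}^{+}_{\lambda}}S^{\mu}U,
\]
where the first isomorphism is because $X_\lambda$ is compact in $Ind-\Dab$ and the second is Lemma \ref{lem:hom_X_tau_Delta_k}. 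In this decomposition the summand corresponding to $\mu\in\mathcal{I}^{m,+}_{\lambda}$ lies in grade $\abs{\lambda}+m$, so grade $\abs{\lambda}$ contributes precisely a single copy of $S^{\lambda}U$ (the one coming from $\mu=\lambda$). By construction $\mathfrak{u}_{\mathfrak{p}}^{+}$ acts on $V^{\underline{\otimes}\nu}$ (and hence, by post-composition, on $SW_{\nu}(X_{\lambda})$) by operators of degree $-1$, so it kills this lowest-grade copy of $S^{\lambda}U$; moreover $\id_{V}$ acts on it by the scalar $\nu$.

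Therefore the inclusion $S^{\lambda}U\hookrightarrow SW_{\nu}(X_{\lambda})$ of this lowest grade is a morphism of $\mathfrak{p}$-modules, where $\mathfrak{p}\cong\bC\id_{V}\oplus\gl(U)\oplus\mathfrak{u}_{\mathfrak{p}}^{+}$. By Frobenius reciprocity this extends to a $\gl(V)$-module map
\[
\Psi_{\lambda}\colon M_{\mathfrak{p}}(\nu-\abs{\lambda},\lambda)\;=\;\mathcal{U}(\gl(V))\otimes_{\mathcal{U}(\mathfrak{p})}S^{\lambda}U\;\longrightarrow\;SW_{\nu}(X_{\lambda}),
\]
which is nonzero because its restriction to the generating subspace $S^{\lambda}U$ is the injection we built. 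Since $\lambda$ lies in a trivial $\stackrel{\nu}{\sim}$-class, Corollary \ref{cor:parab_Verma_simple} tells us that $M_{\mathfrak{p}}(\nu-\abs{\lambda},\lambda)$ is simple, so $\Psi_{\lambda}$ is injective.

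To conclude surjectivity I compare $\gl(U)$-characters. By Lemma \ref{lem:gl_u_struct_o_cat} one has $M_{\mathfrak{p}}(\nu-\abs{\lambda},\lambda)\rvert_{\gl(U)}\cong SU\otimes S^{\lambda}U\cong\bigoplus_{\mu\in\mathcal{I}^{+}_{\lambda}}S^{\mu}U$, which agrees grade-by-grade with the decomposition of $SW_{\nu}(X_{\lambda})\rvert_{\gl(U)}$ obtained above (each graded piece is finite-dimensional). An injective $\gl(V)$-equivariant map between $\gl(U)$-modules with matching finite-dimensional graded characters must be an isomorphism, so $\Psi_{\lambda}$ provides the desired isomorphism. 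I do not anticipate a main obstacle here; the only thing that requires care is the bookkeeping for the grading, which is essentially built into Lemma \ref{lem:hom_X_tau_Delta_k} and the definition of $V^{\underline{\otimes}\nu}$.
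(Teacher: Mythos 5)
Your proof is correct and follows essentially the same route as the paper: compute the $\gl(U)$-decomposition of $SW_{\nu}(X_{\lambda})$ via Lemma \ref{lem:hom_X_tau_Delta_k}, use the degree $-1$ action of $\mathfrak{u}_{\mathfrak{p}}^{+}$ to find a highest-weight vector in the lowest grade, build a map from $M_{\mathfrak{p}}(\nu-\abs{\lambda},\lambda)$ by Frobenius reciprocity, invoke simplicity for injectivity, and compare $\gl(U)$-characters for surjectivity. The only cosmetic differences are that you treat the $\ell(\lambda)>\dim V-1$ case as a separate preamble and spell out the Frobenius reciprocity step a bit more explicitly.
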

\begin{remark}
Recall that $M_{\mathfrak{p}}(\nu-\abs{\lambda}, \lambda)$ is zero if $\ell(\lambda) > \dim V -1$, see Definition \ref{def:parabolic_Verma_mod}.
\end{remark}

\begin{proof}
 \InnaB{Fix a splitting $V = \bC \triv \oplus U$.} Based on Lemma \ref{lem:hom_X_tau_Delta_k}, we see that as a $\gl(U)$-module, the space 
 $$SW_{\nu}(X_{\lambda}) = \Hom_{Ind-\underline{Rep}^{ab}(S_{\nu})}(X_{\lambda}, V^{\underline{\otimes}  \nu})$$
 is isomorphic to \begin{align*}
 &\Hom_{Ind-\underline{Rep}^{ab}(S_{\nu})}(X_{\lambda}, \bigoplus_{k \geq 0} (U^{\otimes k} \otimes \Delta_k)^{S_k}) \cong \bigoplus_{k \geq 0} \bigoplus_{\mu: \abs{\mu}=k} \Hom_{\underline{Rep}(S_{\nu}) \boxtimes Rep(S_k)}(X_{\lambda} \otimes \mu, \Delta_k) \otimes S^{\mu} U \cong \\
 &\cong \bigoplus_{\mu \in \mathcal{I}^{+}_{\lambda}} S^{\mu} U                                                                                                                                                                                                                                                                                                                                                                                                                                   \end{align*}
 Notice that this expression is zero if $\ell(\lambda) > \dim(U) = \dim V -1$.
  Recall that by definition of $V^{\underline{\otimes}  \nu}$, $\mathfrak{u}_{\mathfrak{p}}^+$ acts on the graded space $V^{\underline{\otimes}  \nu} \cong \bigoplus_{k \geq 0} (U^{\otimes k} \otimes \Delta_k)^{S_k}$ by operators of degree $-1$, therefore, it acts by zero on the subspace $S^{\lambda} U$ of $\Hom_{Ind-\underline{Rep}^{ab}(S_{\nu})}(X_{\lambda}, V^{\underline{\otimes}  \nu})$.

  We conclude that if $\ell(\lambda) \leq \dim(U) = \dim V -1$, then $M_{\mathfrak{p}}(\nu-\abs{\lambda}, \lambda)$ maps to $SW_{\nu}(X_{\lambda})$ inducing an identity map on the subspaces $S^{\lambda} U$. Now, $M_{\mathfrak{p}}(\nu-\abs{\lambda}, \lambda)$ is simple, so this map is injective, and since $$SW_{\nu}(X_{\lambda})\lvert_{\gl(U)} \cong\bigoplus_{\mu \in \mathcal{I}^{+}_{\lambda}} S^{\mu} U \cong M_{\mathfrak{p}}(\nu-\abs{\lambda}, \lambda)\lvert_{\gl(U)}$$
  as $\gl(U)$-modules, we conclude that the above map from $M_{\mathfrak{p}}(\nu-\abs{\lambda}, \lambda)$ to $SW_{\nu}(X_{\lambda})$ is an isomorphism.
\end{proof}
Thus we proved that
\begin{corollary}
The functor $SW_{\nu}$ restricted to a semisimple block $\mathcal{B}_{\lambda}$ of $\underline{Rep}^{ab}(S_{\nu})$ is either zero (iff $\ell(\lambda) > \dim V -1$), or is an equivalence of abelian categories between $\mathcal{B}_{\lambda}$ and the block $\mathfrak{B}_{\lambda}$ of $\co^{\mathfrak{p}}_{\nu, V}$; furthermore, the functor $\widehat{SW}_{\nu}$ restricted to $\mathcal{B}_{\lambda}$ is either zero or an equivalence of abelian categories between $\mathcal{B}_{\lambda}$ and the block $\hat{\pi}(\mathfrak{B}_{\lambda})$ of $\widehat{\co}^{\mathfrak{p}}_{\nu, V}$.
\end{corollary}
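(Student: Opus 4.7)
The plan is to read off both claims directly from Proposition \ref{prop:image_SW_ss_case}, together with the fact that both blocks in question are equivalent to $\mathbf{Vect}_{\bC}$, so that a contravariant additive functor between them is determined (up to equivalence) by what it does on a single simple object and on its endomorphism ring.

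First I would recall that $\mathcal{B}_{\lambda}$ is semisimple with (up to isomorphism) one simple object $X_{\lambda}$ satisfying $\End_{\Dab}(X_{\lambda}) = \bC$, so every object is a finite direct sum of copies of $X_{\lambda}$ and the block is equivalent to $\mathbf{Vect}_{\bC}$. By Proposition \ref{prop:image_SW_ss_case}, $SW_{\nu}(X_{\lambda}) \cong M_{\mathfrak{p}}(\nu-\abs{\lambda}, \lambda)$. Definition \ref{def:parabolic_Verma_mod} then splits the analysis into two cases: if $\ell(\lambda) > \dim V - 1$ the Verma is zero, so by additivity $SW_{\nu}$ vanishes on the whole block; otherwise the Verma is nonzero.

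In the nonzero case, since $\lambda$ lies in a trivial $\stackrel{\nu}{\sim}$-class, Corollary \ref{cor:parab_Verma_simple} says $M_{\mathfrak{p}}(\nu-\abs{\lambda}, \lambda)$ is simple, and Proposition \ref{prop:par_cat_O_Vermas}(3) says its endomorphism algebra is $\bC$. Together with the block decomposition statement following Corollary \ref{cor:parab_Verma_simple}, this gives that the block $\mathfrak{B}_{\lambda}$ of $\co^{\mathfrak{p}}_{\nu, V}$ is also semisimple with a unique simple object, hence equivalent to $\mathbf{Vect}_{\bC}$. To upgrade the object-level statement $SW_{\nu}(X_{\lambda}) \cong M_{\mathfrak{p}}(\nu-\abs{\lambda}, \lambda)$ into an anti-equivalence of blocks, I would just verify that the induced $\bC$-algebra map
\[
\End_{\Dab}(X_{\lambda}) \longrightarrow \End_{\co^{\mathfrak{p}}_{\nu, V}}(M_{\mathfrak{p}}(\nu-\abs{\lambda}, \lambda))^{op}
\]
is an isomorphism; both sides are $\bC$ and the map sends $\id_{X_{\lambda}}$ to $\id$, so it is indeed an isomorphism. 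Additivity then yields bijectivity on all Hom spaces between direct sums of $X_{\lambda}$, giving the anti-equivalence $\mathcal{B}_{\lambda} \xrightarrow{\sim} \mathfrak{B}_{\lambda}^{op}$.

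For the $\widehat{SW}_{\nu}$ statement, I would argue that $\hat{\pi}$ is fully faithful when restricted to $\mathfrak{B}_{\lambda}$. Indeed, $M_{\mathfrak{p}}(\nu-\abs{\lambda}, \lambda)$ is infinite-dimensional (by Lemma \ref{lem:gl_u_struct_o_cat} its restriction to $\gl(U)$ is $SU \otimes S^{\lambda}U$), so it does not lie in the Serre subcategory of polynomial (hence finite-dimensional) modules being quotiented out; since $\mathfrak{B}_{\lambda}$ is semisimple with unique simple object $M_{\mathfrak{p}}(\nu-\abs{\lambda}, \lambda)$, no nonzero object of $\mathfrak{B}_{\lambda}$ lies in the kernel of $\hat{\pi}$, and one checks in the standard way that $\hat{\pi}$ is an equivalence from $\mathfrak{B}_{\lambda}$ onto $\hat{\pi}(\mathfrak{B}_{\lambda})$. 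Composing with the anti-equivalence $SW_{\nu}|_{\mathcal{B}_{\lambda}}$ gives the claim for $\widehat{SW}_{\nu}$. There is no serious obstacle here: the work has been done in Proposition \ref{prop:image_SW_ss_case} and the structural results on semisimple blocks; the only thing to be careful about is handling the vanishing case $\ell(\lambda) > \dim V - 1$ separately before invoking the equivalence argument.
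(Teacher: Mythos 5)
Your proof is correct and is essentially the same as what the paper does — the paper simply states the corollary as an immediate consequence of Proposition \ref{prop:image_SW_ss_case} (the preceding sentence is literally ``Thus we proved that''), and your write-up supplies exactly the routine details that are left implicit: both blocks are equivalent to $\mathbf{Vect}_{\bC}$, the functor is determined by its value on the unique simple and its endomorphism ring, and $\hat{\pi}$ is fully faithful on $\mathfrak{B}_{\lambda}$ because the parabolic Verma is infinite-dimensional and simple.
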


Recall from Section \ref{sec:Del_cat_S_nu} that for $\nu \notin \bZ_+$, $\underline{Rep}(S_{\nu})$ is abelian semisimple and in particular $\underline{Rep}^{ab}(S_{\nu}) = \underline{Rep}(S_{\nu})$. 
Denote by $\underline{Rep}(S_{\nu})^{(\leq \dim V -1)}$ the full semisimple abelian subcategory of $\underline{Rep}(S_{\nu})$ generated by simple objects $X_{\lambda}$ where $\lambda$ runs over all the Young diagrams of length at most $\dim V -1$. 

Note that $\underline{Rep}(S_{\nu})^{(\leq \dim V -1)}$ is the Serre quotient of $\underline{Rep}(S_{\nu})$ by the full semisimple abelian subcategory generated by simple objects $X_{\lambda}$ where $\lambda$ runs over all the Young diagrams of length at least $\dim V $.

Then we immediately get the following corollary:

\begin{corollary}\label{cor:SW_equiv_non_int_case}
 Assume $\nu \notin \bZ_+$. Then $SW_{\nu}: \underline{Rep}(S_{\nu}) \rightarrow \co^{\mathfrak{p}}_{\nu, V}$ is a full, essentially surjective, additive $\bC$-linear contravariant functor between semisimple abelian categories, inducing an anti-equivalence of abelian categories between $\underline{Rep}(S_{\nu})^{(\leq \dim V -1)}$ and $\co^{\mathfrak{p}}_{\nu, V}$.
\end{corollary}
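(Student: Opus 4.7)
The plan is to deduce this corollary essentially as a repackaging of the block-by-block results already established for semisimple blocks, since the hypothesis $\nu \notin \bZ_+$ puts us entirely in the trivial $\stackrel{\nu}{\sim}$-class regime on both sides.

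First I would observe that by Lemma \ref{lem:nu_classes_struct}(1), every Young diagram lies in a trivial $\stackrel{\nu}{\sim}$-class when $\nu \notin \bZ_+$. Applying Theorem \ref{thrm:blocks_S_nu}, this forces $\underline{Rep}(S_{\nu})$ to decompose as a direct sum of semisimple blocks $\mathcal{B}_{\lambda} \simeq \mathbf{Vect}_{\bC}$, one for each Young diagram $\lambda$, so $\underline{Rep}(S_{\nu})$ is semisimple abelian and equals its own abelian envelope. Symmetrically, by Proposition \ref{prop:par_cat_O_Vermas} and Corollary \ref{cor:parab_Verma_simple}, the category $\co^{\mathfrak{p}}_{\nu, V}$ decomposes into blocks $\mathfrak{B}_{\lambda}$: each is zero if $\ell(\lambda) > \dim V - 1$ and otherwise equivalent to $\mathbf{Vect}_{\bC}$, generated by the simple parabolic Verma module $M_{\mathfrak{p}}(\nu - \abs{\lambda}, \lambda)$. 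Thus $\co^{\mathfrak{p}}_{\nu, V}$ is semisimple as well, with simple objects indexed by Young diagrams $\lambda$ satisfying $\ell(\lambda) \leq \dim V - 1$.

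Next I would apply Proposition \ref{prop:image_SW_ss_case}, which already does the main computation: $SW_{\nu}(X_{\lambda}) \cong M_{\mathfrak{p}}(\nu - \abs{\lambda}, \lambda)$. This sends a simple object of $\underline{Rep}(S_{\nu})$ either to zero (precisely when $\ell(\lambda) > \dim V - 1$) or to a simple object of $\co^{\mathfrak{p}}_{\nu, V}$, and the induced map of one-dimensional endomorphism algebras is necessarily an isomorphism. Since both source and target are semisimple abelian categories and the functor is additive and $\bC$-linear, this simple-to-simple behavior immediately implies that $SW_{\nu}$ is full and essentially surjective onto $\co^{\mathfrak{p}}_{\nu, V}$.

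Finally I would identify $\mathrm{Ker}(SW_{\nu})$ as the full semisimple abelian subcategory generated by those $X_{\lambda}$ with $\ell(\lambda) > \dim V - 1$. In the semisimple setting, the Serre quotient by this subcategory is canonically identified with the complementary full subcategory $\underline{Rep}(S_{\nu})^{(\leq \dim V - 1)}$, and the induced contravariant functor sends each simple $X_{\lambda}$ (with $\ell(\lambda) \leq \dim V - 1$) to the simple $M_{\mathfrak{p}}(\nu - \abs{\lambda}, \lambda)$, realizing a bijection on isomorphism classes of simples together with isomorphisms on all $\mathrm{Hom}$-spaces. There is really no obstacle here: the only computational input is Proposition \ref{prop:image_SW_ss_case}, and everything else is formal semisimple bookkeeping.
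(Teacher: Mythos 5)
Your proposal is correct and takes essentially the same approach as the paper: the corollary is obtained by combining Proposition~\ref{prop:image_SW_ss_case} (which computes $SW_{\nu}(X_{\lambda}) \cong M_{\mathfrak{p}}(\nu-\abs{\lambda}, \lambda)$) with the block-by-block semisimplicity of both categories when $\nu \notin \bZ_+$, then identifying the kernel and invoking the standard fact that a simple-to-simple-or-zero additive functor between semisimple categories induces an equivalence on the complement of its kernel.
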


\begin{remark}
 \InnaA{If $\nu \notin \bZ_+$, then $\widehat{\co}^{\mathfrak{p}}_{\nu, V} \cong \co^{\mathfrak{p}}_{\nu, V}$ with $\hat{\pi} \cong \id_{\co^{\mathfrak{p}}_{\nu, V}}$}, so Corollary \ref{cor:SW_equiv_non_int_case} is just Theorem \ref{thrm:SW_almost_equiv} in the case $\nu \notin \bZ_+$.
\end{remark}

\subsection{Case of a non-semisimple block}\label{ssec:SW_duality_non_ss_block}
Throughout this subsection, we will use the \InnaA{results from} Sections \ref{sec:Del_cat_S_nu} and \ref{sec:par_cat_o},\InnaA{ and we will denote $\co^{\mathfrak{p}}_{\nu, V}$ by $\co^{\mathfrak{p}}_{\nu}$ for short.}

\InnaB{Fix a splitting $V = \bC \triv \oplus U$.}

In this subsection we consider a non-semisimple block in $\underline{Rep}^{ab}(S_{\nu})$. Recall that such blocks occur only when $\nu \in \bZ_+$, \InnaA{so we will assume that this is the case}. 

We know that non-semisimple blocks are parametrized by Young diagrams $\lambda$ such that $\lambda_1 +\abs{\lambda} \leq \nu$; the projective objects in \InnaA{such a} block correspond to the elements of the (non-trivial) $\stackrel{\nu}{\sim}$-class of $\lambda$ (see Proposition \ref{prop:obj_ab_env}). 

Let us denote our block by $\mathcal{B}_{\lambda}$.

If $\ell(\lambda) \leq \dim V -1$, then the block $\mathfrak{B}_{\lambda}$ corresponding to $\lambda$ in $\co^{\mathfrak{p}}_{\nu}$ is also non-semisimple.
We will continue with the blocks $\mathcal{B}_{\lambda}$, $\mathfrak{B}_{\lambda}$ fixed, and insert some notation for the convenience of the reader.

\begin{notation}
 We will denote the simple objects, standard objects, co-standard and indecomposable projective objects in $\mathcal{B}_{\lambda}$ by $\mathbf{L}_i, \mathbf{M}_i, \InnaB{\mathbf{M}^*_i}, \mathbf{P}_i$ ($i \in \bZ_+$) respectively, with $\mathbf{L}_i$ standing for $\mathbf{L}(\lambda^{(i)})$  and similarly for $\mathbf{M}_i$, $\InnaB{\mathbf{M}^*_i}$ and $\mathbf{P}_i$. The structure of these objects is discussed in Subsection \ref{ssec:S_nu_abelian_env}.

 Notice that $\mathbf{M}_0 = \InnaB{\mathbf{M}^*_0} =\mathbf{L}_0 = X_{\lambda^{(0)}}$, $\mathbf{P}_i  = X_{\lambda^{(i+1)}}$ for $i\in \bZ_+$ (see Proposition \ref{prop:obj_ab_env}).
\end{notation}

\begin{notation}
We will denote the simple modules, the parabolic Verma modules, their duals (the co-standard objects in $\co^{\mathfrak{p}}_{\nu}$) and the indecomposable parabolic projective modules in $\mathfrak{B}_{\lambda}$ by $L_i, M_i, M^{\vee}_i, P_i$ ($i \in \bZ_+$) respectively, with $M_i$ standing for $M_{\mathfrak{p}}(\nu-\abs{\lambda^{(i)}}, \lambda^{(i)})$ and similarly for $L_i, M^{\vee}_i$ and $P_i$. 

The structure of the modules $L_i, M_i, M^{\vee}_i, P_i, (i \in \bZ_+)$ is discussed in Section \ref{sec:par_cat_o} and in \cite[Chapter 9]{H}.

We put $k_{\lambda} := \min \{k \geq 0 \mid \ell(\lambda^{(k)}) > \dim V -1 \}$. Then $P_i = M_i = M^{\vee}_{\InnaB{i}} = L_i = 0$ whenever $ i \geq k_{\lambda}$.
\end{notation}

The goal of this section is to prove Theorem \ref{thrm:SW_almost_equiv} for the blocks $\mathcal{B}_{\lambda}$, $\mathfrak{B}_{\lambda}$. In order to do this, we will prove the following theorem:

\begin{theorem}\label{thrm:images_SW}
The functor $SW_{\nu}$ satisfies:
 \begin{enumerate}[label=(\alph*)]
 \item $SW_{\nu}(\mathbf{L}_i) \cong L_{i+1}$ whenever $i\geq 1$.
 \item $SW_{\nu}(\mathbf{M}_i) \cong M_{i}$ whenever $i\geq 0$.
 \item $SW_{\nu}(\InnaB{\mathbf{M}^*_i}) \cong M^{\vee}_{i}$ whenever $i \geq 2$.
 \item $SW_{\nu}(\mathbf{P}_{i}) \cong P_{i+1}$ whenever $i \geq 0$ and $i < k_{\lambda} -1$ or $i \geq k_{\lambda}$ (recall that in the latter case $P_{i+1}=0$); $SW_{\nu}(\mathbf{P}_{k_{\lambda}-1}) \cong L_{k_{\lambda}-1}$.
 \item $SW_{\nu}(\mathbf{M}_0 = \InnaB{\mathbf{M}^*_0} =\mathbf{L}_0) \cong M_0$.
  \item $SW_{\nu}(\InnaB{\mathbf{M}^*_1}) \cong Ker(P_1 \twoheadrightarrow L_1)$.
\end{enumerate}

\end{theorem}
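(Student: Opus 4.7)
My plan is to first compute $SW_\nu(X_{\lambda^{(j)}})$ for each $j$, establishing parts (d) and (e) by exploiting that $X_{\lambda^{(j)}} \in \underline{Rep}(S_{\nu}) \subset \Dab$ so Lemma~\ref{lem:hom_X_tau_Delta_k} applies directly, and then to deduce parts (a), (b), (c), (f) by applying the contravariant left-exact functor $SW_\nu$ to the defining short exact sequences of Proposition~\ref{prop:obj_ab_env}.

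Using Lemma~\ref{lem:hom_X_tau_Delta_k}(2) together with the decomposition $V^{\underline{\otimes}\nu} = \bigoplus_{k \geq 0}(U^{\otimes k} \otimes \Delta_k)^{S_k}$, I compute
\[
SW_\nu(X_{\lambda^{(j)}})\big\rvert_{\mathfrak{gl}(U)} \;\cong\; \bigoplus_{\mu \in \mathcal{I}^{+}_{\lambda^{(j-1)}}} S^{\mu} U \;\oplus\; \bigoplus_{\mu \in \mathcal{I}^{+}_{\lambda^{(j)}}} S^{\mu} U \quad (j \geq 1),
\]
which by Lemma~\ref{lem:gl_u_struct_o_cat} matches $P_j\rvert_{\mathfrak{gl}(U)}$ for $1 \leq j \leq k_\lambda - 1$; for $j = 0$ it reduces to $M_0\rvert_{\mathfrak{gl}(U)}$, for $j = k_\lambda$ to $L_{k_\lambda-1}\rvert_{\mathfrak{gl}(U)}$, and for $j \geq k_\lambda + 1$ it vanishes. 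In each edge case the lowest-grade summand furnishes a $\mathfrak{u}_{\mathfrak{p}}^+$-annihilated highest weight vector (since $\mathfrak{u}_{\mathfrak{p}}^+$ acts by degree $-1$ per Definition~\ref{def:complex_ten_power_splitting}) which generates the entire module through a quotient of a parabolic Verma module as in Proposition~\ref{prop:image_SW_ss_case}, settling (e) and the boundary cases of (d). In the main range $1 \leq j \leq k_\lambda - 1$ the same procedure produces a short exact sequence $0 \to M_{j-1} \to SW_\nu(X_{\lambda^{(j)}}) \to M_j \to 0$, which is either split or gives $P_j$; to exclude the split case I compute the dimension of the primitive-vector space $\Hom_{\co^{\mathfrak{p}}_\nu}(M_j, SW_\nu(X_{\lambda^{(j)}}))$, which by BGG reciprocity (Proposition~\ref{prop:proj_parab_cat_O_gen}(d)) equals $1$ for $P_j$ and $2$ for $M_{j-1} \oplus M_j$. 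This count can be performed explicitly by analyzing the action of the operators $E_f = \sum_l f^{(l)} \otimes \mathrm{res}_l$ (Definition~\ref{def:complex_ten_power_splitting}) on the two copies of $S^{\lambda^{(j)}} U$ inside $SW_\nu(X_{\lambda^{(j)}})$, and a direct calculation shows exactly one independent primitive combination, forcing $SW_\nu(X_{\lambda^{(j)}}) \cong P_j$.

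Parts (a), (b), (c), (f) then follow by applying $SW_\nu$ to the short exact sequences $0 \to \mathbf{M}_{i+1} \to \mathbf{P}_i \to \mathbf{M}_i \to 0$ and $0 \to \mathbf{M}_i^* \to \mathbf{P}_i \to \mathbf{M}_{i+1}^* \to 0$ from Proposition~\ref{prop:obj_ab_env}. Combined with the surjection $P_i \cong SW_\nu(\mathbf{P}_{i-1}) \twoheadrightarrow SW_\nu(\mathbf{M}_i)$ coming from the inclusion $\mathbf{M}_i \hookrightarrow \mathbf{P}_{i-1}$, the object $SW_\nu(\mathbf{M}_i)$ is simultaneously a quotient of $P_i$ and a submodule of $P_{i+1}$, and the submodule/quotient classification of the projectives provided by Proposition~\ref{prop:proj_parab_cat_O_struct} identifies it as $M_i$, giving (b). The analogous argument for $\mathbf{M}_i^*$ yields (c) and, in the anomalous case $i = 1$ of (f), produces $\Ker(P_1 \twoheadrightarrow L_1)$ in place of $M_1^\vee$ because the lack of right-exactness of $SW_\nu$ contributes an extra composition factor to the image. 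Finally (a) follows by applying $SW_\nu$ to $0 \to \mathbf{L}_{i-1} \to \mathbf{M}_i \to \mathbf{L}_i \to 0$ and invoking (b). The main obstacle is the primitive-vector multiplicity computation in the previous paragraph, which substitutes for the standard homological tools unavailable due to $SW_\nu$ being only left-exact.
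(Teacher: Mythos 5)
Your overall plan — compute the images of the tilting objects $X_{\lambda^{(j)}}$ first, then propagate to simples, standards and costandards via the short exact sequences of Proposition~\ref{prop:obj_ab_env} — is a legitimate reorganization, and the $\mathfrak{gl}(U)$-decompositions and the final submodule-of-$P_{i+1}$ identifications you sketch do match what Lemma~\ref{lem:gl_u_decomp_images_SW} and the socle filtrations of Proposition~\ref{prop:proj_parab_cat_O_struct} give. The order of deduction (paper: simples and standards first by adjunction, then projectives; you: projectives first, then the rest by exactness and submodule structure) is genuinely different, and your route would shorten parts (a), (b), (c) if the starting point were secure. It isn't, and the gap sits exactly where the real difficulty of the theorem lives.

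The step you need for (d) is to distinguish $SW_\nu(X_{\lambda^{(j)}}) \cong P_j$ from the split extension $M_{j-1}\oplus M_j$. You propose to do this by counting $\mathfrak{u}_{\mathfrak{p}}^+$-primitive vectors of weight $\lambda^{(j)}$, asserting that ``a direct calculation shows exactly one independent primitive combination.'' But you never carry out this calculation, and it is not a formality: you would have to write down the two copies of $S^{\lambda^{(j)}}U$ inside $\Hom_{Ind\text{-}\Dab}(X_{\lambda^{(j)}}, (U^{\otimes k}\otimes\Delta_k)^{S_k})$ explicitly as linear combinations of elements of $\bC\bar P_{|\lambda^{(j)}|,k}$, and then apply the operators $\mathrm{res}_l$ to them and track cancellations. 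No tools in the paper support doing this quickly, and the paper itself does not do it; instead it introduces the right adjoint $SW^*_\nu$ and the unit transformation $\epsilon$ of Notation~\ref{notn:eta_eps_units}, and rules out the split case via the identities of Lemma~\ref{lem:cond_on_units} together with indecomposability of $\mathbf{P}_i$, $\mathbf{M}_i$, $\mathbf{M}_i^*$. That adjunction argument is the conceptual substitute for the missing right-exactness, and your proposal does not contain it or a working replacement for it. (Also a minor point: Proposition~\ref{prop:proj_parab_cat_O_gen}(d) is BGG reciprocity, $(P_j:M_\mu)=[M_\mu:L_j]$; to convert that into $\dim\Hom(M_j,P_j)=1$ you still need to invoke the costandard filtration of $P_j$ and argue through it, or simply use indecomposability of $P_j$.)

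There is a second gap in (e). For $j=0$ you observe that $SW_\nu(X_{\lambda^{(0)}})\rvert_{\gl(U)}\cong\bigoplus_{\mu\in\mathcal{I}^+_{\lambda^{(0)}}}S^\mu U$ and claim the lowest-grade highest weight vector ``generates the entire module through a quotient of a parabolic Verma module as in Proposition~\ref{prop:image_SW_ss_case}.'' In that proposition $M_{\mathfrak{p}}(\nu-|\lambda|,\lambda)$ is \emph{simple}, so the induced map is automatically injective. Here $M_0$ is not simple, and the induced map $M_0\to SW_\nu(X_{\lambda^{(0)}})$ could a priori factor through the finite-dimensional $L_0$, leaving a submodule not generated by the top grade. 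The paper excludes this possibility with Lemma~\ref{lem:maps_L_0}, which in turn depends on the ``almost-injectivity'' of the raising operator $F_u$ proved in Lemma~\ref{lem:tens_power_weak_torsion_free}; nothing in your argument addresses this, so (e) is not established.
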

\begin{proof}
Statement (a) is proved in Proposition \ref{prop:image_SW_simples}.
Statements (b)-(d), (f) are proved in Proposition \ref{prop:images_SW}. Statement (e) is proved in Lemma \ref{lem:images_exceptionals}.
\end{proof}

We start by establishing some useful properties of the functor $\hat{\pi}$ and of the category $\hat{\pi}(\mathfrak{B}_{\lambda})$.
\begin{proposition}\label{prop:propert_functor_pi}
$\hat{\pi}(P_i), i>0$ are indecomposable injective and projective objects in $\widehat{\co}^{\mathfrak{p}}_{\nu}$.
\end{proposition}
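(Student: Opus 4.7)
The strategy is to exhibit $P_i$ (for $i>0$) as strongly orthogonal to the Serre subcategory $\mathcal{C}:=Mod_{\mathcal{U}(\gl(V)),poly,\nu}$ by which we are localizing, so that $\hat{\pi}$ preserves all of its relevant structure. First, identify $\mathcal{C}\cap \mathfrak{B}_\lambda$: using the recursive description in Lemma \ref{lem:nu_classes_struct}, one checks that $\lambda^{(j)}_1+\abs{\lambda^{(j)}}>\nu$ for every $j\geq 1$, so the highest weight of $L_j$ fails to be integral dominant and $L_j$ is infinite-dimensional, while $L_0$ is a polynomial module of degree $\nu$. Combined with $\Ext^1_{\mathfrak{B}_\lambda}(L_0,L_0)=0$ (visible from the socle filtration $L_1;L_0$ of $M_0=P_0$ via Proposition \ref{prop:proj_parab_cat_O_struct}(b) and Corollary \ref{cor:parab_Verma_ses}), this shows that $\mathcal{C}\cap \mathfrak{B}_\lambda$ consists precisely of direct sums of $L_0$.

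Next, for $i>0$, Proposition \ref{prop:proj_parab_cat_O_gen}(b,f) yields
\[ \dim\Hom_{\co^{\mathfrak{p}}_\nu}(P_i, L_0)=\dim\Hom_{\co^{\mathfrak{p}}_\nu}(L_0, P_i)=[L_0:L_i]=0, \]
so $\Hom(P_i,C)=\Hom(C,P_i)=0$ for every $C\in \mathcal{C}$. From the socle filtration in Proposition \ref{prop:proj_parab_cat_O_struct}(c), $\mathrm{soc}(P_i)=\mathrm{cosoc}(P_i)=L_i\neq L_0$, so $P_i$ admits no nonzero subobject or quotient in $\mathcal{C}$.

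These two observations collapse the colimits defining Hom-spaces in the Serre quotient: the condition $P_i/P'\in\mathcal{C}$ forces $P'=P_i$, and projectivity of $P_i$ combined with $\Hom(P_i,Y')=0$ makes $\Hom(P_i,Y/Y')\cong \Hom(P_i,Y)$ independent of $Y'\subset Y$ with $Y'\in \mathcal{C}$. Hence
\[ \Hom_{\widehat{\co}^{\mathfrak{p}}_\nu}(\hat{\pi}(P_i),\hat{\pi}(Y)) \cong \Hom_{\co^{\mathfrak{p}}_\nu}(P_i,Y) \]
naturally in $Y$, and the dual argument (using injectivity of $P_i$ to kill $\Ext^1_{\co^{\mathfrak{p}}_\nu}(Y/Y'',P_i)$ for $Y/Y''\in \mathcal{C}$) gives
\[ \Hom_{\widehat{\co}^{\mathfrak{p}}_\nu}(\hat{\pi}(Y),\hat{\pi}(P_i)) \cong \Hom_{\co^{\mathfrak{p}}_\nu}(Y,P_i). \]
Indecomposability of $\hat{\pi}(P_i)$ follows immediately from $\End(\hat{\pi}(P_i))\cong\End(P_i)$ being local.

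For projectivity and injectivity in the quotient, use the standard fact that any short exact sequence in $\widehat{\co}^{\mathfrak{p}}_\nu$ is isomorphic to $\hat{\pi}$ applied to a short exact sequence in $\co^{\mathfrak{p}}_\nu$ (lifting subobjects through the quotient functor). Since $\Hom_{\co^{\mathfrak{p}}_\nu}(P_i,-)$ and $\Hom_{\co^{\mathfrak{p}}_\nu}(-,P_i)$ are exact (the latter via self-duality of $P_i$ from Proposition \ref{prop:proj_parab_cat_O_gen}(f)), the natural isomorphisms above transfer this exactness to $\Hom_{\widehat{\co}^{\mathfrak{p}}_\nu}(\hat{\pi}(P_i),-)$ and $\Hom_{\widehat{\co}^{\mathfrak{p}}_\nu}(-,\hat{\pi}(P_i))$, establishing both projectivity and injectivity. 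The main obstacle is the colimit-collapse argument; once the strong orthogonality between $P_i$ and $\mathcal{C}$ is in hand, the remaining steps are essentially bookkeeping of Serre-quotient generalities.
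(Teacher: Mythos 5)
Your proof is correct, and it reaches the paper's conclusion by a closely related but somewhat more hands-on route. The paper treats this proposition as an immediate special case of the general Lemma \ref{lem:proj_in_serre_quotient}, whose hypotheses ($P_i$ injective, projective, indecomposable, and without nonzero subobject or quotient in the Serre subcategory) are all supplied by Propositions \ref{prop:proj_parab_cat_O_gen}(f) and \ref{prop:proj_parab_cat_O_struct}(c). The general lemma is proved there by showing that $\Hom_{\mathcal{A}/\mathcal{A}'}(\pi(E),\pi(I))$ collapses to $\Hom_{\mathcal{A}}(Y_E,I)$ for a single subobject $Y_E$, then checking exactness in $E$ directly; no lifting of short exact sequences from the quotient is invoked. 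You instead derive the stronger orthogonality $\Hom(P_i,C)=\Hom(C,P_i)=0$ for all $C$ in the kernel subcategory (via BGG reciprocity $\dim\Hom(P_i,L_0)=[L_0:L_i]=0$) and then appeal to the SES-lifting property of Serre quotients to transfer exactness of $\Hom(P_i,-)$ and $\Hom(-,P_i)$. This works, but two points are worth noting. First, the explicit identification of $\mathcal{C}\cap\mathfrak{B}_\lambda$ as direct sums of $L_0$ (and the $\Ext^1(L_0,L_0)=0$ argument) is not actually needed for what you use; all you need is that every object of $\mathcal{C}\cap\mathfrak{B}_\lambda$ has all composition factors equal to $L_0$, which follows immediately from finite-dimensionality. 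Second, the SES-lifting fact you invoke, while standard, is a nontrivial property of Gabriel quotients and is not a freebie; the paper's proof of Lemma \ref{lem:proj_in_serre_quotient} deliberately avoids it by working with the defining colimit directly. The paper's formulation is more economical and reusable (the same lemma is later applied to $\overline{\pi}(\mathbf{P}_i)$ in Lemma \ref{lem:overline_Del_enough_inj_proj}), whereas your version extracts more concrete structural information about the block along the way.
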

Recall that for $i>0$, $P_i$ is an indecomposable injective and projective module and has no finite-dimensional submodules nor quotients (c.f. Proposition
\ref{prop:proj_parab_cat_O_struct}). So Proposition \ref{prop:propert_functor_pi} is a special case of the following lemma:

\begin{lemma}\label{lem:proj_in_serre_quotient}
 Let $\mathcal{A}$ be an abelian category where all objects have finite length, and $\mathcal{A}'$ be a Serre subcategory of $\mathcal{A}$. We consider the Serre quotient $ \pi: \mathcal{A} \rightarrow \mathcal{A} /{\mathcal{A}'}$.
 
 Let $I \in \mathcal{A}$ (respectively, $P \in \mathcal{A}$) be an injective (respectively, projective) object, such that $I$ has no non-trivial subobject nor quotient lying in $\mathcal{A}'$.
 
 Then $\pi(I)$ is an injective (respectively, projective) object in $\mathcal{A} /{\mathcal{A}'}$.
 Moreover, if $I$ is indecomposable, so is $\pi(I)$.
\end{lemma}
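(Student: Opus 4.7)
The plan is to prove the injective statement in detail; the projective case follows by applying the same argument in the opposite category (the hypotheses on subobjects and quotients swap roles but both are already assumed).

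First, I would establish a clean formula for Hom-spaces into $\pi(I)$. Since $I$ has no non-zero subobject lying in $\mathcal{A}'$, in the filtered colimit
$$\Hom_{\mathcal{A}/\mathcal{A}'}(\pi(X), \pi(I)) = \varinjlim_{\substack{X' \subset X,\, Y' \subset I \\ X/X',\, Y' \in \mathcal{A}'}} \Hom_{\mathcal{A}}(X', I/Y')$$
only pairs with $Y' = 0$ contribute. The finite-length hypothesis ensures that the poset of $X' \subset X$ with $X/X' \in \mathcal{A}'$, which is closed under intersection, has a smallest element $X_0$: take any such $X'$ of minimal length and use the intersection argument to show it is contained in every other. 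Hence
$$\Hom_{\mathcal{A}/\mathcal{A}'}(\pi(X), \pi(I)) \cong \Hom_{\mathcal{A}}(X_0, I).$$

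Next, I would prove $\pi(I)$ is injective. The auxiliary step is that every monomorphism in $\mathcal{A}/\mathcal{A}'$ is isomorphic, as an arrow, to $\pi(\iota)$ for some honest monomorphism $\iota: X \hookrightarrow Y$ in $\mathcal{A}$: start from a roof representative $\alpha: X' \to Y/Y'$ with $X/X', Y' \in \mathcal{A}'$, observe that $\pi(X') \cong \pi(X)$ and $\pi(Y/Y') \cong \pi(Y)$ in the quotient, and then quotient $X'$ by $\ker(\alpha)$ (which lies in $\mathcal{A}'$ because $\pi(\alpha)$ is mono) to make $\alpha$ injective in $\mathcal{A}$. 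Given any $g: \pi(X) \to \pi(I)$, represent it by some $\gamma: X_0 \to I$ via the Hom formula above; then the composite $X_0 \hookrightarrow X \stackrel{\iota}{\hookrightarrow} Y$ is a mono in $\mathcal{A}$, so injectivity of $I$ in $\mathcal{A}$ yields an extension $\widetilde{\gamma}: Y \to I$. Setting $h := \pi(\widetilde{\gamma})$, restriction to $X_0$ shows that $h \circ \pi(\iota) = g$ at the level of representatives in the colimit.

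For indecomposability, I would apply the Hom formula with $X = I$: since $I$ has no non-zero quotient in $\mathcal{A}'$, we have $I_0 = I$, so
$$\End_{\mathcal{A}/\mathcal{A}'}(\pi(I)) \cong \End_{\mathcal{A}}(I)$$
as rings. Because $I$ is indecomposable of finite length, $\End_{\mathcal{A}}(I)$ is local by Fitting's lemma, and therefore so is $\End_{\mathcal{A}/\mathcal{A}'}(\pi(I))$, forcing $\pi(I)$ to be indecomposable. The main obstacle will be the mono-lifting step: checking carefully with the roof calculus that every monomorphism in the Serre quotient comes from a genuine monomorphism in $\mathcal{A}$ after replacing source and target by isomorphic representatives. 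Everything else is formal once the Hom computation is in place, and the projective statement follows by duality.
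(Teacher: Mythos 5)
Your proof is correct, and you start from the same key computation the paper uses, namely reducing $\Hom_{\mathcal{A}/\mathcal{A}'}(\pi(X), \pi(I))$ to $\Hom_{\mathcal{A}}(X_0, I)$, where $X_0$ is the smallest subobject of $X$ with $X/X_0 \in \mathcal{A}'$ (the paper writes this as $Y_E = \ker(E \twoheadrightarrow R_2(E))$). From there you diverge. The paper never invokes the ``extend along monomorphisms'' characterization of injectivity: it instead checks directly that $\Hom_{\mathcal{A}/\mathcal{A}'}(\pi(\cdot), \pi(I))$ carries short exact sequences to short exact sequences, using the observation that $\Hom_{\mathcal{A}}(E, I) \cong \Hom_{\mathcal{A}}(Y_E, I)$ (because $I$ has no $\mathcal{A}'$-subobjects) and then the exactness of $\Hom_{\mathcal{A}}(\cdot, I)$ in $\mathcal{A}$. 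This is more self-contained than your route, which requires the auxiliary lemma that every monomorphism in a Serre quotient can be realized by a genuine monomorphism in $\mathcal{A}$ after adjusting source and target by quotient-isomorphisms; that lemma is standard but still something to verify, and you correctly flag it as the delicate step. On the other hand, your treatment of indecomposability is cleaner: from $\End_{\mathcal{A}/\mathcal{A}'}(\pi(I)) \cong \End_{\mathcal{A}}(I)$ as rings (here you use both the no-$\mathcal{A}'$-subobject and no-$\mathcal{A}'$-quotient hypotheses on $I$, so $I_0 = I$), Fitting's lemma for a finite-length indecomposable gives a local endomorphism ring, which has no nontrivial idempotents. The paper instead argues by contradiction: given $\pi(I) \cong X_1 \oplus X_2$ it chooses preimages $E_1, E_2$ without $\mathcal{A}'$-sub- or quotient objects, matches Hom-spaces, and deduces $E_1 \oplus E_2 \cong I$. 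Both indecomposability arguments work; yours is the more conceptual one, the paper's is the more hands-on one. Either way the two proofs agree on the central Hom-space reduction and differ only in how that reduction is exploited.
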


 \begin{proof}
We start by noticing that we have two functors $R_1, R_2: \mathcal{A} \rightarrow \mathcal{A}'$ which are adjoint to the inclusion $\mathcal{A}' \rightarrow \mathcal{A}$ on different sides: the first functor, $R_1$, takes an object $A \in \mathcal{A}$ to its maximal subobject lying in $\mathcal{A}'$, and the second, $R_2$, takes $A$ to its maximal quotient lying in $\mathcal{A}'$.

These functors are defined since for any $A \in \mathcal{A}$, we can take its maximal (in terms of length) subobject lying in $\mathcal{A}'$, and this subobject will be well-defined. Similarly for the maximal quotient of $A$ lying in $\mathcal{A}'$.

We need to prove that $\Hom_{\mathcal{A} /{\mathcal{A}'}}(\cdot, {\pi}(I))$ is an exact functor.
  By definition, for any $E \in \mathcal{A}$,
  $$\Hom_{\mathcal{A} /{\mathcal{A}'}}({\pi}(E), {\pi}(I)) := \varinjlim_{\substack{Y \subset E, X \subset I\\ E/Y, X \in  \mathcal{A}' }} \Hom_{\mathcal{A}}(Y, I/X) = \varinjlim_{\substack{Y \subset E\\ E/Y \in \mathcal{A}' }} \Hom_{\mathcal{A}}(Y, I) $$
  (since $I$ has no non-trivial subobjects lying in $\mathcal{A}'$).

  The colimit is taken with respect to the direct system
  \begin{align*}
  &\{\Hom_{\mathcal{A}}(Y, I): Y \subset E, E/ Y \in  \mathcal{A}' \},\\
  &\text{ arrows } \Hom_{\mathcal{A}}(Y_2, I) \rightarrow \Hom_{\mathcal{A}}(Y_1, I)
 \text{ whenever }  Y_1 \hookrightarrow Y_2
  \end{align*}
  Now, $I$ is an injective object in $\mathcal{A}$, so the arrows in this direct system are surjective:
  \begin{align*}
  &\{\Hom_{\mathcal{A}}(Y, I): Y \subset E, E/ Y \in  \mathcal{A}' \},\\
  &\text{ arrows } \Hom_{\mathcal{A}}(Y_2, I) \twoheadrightarrow \Hom_{\mathcal{A}}(Y_1, I) \text{ whenever }  Y_1 \hookrightarrow Y_2
  \end{align*}
  Then one easily sees that the colimit is $\Hom_{\mathcal{A}}(Y_E, I) $, where $Y_E := Ker( E \twoheadrightarrow R_2(E))$. Thus
  $$\Hom_{\mathcal{A} /{\mathcal{A}'}}(\pi(E), \pi(I)) :=\Hom_{\mathcal{A}}(Y_E, I)$$
  So we need to prove that given an exact sequence $$0 \rightarrow E' \longrightarrow E \longrightarrow E'' \rightarrow 0$$
  of objects in $\mathcal{A}$, the sequence $$0 \rightarrow \Hom_{\mathcal{A}}(Y_{E'}, I) \longrightarrow \Hom_{\mathcal{A}}(Y_E, I) \longrightarrow \Hom_{\mathcal{A}}(Y_{E''}, I) \rightarrow 0$$ is also exact.
  Notice that since $I$ is injective in $\mathcal{A}$, we have an exact sequence
  $$0 \rightarrow \Hom_{\mathcal{A}}(R_2(E), I) \rightarrow \Hom_{\mathcal{A}}(E, I) \rightarrow \Hom_{\mathcal{A}}(Y_E, I)\rightarrow 0$$
  and since $I$ has no non-trivial \InnaA{subobjects} in $\mathcal{A}'$, we get $\Hom_{\mathcal{A}}(E, I) \cong  \Hom_{\mathcal{A}}(Y_E, I)$. The sequence $$0 \rightarrow \Hom_{\mathcal{A}}({E'}, I) \longrightarrow \Hom_{\mathcal{A}}(E, I) \longrightarrow \Hom_{\mathcal{A}}({E''}, I) \rightarrow 0$$ is exact (since $I$ is injective in $\mathcal{A}$), so the sequence $$0 \rightarrow \Hom_{\mathcal{A}}(Y_{E'}, I) \longrightarrow \Hom_{\mathcal{A}}(Y_E, I) \longrightarrow \Hom_{\mathcal{A}}(Y_{E''}, I) \rightarrow 0$$ is exact as well.

  Thus we proved that $\Hom_{\mathcal{A} /{\mathcal{A}'}}(\cdot, \pi(I))$ is an exact functor, so $\pi(I)$ is an injective object in $\mathcal{A} /{\mathcal{A}'}$.

  The fact that $\pi(P)$ is a projective object in $\mathcal{A} /{\mathcal{A}'}$ is proved in the same way.

  Now, assume $\pi(I)$ is decomposable, $\pi(I) \cong X_1 \oplus X_2$ in \InnaA{$\mathcal{A} /{\mathcal{A}'}$}, $X_1, X_2 \neq 0$. Then we can find $E_1, E_2 \in \mathcal{A}$ such that $E_1, E_2$ have no non-trivial subobject nor quotient lying in $\mathcal{A}'$, and such that $\pi(E_i) =X_i$, $i=1,2$. Then one immediately sees that 
  \begin{align*}
   &\Hom_{\mathcal{A}}(I, E_i) =  \Hom_{\mathcal{A} /{\mathcal{A}'}}({\pi}(I), X_i), \Hom_{\mathcal{A}}( E_i, I) = \Hom_{\mathcal{A} /{\mathcal{A}'}}(X_i, {\pi}(I)), \\
   &\Hom_{\mathcal{A}}( E_i, E_j) = \Hom_{\mathcal{A} /{\mathcal{A}'}}(X_i, X_j)
  \end{align*}
  for $i=1,2$. In particular, since $\pi$ is exact and $I, E_1, E_2$ have no non-trivial subobject nor quotient lying in $\mathcal{A}'$, we see that $E_1 \oplus E_2 \cong I$. We conclude that if $I$ is indecomposable, so is \InnaA{$\pi(I)$}.
 \end{proof}

 The following corollary will be useful when proving that the functor $\widehat{SW}_{\nu}$ is full and essentially surjective:
 \begin{corollary}\label{cor:hat_O_p_enough_inj_proj}
 The image of the category $\mathfrak{B}_{\lambda}$ under the functor $\hat{\pi}$ has enough injectives and enough projectives. Moreover, \InnaA{$\{ \hat{\pi}(P_i) \}_{0<i\leq k_{\lambda}-1}$} is the full set of representatives of isomorphism classes of indecomposable injective (respectively, projective) objects in $\hat{\pi}(\mathfrak{B}_{\lambda})$.
\end{corollary}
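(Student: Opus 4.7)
The plan is to deduce both halves of the corollary directly from Proposition~\ref{prop:propert_functor_pi}, Lemma~\ref{lem:proj_in_serre_quotient} and the known structure of the non-semisimple block $\mathfrak{B}_{\lambda}$ described in Proposition~\ref{prop:proj_parab_cat_O_struct}. First, for every $i$ with $0<i\leq k_{\lambda}-1$ the module $P_i$ is a projective and injective indecomposable in $\mathfrak{B}_{\lambda}$ whose socle and cosocle both equal $L_i$; since $L_j$ is infinite dimensional for every $j\geq 1$ (its highest weight $(\nu-|\lambda^{(j)}|,\lambda^{(j)})$ fails to be dominant, as $\nu-|\lambda^{(j)}|<\lambda^{(j)}_1$ by Lemma~\ref{lem:nu_classes_struct}), $P_i$ has no non-zero finite-dimensional submodule or quotient. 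Hence Proposition~\ref{prop:propert_functor_pi} (equivalently Lemma~\ref{lem:proj_in_serre_quotient}) gives that $\hat\pi(P_i)$ is indecomposable, projective and injective in $\widehat{\co}^{\mathfrak{p}}_{\nu}$.

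Next I would show that there are enough projectives. Given $Q\in\hat\pi(\mathfrak{B}_{\lambda})$, pick a lift $\tilde Q\in\mathfrak{B}_{\lambda}$ and a projective cover $\bigoplus_{i\geq 0} P_i^{a_i}\twoheadrightarrow\tilde Q$ in $\mathfrak{B}_{\lambda}$. For $i\geq k_{\lambda}$ one has $P_i=0$. The only delicate case is $i=0$: here $P_0=M_0$ has a \emph{finite-dimensional} quotient $L_0$ (so Lemma~\ref{lem:proj_in_serre_quotient} does not apply directly), but the short exact sequence $0\to L_1\to M_0\to L_0\to 0$ from Corollary~\ref{cor:parab_Verma_ses} becomes, after applying the exact functor $\hat\pi$, an isomorphism $\hat\pi(L_1)\cong\hat\pi(P_0)$, and composing with the surjection $P_1\twoheadrightarrow L_1$ (the cosocle) yields a surjection $\hat\pi(P_1)\twoheadrightarrow\hat\pi(P_0)$. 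Replacing each copy of $\hat\pi(P_0)$ in the cover of $Q$ by a copy of $\hat\pi(P_1)$ produces a surjection $\bigoplus_{0<i\leq k_{\lambda}-1}\hat\pi(P_i)^{b_i}\twoheadrightarrow Q$ from a projective object; this gives enough projectives. The injective statement follows immediately by duality, since $P_i$ is self-dual for $i\geq 1$ by Proposition~\ref{prop:proj_parab_cat_O_gen}(f).

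To complete the classification, let $Q$ be any indecomposable projective in $\hat\pi(\mathfrak{B}_{\lambda})$. By the previous paragraph $Q$ is a direct summand of $\bigoplus_{0<i\leq k_{\lambda}-1}\hat\pi(P_i)^{b_i}$; since each $\hat\pi(P_i)$ is indecomposable and $\hat\pi(\mathfrak{B}_{\lambda})$ is a finite-length abelian category, Krull–Schmidt gives $Q\cong\hat\pi(P_i)$ for some $0<i\leq k_{\lambda}-1$. Finally, to see that the $\hat\pi(P_i)$ are pairwise non-isomorphic, note that $L_i$ is simple and infinite-dimensional for $1\leq i\leq k_{\lambda}-1$, hence $\hat\pi(L_i)$ is a non-zero simple object in $\widehat{\co}^{\mathfrak{p}}_{\nu}$; the cosocle of $\hat\pi(P_i)$ is $\hat\pi(L_i)$, and these simples are pairwise non-isomorphic. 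The same argument on the injective side (using socles instead of cosocles) finishes the proof.

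The main obstacle is the exceptional behaviour at $i=0$: because $P_0=M_0$ acquires the finite-dimensional quotient $L_0$, it does not satisfy the hypothesis of Lemma~\ref{lem:proj_in_serre_quotient}, and naively $\hat\pi(P_0)$ would be an extra candidate for an indecomposable projective. The key observation that resolves this is that after killing finite-dimensional modules, $\hat\pi(P_0)$ collapses to $\hat\pi(L_1)$, which is already a quotient of $\hat\pi(P_1)$; this is what lets us eliminate the index $i=0$ from the list and still maintain enough projectives.
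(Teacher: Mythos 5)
Your proof is correct and takes essentially the same route as the paper: deduce enough injectives/projectives from exactness of $\hat\pi$ together with Proposition~\ref{prop:propert_functor_pi}, and then handle the one exceptional index $i=0$ by noting that $\hat\pi(P_0)\cong\hat\pi(L_1)$ is dominated by the projective/injective $\hat\pi(P_1)$. You fill in two details the paper leaves implicit --- the verification that $L_j$ is infinite-dimensional for $j\geq1$ and the Krull--Schmidt argument showing the list $\{\hat\pi(P_i)\}_{0<i\leq k_\lambda-1}$ is exhaustive and without repetitions --- both of which are correct.
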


 \begin{proof}
 Let $E \in \mathfrak{B}_{\lambda}$. We know from Proposition \ref{prop:proj_in_ab_envelope} that the category $\mathfrak{B}_{\lambda}$ has enough injective and enough projective modules. This means that there exist an injective module $I$ and a projective module $P$, together with an injective map $E \rightarrow I$ and a surjective map $P \rightarrow E$.

 Since the functor $\hat{\pi}$ is exact, we get an injective map $\hat{\pi}(E) \rightarrow \hat{\pi}(I)$ and a surjective map $\hat{\pi}(P) \rightarrow \hat{\pi}(E)$.

 Next, recall that $\{P_i, \InnaA{0 \leq i\leq k_{\lambda}-1}\}$ (respectively, $\{P^{\vee}_i, \InnaA{0 \leq i\leq k_{\lambda}-1}\}$) is the full set of representatives of isomorphism classes of indecomposable projective (respectively, injective) modules in $\mathfrak{B}_{\lambda}$. For $i>0$, the object $\hat{\pi}(P_i \cong P^{\vee}_i)$ was proved to be injective and projective (c.f. Proposition \ref{prop:propert_functor_pi}), so \InnaA{it remains to check the following statement}:
 
 The object $\hat{\pi}(P_0) \cong \hat{\pi}(L_1) \cong \hat{\pi}(P^{\vee}_0)$ is neither injective nor projective, and has a projective cover and an injective hull in $\mathfrak{B}_{\lambda}$ which are direct sums of objects $\hat{\pi}(P_i), i>0$.

 To prove the \InnaA{latter} claim, notice that the maps $L_1 \hookrightarrow P_1$, $P_1 \twoheadrightarrow L_1$ in $\mathfrak{B}_{\lambda}$ become maps $\hat{\pi}(L_1) \hookrightarrow \hat{\pi}(P_1)$, $\hat{\pi}(P_1) \twoheadrightarrow \hat{\pi}(L_1)$ \InnaA{in $\hat{\pi}(\mathfrak{B}_{\lambda})$} (since the functor $\hat{\pi}$ is exact). Knowing that $\hat{\pi}(P_1)$ is an indecomposable injective and projective object, we conclude that $\hat{\pi}(L_1)$ is neither injective nor projective, and $\hat{\pi}(P_1)$ is both its projective cover and its injective hull.
 \end{proof}

We now compute the decomposition into $\gl(U)$-irreducibles of $SW_{\nu}(\mathbf{L}_i), SW_{\nu}(\mathbf{M}_{i}), SW_{\nu}(\mathbf{P}_{i})$:
 \begin{lemma}\label{lem:gl_u_decomp_images_SW}
  We have the following isomorphisms of $\gl(U)$-modules:
  \InnaA{
  \begin{align*}
&SW_{\nu}(\mathbf{L}_i)\rvert_{\gl(U)} \cong \bigoplus_{\mu \in \mathcal{I}^{+}_{\lambda^{(i)}}\cap \mathcal{I}^{+}_{\lambda^{(i+1)}}} S^{\mu} U \; \text{,} \; SW_{\nu}(\mathbf{M}_{i})\rvert_{\gl(U)} \cong \bigoplus_{\mu \in  \mathcal{I}^{+}_{\lambda^{(i)}}} S^{\mu} U   &\forall i\geq 1
\\
&SW_{\nu}(\mathbf{L}_0=\mathbf{M}_0= \InnaB{\mathbf{M}^*_0} \cong X_{\lambda^{(0)}})\rvert_{\gl(U)} \cong \bigoplus_{\mu \in \mathcal{I}^{+}_{\lambda^{(0)}}} S^{\mu} U \\
&SW_{\nu}(\mathbf{P}_{i} \cong X_{\lambda^{(i+1)}})\rvert_{\gl(U)} \cong \bigoplus_{\mu \in \mathcal{I}^{+}_{\lambda^{(i)}}} S^{\mu} U   \oplus \bigoplus_{\mu \in  \mathcal{I}^{+}_{\lambda^{(i+1)}}} S^{\mu} U &\forall i\geq 0\\
&SW_{\nu}(\InnaB{\mathbf{M}^*_{i}})\rvert_{\gl(U)} \cong \bigoplus_{\mu \in  \mathcal{I}^{+}_{\lambda^{(i)}}} S^{\mu} U  &\forall i \geq 2
\end{align*}
}
 \end{lemma}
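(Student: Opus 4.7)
My plan has two main inputs: a structural claim that $V^{\underline{\otimes}\nu}$ is costandardly filtered as an $Ind$-object of $\Dab$, and a direct computation of $SW_\nu$ on the $X$-objects via Lemma~\ref{lem:hom_X_tau_Delta_k}. The structural claim provides $\Ext$-vanishing that upgrades the short exact sequences of Proposition~\ref{prop:obj_ab_env} to short exact sequences in $Mod_{\mathcal{U}(\gl(V))}$ after applying the contravariant $SW_\nu$; the direct computation disposes of the three cases of the lemma that do not require a sequence argument (namely $\mathbf{L}_0\cong\mathbf{M}_0\cong\mathbf{M}^*_0$ and all $\mathbf{P}_i$). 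To see the costandard-filtered claim, I would observe that each finite summand $(U^{\otimes k}\otimes\Del_k)^{S_k}$ of $V^{\underline{\otimes}\nu}$ breaks further (by Proposition~\ref{prop:obj_ab_env}) into indecomposables $X_\tau$, each of which is either simple-standard-costandard (when $\tau$ is minimal in its non-trivial $\stackrel{\nu}{\sim}$-class) or projective-and-injective, hence costandardly filtered in either case; since costandard filtrations are preserved by the direct sums defining the $Ind$-object, and since $\mathbf{M}_i\in\Dab$ is compact, this forces $\Ext^j_{\Dab}(\mathbf{M}_i,V^{\underline{\otimes}\nu})=0$ for all $j\geq 1$.

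For the direct computation I substitute $M=X_{\lambda^{(i)}}$ into the formula
\begin{equation*}
SW_\nu(M)\rvert_{\gl(U)} \;\cong\; \bigoplus_{k\geq 0}\bigoplus_{|\mu|=k} \Hom_{\underline{Rep}(S_\nu)\boxtimes Rep(S_k)}(M\otimes\mu,\,\Del_k)\otimes S^\mu U
\end{equation*}
from the proof of Lemma~\ref{lem:image_SW_fin_gen} and apply Lemma~\ref{lem:hom_X_tau_Delta_k}: the case $M=X_{\lambda^{(0)}}$ gives $\bigoplus_{\mu\in\mathcal{I}^+_{\lambda^{(0)}}}S^\mu U$, handling $\mathbf{L}_0\cong\mathbf{M}_0\cong\mathbf{M}^*_0$; and for $M=X_{\lambda^{(i+1)}}=\mathbf{P}_i$ with $i\geq 0$, combining the ``multiplicity two on the intersection, one on the symmetric difference'' formula yields $\bigoplus_{\mu\in\mathcal{I}^+_{\lambda^{(i)}}}S^\mu U\oplus\bigoplus_{\mu\in\mathcal{I}^+_{\lambda^{(i+1)}}}S^\mu U$. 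Note that $SW_\nu(\mathbf{P}_i)\rvert_{\gl(U)}=0$ for every $i\geq k_\lambda$.

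Next, for the standards $\mathbf{M}_i$ with $i\geq 1$, I apply the now-exact $SW_\nu$ to $0\to\mathbf{M}_{i+1}\to\mathbf{P}_i\to\mathbf{M}_i\to 0$ and perform a downward induction on $i$, anchored at $i\geq k_\lambda$ (where $SW_\nu(\mathbf{M}_i)\hookrightarrow SW_\nu(\mathbf{P}_i)=0$); character subtraction at each step yields $SW_\nu(\mathbf{M}_i)\rvert_{\gl(U)}\cong\bigoplus_{\mu\in\mathcal{I}^+_{\lambda^{(i)}}}S^\mu U$. The entirely parallel argument using $0\to\mathbf{M}^*_i\to\mathbf{P}_i\to\mathbf{M}^*_{i+1}\to 0$ handles the costandards $\mathbf{M}^*_i$ for $i\geq 2$. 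Finally, for the simples $\mathbf{L}_i$ with $i\geq 1$, I apply $SW_\nu$ to the socle-filtration sequence $0\to\mathbf{L}_{i-1}\to\mathbf{M}_i\to\mathbf{L}_i\to 0$ (from the proof of Proposition~\ref{prop:obj_ab_env}) and extract $SW_\nu(\mathbf{L}_i)\rvert_{\gl(U)}$ by subtracting the already-established characters of $SW_\nu(\mathbf{M}_i)$ and $SW_\nu(\mathbf{L}_{i-1})$, with the base case $\mathbf{L}_0=X_{\lambda^{(0)}}$ in hand.

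The main obstacle is the simple-object case, because $\Ext^1_{\Dab}(\mathbf{L}_i,V^{\underline{\otimes}\nu})$ need not vanish (the non-projective summand $X_{\lambda^{(0)}}$ of $V^{\underline{\otimes}\nu}$ contributes a genuine obstruction, so the sequence one obtains from $0\to\mathbf{L}_{i-1}\to\mathbf{M}_i\to\mathbf{L}_i\to 0$ ends in a non-trivial $\Ext^1$ term). I would handle this either by computing the $\Ext^1$ directly from the $X_{\lambda^{(0)}}$-summands (all other $X_\tau$ in $V^{\underline{\otimes}\nu}$ are injective by Proposition~\ref{prop:obj_ab_env} and so contribute nothing), or, more cleanly, by computing $\Hom_{\Dab}(\mathbf{L}_i,V^{\underline{\otimes}\nu})$ directly: since $\Hom_{\Dab}(\mathbf{L}_i,X_{\lambda^{(j)}})=[\Soc(X_{\lambda^{(j)}}):\mathbf{L}_i]$ picks out precisely $j=i+1$ (because $\Soc(\mathbf{P}_i)=\mathbf{L}_i$), the Hom selects the appropriate multiplicity of $X_{\lambda^{(i+1)}}$ in each $\Del_k$ from Lemma~\ref{lem:hom_X_tau_Delta_k}, and carefully splitting the ``multiplicity-two'' intersection contribution against the two-dimensional endomorphism ring $\End_{\Dab}(\mathbf{P}_i)$ (whose nilpotent summand annihilates the socle, cutting this down to one) produces the stated answer $\bigoplus_{\mu\in\mathcal{I}^+_{\lambda^{(i)}}\cap\mathcal{I}^+_{\lambda^{(i+1)}}}S^\mu U$.
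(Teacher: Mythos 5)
Your proposal takes a genuinely different route from the paper, and the two clean pieces of it (the costandard-filtration vanishing $\Ext^{j}_{\Dab}(\mathbf{M}_i,V^{\underline{\otimes}\nu})=0$ for $j\geq 1$, and the direct computation of $SW_\nu$ on the projectives $X_\tau$ and on $X_{\lambda^{(0)}}$ from Lemma~\ref{lem:hom_X_tau_Delta_k}) are correct. Where the paper handles everything at once by introducing the subcategory $\mathcal{D}\subset\mathcal{B}_\lambda$ of objects without $\mathbf{L}_0$ in their composition series and observing that $\mathcal{F}=SW_\nu(\cdot)\rvert_{\gl(U)}$ is exact on $\mathcal{D}$ (because the unique non-injective summand $X_{\lambda^{(0)}}$ of $V^{\underline{\otimes}\nu}$ receives no maps from $\mathcal{D}$), you split the work into Ext-vanishing statements, which forces you to control $\Ext^1(\cdot,\mathbf{L}_0)$ object by object. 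This is where gaps appear.

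First, the ``entirely parallel argument'' for the costandards is not parallel: the costandard filtration of $V^{\underline{\otimes}\nu}$ kills $\Ext^{>0}(\mathbf{M}_i,V^{\underline{\otimes}\nu})$, but gives you nothing about $\Ext^{1}(\mathbf{M}^*_{i+1},V^{\underline{\otimes}\nu})$, since $\mathbf{M}^*_{i+1}$ does not have a standard filtration and $X_{\lambda^{(0)}}=\mathbf{M}^*_0$ is not injective. The needed vanishing $\Ext^1(\mathbf{M}^*_{i+1},\mathbf{L}_0)=0$ for $i\geq 2$ is true, but it requires a separate argument; two available ones are (i) dualizing in the rigid category $\Dab$ (using Corollary~\ref{cor:duality_abel_env} and the self-duality of $X_{\lambda^{(0)}}$) to reduce to $\Ext^1(\mathbf{M}(\lambda^{(0)}),\mathbf{M}(\lambda^{(i+1)}))=0$, which holds because $\lambda^{(0)}\geq\lambda^{(i+1)}$ in the order of Proposition~\ref{prop:ab_envelope_highest_weight}; or (ii) applying $\Hom(\mathbf{M}^*_{i+1},-)$ to the injective resolution $0\to\mathbf{L}_0\to\mathbf{P}_0\to\mathbf{P}_1\to\cdots$ and observing that the complex is concentrated in degrees $i,i+1$. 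Your write-up supplies neither.

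Second, the simples case has the more serious gap. You anchor the induction for $\mathbf{L}_i$ at $\mathbf{L}_0$ and go upward, but the very first step $i=1$ is blocked: $\Ext^1(\mathbf{L}_1,\mathbf{L}_0)\neq 0$, so the sequence $0\to SW_\nu(\mathbf{L}_1)\to SW_\nu(\mathbf{M}_1)\to SW_\nu(\mathbf{L}_0)\to\Ext^1(\mathbf{L}_1,V^{\underline{\otimes}\nu})\to 0$ does not determine $SW_\nu(\mathbf{L}_1)$ unless you also identify the multiplicity space $V_{\lambda^{(0)}}$; the paper gets this via the specialization functor $\mathcal{S}_n$ (Proposition~\ref{prop:comp_tens_power_F_n}), which your proposal never invokes. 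Your alternative, computing $\Hom_{\Dab}(\mathbf{L}_i,V^{\underline{\otimes}\nu})$ ``directly,'' needs the multiplicity of $X_{\lambda^{(i+1)}}$ as a direct summand of each $\Del_k$, not the $\Hom$-space dimensions that Lemma~\ref{lem:hom_X_tau_Delta_k} actually provides; converting one to the other means solving the band system $m_i+2m_{i+1}+m_{i+2}=\dim\Hom(\mathbf{P}_i\otimes\mu,\Del_k)$ or determining the $\End(\mathbf{P}_i)$-module structure of the Hom space, and the phrase ``carefully splitting the multiplicity-two contribution against the two-dimensional endomorphism ring'' does not amount to such an argument (a dimension count alone does not distinguish a regular $\bC[\epsilon]/\epsilon^2$-module from two trivial ones). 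The cleanest repair within your framework would be a downward induction on $\mathbf{L}_i$ anchored at $SW_\nu(\mathbf{L}_{k_\lambda-1})=0$, exactly parallel to what you do for the standards, since $\Ext^1(\mathbf{L}_i,\mathbf{L}_0)=0$ for all $i\geq 2$; that would avoid $V_{\lambda^{(0)}}$ altogether.
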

\begin{proof}
 Consider $V^{\underline{\otimes}  \nu}$ as an object in $\underline{Rep}(S_{\nu})$. As such, it is a direct sum $ \bigoplus_{\mu} X_{\mu} \otimes V_{\mu}$, where $V_{\mu}$ is the multiplicity space of $X_{\mu}$. In fact, $V_{\mu}$ has the structure of $\bZ_+$-graded $\gl(U)$-module, each grade being a polynomial $\gl(U)$-module. 

 We now consider the full subcategory $\mathcal{D}$ of $\mathcal{B}_{\lambda}$ whose objects are those which do not have $\mathbf{L}_0$ among their composition factors. Recall that $\mathbf{L}_i \in \mathcal{D}$ for $i>0$, and $\mathbf{M}_i, \InnaB{\mathbf{M}^*_i}, \mathbf{P}_i \in \mathcal{D}$ whenever $i \geq 2$.

 We will denote by $\mathcal{F}$ the following functor from $\mathcal{D}$ to the category \InnaA{$Ind-Mod_{\mathcal{U}(\gl(U)), poly}$:}
 %
 \begin{align*}
  \mathcal{F} := SW_{\nu}(\cdot)\rvert_{\gl(U)} = \Hom_{Ind-\underline{Rep}^{ab}(S_{\nu})}(\cdot,  V^{\underline{\otimes}  \nu})
 \end{align*}

Next, for any $X \in \mathcal{D}$, we have the following isomorphism of $\gl(U)$-modules:
 $$ \mathcal{F}(X) = \Hom_{Ind-\underline{Rep}^{ab}(S_{\nu})}(X,  V^{\underline{\otimes}  \nu}) \cong \Hom_{Ind-\underline{Rep}^{ab}(S_{\nu})}(X,  \bigoplus_{i>0} X_{\lambda^{(i)}} \otimes V_{\lambda^{(i)}})$$
Since we know that $X_{\lambda^{(i)}} = \mathbf{P}_{i-1}$ is injective for $i>0$ (see Proposition \ref{prop:obj_ab_env}), we immediately conclude that $\mathcal{F}$ is exact.

Now, one easily sees from the $\gl(U)$-decomposition \InnaA{$V^{\underline{\otimes}  \nu} \rvert_{\gl(U)} = \bigoplus_{k \geq 0} (U^{\otimes k} \otimes \Delta_k)^{S_k} $}, together with Lemma \ref{lem:hom_X_tau_Delta_k}, that for any $i \geq 0$,
\begin{align*}                                                                                                                                                 & SW_{\nu}(\mathbf{P}_{i} = X_{\lambda^{(i+1)}})\rvert_{\gl(U)} \cong \Hom_{Ind-\underline{Rep}^{ab}(S_{\nu})}(X_{\lambda^{(i+1)}}, \bigoplus_{k \geq 0} (U^{\otimes k} \otimes \Delta_k)^{S_k}) \cong \\
&\cong \bigoplus_{k \geq 0} \bigoplus_{\mu: \abs{\mu}=k} \Hom_{\underline{Rep}(S_{\nu}) \boxtimes Rep(S_k)}(X_{\lambda^{(i+1)}} \otimes \mu,  \Delta_k) \otimes S^{\mu} U \cong
\bigoplus_{\mu \in \mathcal{I}^{+}_{\lambda^{(i)}}} S^{\mu} U \oplus \bigoplus_{\mu \in \mathcal{I}^{+}_{\lambda^{(i+1)}}} S^{\mu} U
\end{align*}
(the $k$-th grade of $SW_{\nu}(\mathbf{P}_{i})$ is the direct sum of $S^{\mu} U$ such that $\abs{\mu}=k$).

Fix $i \geq 1$.
We can now apply $\mathcal{F}$ to the following long exact sequences in $\mathcal{D}$ (these exact sequences exist due to Proposition \ref{prop:obj_ab_env}):
$$ ... \rightarrow \mathbf{P}_{i+3} \rightarrow \mathbf{P}_{i+2} \rightarrow \mathbf{P}_{i+1} \rightarrow \mathbf{M}_{i+1} \rightarrow 0$$
$$ 0 \rightarrow  \InnaB{\mathbf{M}^*_{i+1}} \rightarrow \mathbf{P}_{i+1} \rightarrow \mathbf{P}_{i+2} \rightarrow \mathbf{P}_{i+3} \rightarrow ...$$
$$ 0 \rightarrow \mathbf{L}_{i} \rightarrow \mathbf{M}_{i+1} \rightarrow \mathbf{M}_{i+2} \rightarrow \mathbf{M}_{i+3} \rightarrow ...$$

Using $$\mathcal{F}(\mathbf{P}_{i}) \cong \bigoplus_{\mu \in \mathcal{I}^{+}_{\lambda^{(i)}}\cup \mathcal{I}^{+}_{\lambda^{(i+1)}}} S^{\mu} U$$ and the fact that $\mathcal{F}$ is exact, we conclude that
$$\mathcal{F}(\mathbf{M}_{i+1}) \cong \bigoplus_{\mu \in  \mathcal{I}^{+}_{\lambda^{(i+1)}}} S^{\mu} U \cong \mathcal{F}(\InnaB{\mathbf{M}^*_{i+1}}) $$ and
$$ \mathcal{F}(\mathbf{L}_{i}) \cong V_{\lambda^{(i+1)}} \cong \bigoplus_{\mu \in  \mathcal{I}^{+}_{\lambda^{(i+1)}} \cap \mathcal{I}^{+}_{\lambda^{(i)}}} S^{\mu} U$$

It remains to check the $\gl(U)$-structure of $SW_{\nu}(\mathbf{L}_0 \cong X_{\lambda^{(0)}})\rvert_{\gl(U)}$, $SW_{\nu}(\mathbf{M}_1)\rvert_{\gl(U)}$.

Similarly to the decomposition of $\mathcal{F}(\mathbf{P}_i)$, we use the $\gl(U)$-decomposition $V^{\underline{\otimes}  \nu} = \bigoplus_{k \geq 0} (U^{\otimes k} \otimes \Delta_k)^{S_k} $, together with Lemma \ref{lem:hom_X_tau_Delta_k}, to get the following isomorphisms of $\gl(U)$-modules:
\begin{align*}                                                                                                                                                 &SW_{\nu}(X_{\lambda^{(0)}})\rvert_{\gl(U)} \cong \Hom_{Ind-\underline{Rep}^{ab}(S_{\nu})}(X_{\lambda^{(0)}}, \bigoplus_{k \geq 0} (U^{\otimes k} \otimes \Delta_k)^{S_k}) \cong \\
&\cong \bigoplus_{k \geq 0} \bigoplus_{\mu: \abs{\mu}=k} \Hom_{\underline{Rep}(S_{\nu}) \boxtimes Rep(S_k)}(X_{\lambda^{(0)}} \otimes \mu,  \Delta_k) \otimes S^{\mu} U \cong \bigoplus_{\mu \in \mathcal{I}^{+}_{\lambda^{(0)}}} S^{\mu} U
\end{align*}
\InnaA{In particular, we have: 
$$V_{\lambda^{(0)}} \oplus V_{\lambda^{(1)}} \cong SW_{\nu}(X_{\lambda^{(0)}})\rvert_{\gl(U)} \cong \bigoplus_{\mu \in \mathcal{I}^{+}_{\lambda^{(0)}}} S^{\mu} U$$ 
Recall that $$V_{\lambda^{(0)}} \cong (S^{\tilde{\lambda^{(0)}}(\nu)} V) \rvert_{\gl(U)} \cong \bigoplus_{\mu : \, \tilde{\lambda^{(0)}}(\nu) \in \mathcal{I}^{+}_{\mu}} S^{\mu} U$$ (c.f. Proposition \ref{prop:comp_tens_power_F_n}).
Now, for any Young diagram $\mu$, we have (c.f. the proof of Lemma \ref{lem:hom_X_tau_Delta_k}):   
$$ \tilde{\lambda^{(0)}}(\nu) \in \mathcal{I}^{+}_{\mu} \, \Leftrightarrow \, \left[ \mu \in \mathcal{I}^{+}_{\lambda^{(0)}} \text{ and } \mu_1 +\abs{\lambda^{(0)}} \leq \nu \right]$$ 
On the other hand, the description of non-trivial $\stackrel{\nu}{\sim}$-classes (c.f. Lemma \ref{lem:nu_classes_struct}) tells us that $\lambda^{(0)} \subset \lambda^{(1)}$, and $\lambda^{(1)} \setminus \lambda^{(0)}$ is a strip in row $1$ of length $\nu - \abs{\lambda^{(0)}} - \lambda^{(0)}_1 +1$. Thus 
$$\{\mu : \tilde{\lambda^{(0)}}(\nu) \in \mathcal{I}^{+}_{\mu}\} = \mathcal{I}^{+}_{\lambda^{(0)}} \setminus \mathcal{I}^{+}_{\lambda^{(1)}}$$
and so $V_{\lambda^{(1)}} \cong \bigoplus_{\mu \in \mathcal{I}^{+}_{\lambda^{(0)}} \cap  \mathcal{I}^{+}_{\lambda^{(1)}} } S^{\mu} U$. \InnaB{We have already seen that $V_{\lambda^{(2)}} \cong \bigoplus_{\mu \in  \mathcal{I}^{+}_{\lambda^{(2)}} \cap \mathcal{I}^{+}_{\lambda^{(1)}}} S^{\mu} U$, and we conclude that }
\begin{align*}                                                                                                                                                 &SW_{\nu}(\mathbf{M}_1)\rvert_{\gl(U)} \cong \Hom_{Ind-\underline{Rep}^{ab}(S_{\nu})}(\mathbf{M}_1, V^{\underline{\otimes}  \nu}) = \Hom_{Ind-\underline{Rep}^{ab}(S_{\nu})}(\mathbf{M}_1,  \bigoplus_{i \geq 0} X_{\lambda^{(i)}} \otimes V_{\lambda^{(i)}}) \cong \\
&\cong V_{\lambda^{(1)}} \oplus V_{\lambda^{(2)}} \cong 
 \bigoplus_{\mu \in \mathcal{I}^{+}_{\lambda^{(0)}} \cap  \mathcal{I}^{+}_{\lambda^{(1)}} } S^{\mu} U \oplus \bigoplus_{\mu \in  \mathcal{I}^{+}_{\lambda^{(2)}} \cap \mathcal{I}^{+}_{\lambda^{(1)}}} S^{\mu} U \cong  \bigoplus_{\mu \in \mathcal{I}^{+}_{\lambda^{(1)}}} S^{\mu} U
\end{align*} 
(the last isomorphism can be inferred from Lemma \ref{lem:nu_classes_struct}).}

 \end{proof}

\begin{proposition}\label{prop:image_SW_simples}
 For a simple object $\mathbf{L}_i$ in $\mathcal{B}_{\lambda}$, $i>0$, we have: $SW_{\nu}(\mathbf{L}_i) \cong L_{i+1}$ (recall that the latter is defined to be zero if $i \geq k_{\lambda} -1$).
\end{proposition}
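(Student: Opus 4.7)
The plan is to identify $SW_{\nu}(\mathbf{L}_i)$ entirely through its $\gl(U)$-character (already supplied by Lemma \ref{lem:gl_u_decomp_images_SW}), and to match it against the character of $L_{i+1}$ computed independently inside the block $\mathfrak{B}_{\lambda}$. The vanishing case is immediate: if $i \geq k_{\lambda} - 1$, then every $\mu \in \mathcal{I}^{+}_{\lambda^{(i)}} \cap \mathcal{I}^{+}_{\lambda^{(i+1)}}$ has $\ell(\mu) \geq \ell(\lambda^{(i+1)}) > \dim U$, so each summand $S^{\mu} U$ appearing in Lemma \ref{lem:gl_u_decomp_images_SW} vanishes; meanwhile $L_{i+1} = 0$ by the definition of $k_{\lambda}$, and both sides agree.

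For $1 \leq i < k_{\lambda} - 1$, I would first compute $[L_j]_{\gl(U)}$ for all $0 \leq j \leq k_{\lambda} - 1$ by descending induction on $j$, using the short exact sequence $0 \to L_{j+1} \to M_j \to L_j \to 0$ of Corollary \ref{cor:parab_Verma_ses} together with the $\gl(U)$-decomposition $M_j \rvert_{\gl(U)} \cong \bigoplus_{\mu \in \mathcal{I}^{+}_{\lambda^{(j)}}} S^{\mu} U$ from Lemma \ref{lem:gl_u_struct_o_cat}, with base case $L_{k_{\lambda} - 1} = M_{k_{\lambda} - 1}$. The induction produces $[L_j]_{\gl(U)} = \sum_{\mu \in \mathcal{I}^{+}_{\lambda^{(j)}} \setminus \mathcal{I}^{+}_{\lambda^{(j+1)}}} [S^{\mu} U]$ (restricted to $\ell(\mu) \leq \dim U$); in particular the supports of the characters $[L_j]_{\gl(U)}$ for varying $j$ are pairwise disjoint. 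Applying this for $j = i+1$ and comparing with the formula in Lemma \ref{lem:gl_u_decomp_images_SW} gives $[SW_{\nu}(\mathbf{L}_i)]_{\gl(U)} = [L_{i+1}]_{\gl(U)}$ once the combinatorial identity below is in hand. Because $SW_{\nu}(\mathbf{L}_i)$ lies in $\mathfrak{B}_{\lambda}$, its composition factors are among the $L_j$; non-negativity of multiplicities combined with the disjointness of supports then forces $SW_{\nu}(\mathbf{L}_i)$ to have length one with unique composition factor $L_{i+1}$, whence the desired isomorphism.

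The main technical step is verifying the combinatorial identity
$$\mathcal{I}^{+}_{\lambda^{(i)}} \cap \mathcal{I}^{+}_{\lambda^{(i+1)}} \;=\; \mathcal{I}^{+}_{\lambda^{(i+1)}} \setminus \mathcal{I}^{+}_{\lambda^{(i+2)}}$$
(restricted to Young diagrams $\mu$ with $\ell(\mu) \leq \dim U$), which simultaneously reconciles the two character formulas and ensures non-negativity of multiplicities throughout the descending induction. This identity reflects the rigid structure of non-trivial $\stackrel{\nu}{\sim}$-classes established in Lemma \ref{lem:nu_classes_struct}: each $\lambda^{(j+1)}/\lambda^{(j)}$ is a single horizontal strip lying in row $j+1$ of a length prescribed by $\lambda^{(0)}$. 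It amounts to a column-interaction analysis: for $\mu \supset \lambda^{(i+1)}$ with $\mu/\lambda^{(i+1)}$ a horizontal strip, membership in $\mathcal{I}^{+}_{\lambda^{(i)}}$ and membership in $\mathcal{I}^{+}_{\lambda^{(i+2)}}$ translate into complementary conditions on the placement of added cells of $\mu/\lambda^{(i+1)}$ in row $i+2$ versus the columns of the strip $\lambda^{(i+1)}/\lambda^{(i)}$ in row $i+1$. While elementary, this check is the principal combinatorial content of the proof.
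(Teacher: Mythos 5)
Your approach is correct but genuinely different from the paper's, which is worth comparing. The paper first observes that $SW_{\nu}(\mathbf{L}_i)$ inherits a $\bZ_+$-grading from $V^{\underline{\otimes}\nu}$ whose bottom degree piece is $S^{\lambda^{(i+1)}}U$, uses the fact that $\mathfrak{u}_{\mathfrak{p}}^{+}$ acts by degree $-1$ to deduce that this bottom piece is killed by $\mathfrak{u}_{\mathfrak{p}}^{+}$, and so produces an explicit nonzero morphism $M_{i+1}\to SW_{\nu}(\mathbf{L}_i)$; the $\gl(U)$-character from Lemma \ref{lem:gl_u_decomp_images_SW} then identifies its image as $L_{i+1}$ and pins down the cokernel. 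Your argument skips the construction of the module map entirely and works directly with $\gl(U)$-characters: you compute $[L_j]_{\gl(U)}$ by descending induction from $L_{k_\lambda-1}=M_{k_\lambda-1}$ using the short exact sequences $0 \to L_{j+1}\to M_j\to L_j\to 0$, and then reconcile this against Lemma \ref{lem:gl_u_decomp_images_SW} via the combinatorial identity $\mathcal{I}^{+}_{\lambda^{(i)}}\cap\mathcal{I}^{+}_{\lambda^{(i+1)}}=\mathcal{I}^{+}_{\lambda^{(i+1)}}\setminus\mathcal{I}^{+}_{\lambda^{(i+2)}}$. I checked this identity against Lemma \ref{lem:nu_classes_struct}: with $\mu\supset\lambda^{(i+1)}$, $\mu/\lambda^{(i+1)}$ a horizontal strip, membership in $\mathcal{I}^+_{\lambda^{(i)}}$ is $\mu_{i+2}\leq\lambda^{(i)}_{i+1}$ while membership in $\mathcal{I}^+_{\lambda^{(i+2)}}$ is $\mu_{i+2}\geq\lambda^{(i+2)}_{i+2}=\lambda^{(i)}_{i+1}+1$, so the two conditions are indeed complementary. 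Note that the paper's proof in fact relies implicitly on the same combinatorial identity to conclude that $L_{i+1}\hookrightarrow SW_{\nu}(\mathbf{L}_i)$ is onto, so your version is arguably more transparent about where the work actually happens; what you lose is the module-map viewpoint, which the paper reuses several times in the subsequent propositions.

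There is one step to tighten: you assert that $SW_{\nu}(\mathbf{L}_i)$ lies in $\mathfrak{B}_{\lambda}$ and therefore has composition factors only among the $L_j$. This block membership is not established at this point in the paper, and is not obvious merely from the $\gl(U)$-restriction, since $\mathcal{I}^{+}_{\lambda^{(i)}}\cap\mathcal{I}^{+}_{\lambda^{(i+1)}}$ contains many Young diagrams $\mu$ outside the $\stackrel{\nu}{\sim}$-class of $\lambda$. The clean replacement is to observe that the $\gl(U)$-characters of all simples $L(\nu-\abs{\mu},\mu)$ in $\co^{\mathfrak{p}}_{\nu,V}$ are linearly independent (unitriangular against the partial order by $\abs{\mu}$, since each $L(\nu-\abs{\mu},\mu)\rvert_{\gl(U)}$ contains $S^{\mu}U$ as its unique bottom piece), so the $\gl(U)$-character of $SW_{\nu}(\mathbf{L}_i)$ already determines its Jordan--H\"older multiplicities, and once you know the character equals $[L_{i+1}]_{\gl(U)}$, the conclusion follows with no appeal to block membership. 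With that substitution, your proof is sound.
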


\begin{proof}
 Fix $i>0$.
By definition, $SW_{\nu}(\mathbf{L}_i):= \Hom_{Ind-\underline{Rep}^{ab}(S_{\nu})}(\mathbf{L}_i, V^{\underline{\otimes}  \nu})$ with $\gl(V)$-action on this space induced from the action of $\gl(V)$ on $V^{\underline{\otimes}  \nu}$.

By Lemma \ref{lem:gl_u_decomp_images_SW}, we have the following decomposition of $SW_{\nu}(\mathbf{L}_i)$ \InnaA{as a $\gl(U)$-module:}
$$SW_{\nu}(\mathbf{L}_i) \rvert_{\gl(U)} \cong \bigoplus_{\mu \in \mathcal{I}^{+}_{\lambda^{(i)}}\cap \mathcal{I}^{+}_{\lambda^{(i+1)}}} S^{\mu} U$$

If $i \geq k_{\lambda} -1$, then $\ell(\mu) > \dim V -1$ for any $\mu \in \mathcal{I}^{+}_{\lambda^{(i+1)}} \cap \mathcal{I}^{+}_{\lambda^{(i)}}$, so we get that $SW_{\nu}(\mathbf{L}_i) = \Hom_{Ind-\underline{Rep}^{ab}(S_{\nu})}(\mathbf{L}_i, V^{\underline{\otimes}  \nu}) = 0 = L_{i+1}$ and we are done.

Otherwise, notice that $\Hom_{Ind-\underline{Rep}^{ab}(S_{\nu})}(\mathbf{L}_i, V^{\underline{\otimes}  \nu})$ is a \InnaA{$\bZ_+$-}graded $\gl(U)$-module, with the grading inherited from $V^{\underline{\otimes}  \nu}$: $S^{\mu} U$ lies in grade $\abs{\mu}$. The minimal grade is thus $\abs{\lambda^{(i+1)}}$, and it consists of the $\gl(U)$-module $S^{\lambda^{(i+1)}} U$.

Recall that $\mathfrak{u}_{\mathfrak{p}}^+$ acts on the graded space $V^{\underline{\otimes}  \nu} \cong \bigoplus_{k \geq 0} (U^{\otimes k} \otimes \Delta_k)^{S_k}$ by operators of degree $-1$, therefore it acts by zero on the subspace $S^{\lambda^{(i+1)}} U$ of $\Hom_{Ind-\underline{Rep}^{ab}(S_{\nu})}(\mathbf{L}_i, V^{\underline{\otimes}  \nu})$.

So there must be a non-zero map $M_{i+1} \rightarrow SW_{\nu}(\mathbf{L}_i)$, and its image can be either $M_{i+1}$ itself or $L_{i+1}$. From the decomposition of $SW_{\nu}(\mathbf{L}_i)$ as a $\gl(U)$-module, we see that the image is $L_{i+1}$, and the induced map $L_{i+1} \rightarrow SW_{\nu}(\mathbf{L}_i)$ is an isomorphism.
\end{proof}

The following lemma will be useful to us later:

\begin{lemma}\label{lem:maps_L_0}
 $$\Hom_{Ind-\underline{Rep}^{ab}(S_{\nu})}(\mathbf{L}_0, SW^*_{\nu}(L_0)) = 0$$
\end{lemma}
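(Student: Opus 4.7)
The plan is to pass through the adjunction between $SW^{ind}_{\nu}$ and $SW^{*,ind}_{\nu}$ (Notation \ref{notn:eta_eps_units}) to reduce the claim to
\[
\Hom_{\co^{\mathfrak{p}}_{\nu, V}}(L_0,\, SW_{\nu}(\mathbf{L}_0)) = 0,
\]
and then to argue by contradiction: suppose $\psi \colon L_0 \to SW_{\nu}(\mathbf{L}_0)$ is a non-zero $\gl(V)$-homomorphism. Since $L_0$ is simple, $\psi$ is injective.

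Next, I compare $\gl(U)$-isotypic decompositions. Lemma \ref{lem:gl_u_decomp_images_SW} gives
\[
SW_{\nu}(\mathbf{L}_0)\rvert_{\gl(U)} \;\cong\; \bigoplus_{\mu \in \mathcal{I}^{+}_{\lambda^{(0)}}} S^{\mu} U,
\]
with $S^{\mu} U$ sitting in grade $|\mu|$ of $SW_{\nu}(\mathbf{L}_0) = \bigoplus_k \Hom_{\Dab}(\mathbf{L}_0, (U^{\otimes k} \otimes \Delta_k)^{S_k})$, while the classical branching rule applied to $L_0 \cong S^{\tilde{\lambda^{(0)}}(\nu)} V$ produces $L_0\rvert_{\gl(U)} \cong \bigoplus_{\mu \in \mathcal{I}^{+}_{\lambda^{(0)}} \setminus \mathcal{I}^{+}_{\lambda^{(1)}}} S^{\mu} U$; both decompositions are multiplicity free. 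The top grade of $L_0$ is $\nu$, attained by the summand $S^{\tilde{\lambda^{(0)}}(\nu)} U$, and $\gl(U)$-equivariance together with injectivity force $\psi$ to identify this summand with the unique $S^{\tilde{\lambda^{(0)}}(\nu)} U$ inside $SW_{\nu}(\mathbf{L}_0)$ (also sitting in grade $\nu$). Because the operator $F_u$ (for $u \in U$) raises grade by one and $L_0$ has no components in grade $> \nu$, stability of the submodule $\psi(L_0)$ under $F_u$ forces $F_u$ to vanish on this $S^{\tilde{\lambda^{(0)}}(\nu)} U$-summand of $SW_{\nu}(\mathbf{L}_0)$.

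The main step, and the hardest one, is to show the opposite: $F_u$ acts non-trivially on this summand. I will pick a non-zero $\phi \in \Hom_{\Dab}\bigl(\mathbf{L}_0,\, (U^{\otimes \nu} \otimes \Delta_{\nu})^{S_{\nu}}\bigr)$ representing it. Lemma \ref{lem:hom_X_tau_Delta_k} (applied with $\mu = \lambda^{(0)} \in \mathcal{I}^{+}_{\lambda^{(0)}}$) guarantees that $\mathbf{L}_0 = X_{\lambda^{(0)}}$ is a direct summand of $\Delta_{|\lambda^{(0)}|}$, and hence of $U^{\otimes |\lambda^{(0)}|} \otimes \Delta_{|\lambda^{(0)}|}$; fix a split surjection $p \colon U^{\otimes |\lambda^{(0)}|} \otimes \Delta_{|\lambda^{(0)}|} \twoheadrightarrow \mathbf{L}_0$. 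Composing $\phi$ with $p$ and with the inclusion of the $S_{\nu}$-invariants yields a non-zero morphism $\bar{\phi} \colon U^{\otimes |\lambda^{(0)}|} \otimes \Delta_{|\lambda^{(0)}|} \to U^{\otimes \nu} \otimes \Delta_{\nu}$. Since $|\lambda^{(0)}| \leq \nu$ (immediate from the non-triviality condition $|\lambda^{(0)}| + \lambda^{(0)}_1 \leq \nu$), Lemma \ref{lem:tens_power_weak_torsion_free} gives $F_u \circ \bar{\phi} \neq 0$ for every non-zero $u \in U$. The compatibility of $F_u$ with the $S_{k+1}$-invariant embedding, combined with the surjectivity of $p$, then transfers this to $F_u \circ \phi \neq 0$ in $(U^{\otimes \nu+1} \otimes \Delta_{\nu+1})^{S_{\nu+1}}$, contradicting the vanishing established above and completing the argument. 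The one point demanding care is the compatibility of the two incarnations of $F_u$ (acting on $S_{k+1}$-invariants versus on $U^{\otimes k+1} \otimes \Delta_{k+1}$), but this is immediate from the defining formula $F_u = \tfrac{1}{k+1}\sum_l u^{(l)} \otimes res_l^*$, which is already $S_{k+1}$-invariant.
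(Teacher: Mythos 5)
Your overall strategy—reduce via the adjunction to $\Hom_{\co^{\mathfrak{p}}_{\nu,V}}(L_0, SW_{\nu}(\mathbf{L}_0)) = 0$, assume a non-zero (hence injective) $\psi$, locate a grade of $\psi(L_0)$ that $F_u$ must annihilate, and then use Lemma \ref{lem:tens_power_weak_torsion_free} to contradict this—is sound in spirit, and it genuinely differs from the paper's own proof: the paper stays on the other side of the adjunction (in $\Hom(\mathbf{L}_0 \otimes L_0, V^{\underline{\otimes}\nu})$), isolates the \emph{bottom} grade $S^{\lambda^{(0)}} U$ of $L_0$, and iterates $F_u^N$ using nilpotency plus Lemma \ref{lem:tens_power_weak_torsion_free} applied repeatedly, rather than a single application at the top grade. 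Your route is arguably cleaner when it applies, since it avoids the iteration.

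However, there is a genuine gap. You assert that the top grade of $L_0\rvert_{\gl(U)}$ is $\nu$, carried by $S^{\tilde{\lambda^{(0)}}(\nu)} U$, and you then pick $\phi$ in grade $\nu$. This fails when $\ell(\lambda^{(0)}) = \dim V - 1$: in that case $\tilde{\lambda^{(0)}}(\nu)$ has $\ell(\lambda^{(0)}) + 1 = \dim V > \dim U$ rows, so $S^{\tilde{\lambda^{(0)}}(\nu)} U = 0$. The $\nu$-grade of $\psi(L_0)$ is then zero, and no non-zero $\phi \in \psi(L_0)$ exists in $\Hom_{\Dab}(\mathbf{L}_0, (U^{\otimes \nu} \otimes \Delta_{\nu})^{S_{\nu}})$, so there is nothing to feed into Lemma \ref{lem:tens_power_weak_torsion_free}. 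Note this is exactly the case $L_0 \neq 0$ with $\lambda^{(0)}$ of maximal allowed length, which is certainly realizable (e.g.\ $\lambda^{(0)} = (1^{\dim V - 1})$), so it is not vacuous. The repair is to replace ``grade $\nu$'' by the \emph{actual} top grade $d$ of $\psi(L_0)$ and pick $\phi$ there; one still has $d \geq |\lambda^{(0)}|$, so Lemma \ref{lem:tens_power_weak_torsion_free} (with $l = |\lambda^{(0)}|$, $k = d$) applies. The paper sidesteps the issue entirely by working with the bottom grade $S^{\lambda^{(0)}} U$, which is non-zero whenever $L_0 \neq 0$.

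Two smaller points of precision. First, your ``split surjection $p: U^{\otimes l} \otimes \Delta_l \twoheadrightarrow \mathbf{L}_0$'' needs to be read as a morphism in $\underline{Rep}(S_{\nu})$ (tensoring with a vector space is just a finite direct sum), not as a $\gl(U)$-equivariant map, otherwise it would have to be zero on weight grounds; the paper instead splits off $\mathbf{L}_0 \otimes S^{\lambda^{(0)}} U$ inside $\Delta_l \otimes U^{\otimes l}$ equivariantly, which is cleaner. Second, your compatibility remark between the two incarnations of $F_u$ is correct but should cite the well-definedness argument (Appendix \ref{appssec:complex_ten_power_action_well_def}), which is precisely what shows $F_u$ restricted to $S_k$-invariants lands in $S_{k+1}$-invariants.
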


\begin{proof}
Recall that by definition of the functor $ SW^*_{\nu}$,
 $$\Hom_{Ind-\underline{Rep}^{ab}(S_{\nu})}(\mathbf{L}_0, SW^*_{\nu}(L_0)) = \Hom_{Ind-(\underline{Rep}^{ab}(S_{\nu}) \boxtimes \co^{\mathfrak{p}}_{\nu, V})}(\mathbf{L}_0 \otimes L_0, V^{\underline{\otimes}  \nu})$$
 Recall also that the space $\mathfrak{u}_{\mathfrak{p}}^{-}$ acts on $\mathbf{L}_0 \otimes L_0$ by nilpotent operators, since $ L_0$ is finite-dimensional and $\bZ_+$-graded, and each non-zero element of $\mathfrak{u}_{\mathfrak{p}}^{-}$ acts by operator of degree $1$.
 Now let $$\phi \in \Hom_{Ind-(\underline{Rep}^{ab}(S_{\nu}) \boxtimes \co^{\mathfrak{p}}_{\nu, V})}(\mathbf{L}_0 \otimes L_0, V^{\underline{\otimes}  \nu})$$
 The map $\phi$ is zero iff $\phi\rvert_{\mathbf{L}_0 \otimes S^{\lambda^{(0)}} U}=0$. 
 
 \InnaA{Fix $k \in \bZ_+$ so that we have an inclusion of $\underline{Rep}^{ab}(S_{\nu})$-objects: $$\phi(\mathbf{L}_0 \otimes v) \subset (\Del_k \otimes U^{\otimes k} )^{S_k}$$ where $v$ is the highest weight vector in $S^{\lambda^{(0)}} U$. Then we automatically get an inclusion of $\underline{Rep}^{ab}(S_{\nu})$-objects} $$\phi(\mathbf{L}_0 \otimes S^{\lambda^{(0)}} U) \subset (\Del_k \otimes U^{\otimes k} )^{S_k}$$

 Let $l:=\abs{{\lambda^{(0)}}}$.
 Since $\mathbf{L}_0 = X_{\lambda^{(0)}}$, it is a summand of $\Delta_i$ iff $i \geq l$, so we immediately see that $k \geq l$.

 Now, $\mathbf{L}_0 \otimes S^{\lambda^{(0)}} U$ is a direct summand of $ \Del_l \otimes U^{\otimes l}$, so one can easily find $\psi:  \Del_l \otimes U^{\otimes l} \rightarrow \Del_k \otimes U^{\otimes k} $ such that $\phi(\mathbf{L}_0 \otimes S^{\lambda^{(0)}} U) = \Im(\psi)$.

 As we said before, \InnaA{$\mathfrak{u}_{\mathfrak{p}}^{-}$ acts on $\mathbf{L}_0 \otimes L_0$ by nilpotent operators, so} for any $u \in U \cong \mathfrak{u}_{\mathfrak{p}}^{-}$, $(F_u)^N \circ \phi = 0$ for $N>>0$.

 On the other hand, we know that $(F_u)^N \circ \psi \neq 0$ if $\psi \neq 0$ (by applying Lemma \ref{lem:tens_power_weak_torsion_free} iteratively to $\psi, F_u\circ \psi, ...,(F_u)^{N-1} \circ \psi$).

We conclude that
$$\Hom_{Ind-(\underline{Rep}^{ab}(S_{\nu}) \boxtimes \co^{\mathfrak{p}}_{\nu, V})}(\mathbf{L}_0 \otimes L_0, V^{\underline{\otimes}  \nu}) = 0$$ as needed.
\end{proof}

We now use Lemma \ref{lem:maps_L_0} to compute the images of the ``exceptional'' objects in our blocks $\mathcal{B}_{\lambda}$, $\mathfrak{B}_{\lambda}$ under the functors $SW_{\nu}, SW^*_{\nu}$, respectively.

\begin{lemma}\label{lem:images_exceptionals}
 \mbox{}
\begin{enumerate}[label=(\alph*)]
 \item $SW_{\nu}(\mathbf{L}_{0}=\mathbf{M}_{0} = X_{\lambda^{(0)}}) \cong M_0$.
\item $SW^*_{\nu}(L_0) = 0$.

\end{enumerate}
\end{lemma}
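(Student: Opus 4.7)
The plan is to prove (a) using Lemma \ref{lem:maps_L_0}, then use (a) to prove (b).

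For (a), we already have $SW_\nu(\mathbf{L}_0)\rvert_{\gl(U)} \cong M_0\rvert_{\gl(U)}$ from Lemma \ref{lem:gl_u_decomp_images_SW}. The key step is to exhibit a nonzero $\gl(V)$-equivariant map $\Phi \colon M_0 \to SW_\nu(\mathbf{L}_0)$: the lowest $\bZ_+$-grade piece $S^{\lambda^{(0)}} U$ of $SW_\nu(\mathbf{L}_0)$ is killed by $\mathfrak{u}_\mathfrak{p}^+$ (which acts in degree $-1$ on $V^{\underline{\otimes}\nu}$), so Frobenius reciprocity extends the inclusion $S^{\lambda^{(0)}} U \hookrightarrow SW_\nu(\mathbf{L}_0)$ to a map $\Phi$ out of the parabolic Verma module $M_0$. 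Since $L_1$ is the unique maximal proper submodule of $M_0$ (Corollary \ref{cor:parab_Verma_ses}), the image of $\Phi$ is either all of $M_0$ or the simple quotient $L_0 = M_0/L_1$. The second case would yield an embedding $L_0 \hookrightarrow SW_\nu(\mathbf{L}_0)$, which by the contravariant adjunction between $SW_\nu$ and $SW^*_\nu$ would give $\Hom(\mathbf{L}_0, SW^*_\nu(L_0)) \ne 0$, contradicting Lemma \ref{lem:maps_L_0}. Hence $\Phi$ is injective, and matching $\gl(U)$-decompositions forces it to be an isomorphism. The use of Lemma \ref{lem:maps_L_0} is the essential trick of the argument: without it one cannot distinguish between $SW_\nu(\mathbf{L}_0) \cong M_0$ and $SW_\nu(\mathbf{L}_0) \cong L_0$.

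For (b), it suffices to show $\Hom_{Ind-\Dab}(\mathbf{L}_\mu, SW^*_\nu(L_0)) = 0$ for every simple $\mathbf{L}_\mu \in \Dab$ (indexed by an arbitrary Young diagram $\mu$): any nonzero object of $Ind-\Dab$ contains a compact subobject from $\Dab$, which has finite length by Remark \ref{rmrk:ab_env_preTannakian} and hence a simple subobject. By adjunction this vanishing is equivalent to $\Hom_{\gl(V)}(L_0, SW_\nu(\mathbf{L}_\mu)) = 0$ for every $\mu$, which I verify case by case. When $\mu$ lies in a trivial $\stackrel{\nu}{\sim}$-class, Proposition \ref{prop:image_SW_ss_case} identifies $SW_\nu(\mathbf{L}_\mu) = SW_\nu(X_\mu)$ with a simple parabolic Verma module in $\mu$'s block of $\co^\mathfrak{p}_{\nu, V}$ (or zero), disjoint from $L_0$'s block. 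When $\mu = \lambda^{(i)}$ with $i \geq 1$, Proposition \ref{prop:image_SW_simples} gives $SW_\nu(\mathbf{L}_i) \cong L_{i+1} \ne L_0$. When $\mu = \lambda^{(0)}$, part (a) gives $SW_\nu(\mathbf{L}_0) \cong M_0$, whose socle is $L_1 \ne L_0$ by Corollary \ref{cor:parab_Verma_ses}. When $\mu$ belongs to a different non-trivial $\stackrel{\nu}{\sim}$-class, $SW_\nu(\mathbf{L}_\mu)$ lies in $\mu$'s block (as shown by the analogues of Lemma \ref{lem:gl_u_decomp_images_SW} applied to $\mu$'s class, whose $\gl(U)$-support is constrained to diagrams related to $\mu$), again disjoint from $L_0$'s block. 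Once (a) is in hand, this case analysis is routine bookkeeping.
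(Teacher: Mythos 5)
Your argument for (a) matches the paper's proof exactly: produce a nonzero map $M_0 \to SW_\nu(\mathbf{L}_0)$ from the $\mathfrak{u}_\mathfrak{p}^+$-invariance of the bottom grade, and rule out factoring through $L_0$ by the adjunction together with Lemma~\ref{lem:maps_L_0}. For (b) you also match the paper's reduction-to-simples-via-adjunction argument, differing only in the sub-case $\mu=\lambda^{(0)}$, where you invoke part (a) to see $\operatorname{soc}(M_0)=L_1\ne L_0$ instead of citing Lemma~\ref{lem:maps_L_0} directly; both routes are valid and essentially equivalent.
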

\begin{proof}
 \begin{enumerate}[leftmargin=*, label=(\alph*)]
  \item \InnaA{We will use an agrument similar to the one in the proof of Proposition \ref{prop:image_SW_simples}.} Recall that from Lemma \ref{lem:gl_u_decomp_images_SW}, we have the following isomorphism of \InnaA{$\bZ_+$-graded} $\gl(U)$-modules: $$SW_{\nu}(\mathbf{L}_{0}=\mathbf{M}_{0}=X_{\lambda^{(0)}}) \cong \bigoplus_{\mu \in \mathcal{I}^{+}_{\lambda^{(0)}}} S^{\mu} U$$ \InnaA{($ S^{\mu} U$ lies in degree $\abs{\mu}$).
 Recall also that $\mathfrak{u}_{\mathfrak{p}}^{+}$ acts on the right hand side by operators of degree $-1$.}
 
  This implies that there is a non-zero map of $\gl(V)$-modules $$\phi: M_0=M_{\mathfrak{p}}(\nu-\abs{\lambda^{(0)}}, \lambda^{(0)}) \rightarrow SW_{\nu}(\mathbf{L}_{0})$$ \InnaA{From the $\gl(U)$-decomposition of $M_0$ (c.f. Lemma \ref{lem:gl_u_struct_o_cat}), if this map $\phi$ is injective, then it is bijective as well.}

  So we only need to check that $\phi$ is injective. Indeed, assume $\phi$ is not injective. Since $\phi$ is not zero, $\phi$ must factor through $L_0$, so we have:
  $$\dim \Hom_{\co^{\mathfrak{p}}_{\nu}}(L_0, SW_{\nu}(\mathbf{L}_{0})) \geq 1$$
  But from adjointness of $SW_{\nu}, SW^*_{\nu}$, together with Lemma \ref{lem:maps_L_0}, we have:
  $$ \Hom_{\co^{\mathfrak{p}}_{\nu}}(L_0, SW_{\nu}(\mathbf{L}_{0})) \cong \Hom_{Ind-\underline{Rep}^{ab}(S_{\nu})}(\mathbf{L}_0, SW^*_{\nu}(L_0)) = 0$$

  We obtained a contradiction, which means that $\phi$ is injective.

  \item Recall that since $SW_{\nu}, SW^*_{\nu}$ are adjoint, for any Young diagram $\mu$ we have
  $$\Hom_{Ind-\underline{Rep}^{ab}(S_{\nu})}(\mathbf{L}({\mu}), SW^*_{\nu}(L_0)) \cong \Hom_{\co^{\mathfrak{p}}_{\nu}}(L_0, SW_{\nu}(\mathbf{L}({\mu})))$$
  The latter is zero by Propositions \ref{prop:image_SW_ss_case}, \ref{prop:image_SW_simples}, and Lemma \ref{lem:maps_L_0}. Hence $SW^*_{\nu}(L_0)=0$.
%
%

  \end{enumerate}

\end{proof}

We can now prove the following proposition:

\begin{proposition}\label{prop:images_SW}
 \mbox{}
\begin{enumerate}[label=(\alph*)]
 \item $SW_{\nu}(\mathbf{M}_i) \cong M_i$ whenever $i \geq 0$.
 \item $SW_{\nu}(\InnaB{\mathbf{M}^*_i}) \cong M^{\vee}_i$ whenever $i \geq 2$.
 \item $SW_{\nu}(\mathbf{P}_{i}) \cong P_{i+1}$ whenever $i \geq 0$, and $i < k_{\lambda} -1$ or $i \geq k_{\lambda}$ (recall that in the latter case $P_{i+1}=0$); $SW_{\nu}(\mathbf{P}_{k_{\lambda}-1}) \cong L_{k_{\lambda}-1}$.
 \item $SW_{\nu}(\InnaB{\mathbf{M}^*_1}) \cong Ker(P_1 \twoheadrightarrow L_1)$.
\end{enumerate}
\end{proposition}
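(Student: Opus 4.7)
The plan is to prove the four statements in the order (a), (c), (b), (d), using two inputs throughout: the $\gl(U)$-character identities of Lemma~\ref{lem:gl_u_decomp_images_SW}, which match the $\gl(U)$-characters of $SW_{\nu}(\mathbf{M}_i)$, $SW_{\nu}(\InnaB{\mathbf{M}^*_i})$ and $SW_{\nu}(\mathbf{P}_i)$ with those of $M_i$, $M^{\vee}_i$ and $P_{i+1}$; and the short exact sequences in $\Dab$ of Proposition~\ref{prop:obj_ab_env}, to which we apply the contravariant left-exact functor $SW_{\nu}$ and compare with the short exact sequences of Proposition~\ref{prop:proj_parab_cat_O_struct}(c).

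For part~(a) with $i \geq 1$, the lowest-degree summand $S^{\lambda^{(i)}} U$ of $SW_{\nu}(\mathbf{M}_i)\rvert_{\gl(U)}$ is annihilated by $\mathfrak{u}_{\mathfrak{p}}^{+}$ (which acts by operators of degree $-1$ on $V^{\underline{\otimes}\nu}$), so Frobenius reciprocity yields a non-zero map $\phi_i\colon M_i \to SW_{\nu}(\mathbf{M}_i)$. Its image is a non-zero quotient of $M_i$, hence either $L_i$ or $M_i$. Applying $SW_{\nu}$ to the surjection $\mathbf{M}_i \twoheadrightarrow \mathbf{L}_i$ produces an inclusion $L_{i+1} = SW_{\nu}(\mathbf{L}_i) \hookrightarrow SW_{\nu}(\mathbf{M}_i)$, and combining this with $\operatorname{im}\phi_i = L_i$ together with character-matching would force $SW_{\nu}(\mathbf{M}_i) \cong L_i \oplus L_{i+1}$. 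To exclude this split case, we compute $\Hom_{\co^{\mathfrak{p}}_{\nu}}(L_i, SW_{\nu}(\mathbf{M}_i)) \cong \Hom_{Ind-\Dab}(\mathbf{M}_i, SW^*_{\nu}(L_i))$ via adjunction and show the latter vanishes, using a nilpotency argument for $\mathfrak{u}_{\mathfrak{p}}^{-}$ together with Lemma~\ref{lem:tens_power_weak_torsion_free}, analogous to the proof of Lemma~\ref{lem:maps_L_0}. Hence $\phi_i$ is an isomorphism. The case $i = 0$ is Lemma~\ref{lem:images_exceptionals}(a).

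For part~(c), applying $SW_{\nu}$ to $0 \to \mathbf{M}_{i+1} \to \mathbf{P}_i \to \mathbf{M}_i \to 0$ and using~(a) yields an exact sequence $0 \to M_i \to SW_{\nu}(\mathbf{P}_i) \to M_{i+1}$; character-matching via Lemma~\ref{lem:gl_u_decomp_images_SW} shows the last map is surjective, giving a short exact sequence $0 \to M_i \to SW_{\nu}(\mathbf{P}_i) \to M_{i+1} \to 0$. Projectivity of $P_{i+1}$ lifts the surjection $P_{i+1} \twoheadrightarrow M_{i+1}$ to a map $\Psi\colon P_{i+1} \to SW_{\nu}(\mathbf{P}_i)$; the induced map $M_i \to M_i$ on kernels is either an isomorphism (and then $\Psi$ is an isomorphism by the five-lemma) or zero (in which case the extension splits as $M_i \oplus M_{i+1}$). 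The split case is excluded by an analogous vanishing argument to the one in part~(a), applied to $\mathbf{P}_i$ using the unit map $\epsilon_{\mathbf{P}_i}$ of Notation~\ref{notn:eta_eps_units} and indecomposability of $\mathbf{P}_i = X_{\lambda^{(i+1)}}$. The boundary cases are direct: $i = k_{\lambda} - 1$ forces $SW_{\nu}(\mathbf{P}_{k_{\lambda}-1}) \cong M_{k_{\lambda} - 1} = L_{k_{\lambda} - 1}$ (the last equality since $L_{k_{\lambda}} = 0$ makes $M_{k_{\lambda}-1}$ simple), and $i \geq k_{\lambda}$ gives vanishing by Lemma~\ref{lem:gl_u_decomp_images_SW}.

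Parts~(b) and~(d) both follow by applying $SW_{\nu}$ to $0 \to \InnaB{\mathbf{M}^*_i} \to \mathbf{P}_i \to \InnaB{\mathbf{M}^*_{i+1}} \to 0$. For~(b) with $i \geq 2$, descending induction on $i$ (starting from where $SW_{\nu}(\InnaB{\mathbf{M}^*_j}) = 0$ for $j \geq k_{\lambda}$), together with~(c) and matching against $0 \to M^{\vee}_{i+1} \to P_{i+1} \to M^{\vee}_i \to 0$ from Proposition~\ref{prop:proj_parab_cat_O_struct}(c), identifies $SW_{\nu}(\InnaB{\mathbf{M}^*_i}) \cong M^{\vee}_i$. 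For~(d), the case $i = 0$ of that sequence gives, using~(c) and Lemma~\ref{lem:images_exceptionals}(a), the left exact sequence $0 \to SW_{\nu}(\InnaB{\mathbf{M}^*_1}) \to P_1 \to M_0$; since $\operatorname{top}(P_1) = L_1 \neq L_0 = \operatorname{top}(M_0)$, the image of $P_1 \to M_0$ is contained in $\operatorname{soc}(M_0) = L_1$. We verify this image equals $L_1$ by exhibiting a non-zero extension of the inclusion $\mathbf{M}_0 \hookrightarrow V^{\underline{\otimes}\nu}$ to a map $\mathbf{P}_0 \to V^{\underline{\otimes}\nu}$, using that $\mathbf{P}_0 = X_{\lambda^{(1)}}$ appears as a direct summand of $V^{\underline{\otimes}\nu}$ with non-zero multiplicity (by Proposition~\ref{prop:comp_tens_power_F_n}) and that the socle inclusion $X_{\lambda^{(0)}} \hookrightarrow X_{\lambda^{(1)}}$ inside this summand provides the required non-zero restriction to $\mathbf{M}_0$. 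The main obstacle is ruling out the split extensions in~(a) and~(c): the $\gl(U)$-character count is insufficient, and we must exploit the explicit action formulas of Definition~\ref{def:complex_ten_power_splitting}, the nilpotency of $\mathfrak{u}_{\mathfrak{p}}^{-}$ on finite-dimensional submodules, and the indecomposability of $\mathbf{M}_i, \mathbf{P}_i$ via the adjunction with $SW^*_{\nu}$.
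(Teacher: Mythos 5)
There is a genuine gap in your proposed exclusion of the split case in part~(a). You want to show $\Hom_{Ind\text{-}\Dab}(\mathbf{M}_i, SW^*_{\nu}(L_i))=0$ ``using a nilpotency argument for $\mathfrak{u}_{\mathfrak{p}}^{-}$\dots analogous to the proof of Lemma~\ref{lem:maps_L_0}.'' But that proof hinges on $L_0$ being \emph{finite-dimensional}: only then does $\mathfrak{u}_{\mathfrak{p}}^{-}$ act nilpotently on $\mathbf{L}_0 \otimes L_0$, which lets one conclude $(F_u)^N \circ \phi = 0$ for large $N$ and derive a contradiction with Lemma~\ref{lem:tens_power_weak_torsion_free}. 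For $i\ge 1$ the simple $\gl(V)$-module $L_i$ is infinite-dimensional (the weight $(\nu-\abs{\lambda^{(i)}},\lambda^{(i)})$ is no longer dominant integral), so $\mathfrak{u}_{\mathfrak{p}}^{-}$ does \emph{not} act nilpotently on $\mathbf{M}_i\otimes L_i$, and there is no reason for $(F_u)^N\circ\phi$ to vanish. Indeed $SW^*_{\nu}(L_i)\neq 0$ when $i\geq 1$ (one has $\Hom(\mathbf{L}_{i-1},SW^*_{\nu}(L_i))\cong\Hom(L_i,SW_{\nu}(\mathbf{L}_{i-1}))\cong\Hom(L_i,L_i)\neq 0$ by Proposition~\ref{prop:image_SW_simples}), so the crude vanishing of Lemma~\ref{lem:images_exceptionals}(b) has no analogue either; one must show that the non-zero maps out of the socle $\mathbf{L}_{i-1}$ do not extend to $\mathbf{M}_i$, which is precisely what the paper achieves with the $\epsilon$-diagram and indecomposability of $\mathbf{M}_i$. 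Since parts~(c), (b) and~(d) all feed off~(a), the gap propagates.

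Setting that aside, two of your other ideas are genuinely nicer than the paper's: for~(b), inducing down the exact sequences $0\to\mathbf{M}^*_i\to\mathbf{P}_i\to\mathbf{M}^*_{i+1}\to 0$ and matching against $0\to M^{\vee}_{i+1}\to P_{i+1}\to M^{\vee}_i\to 0$ (with uniqueness of the copy of $M^{\vee}_{i+1}$ inside $P_{i+1}$ coming from the simple socle) avoids the duality-plus-$\epsilon$ argument the paper uses; and for~(d), exhibiting a split inclusion $\mathbf{P}_0=X_{\lambda^{(1)}}\hookrightarrow V^{\underline{\otimes}\nu}$ that is non-zero on the socle $\mathbf{L}_0$ is a pleasant direct way to see that the restriction map $P_1\to M_0$ is non-zero (though the justification for $V_{\lambda^{(1)}}\neq 0$ should come from the proof of Lemma~\ref{lem:gl_u_decomp_images_SW} rather than Proposition~\ref{prop:comp_tens_power_F_n}). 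To repair the argument you should replace the proposed nilpotency step in part~(a) with the paper's mechanism: apply $SW^*_{\nu}\circ SW_{\nu}$ to the short exact sequence $0\to\mathbf{L}_{i-1}\to\mathbf{M}_i\to\mathbf{L}_i\to 0$, use the naturality of $\epsilon$ and the unit--counit identity of Lemma~\ref{lem:cond_on_units}, and exploit the indecomposability of $\mathbf{M}_i$ to force the image of $\epsilon_{\mathbf{M}_i}$ into the summand $SW^*_{\nu}(L_i)$, contradicting $SW^*_{\nu}(SW_{\nu}(\psi))\circ\epsilon_{\mathbf{M}_i}=\epsilon_{\mathbf{L}_i}\circ\psi\neq 0$.
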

\begin{proof}
\begin{enumerate}[leftmargin=*, label=(\alph*)]
 \item For $i=0$, we have already proved (in Lemma \ref{lem:images_exceptionals}) that $SW_{\nu}(\mathbf{M}_0) \cong M_0$. We now fix $i \in \bZ_{>0}$.

 Recall from Lemma \ref{lem:gl_u_decomp_images_SW} that for $i>0$, we have an isomorphism of $\gl(U)$-modules:
 $$SW_{\nu}(\mathbf{M}_{i})\rvert_{\gl(U)} \cong \bigoplus_{\mu \in  \mathcal{I}^{+}_{\lambda^{(i)}}} S^{\mu} U$$

 Similarly to the argument in the proofs of Propositions \ref{prop:image_SW_ss_case} and \ref{prop:image_SW_simples}, we have a $\bZ_+$-grading on the space $SW_{\nu}(\mathbf{M}_{i})$, which is inherited from the grading on $V^{\underline{\otimes}  \nu}$. Grade $j$ of $SW_{\nu}(\mathbf{M}_{i})$ is then $\bigoplus_{\mu \in  \mathcal{I}^{+}_{\lambda^{(i)}}, \abs{\mu}=j} S^{\mu} U$.
 The minimal grade is thus $\abs{\lambda^{(i)}}$, and it consists of the $\gl(U)$-module $S^{\lambda^{(i)}} U$.

 Recall that by definition of $V^{\underline{\otimes}  \nu}$, $\mathfrak{u}^{\mathfrak{p}}_+$ acts on the graded space $V^{\underline{\otimes}  \nu} \cong \bigoplus_{k \geq 0} (U^{\otimes k} \otimes \Delta_k)^{S_k}$ by operators of degree $-1$, therefore, it acts by zero on the subspace $S^{\lambda^{(i)}} U$ of $SW_{\nu}(\mathbf{M}_{i})=\Hom_{\underline{Rep}^{ab}(S_{\nu})}(\mathbf{M}_i, V^{\underline{\otimes}  \nu})$.

If $i \geq k_{\lambda} $, then $SW_{\nu}(\mathbf{M}_{i}) =0 =M_i$, and we are done.

Otherwise, $M_i \neq 0$, and there must be a non-zero map $M_{i} \rightarrow SW_{\nu}(\mathbf{M}_i)$; its image can be either $M_{i}$ itself or $L_{i}$. In the first case, we get an isomorphism $M_i \cong SW_{\nu}(\mathbf{M}_i)$ (from the $\gl(U)$-decomposition of both), and again, we are done.

We will now assume that we are in the second case, and there is a non-zero map $L_i \rightarrow SW_{\nu}(\mathbf{M}_i)$. Then the $\gl(U)$-decomposition of the quotient $\quotient{SW_{\nu}(\mathbf{M}_i)}{L_i}$ means that this module is congruent to $L_{i+1}$. Notice that at this point we can assume that $i < k_{\lambda}-1 $ (otherwise $L_{i+1} =0$, so $M_i =L_i\cong SW_{\nu}(\mathbf{M}_i)$).
We will prove that under this assumption, we arrive to a contradiction.

Now, consider the short exact sequence $$0 \rightarrow \mathbf{L}_{i-1} \stackrel{\phi}{\longrightarrow} \mathbf{M}_i  \stackrel{\psi}{\longrightarrow} \mathbf{L}_i  \rightarrow 0$$
Since the contravariant functor $SW_{\nu}$ is left-exact, we have (using Proposition \ref{prop:image_SW_simples} and Lemma \ref{lem:images_exceptionals}, part (a))
$$0 \rightarrow SW_{\nu}(\mathbf{L}_{i})\cong L_{i+1} \stackrel{SW_{\nu}(\psi)}{\longrightarrow} SW_{\nu}(\mathbf{M}_i)  \stackrel{SW_{\nu}(\phi)}{\longrightarrow} SW_{\nu}(\mathbf{L}_{i-1})\cong L_i $$

 So $SW_{\nu}(\psi)$ is an insertion of a direct summand, $SW_{\nu}(\phi)$ is a projection onto a direct summand, and we get: $$SW_{\nu}(\mathbf{M}_i) \cong L_i \oplus L_{i+1}$$

 We now use the unit natural transformation $\epsilon$ described in Notation \ref{notn:eta_eps_units}. We have a commutative diagram:
$$\begin{CD}
 SW^*_{\nu}(SW_{\nu}(\mathbf{L}_{i-1})) @>SW^*_{\nu}(SW_{\nu}(\phi))>> SW^*_{\nu}(SW_{\nu}(\mathbf{M}_{i})) @>SW^*_{\nu}(SW_{\nu}(\psi))>> SW^*_{\nu}(SW_{\nu}(\mathbf{L}_{i})) \\
@A\epsilon_{\mathbf{L}_{i-1}}AA @A\epsilon_{\mathbf{M}_{i}}AA @A\epsilon_{\mathbf{L}_{i}}AA\\
 \mathbf{L}_{i-1} @>\phi\phantom{\text{.........................}}>> \mathbf{M}_i  @>\psi\phantom{\text{.........................}}>> \mathbf{L}_i                                                                                                                                                                                                          
\end{CD}$$
 which can be rewritten as
 $$\begin{CD}
 SW^*_{\nu}(L_{i}) @>SW^*_{\nu}(SW_{\nu}(\phi))>>SW^*_{\nu}(L_{i}) \oplus SW^*_{\nu}(L_{i+1}) @>SW^*_{\nu}(SW_{\nu}(\psi))>>  SW^*_{\nu}(L_{i+1}) \\
@A\epsilon_{\mathbf{L}_{i-1}}AA @A\epsilon_{\mathbf{M}_{i}}AA @A\epsilon_{\mathbf{L}_{i}}AA\\
 \mathbf{L}_{i-1} @>\phi\phantom{\text{.........................}}>> \mathbf{M}_i  @>\psi\phantom{\text{.........................}}>> \mathbf{L}_i                                                                                                                                                                                                          
\end{CD}$$
Since the contravariant functor $SW^*_{\nu}$ is additive, $SW^*_{\nu}(SW_{\nu}(\phi))$ is an insertion of a direct summand, and $SW^*_{\nu}(SW_{\nu}(\psi))$ is a projection onto a direct summand.

 Now, the relations in Lemma \ref{lem:cond_on_units} imply that $\epsilon_{\mathbf{L}_{i-1}},\epsilon_{\mathbf{M}_{i}}, \epsilon_{\mathbf{L}_{i}}$ are all non-zero as long as $SW_{\nu}(\mathbf{L}_{i-1}), SW_{\nu}(\mathbf{M}_{i}), SW_{\nu}(\mathbf{L}_{i}) $ are non-zero, which is guaranteed by the assumption $i < k_{\lambda}-1 $.

 This means that the image of $\mathbf{L}_{i-1}$ under $SW^*_{\nu}(SW_{\nu}(\phi)) \circ \epsilon_{\mathbf{L}_{i-1}}$ is $\mathbf{L}_{i-1}$, and it lies inside the direct summand $SW^*_{\nu}(L_{i})$ of $SW^*_{\nu}(SW_{\nu}(\mathbf{M}_{i}))$.
We then deduce that $\epsilon_{\mathbf{M}_{i}}$ is injective (since it is not zero on the socle \InnaA{$\mathbf{L}_{i-1}$} of $\mathbf{M}_{i}$), and that its image lies entirely inside the direct summand $SW^*_{\nu}(L_{i})$ of $SW^*_{\nu}(SW_{\nu}(\mathbf{M}_{i}))$ (since $\mathbf{M}_{i}$ is indecomposable).

But this clearly contradicts the right half of the above commutative diagram, since it means that $$SW^*_{\nu}(SW_{\nu}(\psi)) \circ \epsilon_{\mathbf{M}_{i}} =0$$ while we have already established that $\epsilon_{\mathbf{L}_{i}}\circ \psi \neq 0$.

\item The proof for $SW_{\nu}(\InnaB{\mathbf{M}^*_k})$ is very similar to the one given for $SW_{\nu}(\mathbf{M}_k)$.
 Now fix $i \geq 2$.

 Consider the short exact sequence $$0 \rightarrow \mathbf{L}_{i} \stackrel{\phi}{\longrightarrow} \InnaB{\mathbf{M}^*_i}  \stackrel{\psi}{\longrightarrow} \mathbf{L}_{i-1}  \rightarrow 0$$
Since the contravariant functor $SW_{\nu}$ is left-exact, we have (using Proposition \ref{prop:image_SW_simples})
$$0 \rightarrow SW_{\nu}(\mathbf{L}_{i-1})\cong L_{i} \stackrel{SW_{\nu}(\psi)}{\longrightarrow} SW_{\nu}(\InnaB{\mathbf{M}^*_i})  \stackrel{SW_{\nu}(\phi)}{\longrightarrow} SW_{\nu}(\mathbf{L}_{i})\cong L_{i+1}$$

Furthermore, Lemma \ref{lem:gl_u_decomp_images_SW} tells us that for $i>0$, we have an isomorphism of $\gl(U)$-modules:
 $$SW_{\nu}(\InnaB{\mathbf{M}^*_{i}})\rvert_{\gl(U)} \cong \bigoplus_{\mu \in  \mathcal{I}^{+}_{\lambda^{(i)}}} S^{\mu} U$$
This decomposition, together with the $\gl(U)$-decomposition of $L_i, L_{i+1}$, tell us that the above exact sequence can be completed to a short exact sequence
$$0 \rightarrow SW_{\nu}(\mathbf{L}_{i-1})\cong L_{i} \stackrel{SW_{\nu}(\psi)}{\longrightarrow} SW_{\nu}(\InnaB{\mathbf{M}^*_i})  \stackrel{SW_{\nu}(\phi)}{\longrightarrow} SW_{\nu}(\mathbf{L}_{i})\cong L_{i+1} \rightarrow 0$$

Applying the (exact) functor $(\cdot)^{\vee}: \co^{\mathfrak{p}}_{\nu} \rightarrow \left(\co^{\mathfrak{p}}_{\nu} \right)^{op}$ to the above exact sequence, we conclude that $SW_{\nu}(\InnaB{\mathbf{M}^*_i})^{\vee}$ is isomorphic to either $M_i$ or $L_i \oplus L_{i+1}$. This implies that $SW_{\nu}(\InnaB{\mathbf{M}^*_i})$ is isomorphic to either $M^{\vee}_i$ (which is what we want to show) or $L_i \oplus L_{i+1}$.

We will now assume that we are in the case $SW_{\nu}(\InnaB{\mathbf{M}^*_i}) \cong L_i \oplus L_{i+1}$. Furthermore, we will assume that $i < k_{\lambda}-1 $ (otherwise $L_{i+1} =0$, so $M^{\vee}_i =L_i \cong SW_{\nu}(\InnaB{\mathbf{M}^*_i})$).
We will prove that under this assumption, we arrive to a contradiction. Since we assumed that $i < k_{\lambda}-1 $, we have: $L_i, L_{i+1} \neq 0$, which means that $SW_{\nu}(\psi), SW_{\nu}(\phi) \neq 0$ are insertion of and projection onto direct summands, respectively.

We now construct the commutative diagram
$$\begin{CD}
 SW^*_{\nu}(SW_{\nu}(\mathbf{L}_{i})) @>SW^*_{\nu}(SW_{\nu}(\phi))>> SW^*_{\nu}(SW_{\nu}(\InnaB{\mathbf{M}^*_{i}})) @>SW^*_{\nu}(SW_{\nu}(\psi))>> SW^*_{\nu}(SW_{\nu}(\mathbf{L}_{i-1})) \\
@A\epsilon_{\mathbf{L}_{i}}AA @A\epsilon_{\InnaB{\mathbf{M}^*_{i}}}AA @A\epsilon_{\mathbf{L}_{i-1}}AA\\
 \mathbf{L}_i @>\phi\phantom{\text{.........................}}>> \InnaB{\mathbf{M}^*_i}  @>\psi\phantom{\text{.........................}}>> \mathbf{L}_{i-1}                                                                                                                                                                                                          
\end{CD}$$

which can be rewritten as
$$\begin{CD}
SW^*_{\nu}(L_{i+1}) @>SW^*_{\nu}(SW_{\nu}(\phi))>> SW^*_{\nu}(L_{i+1})\oplus SW^*_{\nu}(L_{i}) @>SW^*_{\nu}(SW_{\nu}(\psi))>> SW^*_{\nu}(L_{i}) \\
@A\epsilon_{\mathbf{L}_{i}}AA @A\epsilon_{\InnaB{\mathbf{M}^*_{i}}}AA @A\epsilon_{\mathbf{L}_{i-1}}AA\\
 \mathbf{L}_i @>\phi\phantom{\text{.........................}}>> \InnaB{\mathbf{M}^*_i}  @>\psi\phantom{\text{.........................}}>> \mathbf{L}_{i-1}                                                                                                                                                                                                          
\end{CD}$$
 Exactly the same arguments as in part (a) now apply (we use the fact that $\InnaB{\mathbf{M}^*_{i}}$ is indecomposable), and we get a contradiction.

 \item Let $i\geq 0$. Consider the exact sequence $$0 \rightarrow \mathbf{M}_{i+1} \stackrel{\phi}{\longrightarrow} \mathbf{P}_i  \stackrel{\psi}{\longrightarrow} \mathbf{M}_{i}  \rightarrow 0$$

 Since the contravariant functor $SW_{\nu}$ is left-exact, we get an exact sequence
 $$ 0 \rightarrow SW_{\nu}(\mathbf{M}_i)  \stackrel{SW_{\nu}(\psi)}{\longrightarrow} SW_{\nu}(\mathbf{P}_i) \stackrel{SW_{\nu}(\phi)}{\longrightarrow} SW_{\nu}(\mathbf{M}_{i+1})$$ 
 and in particular (see part (a)): $M_i \cong SW_{\nu}(\mathbf{M}_i) \hookrightarrow SW_{\nu}(\mathbf{P}_i)$. 
 
 If $i \geq k_{\lambda} -1$, then \InnaA{part (a) tells us that $SW_{\nu}(\mathbf{M}_{i+1}) =M_{i+1} =0$}. We conclude that $M_i \cong SW_{\nu}(\mathbf{M}_i)  \cong SW_{\nu}(\mathbf{P}_i)$. In particular, $SW_{\nu}(\mathbf{P}_i) = 0 $ if $i \geq k_{\lambda} $, and  $$SW_{\nu}(\mathbf{M}_{k_{\lambda}-1})  \cong SW_{\nu}(\mathbf{P}_{k_{\lambda}-1})\cong M_{k_{\lambda}-1}  \cong L_{k_{\lambda}-1} $$

 From now on, we will assume that $i < k_{\lambda} -1$, and thus $M_i, M_{i+1} \neq 0$.
 
 Now, $P_{i+1}$ is the injective hull of $M_i$, so there is a map $f:SW_{\nu}(\mathbf{P}_i) \rightarrow P_{i+1}$ such that the following diagram is commutative:

 $$\begin{CD}
    0 @>>> SW_{\nu}(\mathbf{M}_i) @>SW_{\nu}(\psi)>> SW_{\nu}(\mathbf{P}_i) \\
    @. @VVV @VfVV\\
    0 @>>> M_i @>>> P_{i+1}
   \end{CD}$$

From the $\gl(U)$-decomposition of $ SW_{\nu}(\mathbf{M}_i), SW_{\nu}(\mathbf{P}_i),SW_{\nu}(\mathbf{M}_{i+1})$ (see Lemma \ref{lem:gl_u_decomp_images_SW}), we see that the map \InnaA{$SW_{\nu}(\phi)$} is surjective. This means that there is a non-zero map $\bar{f}: SW_{\nu}(\mathbf{M}_{i+1})\rightarrow M_{i+1}$ so the diagram below is commutative:
$$\begin{CD}
    0 @>>> SW_{\nu}(\mathbf{M}_i) @>SW_{\nu}(\psi)>> SW_{\nu}(\mathbf{P}_i) @>SW_{\nu}(\phi)>> SW_{\nu}(\mathbf{M}_{i+1}) @>>>0\\
    @. @VVV @VfVV @V{\bar{f}}VV @.\\
    0 @>>> M_i @>>> P_{i+1} @>>> M_{i+1} @>>>0
   \end{CD}$$

 Since $SW_{\nu}(\mathbf{M}_{i+1}) \cong M_{i+1}$, we see that $\bar{f}$ is either an isomorphism, or zero. In the former case, $f$ is an isomorphism as well, and we are done. 

 So it remains to prove that $\bar{f} \neq 0$.

 Assume $\bar{f}=0$. This means that the image of $f$ is $M_i \subset P_{i+1}$, and thus $SW_{\nu}(\mathbf{P}_i) = M_i \oplus M_{i+1}$, with the maps $SW_{\nu}(\psi), SW_{\nu}(\phi)$ being an insertion of a direct summand and a projection onto a direct summand, respectively.

 We now construct the commutative diagram
 $$\begin{CD}
 SW^*_{\nu}(SW_{\nu}(\mathbf{M}_{i+1})) @>SW^*_{\nu}(SW_{\nu}(\phi))>> SW^*_{\nu}(SW_{\nu}(\mathbf{P}_{i})) @>SW^*_{\nu}(SW_{\nu}(\psi))>> SW^*_{\nu}(SW_{\nu}(\mathbf{M}_{i})) \\
@A\epsilon_{\mathbf{M}_{i+1}}AA @A\epsilon_{\mathbf{P}_{i}}AA @A\epsilon_{\mathbf{M}_{i}}AA\\
 \mathbf{M}_{i+1} @>\phi\phantom{\text{.........................}}>> \mathbf{P}_i  @>\psi\phantom{\text{.........................}}>> \mathbf{M}_{i}                                                                                                                                                                                                          
\end{CD}$$
which can be rewritten as
 $$\begin{CD}
SW^*_{\nu}(M_{i+1}) @>SW^*_{\nu}(SW_{\nu}(\phi))>> SW^*_{\nu}(M_{i+1})\oplus SW^*_{\nu}(M_{i}) @>SW^*_{\nu}(SW_{\nu}(\psi))>> SW^*_{\nu}(M_{i}) \\
@A\epsilon_{\mathbf{M}_{i+1}}AA @A\epsilon_{\mathbf{P}_{i}}AA @A\epsilon_{\mathbf{M}_{i}}AA\\
 \mathbf{M}_{i+1} @>\phi\phantom{\text{.........................}}>> \mathbf{P}_i  @>\psi\phantom{\text{.........................}}>> \mathbf{M}_{i}                                                                                                                                                                                                          
\end{CD}$$
The same type of argument as in part (a) now applies (we use the fact that $\mathbf{P}_{i}$ is indecomposable), and we get a contradiction.

\item Consider the short exact sequences
$$0 \rightarrow \mathbf{L}_{1} {\longrightarrow} \InnaB{\mathbf{M}^*_1}  {\longrightarrow} \mathbf{L}_{0}  \rightarrow 0$$
$$0 \rightarrow \mathbf{L}_{0} {\longrightarrow} \mathbf{P}_0  {\longrightarrow} \InnaB{\mathbf{M}^*_{1}}  \rightarrow 0$$

The contravariant functor $SW_{\nu}$ is left-exact, so we have (using part (a), Lemma \ref{lem:images_exceptionals}(a) and Proposition \ref{prop:image_SW_simples})
$$0 \rightarrow SW_{\nu}(\mathbf{L}_{0}) \cong M_0 {\longrightarrow} SW_{\nu}(\InnaB{\mathbf{M}^*_1}) {\longrightarrow} SW_{\nu}(\mathbf{L}_{1})\cong L_2$$
$$0 \rightarrow SW_{\nu}(\InnaB{\mathbf{M}^*_{1}}) {\longrightarrow} SW_{\nu}(\mathbf{P}_0) \cong P_1 {\longrightarrow} SW_{\nu}(\mathbf{L}_{0})\cong M_0$$

The first of these two exact sequences implies that $[SW_{\nu}(\InnaB{\mathbf{M}^*_{1}}): L_1] =1$, hence the map $SW_{\nu}(\InnaB{\mathbf{M}^*_{1}}) \rightarrow P_1$ in the second sequence is not an isomorphism. The second one then means that $SW_{\nu}(\InnaB{\mathbf{M}^*_{1}})$ is the kernel of the unique non-zero map $P_1 \rightarrow M_0$, which factors through the canonical map $P_1 \rightarrow L_1$. Thus $SW_{\nu}(\InnaB{\mathbf{M}^*_1}) \cong Ker(P_1 \twoheadrightarrow L_1)$.

\end{enumerate}
\end{proof}

\begin{proposition}\label{prop:SW_ess_surj}
The contravariant functor $\widehat{SW}_{\nu}: \mathcal{B}_{\lambda} \rightarrow \hat{\pi}(\mathfrak{B}_{\lambda})$ is essentially surjective.

\end{proposition}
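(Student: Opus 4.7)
The plan is to realize every object of $\hat{\pi}(\mathfrak{B}_{\lambda})$ as the image under $\widehat{SW}_{\nu}$ of the kernel of a map between projectives in $\mathcal{B}_{\lambda}$, following the standard strategy for establishing essential surjectivity when both source and target have enough projectives.

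By Corollary \ref{cor:hat_O_p_enough_inj_proj}, the target $\hat{\pi}(\mathfrak{B}_{\lambda})$ has enough projectives, whose indecomposables are exactly the $\hat{\pi}(P_i)$ for $1 \leq i \leq k_{\lambda}-1$ (the case $k_{\lambda} \leq 1$ is trivial, since the target is then zero); by Proposition \ref{prop:images_SW}(c), $\widehat{SW}_{\nu}(\mathbf{P}_{i-1}) \cong \hat{\pi}(P_i)$, so every projective of $\hat{\pi}(\mathfrak{B}_{\lambda})$ lies in the essential image. For an arbitrary $E \in \hat{\pi}(\mathfrak{B}_{\lambda})$, I would choose a projective presentation $Q_1 \xrightarrow{g} Q_0 \twoheadrightarrow E \to 0$, lift $Q_0, Q_1$ to direct sums $\tilde{Q}_0, \tilde{Q}_1$ of $\mathbf{P}_k$'s in $\mathcal{B}_{\lambda}$, and produce a morphism $\tilde{g} : \tilde{Q}_0 \to \tilde{Q}_1$ with $\widehat{SW}_{\nu}(\tilde{g}) = g$. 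Then by the exactness and contravariance of $\widehat{SW}_{\nu}$ (Lemma \ref{lem:overline_SW_exact}),
\[
\widehat{SW}_{\nu}(\ker \tilde{g}) \;=\; \mathrm{coker}\bigl(\widehat{SW}_{\nu}(\tilde{g})\bigr) \;=\; \mathrm{coker}(g) \;=\; E,
\]
placing $E$ in the essential image.

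The crux is therefore to show that for all $1 \leq i, j \leq k_{\lambda}-1$, the map
\[
\widehat{SW}_{\nu} : \Hom_{\mathcal{B}_{\lambda}}(\mathbf{P}_{i-1}, \mathbf{P}_{j-1}) \longrightarrow \Hom_{\hat{\pi}(\mathfrak{B}_{\lambda})}\bigl(\hat{\pi}(P_j), \hat{\pi}(P_i)\bigr)
\]
is surjective. A dimension count using Theorem \ref{thrm:blocks_S_nu} on the source, and Proposition \ref{prop:proj_parab_cat_O_struct}(d) together with the Hom-formula in a Serre quotient from the proof of Lemma \ref{lem:proj_in_serre_quotient} on the target (applicable because $P_j$ for $j \geq 1$ has cosocle $L_j \not\cong L_0$, and hence no finite-dimensional quotient) shows that both sides have equal dimension (namely $2$ if $i=j$, $1$ if $|i-j|=1$, and $0$ otherwise). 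So it suffices to prove injectivity, and the key point is that this can be done directly at the level of $\widehat{SW}_{\nu}$, without invoking faithfulness of $SW_{\nu}$: in the diagonal case, the nilpotent generator of $\End(\mathbf{P}_{i-1}) \cong \bC[\epsilon]/\epsilon^2$ factors as $\mathbf{P}_{i-1} \twoheadrightarrow \mathbf{L}_{i-1} \hookrightarrow \mathbf{P}_{i-1}$ through the common socle/cosocle, so by exactness $\widehat{SW}_{\nu}$ sends it to the composition $\hat{\pi}(P_i) \twoheadrightarrow \widehat{SW}_{\nu}(\mathbf{L}_{i-1}) \hookrightarrow \hat{\pi}(P_i)$, which is nilpotent and nonzero because $\widehat{SW}_{\nu}(\mathbf{L}_{i-1}) \neq 0$ throughout the range $1 \leq i \leq k_{\lambda}-1$ (by parts (a) and (e) of Theorem \ref{thrm:images_SW}), and is therefore linearly independent of $\widehat{SW}_{\nu}(\id) = \id$. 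In the off-diagonal case $|i-j|=1$, an analogous factorization of the unique non-zero morphism through a co-standard subquotient embedded in the target projective --- using the exact sequences in Proposition \ref{prop:obj_ab_env} and the image computations in parts (c) and (f) of Theorem \ref{thrm:images_SW} --- shows its image under $\widehat{SW}_{\nu}$ is nonzero.

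The main obstacle is handling the off-diagonal case $|i-j|=1$ cleanly: whereas the diagonal case has a canonical factorization through the common socle/cosocle $\mathbf{L}_{i-1}$, the off-diagonal case requires careful bookkeeping of composition-factor positions in the socle filtrations of $\mathbf{P}_{i-1}$ and $\mathbf{P}_j$ to identify the unique non-zero morphism with a composition factoring through an appropriate co-standard, and to verify that this co-standard has non-zero $\widehat{SW}_{\nu}$-image in the range under consideration.
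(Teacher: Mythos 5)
Your proposal follows essentially the same approach as the paper: realize an arbitrary $E \in \hat{\pi}(\mathfrak{B}_{\lambda})$ as the image under $\widehat{SW}_{\nu}$ of a kernel of a map between projectives in $\mathcal{B}_{\lambda}$, reducing to fullness of $\widehat{SW}_{\nu}$ on morphisms between projectives (the paper takes an injective resolution of $E$ rather than a projective presentation, a purely cosmetic difference since the $\hat{\pi}(P_i)$, $i \geq 1$, are simultaneously projective and injective). The off-diagonal worry you raise at the end is unfounded — no socle-filtration bookkeeping is needed: the paper handles it directly by noting that the kernel of the unique (up to scalar) nonzero $\beta_{i+1}\colon \mathbf{P}_i \to \mathbf{P}_{i+1}$ is $\mathbf{M}^*_i$, so by exactness and contravariance $\mathrm{coker}(\widehat{SW}_{\nu}(\beta_{i+1})) \cong \widehat{SW}_{\nu}(\mathbf{M}^*_i) \cong \hat{\pi}(M^{\vee}_i) \not\cong \hat{\pi}(P_{i+1})$, which forces $\widehat{SW}_{\nu}(\beta_{i+1}) \neq 0$ (and dually for $\alpha_{i+1}$ using $\mathrm{coker}(\alpha_{i+1}) \cong \mathbf{M}_i$); this is the same kind of one-line exactness argument that resolves your diagonal case.
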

\begin{proof}
We first prove a Sublemma:
\begin{sublemma}\label{sublem:SW_full_ess_surj_proj}
\mbox{}
\begin{enumerate}[label=(\alph*)]
 \item Let $I$ be an injective object in $\hat{\pi}(\mathfrak{B}_{\lambda})$. Then there exists a projective object $P$ in $\mathcal{B}_{\lambda}$ such that $\widehat{SW}_{\nu}(P) = I$.
 \item Consider the restriction of $\widehat{SW}_{\nu}$ to the full subcategory of $\mathcal{B}_{\lambda}$ consisting of projective objects. This restriction is a full contravariant functor to $\hat{\pi}(\mathfrak{B}_{\lambda})$.
\end{enumerate}

\end{sublemma}
\begin{proof}

Recall from Corollary \ref{cor:hat_O_p_enough_inj_proj} that the set of isomorphism classes of indecomposable injective objects in $\hat{\pi}(\mathfrak{B}_{\lambda})$ is $\{\hat{\pi}(P_i)\}_{\InnaA{0 < i <k_{\lambda}}}$. The set of isomorphism classes of indecomposable projective objects in $\mathcal{B}_{\lambda}$ is $\{\mathbf{P}_{i}\}_{i \geq 0}$ (c.f. Section \ref{ssec:S_nu_abelian_env}).

We know from Proposition \ref{prop:images_SW} that $\hat{\pi}(P_i) \cong \widehat{SW}_{\nu}(\mathbf{P}_{i-1})$ for any \InnaA{$0 < i < k_{\lambda}$}. This immediately implies the first part of the sublemma.


We now consider the restriction of $\widehat{SW}_{\nu}$ to the full subcategory of $\mathcal{B}_{\lambda}$ consisting of projective objects.

To see that this restriction is full, we need to check that for any $i, j \geq 0$, the map 
\begin{align}\label{eq:hat_SW_surj_on_proj}
\widehat{SW}_{\nu, \mathbf{P}_j, \mathbf{P}_i}: \Hom_{\Dab}(\mathbf{P}_j, \mathbf{P}_i) \rightarrow \Hom_{\widehat{\co}^{\mathfrak{p}}_{\nu}}(\widehat{SW}_{\nu}(\mathbf{P}_i), \widehat{SW}_{\nu}(\mathbf{P}_j)) 
\end{align}
is surjective.

We use the following observation (which follows from the definition of the Serre quotient):
\begin{obsr}
 Let $E, E' \in \co^{\mathfrak{p}}_{\nu}$. Assume $E $ has no finite-dimensional quotients and $E'$ has no finite-dimensional submodules. Then $$\Hom_{\widehat{\co}^{\mathfrak{p}}_{\nu}}(\hat{\pi}(E), \hat{\pi}(E')) = \Hom_{\co^{\mathfrak{p}}_{\nu}}(E, E')$$ In particular, this is true for $E, E'$ being $P_i, M_i, M^{\vee}_i, L_i$ ($ i\geq 1$).
\end{obsr}

Recall from Theorem \ref{thrm:blocks_S_nu}, Propositions \ref{prop:obj_ab_env} and \ref{prop:images_SW} that if $\abs{i -j} >1$, or if $i \geq k_{\lambda}$, or if $j \geq k_{\lambda}$, then the right hand side $\Hom$-space in \eqref{eq:hat_SW_surj_on_proj} is zero and there is nothing to prove.

If either $i = k_{\lambda} -1$ or $j =k_{\lambda} -1$, we only need to check the cases 

$(i, j) = (k_{\lambda} -1, k_{\lambda} -2), (k_{\lambda} -2, k_{\lambda} -1), (k_{\lambda} -1, k_{\lambda} -1)$.

In all three cases $\Hom_{\widehat{\co}^{\mathfrak{p}}_{\nu}}(\widehat{SW}_{\nu}(\mathbf{P}_i), \widehat{SW}_{\nu}(\mathbf{P}_j))$ is one-dimensional, so we only need to check that the above map $\widehat{SW}_{\nu, \mathbf{P}_j, \mathbf{P}_i}$ is not zero. 
The case $i = j = k_{\lambda} -1$ is obvious. Since $\widehat{SW}_{\nu}$ is contravariant and exact, the exact sequences $$ 0 \rightarrow \InnaB{\mathbf{M}^*_{ k_{\lambda} -2}} \rightarrow \mathbf{P}_{ k_{\lambda} -2} \rightarrow \mathbf{P}_{ k_{\lambda} -1}$$
and $$ \mathbf{P}_{ k_{\lambda} -1} \rightarrow \mathbf{P}_{ k_{\lambda} -2} \rightarrow \mathbf{M}_{ k_{\lambda} -2} \rightarrow 0$$
become $$ \hat{\pi}(L_{ k_{\lambda} -1}) \rightarrow \hat{\pi}(P_{ k_{\lambda} -1}) \rightarrow \hat{\pi}(M^{\vee}_{ k_{\lambda} -2}) \rightarrow 0$$
and $$0\rightarrow \hat{\pi}(M_{ k_{\lambda} -2}) \rightarrow \hat{\pi}(P_{ k_{\lambda} -1}) \rightarrow \hat{\pi}(L_{ k_{\lambda} -1})$$
which proves \InnaA{that $\widehat{SW}_{\nu, \mathbf{P}_j, \mathbf{P}_i}$ is not zero if $(i, j) = (k_{\lambda} -1, k_{\lambda} -2)$ or $(i,j)= (k_{\lambda} -2, k_{\lambda} -1)$}.
$$ $$

We can now assume that  $i, j < k_{\lambda} -1$, and thus $\widehat{SW}_{\nu}(\mathbf{P}_{i}) \cong \hat{\pi}(P_{i+1}), \widehat{SW}_{\nu}(\mathbf{P}_{j}) \cong \hat{\pi}(P_{j+1})$.

If $\abs{i-j} =1$, then both $\Hom$-spaces are at most one-dimensional and we only need to check that the above map $\widehat{SW}_{\nu, \mathbf{P}_j, \mathbf{P}_i}$ is not zero.
Assume $j = i+1$ (the case $j=i-1$ is proved in a similar way).
Let $\InnaB{\beta_{i+1}}: \mathbf{P}_i \rightarrow \mathbf{P}_{i+1}$ be a non-zero morphism.

Then the kernel of $\InnaB{\beta_{i+1}}$ is $\InnaB{\mathbf{M}^*_{i}}$, and since $\widehat{SW}_{\nu}$ is contravariant and exact, we get: $$Coker(\widehat{SW}_{\nu}(\InnaB{\beta_{i+1}})) \cong \widehat{SW}_{\nu}(\InnaB{\mathbf{M}^*_{i}}) \cong \hat{\pi}(M^{\vee}_{i}) \not\cong \hat{\pi}(P_{i+1})$$ which means that
$\widehat{SW}_{\nu}(\InnaB{\beta_{i+1}}) \neq 0$. Similarly, given a non-zero morphism $\InnaB{\alpha_{i+1}}: \mathbf{P}_{i+1} \rightarrow \mathbf{P}_{i}$, we have: $$Ker(\widehat{SW}_{\nu}(\InnaB{\alpha_{i+1}})) \cong \widehat{SW}_{\nu}(Coker(\InnaB{\alpha_{i+1}})) \cong \widehat{SW}_{\nu}(\mathbf{M}_{i}) \cong \hat{\pi}(M_{i}) \not\cong \hat{\pi}(P_{i+1})$$ which means that
$\widehat{SW}_{\nu}(\InnaB{\alpha_{i+1}}) \neq 0$.

Finally, if $i=j$, then the space $\End_{\Dab}(\mathbf{P}_i)$ is spanned by endomorphisms \InnaA{$\id_{\mathbf{P}_i}, \InnaB{\gamma_i}$} of $\mathbf{P}_i$, where $Im(\InnaB{\gamma_i}) \cong \mathbf{L}_i$ ($\InnaB{\gamma_i} := \InnaB{\alpha_{i+1}} \circ \InnaB{\beta_{i+1}}$ \InnaA{in the above notation}). 

\InnaA{Since $\widehat{SW}_{\nu}$ is contravariant and exact, and (by assumption) $i< k_{\lambda} -1$, we see that $\widehat{SW}_{\nu}(\InnaB{\gamma_i})$ will be a non-zero endomorphism of $\hat{\pi}(P_{i+1})$ factoring though $\hat{\pi}(L_{i+1})$. This means that $\widehat{SW}_{\nu}(\id_{\mathbf{P}_i}), \widehat{SW}_{\nu}(\InnaB{\gamma_i})$ span $\End_{\widehat{\co}^{\mathfrak{p}}_{\nu}}(\hat{\pi}(P_{i+1}))$.

This proves that for any $i, j \geq 0$, the map in \eqref{eq:hat_SW_surj_on_proj} is surjective, and we are done.}
\end{proof}

We now show that $\widehat{SW}_{\nu}$ is essentially surjective. Indeed, let $E \in \hat{\pi}(\mathfrak{B}_{\lambda})$. Then $E$ has an injective resolution $$0 \rightarrow E \rightarrow I^0 \stackrel{f}{\rightarrow} I^1$$ ($I^0, I^1$ are injective objects in $\hat{\pi}(\mathfrak{B}_{\lambda})$).
From the Sublemma \ref{sublem:SW_full_ess_surj_proj} above, we know that there exist projective objects $P^0, P^1 \in
\mathcal{B}_{\lambda}$ and a morphism $g: P^1 \rightarrow P^0$ such that $$\widehat{SW}_{\nu}(P^0) = I^0, \widehat{SW}_{\nu}(P^1) =I^1, \widehat{SW}_{\nu}(g)=f$$ Then $E \cong Ker(f) \cong \widehat{SW}_{\nu}(Coker (g))$ (since $\widehat{SW}_{\nu}$ is exact). Thus $\widehat{SW}_{\nu}$ is essentially surjective.
\end{proof}
\begin{remark}
 The functor $\widehat{SW}_{\nu}: \mathcal{B}_{\lambda} \rightarrow \hat{\pi}(\mathfrak{B}_{\lambda})$ is not full. For example, consider $$ \Hom_{\Dab}(\mathbf{P}_{k_{\lambda}-1}, \mathbf{L}_{k_{\lambda}-2}) \stackrel{\widehat{SW}_{\nu}}{\longrightarrow}  \Hom_{\widehat{\co}^{\mathfrak{p}}_{\nu}}(\widehat{SW}_{\nu}(\mathbf{L}_{k_{\lambda}-2}),\widehat{SW}_{\nu}( \mathbf{P}_{k_{\lambda}-1})) = \End_{\widehat{\co}^{\mathfrak{p}}_{\nu}}(\hat{\pi}(L_{k_{\lambda}-1}))$$
 The $\Hom$-space in the left hand side is clearly zero, while the $\Hom$-space in the right hand side is one-dimensional.
\end{remark}

We now consider the Serre subcategory $Ker(\widehat{SW}_{\nu} \rvert_{{\mathcal{B}}_{\lambda}})$ of ${\mathcal{B}}_{\lambda}$ (this is a Serre subcategory since $\widehat{SW}_{\nu} $ is exact). This subcategory is the Serre subcategory of ${\mathcal{B}}_{\lambda}$ generated by the simple objects $\mathbf{L}_i$, $i \geq k_{\lambda} -1$.

We define the quotient of ${\mathcal{B}}_{\lambda}$ by $Ker(\widehat{SW}_{\nu} \rvert_{{\mathcal{B}}_{\lambda}})$:
$$\overline{\pi}: {\mathcal{B}}_{\lambda} \longrightarrow \overline{\pi}(\mathcal{B}_{\lambda})$$ 

By definition of $Ker(\widehat{SW}_{\nu} \rvert_{{\mathcal{B}}_{\lambda}})$, the functor $\widehat{SW}_{\nu}$ factors through $\overline{\pi}$ and we get an exact contravariant functor 
\InnaA{
$$ \widehat{\overline{SW}}_{\nu}: \overline{\pi}(\mathcal{B}_{\lambda}) \longrightarrow  \hat{\pi}(\mathfrak{B}_{\lambda})$$
such that 
$$ \xymatrix{  &{\mathcal{B}}_{\lambda}^{op} \ar[rr]^{SW_{\nu}}  \ar[rrdd]^{\widehat{SW}_{\nu}} \ar[dd]_{\overline{\pi}^{op}} &{} &\mathfrak{B}_{\lambda} \ar[dd]^{\widehat{\pi}} \\
&{} &{}\\
    &\overline{\pi}(\mathcal{B}_{\lambda})^{op} \ar[rr]^{ \widehat{\overline{SW}}_{\nu}} &{} &\hat{\pi}(\mathfrak{B}_{\lambda})
   }
$$}
Notice that all the functors in this commutative diagram except ${SW_{\nu}}$ are exact.

We now prove some properties of the functor $\overline{\pi}$ and the category $\overline{\pi}(\mathcal{B}_{\lambda})$.

\begin{lemma}\label{lem:overline_Del_enough_inj_proj}
\mbox{}
\begin{enumerate}[label=(\alph*)]
 \item The objects $\overline{\pi}(\mathbf{P}_i)$ are indecomposable injective (and projective) objects in $\overline{\pi}(\mathcal{B}_{\lambda})$ for any $i \leq k_{\lambda} -2$. 
 \item The category $\overline{\pi}(\mathcal{B}_{\lambda})$ has enough injectives and enough projectives.
 \item Moreover, $\{\overline{\pi}(\mathbf{P}_i) \}_{0 \leq i \leq k_{\lambda} -2}$ is the full set of representatives of isomorphism classes of indecomposable injective (respectively, projective) objects in $\overline{\pi}(\mathcal{B}_{\lambda})$.
\end{enumerate}

\end{lemma}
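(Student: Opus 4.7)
The main tool throughout will be Lemma \ref{lem:proj_in_serre_quotient}, together with the socle/cosocle descriptions from Proposition \ref{prop:obj_ab_env}. First I would address part (a) directly: recall that the Serre subcategory $Ker(\widehat{SW}_{\nu} \rvert_{{\mathcal{B}}_{\lambda}})$ is generated by the simples $\mathbf{L}_i$ for $i \geq k_{\lambda} - 1$ (by Proposition \ref{prop:image_SW_simples} combined with Theorem \ref{thrm:images_SW}). By Proposition \ref{prop:obj_ab_env}, for $i \leq k_{\lambda} - 2$ both the socle and cosocle of $\mathbf{P}_i$ equal $\mathbf{L}_i$, which is \emph{not} in the kernel. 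Hence any nonzero subobject of $\mathbf{P}_i$ contains $\mathbf{L}_i$ as its socle, and any nonzero quotient has $\mathbf{L}_i$ as its cosocle; neither can lie in the kernel. Since $\mathbf{P}_i$ is indecomposable, injective, and projective in $\mathcal{B}_{\lambda}$ (Proposition \ref{prop:proj_in_ab_envelope}), Lemma \ref{lem:proj_in_serre_quotient} yields part (a).

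For part (b), the plan is to reduce a given $E \in \overline{\pi}(\mathcal{B}_{\lambda})$ to a well-chosen representative. Given any $\tilde{E} \in \mathcal{B}_{\lambda}$ with $\overline{\pi}(\tilde{E}) = E$, let $K$ be the maximal subobject of $\tilde{E}$ lying in $Ker(\widehat{SW}_{\nu}\rvert_{\mathcal{B}_\lambda})$, and then let $\tilde{E}'$ be the smallest subobject of $\tilde{E}/K$ whose cokernel lies in the kernel. Since the functor $\overline{\pi}$ kills kernel objects, $\overline{\pi}(\tilde{E}') = E$; by the maximality/minimality, $\tilde{E}'$ has no nonzero subobject nor quotient lying in the kernel. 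Now take the projective cover $\tilde{P} \twoheadrightarrow \tilde{E}'$ in $\mathcal{B}_{\lambda}$. Then $\tilde{P} \cong \bigoplus_j \mathbf{P}_{i_j}$, where the indices $i_j$ match the composition of the cosocle of $\tilde{E}'$; since no cosocle factor of $\tilde{E}'$ lies in the kernel, all $i_j \leq k_{\lambda} - 2$. Applying the exact functor $\overline{\pi}$ and invoking part (a) produces a surjection from a projective onto $E$. The argument for injectives is parallel: use the dual construction (inject $\tilde{E}'$ into its injective hull in $\mathcal{B}_{\lambda}$), which is again a direct sum of $\mathbf{P}_j$'s with $j \leq k_{\lambda} - 2$, since socle factors of $\tilde{E}'$ are not in the kernel.

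For part (c), observe that the simple objects in $\overline{\pi}(\mathcal{B}_{\lambda})$ are exactly $\overline{\pi}(\mathbf{L}_i)$ for $0 \leq i \leq k_{\lambda} - 2$ (the others are zero by construction of the Serre quotient), and these are pairwise non-isomorphic. The exact surjection $\mathbf{P}_i \twoheadrightarrow \mathbf{L}_i$ in $\mathcal{B}_{\lambda}$ yields a surjection $\overline{\pi}(\mathbf{P}_i) \twoheadrightarrow \overline{\pi}(\mathbf{L}_i)$. Since $\overline{\pi}(\mathbf{P}_i)$ is indecomposable projective (part (a)) in a category with enough projectives and finite-length objects (part (b)), it has a unique simple quotient, which must therefore be $\overline{\pi}(\mathbf{L}_i)$. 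The standard bijection between isomorphism classes of indecomposable projectives and of simples (via ``unique simple quotient'') then forces $\{\overline{\pi}(\mathbf{P}_i)\}_{0 \leq i \leq k_{\lambda}-2}$ to exhaust the indecomposable projectives; injectivity follows from the same list by part (a), combined with the fact that in a finite-length abelian category with enough injectives, each simple has a unique indecomposable injective hull.

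The main technical point to be careful about is the reduction step in part (b): one must verify that the two-step procedure (first killing the maximal kernel subobject, then restricting to the smallest subobject with kernel cokernel) actually leaves a representative with \emph{both} properties simultaneously. This works because subobjects of the second-stage object remain subobjects of the first-stage object, so the ``no subobject in the kernel'' property is preserved.
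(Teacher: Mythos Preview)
Your proof is correct and part (a) is essentially identical to the paper's argument. For parts (b) and (c) you take a slightly different route: you first replace an arbitrary object by a representative with no nonzero subobject or quotient in the kernel, so that its projective cover (resp.\ injective hull) in $\mathcal{B}_{\lambda}$ already involves only $\mathbf{P}_i$ with $i \leq k_{\lambda}-2$; part (c) then follows from the standard bijection between simples and indecomposable projectives. The paper instead mimics Corollary~\ref{cor:hat_O_p_enough_inj_proj}: it applies $\overline{\pi}$ to an arbitrary projective cover in $\mathcal{B}_{\lambda}$ and then handles the one problematic summand $\overline{\pi}(\mathbf{P}_{k_{\lambda}-1})$ directly, showing $\overline{\pi}(\mathbf{P}_{k_{\lambda}-1}) \cong \overline{\pi}(\mathbf{L}_{k_{\lambda}-2})$ is neither projective nor injective but has $\overline{\pi}(\mathbf{P}_{k_{\lambda}-2})$ as both cover and hull. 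Your approach is a bit more streamlined; the paper's makes the role of the exceptional object $\overline{\pi}(\mathbf{P}_{k_{\lambda}-1})$ explicit, which is useful elsewhere in the argument.
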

\begin{proof}
 To prove the first statement, we use Lemma \ref{lem:proj_in_serre_quotient} and the information on the structure of $\mathbf{P}_i$ given in Proposition \ref{prop:obj_ab_env}. 
 
 The proof of the last two statements mimics the proof of Corollary \ref{cor:hat_O_p_enough_inj_proj}. 
 
 All we need to show is that the object $\overline{\pi}(\mathbf{P}_{k_{\lambda} -1})$ is neither injective nor projective in $\overline{\pi}(\mathcal{B}_{\lambda})$, but has a projective cover and an injective hull in $\overline{\pi}(\mathcal{B}_{\lambda})$, both being direct sums of objects $\overline{\pi}(\mathbf{P}_i), i \leq k_{\lambda} -2$.
 $$ $$
 
 But $\overline{\pi}(\mathbf{P}_{k_{\lambda} -1})\cong \overline{\pi}(\mathbf{L}_{k_{\lambda} -2})$ (c.f. Proposition \ref{prop:obj_ab_env}), and we have a surjective map $\overline{\pi}(\mathbf{P}_{k_{\lambda} -2})\twoheadrightarrow \overline{\pi}(\mathbf{L}_{k_{\lambda} -2})$ and an injective map $ \overline{\pi}(\mathbf{L}_{k_{\lambda} -2})\hookrightarrow \overline{\pi}(\mathbf{P}_{k_{\lambda} -2})$. Since $\overline{\pi}(\mathbf{P}_{k_{\lambda} -2})$ is an indecomposable injective and projective object in $\overline{\pi}(\mathcal{B}_{\lambda})$, we conclude that $\overline{\pi}(\mathbf{P}_{k_{\lambda} -1})$ is neither injective nor projective in $\overline{\pi}(\mathcal{B}_{\lambda})$, but $\overline{\pi}(\mathbf{P}_{k_{\lambda} -2})$ is the projective cover and the injective hull of $\overline{\pi}(\mathbf{P}_{k_{\lambda} -1})$ in $\overline{\pi}(\mathcal{B}_{\lambda})$.
 \end{proof}

\begin{theorem}\label{thrm:hat_overline_SW_equiv}
 The functor $\widehat{\overline{SW}}_{\nu}: \overline{\pi}(\mathcal{B}_{\lambda}) \rightarrow \hat{\pi}(\mathfrak{B}_{\lambda})$ is an anti-equivalence of abelian categories. That is, $\widehat{\overline{SW}}_{\nu}: \overline{\pi}(\mathcal{B}_{\lambda}) \rightarrow \hat{\pi}(\mathfrak{B}_{\lambda})$ is an essentially surjective, fully faithful, exact contravariant functor.
\end{theorem}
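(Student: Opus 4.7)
My plan is first to dispatch the easy pieces. Exactness of $\widehat{\overline{SW}}_\nu$ is immediate from the factorization $\widehat{SW}_\nu = \widehat{\overline{SW}}_\nu\circ\overline{\pi}$ combined with the exactness of $\overline{\pi}$ and of $\widehat{SW}_\nu$ (the latter by Lemma \ref{lem:overline_SW_exact}). Essential surjectivity follows at once from Proposition \ref{prop:SW_ess_surj} together with the surjectivity of $\overline{\pi}$ on objects: for any $Y\in\hat{\pi}(\mathfrak{B}_\lambda)$ pick $X\in\mathcal{B}_\lambda$ with $\widehat{SW}_\nu(X)\cong Y$, and then $\widehat{\overline{SW}}_\nu(\overline{\pi}(X))\cong Y$. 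Thus the theorem reduces to showing that $\widehat{\overline{SW}}_\nu$ is fully faithful.

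The strategy will be to establish fully faithfulness first on the full additive subcategory of projective objects, and then to extend to all of $\overline{\pi}(\mathcal{B}_\lambda)$ by a Morita-style argument. By Lemma \ref{lem:overline_Del_enough_inj_proj} the indecomposable projectives of $\overline{\pi}(\mathcal{B}_\lambda)$ are $\overline{\pi}(\mathbf{P}_i)$ for $0\leq i\leq k_\lambda-2$, and by Theorem \ref{thrm:images_SW}(d) the functor $\widehat{\overline{SW}}_\nu$ sends them bijectively to the indecomposable injective-projective objects $\hat{\pi}(P_{i+1})$ of $\hat{\pi}(\mathfrak{B}_\lambda)$ listed in Corollary \ref{cor:hat_O_p_enough_inj_proj}. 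Since the socle and cosocle of each $\mathbf{P}_i$ ($i\leq k_\lambda-2$) are both equal to $\mathbf{L}_i$ (Proposition \ref{prop:obj_ab_env}), and since $\mathbf{L}_i\notin\Ker(\widehat{SW}_\nu)$ in this range, the object $\mathbf{P}_i$ has no nonzero sub- or quotient object in $\Ker(\widehat{SW}_\nu)$. The general Serre-quotient identity used in the proof of Sublemma \ref{sublem:SW_full_ess_surj_proj} will then yield
\[
\Hom_{\overline{\pi}(\mathcal{B}_\lambda)}(\overline{\pi}(\mathbf{P}_j),\overline{\pi}(\mathbf{P}_i)) \;\cong\; \Hom_{\mathcal{B}_\lambda}(\mathbf{P}_j,\mathbf{P}_i)
\]
for $0\leq i,j\leq k_\lambda-2$, while the analogous identity $\Hom_{\hat{\pi}(\mathfrak{B}_\lambda)}(\hat{\pi}(P_{i+1}),\hat{\pi}(P_{j+1})) \cong \Hom_{\mathfrak{B}_\lambda}(P_{i+1},P_{j+1})$ is recorded in the proof of that Sublemma. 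A direct comparison via Theorem \ref{thrm:blocks_S_nu} and Proposition \ref{prop:proj_parab_cat_O_struct}(d) will then show that both sides have equal dimension ($2$ if $i=j$, $1$ if $|i-j|=1$, and $0$ otherwise) in the relevant range. Combined with the surjectivity assertion of Sublemma \ref{sublem:SW_full_ess_surj_proj}(b), this will prove that $\widehat{\overline{SW}}_\nu$ is fully faithful on the full additive subcategory of projectives.

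To transfer fully faithfulness from projectives to arbitrary objects, I will set $P:=\bigoplus_{0\leq i\leq k_\lambda-2}\overline{\pi}(\mathbf{P}_i)$ and $Q:=\widehat{\overline{SW}}_\nu(P)=\bigoplus_{0\leq i\leq k_\lambda-2}\hat{\pi}(P_{i+1})$. By Lemma \ref{lem:overline_Del_enough_inj_proj} and Corollary \ref{cor:hat_O_p_enough_inj_proj}, $P$ is a projective generator of $\overline{\pi}(\mathcal{B}_\lambda)$ and $Q$ is an injective cogenerator of $\hat{\pi}(\mathfrak{B}_\lambda)$, and the previous step supplies a $\bC$-algebra anti-isomorphism $\End(P)\xrightarrow{\sim}\End(Q)$. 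Standard Morita--Gabriel reconstruction then identifies $\overline{\pi}(\mathcal{B}_\lambda)$ with the category of finite-length right $\End(P)$-modules and $\hat{\pi}(\mathfrak{B}_\lambda)$ with the category of finite-length right $\End(Q)^{op}$-modules; under these identifications $\widehat{\overline{SW}}_\nu$ becomes the tautological contravariant Hom-functor $\Hom_{\overline{\pi}(\mathcal{B}_\lambda)}(-,P)$, which is an anti-equivalence. Alternatively, the same conclusion can be reached more directly by choosing projective presentations $P^1\to P^0\to E\to 0$ and $Q^1\to Q^0\to F\to 0$ in $\overline{\pi}(\mathcal{B}_\lambda)$, applying $\widehat{\overline{SW}}_\nu$, and invoking a five-lemma argument on the resulting Hom long exact sequences. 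The hard part is expected to be precisely this transfer step: one must verify carefully that $\widehat{\overline{SW}}_\nu$ restricted to pairs $(\text{projective},\text{arbitrary})$ induces a bijection on Hom-spaces, an intermediate statement strictly stronger than fully faithfulness on projectives alone but extractable from it via exactness and the fact that $\widehat{\overline{SW}}_\nu$ sends projective covers to injective hulls.
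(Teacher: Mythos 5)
Your proposal is essentially the same as the paper's: both establish full faithfulness first on the additive subcategory of projective objects (via Sublemma \ref{sublem:SW_full_ess_surj_proj} and the dimension count from Theorem \ref{thrm:blocks_S_nu} and Proposition \ref{prop:proj_parab_cat_O_struct}), and then transfer to arbitrary objects using exactness of $\widehat{\overline{SW}}_\nu$ together with the coincidence of projectives and injectives on each side; the "alternative" you sketch at the end (resolving by projectives/injectives and comparing the resulting exact $\Hom$-sequences) is precisely the paper's argument, carried out in two steps (first with $P$ projective and $X$ arbitrary, then with both arbitrary). One cautionary remark on your primary Morita--Gabriel route: abstract Morita reconstruction only yields \emph{some} anti-equivalence between the two categories, and identifying it with $\widehat{\overline{SW}}_\nu$ already requires the natural isomorphism $\Hom(P,-) \cong \Hom(\widehat{\overline{SW}}_\nu(-),\widehat{\overline{SW}}_\nu(P))$, which is exactly the $(\text{projective},\text{arbitrary})$ step you flag as "the hard part"; so the Morita packaging is not a genuine shortcut but a reformulation of the same diagram chase the paper performs, and you would need to make that step explicit rather than leave it as "extractable."
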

\begin{proof}
\mbox{}
\begin{itemize}[leftmargin=*]
 \item Proof that $\widehat{\overline{SW}}_{\nu}$ is faithful: by definition, if $\widehat{\overline{SW}}_{\nu}(X)=0$ for some $X \in \overline{\pi}(\mathcal{B}_{\lambda})$, then $X =0$. Now, let $f: X \rightarrow Y$ in $\overline{\pi}(\mathcal{B}_{\lambda})$, and assume $\widehat{\overline{SW}}_{\nu}(f)=0$. Then $\widehat{\overline{SW}}_{\nu}(Im(f))=0$, i.e. $Im(f)=0$, and thus $f=0$.
 \item The fact that $\widehat{\overline{SW}}_{\nu}$ is essentially surjective follows directly from the fact that $\widehat{SW}_{\nu}$ is essentially surjective, c.f. Proposition \ref{prop:SW_ess_surj}.
 \item Proof that $\widehat{\overline{SW}}_{\nu}$ is full:
 
 We start with the following sublemma:
 \begin{sublemma}\label{sublem:overline_SW_equiv_proj}
  Let $Proj_{\overline{\pi}(\mathcal{B}_{\lambda})}$ be the full subcategory of projective objects in $\overline{\pi}(\mathcal{B}_{\lambda})$, and $Inj_{\hat{\pi}(\mathfrak{B}_{\lambda})}$ be the full subcategory of injective objects in $\hat{\pi}(\mathfrak{B}_{\lambda})$. Then $\widehat{\overline{SW}}_{\nu}$ induces an anti-equivalence of additive categories $Proj_{\overline{\pi}(\mathcal{B}_{\lambda})} \rightarrow Inj_{\hat{\pi}(\mathfrak{B}_{\lambda})}$.
 \end{sublemma}
\begin{proof}
The first thing we need to check is that given a projective object in $\overline{\pi}(\mathcal{B}_{\lambda})$, $\widehat{\overline{SW}}_{\nu}$ takes it to an injective object in $\hat{\pi}(\mathfrak{B}_{\lambda})$. By Lemma \ref{lem:overline_Del_enough_inj_proj}, it is enough to check this for $\overline{\pi}(\mathbf{P}_i)$ for $i \leq k_{\lambda}-2$, in which case this follows straight from the definition of $\widehat{\overline{SW}}_{\nu}$ together with Proposition \ref{prop:images_SW} and Corollary \ref{cor:hat_O_p_enough_inj_proj}. 

Now, $$\Hom_{\overline{\pi}(\mathcal{B}_{\lambda})}(\overline{\pi}(\mathbf{P}_i), \overline{\pi}(\mathbf{P}_j)) = \Hom_{{\mathcal{B}}_{\lambda}}(\mathbf{P}_i, \mathbf{P}_j), \, i,j \leq k_{\lambda}-2$$
(since $\mathbf{P}_i, \mathbf{P}_j$ have no non-trivial subobjects nor quotients lying in $Ker(\widehat{SW}_{\nu})$).
The proof of Sublemma \ref{sublem:SW_full_ess_surj_proj} then implies that the contravariant functor $$\widehat{\overline{SW}}_{\nu}:Proj_{\overline{\pi}(\mathcal{B}_{\lambda})} \rightarrow Inj_{\hat{\pi}(\mathfrak{B}_{\lambda})}$$ is full and essentially surjective. We have already established that $\widehat{\overline{SW}}_{\nu}$ is faithful, which concludes the proof of the sublemma.
\end{proof} 

Let $X \in \overline{\pi}(\mathcal{B}_{\lambda})$. Since $\overline{\pi}(\mathcal{B}_{\lambda})$ has enough projectives (which are also injectives), there exists an exact sequence $$ 0 \rightarrow X \rightarrow I^0_X \rightarrow I^1_X$$ in $\overline{\pi}(\mathcal{B}_{\lambda})$, where $I^0_X, I^1_X$ are injective (and thus projective as well).

Now let $P \in \overline{\pi}(\mathcal{B}_{\lambda})$ be a projective object. Sublemma \ref{sublem:overline_SW_equiv_proj} then tells us that $ \widehat{\overline{SW}}_{\nu}(P)$ is an injective object in $\hat{\pi}(\mathfrak{B}_{\lambda})$. \InnaA{Together with the fact that $ \widehat{\overline{SW}}_{\nu}$ is exact, this gives us the following commutative diagram, whose rows are short exact sequences}:
$$ \minCDarrowwidth10pt\begin{CD}
     \Hom_{\overline{\pi}(\mathcal{B}_{\lambda})}(P, X) @>>> \Hom_{\overline{\pi}(\mathcal{B}_{\lambda})}(P, I^0_X) @>>> \Hom_{\overline{\pi}(\mathcal{B}_{\lambda})}(P, I^1_X) \\
     @V{\widehat{\overline{SW}}_{\nu}}VV @V{\widehat{\overline{SW}}_{\nu}}VV @V{\widehat{\overline{SW}}_{\nu}}VV \\
     \Hom_{\hat{\pi}(\mathfrak{B}_{\lambda})}(\widehat{\overline{SW}}_{\nu}(X), \widehat{\overline{SW}}_{\nu}(P)) @>>> \Hom_{\hat{\pi}(\mathfrak{B}_{\lambda})}(\widehat{\overline{SW}}_{\nu}(I^0_X), \widehat{\overline{SW}}_{\nu}(P)) @>>> \Hom_{\hat{\pi}(\mathfrak{B}_{\lambda})}(\widehat{\overline{SW}}_{\nu}(I^1_X), \widehat{\overline{SW}}_{\nu}(P)) \\
   \end{CD}
$$
By Sublemma \ref{sublem:overline_SW_equiv_proj}, the two rightmost vertical arrows are isomorphisms, which means that the arrow $$\widehat{\overline{SW}}_{\nu}: \Hom_{\overline{\pi}(\mathcal{B}_{\lambda})}(P, X) \rightarrow \Hom_{\hat{\pi}(\mathfrak{B}_{\lambda})}(\widehat{\overline{SW}}_{\nu}(X), \widehat{\overline{SW}}_{\nu}(P))$$ is an isomorphism as well.

Now, let $Y \in \overline{\pi}(\mathcal{B}_{\lambda})$. There exists an exact sequence $$ P^1_Y \rightarrow P^0_Y \rightarrow Y \rightarrow 0$$ in $\overline{\pi}(\mathcal{B}_{\lambda})$, where $P^0_Y, P^1_Y$ are projective.
We then get the following commutative diagram, \InnaA{whose rows are short exact sequences}:
$$ \minCDarrowwidth10pt\begin{CD}
    \Hom_{\overline{\pi}(\mathcal{B}_{\lambda})}(Y, X) @>>> \Hom_{\overline{\pi}(\mathcal{B}_{\lambda})}(P^0_Y, X) @>>> \Hom_{\overline{\pi}(\mathcal{B}_{\lambda})}(P^1_Y, X) \\
     @V{\widehat{\overline{SW}}_{\nu}}VV @V{\widehat{\overline{SW}}_{\nu}}VV @V{\widehat{\overline{SW}}_{\nu}}VV \\
     \Hom_{\hat{\pi}(\mathfrak{B}_{\lambda})}(\widehat{\overline{SW}}_{\nu}(X), \widehat{\overline{SW}}_{\nu}(Y)) @>>> \Hom_{\hat{\pi}(\mathfrak{B}_{\lambda})}(\widehat{\overline{SW}}_{\nu}(X), \widehat{\overline{SW}}_{\nu}(P^0_Y))@>>> \Hom_{\hat{\pi}(\mathfrak{B}_{\lambda})}(\widehat{\overline{SW}}_{\nu}(X), \widehat{\overline{SW}}_{\nu}(P^0_Y)) \\
   \end{CD}
$$
We have already established that the two rightmost vertical arrows are isomorphisms, which means that the left vertical arrow 
$$\widehat{\overline{SW}}_{\nu}: \Hom_{\overline{\pi}(\mathcal{B}_{\lambda})}(Y, X) \rightarrow \Hom_{\hat{\pi}(\mathfrak{B}_{\lambda})}(\widehat{\overline{SW}}_{\nu}(X), \widehat{\overline{SW}}_{\nu}(Y))$$ is an isomorphism as well.
Thus $\widehat{\overline{SW}}_{\nu}$ is fully faithful.
\end{itemize}

\end{proof}

\subsection{\InnaB{Schur-Weyl functor and dualities in categories $\Dab$, $\co^{\mathfrak{p}}_{\nu, V}$}}\label{ssec:dualities_SW_commute}
\InnaB{
As a corollary, we obtain the following relation between dualities in categories $\Dab$, $\co^{\mathfrak{p}}_{\nu, V}$. 

Consider the contravariant functors $$( \cdot )^*: \Dab \rightarrow \Dab \; \; \; \text{        and        }  \; \; \; ( \cdot )^{\vee}: \co^{\mathfrak{p}}_{\nu, V} \rightarrow \co^{\mathfrak{p}}_{\nu, V}$$ where $( \cdot )^*$ is the duality functor on $\Dab$ (with respect to the tensor structure of $\Dab$), and $( \cdot )^{\vee}$ is the usual duality in the category $\co$ (c.f. Section \ref{sec:par_cat_o}, or \cite[Section 3.2]{H}). The Schur-Weyl functor $SW_{\nu, V}$ relates these two duality notions:

\begin{proposition}\label{prop:dualities_SW_commute}
 For any $\nu \in \bC$, there is an isomorphism of (covariant) functors $$ \widehat{SW}_{\nu, V}(( \cdot )^*) \longrightarrow \hat{\pi}(SW_{\nu, V}( \cdot )^{\vee})$$
\end{proposition}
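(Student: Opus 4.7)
My plan is to show that both functors in question are exact covariant functors from $\Dab$ to $\widehat{\co}^{\mathfrak{p}}_{\nu, V}$, and then to construct a natural isomorphism between them by reducing to the case of projectives. First I would verify the exactness claims. For $F_1 := \widehat{SW}_{\nu, V}((\cdot)^*)$, exactness follows by composing the exact functor $\widehat{SW}_{\nu, V}$ (Lemma \ref{lem:overline_SW_exact}) with the exact contravariant duality $(\cdot)^*$ on the pre-Tannakian category $\Dab$ (Remark \ref{rmrk:ab_env_preTannakian}). For $F_2 := \hat{\pi}(SW_{\nu, V}(\cdot)^{\vee})$, I would observe that $(\cdot)^{\vee}$ preserves finite-dimensional $\gl(V)$-modules, hence descends to an exact contravariant self-equivalence of $\widehat{\co}^{\mathfrak{p}}_{\nu, V}$; then $F_2 \cong ((\cdot)^{\vee}) \circ \widehat{SW}_{\nu, V}$, a composition of two exact functors.

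Next I would establish the isomorphism pointwise on indecomposable projectives of $\Dab$. For any such projective $\mathbf{P}$, either $\mathbf{P} = X_\lambda$ for $\lambda$ in a trivial $\stackrel{\nu}{\sim}$-class or $\mathbf{P} = \mathbf{P}_i = X_{\lambda^{(i+1)}}$ for a non-trivial class. In all cases $\mathbf{P}$ lies in the subcategory $\underline{Rep}(S_\nu) \subset \Dab$, where every object is self-dual (used in the proof of Corollary \ref{cor:duality_abel_env}), so $\mathbf{P}^* \cong \mathbf{P}$ and hence $F_1(\mathbf{P}) \cong \widehat{SW}_{\nu, V}(\mathbf{P})$. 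On the other hand, Theorem \ref{thrm:images_SW} and Proposition \ref{prop:proj_parab_cat_O_struct} identify $SW_{\nu, V}(\mathbf{P})$ with an indecomposable projective-injective $P_\bullet$ of $\co^{\mathfrak{p}}_{\nu,V}$ in all cases where the result is non-zero after applying $\hat{\pi}$, and Proposition \ref{prop:proj_parab_cat_O_gen}(f) says these $P_\bullet$ are self-dual under $(\cdot)^{\vee}$. Therefore $F_2(\mathbf{P}) \cong \hat{\pi}(P_\bullet^{\vee}) \cong \hat{\pi}(P_\bullet) \cong \widehat{SW}_{\nu, V}(\mathbf{P})$ as well.

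To upgrade this pointwise agreement to a natural isomorphism I would proceed as follows. Pass to the Serre quotient $\overline{\pi}: \Dab \to \overline{\pi}(\Dab)$ through which $\widehat{SW}_{\nu, V}$ factors; by Theorem \ref{thrm:hat_overline_SW_equiv} the induced functor $\widehat{\overline{SW}}_{\nu}: \overline{\pi}(\Dab) \to \hat{\pi}(\co^{\mathfrak{p}}_{\nu, V})$ is an anti-equivalence of abelian categories. The duality $(\cdot)^*$ preserves $Ker(\widehat{SW}_{\nu})$ (which is generated by the simples $\mathbf{L}_i$ with $i \geq k_\lambda -1$, a set stable under $(\cdot)^*$ since $\mathbf{L}_i^* \cong \mathbf{L}_i$), so it descends to an exact contravariant involutive self-equivalence $\overline{(\cdot)^*}$ on $\overline{\pi}(\Dab)$. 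Transporting through $\widehat{\overline{SW}}_{\nu}$ yields a contravariant self-equivalence $G := \widehat{\overline{SW}}_{\nu} \circ \overline{(\cdot)^*} \circ \widehat{\overline{SW}}_{\nu}^{-1}$ of $\hat{\pi}(\co^{\mathfrak{p}}_{\nu, V})$. Both $G$ and $(\cdot)^{\vee}$ are exact contravariant involutive $\bC$-linear self-equivalences fixing every simple object and every indecomposable projective (by the computations above). Block by block, each non-trivial block of $\hat{\pi}(\co^{\mathfrak{p}}_{\nu, V})$ is a non-semisimple block of a highest weight category equivalent to one coming from quantum $SL_2$ (via the equivalence of Theorem \ref{thrm:blocks_ab_envelope} transported by $\widehat{\overline{SW}}_{\nu}$), and on such a block any two exact contravariant involutions fixing all simples up to isomorphism are canonically naturally isomorphic; in the semisimple blocks the statement is trivial. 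This produces a natural isomorphism $G \cong (\cdot)^{\vee}$, which, after composing back with $\widehat{\overline{SW}}_{\nu}$ and pulling back along $\overline{\pi}$, gives the desired natural isomorphism $F_1 \cong F_2$.

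The main obstacle I anticipate is the last step: verifying that the two candidate dualities on $\hat{\pi}(\co^{\mathfrak{p}}_{\nu, V})$ are canonically naturally isomorphic, rather than just pointwise isomorphic. I would handle this by working block by block using the explicit quiver presentation of the non-semisimple blocks (Theorem \ref{thrm:blocks_S_nu} together with the BGG-type description in Proposition \ref{prop:proj_parab_cat_O_struct}): a contravariant involutive self-equivalence of such a block fixing simples and projectives is determined, up to a scalar per block, by the induced action on $\End(\mathbf{P}_i)$; matching these scalars using the known action of $(\cdot)^{\vee}$ on the socle filtrations yields the required natural transformation, and since both candidate dualities agree on the generating projectives with morphisms spelled out by the quiver relations $\alpha_i \circ \alpha_{i-1} = 0$, $\beta_i \circ \beta_{i-1} = 0$, $\beta_i \circ \alpha_i = \alpha_{i+1} \circ \beta_{i+1}$, uniqueness up to canonical isomorphism follows.
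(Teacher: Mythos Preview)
Your proposal is correct and shares the paper's overall architecture: both arguments (i) verify that the two functors are exact, (ii) reduce to projectives, (iii) exploit the self-duality of the $\mathbf{P}_i \in \underline{Rep}(S_\nu)$ and of their images $SW_\nu(\mathbf{P}_i)$ under $(\cdot)^\vee$, and (iv) pin down the natural isomorphism block-by-block using the quiver description of morphisms among the projectives.

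The difference is one of packaging. The paper works directly in $\co^{\mathfrak p}_{\nu,V}$ and builds the isomorphisms $\theta_i: SW_\nu(\mathbf P_i^*)\to SW_\nu(\mathbf P_i)^\vee$ inductively along the chain of generating maps $\beta_i:\mathbf P_{i-1}\to\mathbf P_i$: one chooses $\theta_0$ arbitrarily, then at each step rescales $\theta_i$ so that the square with $SW_\nu(\beta_i^*)$ and $SW_\nu(\beta_i)^\vee$ commutes, using that the relevant Hom-space is at most one-dimensional; naturality on the remaining generators $\beta_i^*$ then follows because $\mathbf P_i^* = \mathbf P_i$ and $\theta_i = \theta_i^\vee$. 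Your route instead transports $(\cdot)^*$ through the anti-equivalence $\widehat{\overline{SW}}_\nu$ to a second contravariant self-equivalence $G$ on $\widehat{\co}^{\mathfrak p}_{\nu,V}$ and then argues $G\cong (\cdot)^\vee$. That is a legitimate reformulation, but the step you flag as the obstacle---showing that two exact contravariant involutions fixing all simples and projectives are naturally isomorphic---is not automatic and, when unpacked via the quiver, becomes exactly the paper's inductive rescaling argument. In particular, ``canonically'' is too strong: the isomorphism depends on the initial choice of $\theta_0$ in each block, so only existence (which is what the proposition asserts) is obtained. A minor imprecision: $SW_\nu(\mathbf P_{k_\lambda-1})\cong L_{k_\lambda-1}$ is simple rather than projective-injective, so your description of the pointwise step should say ``self-dual'' rather than ``projective-injective''; this does not affect the argument. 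In short, your approach is sound but the detour through $G$ buys nothing over the paper's direct construction, which is what your last paragraph ends up sketching anyway.
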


\begin{proof}
 First of all, notice that both sides are exact functors. Indeed, the duality functor on any abelian rigid monoidal category is exact, and $\widehat{SW}_{\nu}$ is a (contravariant) exact functor by Lemma \ref{lem:overline_SW_exact}, which implies that $\widehat{SW}_{\nu}(( \cdot )^*)$ is exact. 
 
 On the other hand, $( \cdot )^{\vee}$ is exact (c.f. \cite[Section 3.2]{H}), so an argument similar to the proof of Lemma \ref{lem:overline_SW_exact} shows that $\hat{\pi}(SW_{\nu}( \cdot )^{\vee})$ is exact as well. 
 
 Since any object in $\Dab$ has a projective resolution, it remains to establish a natural isomorphism between the two functors when we restrict ourselves to the full subcategory of projective objects in $\Dab$.
 

 We now use the fact that all projective objects in $\Dab$ are self-dual, since they lie in $\underline{Rep}(S_{\nu})$ (c.f. Proposition \ref{prop:proj_in_ab_envelope}). This allows us to construct the isomorphism between the two functors block-by-block. 
 
 Fix a block $\mathcal{B}_{\lambda}$ of $\Dab$. If this block is semisimple, then by Proposition \ref{prop:image_SW_ss_case}, there is nothing to prove.
 
 So we assume that the block $\mathcal{B}_{\lambda}$ is not semisimple, and use the same notation as in Subsection \ref{ssec:SW_duality_non_ss_block} for simple, standard, co-standard and projective objects in both $\mathcal{B}_{\lambda}$ and the corresponding block of $\co^{\mathfrak{p}}_{\nu, V}$. We will also denote by $Proj_{\lambda}$ the full subcategory of projective objects in $\mathcal{B}_{\lambda}$.

 For each $i \geq 1$, fix a non-zero morphism $\InnaB{\beta}_{i}: \mathbf{P}_{i-1} \rightarrow \mathbf{P}_{i}$; Proposition \ref{prop:obj_ab_env} tells us that we have an exact sequence 
  $$0 \rightarrow \mathbf{M}^*_{i-1}  \rightarrow \mathbf{P}_{i-1} \stackrel{\InnaB{\beta}_{i}}{\longrightarrow} \mathbf{P}_{i} $$
 
 Recall from Theorem \ref{thrm:blocks_S_nu} that such a morphism $\InnaB{\beta}_{i}$ is unique up to a non-zero scalar, and that the morphisms $\{ \InnaB{\beta}_{i}, \InnaB{\beta}_{i}^*\}_{i \geq 1}$ generate all the morphisms in $Proj_{\lambda}$.
 
 We construct isomorphisms $$\theta_{i}: SW_{\nu}(\mathbf{P}_i^*) \longrightarrow SW_{\nu}(\mathbf{P}_i)^{\vee}$$ iteratively (recall that such isomorphisms exist by Theorem \ref{thrm:images_SW}).
 
 We start by choosing any isomorphism $\theta_{0}: SW_{\nu}(\mathbf{P}_0^*) \rightarrow SW_{\nu}(\mathbf{P}_0)^{\vee}$; at the $i$-th step, we have already constucted $\theta_0, ..., \theta_{i-1}$, and we choose an isomorphism $\theta_{i}$ so that the diagram below is commutative: 
 $$\begin{CD}
 SW_{\nu}(\mathbf{P}_i^*) @>{\theta_{i}}>> SW_{\nu}(\mathbf{P}_i)^{\vee} \\
 @A{SW_{\nu}(\InnaB{\beta}_{i}^*)}AA @A{SW_{\nu}(\InnaB{\beta}_{i})^{\vee}}AA \\
  SW_{\nu}(\mathbf{P}_{i-1}^*) @>{\theta_{i-1}}>> SW_{\nu}(\mathbf{P}_{i-1})^{\vee}
   \end{CD}$$
 We now explain why it is possible to make such a choice of $\theta_{i}$. 
 
 Applying the left-exact (covariant) functors $SW_{\nu}(\cdot)^{\vee}, SW_{\nu}((\cdot)^*)$ to the exact sequence $$0 \rightarrow \mathbf{M}^*_{i-1}  \rightarrow \mathbf{P}_{i-1} \stackrel{\InnaB{\beta}_{i}}{\longrightarrow} \mathbf{P}_{i} $$
 and using Theorem \ref{thrm:images_SW}, we see that the maps $SW_{\nu}(\InnaB{\beta}_{i}^*), SW_{\nu}(\InnaB{\beta}_{i})$ are either simultaneously zero or simultaneously not zero. Since the space $$\Hom_{\co^{\mathfrak{p}}_{\nu, V}} (SW_{\nu}(\mathbf{P}_{i-1}^*), SW_{\nu}(\mathbf{P}_{i})^{\vee})$$ is at most one-dimensional (c.f. Theorem \ref{thrm:images_SW} and Proposition \ref{prop:proj_parab_cat_O_struct}), we can take $\theta_i$ to be any isomorphism $SW_{\nu}(\mathbf{P}_i^*) \rightarrow SW_{\nu}(\mathbf{P}_i)^{\vee}$, and then multiply it by a non-zero scalar to make the above diagram commutative.
 
 We now claim that the isomorphisms $\theta_{i}$ define a natural transformation. Since the morphisms $\{ \InnaB{\beta}_{i}, \InnaB{\beta}_{i}^*\}_{i \geq 1}$ generate all the morphisms in $Proj_{\lambda}$, we only need to check that for any $i \geq 1$, the following diagram is commutative:
  $$\begin{CD}
 SW_{\nu}(\mathbf{P}_i^*) @>{\theta_{i}}>> SW_{\nu}(\mathbf{P}_i)^{\vee} \\
 @V{SW_{\nu}(\InnaB{\beta}_{i})}VV @V{SW_{\nu}(\InnaB{\beta}_{i}^*)^{\vee}}VV \\
  SW_{\nu}(\mathbf{P}_{i-1}^*) @>{\theta_{i-1}}>> SW_{\nu}(\mathbf{P}_{i-1})^{\vee}
   \end{CD}$$
 The latter follows easily from the construction of $\theta_i$, together with the fact that $\mathbf{P}_i = \mathbf{P}_i^*$ (for any $i\geq 0$) and $\theta_i = \theta_i^{\vee}$.
 \end{proof}

}

\newpage
\appendix
\section{Proofs of Lemmas \ref{lem:complex_ten_power_action_well_def}, \ref{lem:tens_power_weak_torsion_free}, \ref{lem:gl_V_action_usual_tens_power}}\label{app:complex_ten_power_action_well_def}
\subsection{Action of $\gl(V)$ in Definition \ref{def:complex_ten_power_splitting}}\label{appssec:complex_ten_power_action_well_def}
\begin{lemma}
 The action of $\gl(V)$ described in Definition \ref{def:complex_ten_power_splitting} is well-defined.
\end{lemma}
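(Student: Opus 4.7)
The plan is to reduce well-definedness to two packages of checks: (i) that each of the four families of operators $\id_V$, $A \in \gl(U)$, $F_u$ ($u \in U$), $E_f$ ($f \in U^*$) genuinely defines a morphism in $Ind\text{-}\underline{Rep}(S_{\nu})$ between the invariant subspaces $(U^{\otimes k} \otimes \Del_k)^{S_k}$; and (ii) that the assignment respects the Lie bracket of $\gl(V)$ via its decomposition $\gl(V) \cong \bC\,\id_V \oplus \mathfrak{u}_{\mathfrak{p}}^{-} \oplus \mathfrak{u}_{\mathfrak{p}}^{+} \oplus \gl(U)$ (identified with $U$, $U^*$, $\gl(U)$ as in Section~\ref{sec:par_cat_o}). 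For (i), the nontrivial point is $S_{k+1}$-equivariance of $F_u = \tfrac{1}{k+1}\sum_l u^{(l)} \otimes res_l^*$ and $S_{k-1}$-equivariance of $E_f = \sum_l f^{(l)} \otimes res_l$. Both follow from the fact that the diagrams $res_l, res_l^*$ transform into one another by pre/post-composition with transpositions, exactly matching the rearrangement of the tensor factors $u^{(l)}$, $f^{(l)}$.

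For (ii), the commutators involving $\id_V$ are trivial because it acts by the scalar $\nu$, and the relations inside $\gl(U)$ are just the standard ones for the diagonal action on $U^{\otimes k}$ with $\Del_k$ a bystander. The cross-relations $[A, F_u] = F_{A u}$ and $[A, E_f] = E_{-f \circ A}$ for $A \in \gl(U)$ reduce, factor by factor in the sums defining $F_u$ and $E_f$, to the elementary identities $[A^{(i)}, u^{(l)}] = (A u)^{(l)} \delta_{i,l}$ and $[A^{(i)}, f^{(l)}] = -(f \circ A)^{(l)} \delta_{i,l}$. The nilpotent commutators $[F_u, F_{u'}] = 0$ and $[E_f, E_{f'}] = 0$ rest on the combinatorial identities $res_l^* \circ res_{l'}^* = res_{l'+1}^* \circ res_l^*$ (for $l \leq l'$) and the analogous identity for $res_l$, which are immediate from the diagrammatic descriptions in Definitions~\ref{def:res_morphisms} and~\ref{def:res_star_morphisms}; after relabeling indices the sums symmetrize.

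The main obstacle is the mixed commutator $[E_f, F_u]$, which must equal the action of $[f, u]_{\gl(V)}$. A direct calculation in $\gl(V)$ using the splitting gives $[f, u] = f(u)\,\id_V - T_{f,u} - f(u)\,\id_U$, so we must show, for every $k \geq 0$,
\[
(E_f F_u - F_u E_f)\big|_{(U^{\otimes k} \otimes \Del_k)^{S_k}} \;=\; \nu f(u) \,\id \;+\; \Bigl(-T_{f,u} - f(u)\,\id_U\Bigr)\Big|_{U^{\otimes k}} \otimes \id_{\Del_k}.
\]
To verify this, expand both products as double sums over pairs $(l,l')$ and split the $(l,l')$-terms into the diagonal case $l = l'$ and the off-diagonal case $l \neq l'$. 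In the off-diagonal case, the relations $res_{l'} \circ res_l^* = res_{l-1}^* \circ res_{l'}$ (for $l' < l$) and $res_{l'} \circ res_l^* = res_l^* \circ res_{l'-1}$ (for $l' > l$), combined with $f^{(l')} \circ u^{(l)} = u^{(l \mp 1)} \circ f^{(l')}$ after reindexing, produce exactly the cross-terms in $F_u E_f$; after accounting for the $\tfrac{1}{k+1}$ and $\tfrac{1}{k}$ normalizations, these off-diagonal contributions cancel except for a residual piece equal to $-T_{f,u}$ acting diagonally on $U^{\otimes k}$. In the diagonal case, $res_l \circ res_l^* = \id_{\Del_k}$ yields a contribution $f(u)\,\id$ with multiplicity $k+1$ from $E_f F_u$, while the terms $res_l^* \circ res_l$ in $F_u E_f$ produce a contribution that, after pairing with the $f(u)\,\id$ summands, accounts for the $\nu f(u)\,\id$ and the $-f(u)\,\id_U$ correction. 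The only place $\nu$ enters is through the composition rule in Lemma~\ref{lem:Delta_k_homs} applied to $res_l^* \circ res_l$, which is where one uses that $V^{\underline{\otimes} \nu}$ lives over the category $\underline{Rep}(S_\nu)$ at parameter $\nu$. Organizing the double sum by index shifts and carefully tracking the normalizing factors is the main bookkeeping step; once done, both sides match, completing the verification.
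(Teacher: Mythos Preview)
Your overall architecture matches the paper's: verify that each operator lands in the $S_k$-invariants, then check the bracket relations case by case, with the mixed commutator $[E_f,F_u]$ being the only substantive one. The equivariance argument and the relations $[A,F_u]=F_{Au}$, $[A,E_f]=E_{-f\circ A}$, $[F_{u_1},F_{u_2}]=0$, $[E_{f_1},E_{f_2}]=0$ go through as you say.

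The gap is in the mixed commutator, where two of your key identities are backwards. First, $res_l \circ res_l^*$ is \emph{not} $\id_{\Del_k}$: gluing the two diagrams leaves exactly one connected component entirely in the middle row, so by Lemma~\ref{lem:Delta_k_homs} (with $n(\rho,\pi)=1$ and $l(\tau)=k$) one gets $res_l \circ res_l^* = (\nu-k)\,\id_{\Del_k}$. This is precisely where $\nu$ enters, and it produces the $(\nu-k)f(u)\,\id$ term directly from the diagonal of $E_fF_u$. Second, the composition in the other order, $res_l^* \circ res_l$, has $n(\rho,\pi)=0$ (the middle row has only $k-1$ vertices, each of degree $2$), so no factor of $\nu$ appears there; instead it equals $\id_{\Del_k}$ plus the diagram with the $l$-th top and bottom vertices solitary. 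So your attribution of the $\nu$-dependence is inverted.

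This also means your account of where $-T_{f,u}$ comes from cannot be right as stated. What actually happens is that after you reindex the off-diagonal part of $E_fF_u$ using the (correct) relations $res_{l_2}\circ res_{l_1}^* = res_{l_1}^*\circ res_{l_2-1}$ (for $l_1<l_2$) and its mirror, the resulting sum over $\{1,\dots,k\}^2$ double-counts the diagonal $m_1=m_2$; these extra diagonal terms carry $u^{(m)}\circ f^{(m)}=T_{f,u}^{(m)}$ tensored with $res_m^*\circ res_m$, and it is \emph{this} surplus, together with the discrepancy between the normalizations $\tfrac{1}{k+1}$ and $\tfrac{1}{k}$, that must be reorganized on the $S_k$-invariants to yield $-\,T_{f,u}\rvert_{(U^{\otimes k}\otimes\Del_k)^{S_k}}$. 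The paper carries this out explicitly; your sketch does not, and with the diagonal formula wrong the bookkeeping cannot close.
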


\begin{proof}
Let $ u, u_1, u_2 \in U \cong \mathfrak{u}_{\mathfrak{p}}^{-}, f, f_1, f_2 \in U^* \cong \mathfrak{u}_{\mathfrak{p}}^{+}, A \in \gl(U)$. We have to check that the morphisms in $\underline{Rep}(S_{\nu})$ by which $\InnaA{u, f, A}$ act are well-defined and satisfy the same commutation relations as do $\InnaA{u, f, A} \in \gl(V)$.

The first claim is obvious for the actions of $f$ and $A$ and one only needs to check that the image of $(U^{\otimes k} \otimes \Del_k)^{S_k}$ under $\frac{1}{k+1}\sum_{1 \leq l \leq k+1} u^{(l)} \otimes res_{l}^*$ is $S_{k+1}$-invariant. For this, we will prove
\InnaA{\begin{lemma}\label{applem:res_commutant_S_k_projection}
 Let $\sigma \in S_{k+1}, l \in \{1,...,k+1\}$. Then there exists $\rho_l(\sigma) \in S_k$ such that $$ \sigma \circ res_{l}^*  = res_{\sigma(l)}^*\circ \rho_l(\sigma), \sigma \circ u^{(l)}  = u^{(\sigma(l))}\circ \rho_l(\sigma)$$
\end{lemma}
\begin{proof}
We define the permutation $\rho_l(\sigma)$ to be the diagram in \InnaA{$\bar{P}_{k, k}$ constructed as follows.

Consider the diagram $\sigma \in \bar{P}_{k+1, k+1}$. Remove vertex $l$ in its top row, vertex $\sigma(l)$ in the bottom row, as well as the edge connecting these vertices. The obtained diagram will lie in $\bar{P}_{k, k}$ and will have no solitary vertices; thus it represents a permutation in $S_k$.}

The diagram obtained is the same we would get by considering the diagram for $\sigma \circ res_{l}^* \in \bar{P}_{k, k+1}$, and removing the unique solitary vertex $\sigma(l)$ from the bottom row of $\sigma \circ res_{l}^*$. From this construction we immediately get: $ \sigma \circ res_{l}^*  = res_{\sigma(l)}^*\circ \rho_l(\sigma)$.
One then easily sees that $\sigma \circ u^{(l)}  = u^{(\sigma(l))}\circ \rho_l(\sigma)$ holds as well.
\end{proof}

We now see that for any $\sigma \in S_{k+1}$, $$\frac{1}{k+1}\sum_{1 \leq l \leq k+1} (\sigma \circ u^{(l)}) \otimes (\sigma \circ res_{l}^*) = \frac{1}{k+1}\sum_{1 \leq l \leq k+1} (u^{(\sigma(l))}\circ \rho_l(\sigma)) \otimes (res_{\sigma(l)}^*\circ \rho_l(\sigma))  $$
Restricted to $(U^{\otimes k} \otimes \Del_k)^{S_k}$, the latter morphism equals $$\frac{1}{k+1}\sum_{1 \leq l \leq k+1} u^{(\sigma(l))} \otimes res_{\sigma(l)}^* = \frac{1}{k+1}\sum_{1 \leq l \leq k+1} u^{(l)} \otimes res_{l}^*$$ as wanted.}

Moving on to the commutation relations, one only needs to check that the following commutation relations between operators on $(U^{\otimes k} \otimes \Del_k)^{S_k}$ hold (the rest are obvious):
\begin{enumerate}[leftmargin=*, label=(\alph*)]
 \item $$\frac{1}{(k+1)(k+2)}\sum_{\substack{1 \leq l_1 \leq k+1 \\ 1 \leq l_2 \leq k+2}} (u_2^{(l_2)} \circ u_1^{(l_1)}) \otimes (res_{l_2}^* \circ res_{l_1}^*) \stackrel{?}{=} \frac{1}{(k+1)(k+2)}\sum_{\substack{1 \leq l_2 \leq k+1 \\ 1 \leq l_1 \leq k+2}} (u_1^{(l_1)} \circ u_2^{(l_2)}) \otimes (res_{l_1}^* \circ res_{l_2}^*)
$$
\item $$
\sum_{\substack{1 \leq l_1 \leq k \\ 1 \leq l_2 \leq k-1}} (f_2^{(l_2)} \circ f_1^{(l_1)}) \otimes (res_{l_2} \circ res_{l_1}) \stackrel{?}{=} \sum_{\substack{1 \leq l_2 \leq k \\ 1 \leq l_1 \leq k-1}} (f_1^{(l_1)} \circ f_2^{(l_2)}) \otimes (res_{l_1} \circ res_{l_2})
$$
\item 
\begin{align*}
&\frac{1}{(k+1)}\sum_{1 \leq l_1, l_2 \leq k+1} (f^{(l_2)} \circ u^{(l_1)}) \otimes (res_{l_2} \circ res_{l_1}^*) \stackrel{?}{=} \frac{1}{k}\sum_{1 \leq l_1,l_2 \leq k} (u^{(l_1)} \circ f^{(l_2)}) \otimes (res_{l_1}^* \circ res_{l_2}) +\\
&+(\nu -k) f(u) \InnaA{\id_{(U^{\otimes k} \otimes \Del_k)^{S_k}} - T_{f,u}\rvert_{(U^{\otimes k} \otimes \Del_k)^{S_k}}}
\end{align*}
\end{enumerate}


These identities are proved below.

\begin{enumerate}[leftmargin=*, label=(\alph*)]
  \item The claim follows immediately from the following easy computations (consequences of Lemma \ref{lem:Delta_k_homs}):
\begin{itemize}[leftmargin=*]

\item For $1 \leq l_1 < l_2 \leq k+2$,
$$ res_{l_2}^* \circ res_{l_1}^* = res_{l_1}^* \circ res_{l_2-1}^* $$ as operators on $\Del_k$.
We also have $ u_2^{(l_2)} \circ u_1^{(l_1)} = u_1^{(l_1)} \circ u_2^{(l_2-1)}$.

\item For $k+1 \geq l_1 \geq l_2 \geq 1$,
$$ res_{l_2}^* \circ res_{l_1}^* = res_{l_1+1}^* \circ res_{l_2}^*$$ as operators on $\Del_k$.
We also have $u_2^{(l_2)} \circ u_1^{(l_1)} = u_1^{(l_1+1)} \circ u_2^{(l_2)}$.
\end{itemize}
\item The claim follows immediately from the following easy computations (consequences of Lemma \ref{lem:Delta_k_homs}):
\begin{itemize}[leftmargin=*]
\item For $1 \leq l_1 \leq l_2 \leq k-1$,
$$ res_{l_2} \circ res_{l_1} = res_{l_1} \circ res_{l_2+1} $$ as operators on $\Del_k$.
We also have $ f_2^{(l_2)} \circ f_1^{(l_1)} = f_1^{(l_1)} \circ f_2^{(l_2+1)}$.

\item For $k \geq l_1 > l_2 \geq 1$,
$$ res_{l_2} \circ res_{l_1} = res_{l_1-1} \circ res_{l_2}$$ as operators on $\Del_k$.
We also have $f_2^{(l_2)} \circ f_1^{(l_1)} = f_1^{(l_1-1)} \circ f_2^{(l_2)}$.
\end{itemize}

\item
  We have:
\begin{itemize}[leftmargin=*]
 \item For any $1\leq l \leq k+1$, $ res_{l} \circ res_{l}^* = (\nu -k) \id_{\Del_k}$ and thus $$ (f^{(l)} \circ u^{(l)}) \otimes (res_{l} \circ res_{l}^* ) = (\nu -k) f(u) \id_{(U^{\otimes k} \otimes \Del_k)^{S_k}}$$

\item For $1 \leq l_1 < l_2 \leq k+1$,
$$ res_{l_2} \circ res_{l_1}^* = res_{l_1}^* \circ res_{l_2-1} - C_{(l_1, ..., l_2-1)}$$ as operators on $\Del_k$, where $C_{(l_1, ..., l_2-1)}: \Delta_k \rightarrow \Del_k$ is the action of the cycle $C_{(l_1, ..., l_2-1)} \in S_k$ on $\Del_k$.
We also have $$ f^{(l_2)} \circ u^{(l_1)} = u^{(l_1)} \circ f^{(l_2-1)} = T_{f,u}^{(l_1)} \circ C_{(l_1, ..., l_2-1)}$$
Thus
\begin{align*}
& (f^{(l_2)} \circ u^{(l_1)}) \otimes (res_{l_2} \circ res_{l_1}^*) = (u^{(l_1)} \circ f^{(l_2-1)}) \otimes (res_{l_1}^* \circ res_{l_2-1}) -\\
&- (T_{f,u}^{(l_1)} \circ C_{(l_1, ..., l_2-1)} ) \otimes C_{(l_1, ..., l_2-1)}
\end{align*}
as operators on $U^{\otimes k} \otimes \Del_k$.

Finally, note that $res_{l_1}^* \circ res_{l_2-1} \circ C_{(l_1, ..., l_2-1)}^{-1} = res_{l_1}^* \circ res_{l_1}$.

\item For $k+1 \geq l_1 > l_2 \geq 1$, $ res_{l_2} \circ res_{l_1}^* = res_{l_1-1}^* \circ res_{l_2} - C_{(l_2, ..., l_1-1)}^{-1}$ as operators on $\Del_k$, where $C_{(l_2, ..., l_1-1)}: \Delta_k \rightarrow \Del_k$ is the action of the cycle $C_{(l_2, ..., l_1-1)} \in S_k$ on $\Del_k$.
We also have $$ f^{(l_2)} \circ u^{(l_1)} = u^{(l_1-1)} \circ f^{(l_2)} = T_{f,u} \circ C_{(l_2, ..., l_1-1)}^{-1}$$
Thus
\begin{align*}
& (f^{(l_2)} \circ u^{(l_1)}) \otimes (res_{l_2} \circ res_{l_1}^*) = (u^{(l_1-1)} \circ f^{(l_2)}) \otimes (res_{l_1-1}^* \circ res_{l_2}) -\\
&- (T_{f,u}^{(l_1-1)} \circ C_{(l_2, ..., l_1-1)}^{-1} ) \otimes C_{(l_2, ..., l_1-1)}^{-1}
\end{align*}
as operators on $U^{\otimes k} \otimes \Del_k$.

Finally, note that $res_{l_1-1}^* \circ res_{l_2} \circ C_{(l_1, ..., l_2-1)} = res_{l_1-1}^* \circ res_{l_1-1}$.
\end{itemize}

Together these imply the following identities of operators on $(U^{\otimes k} \otimes \Del_k)^{S_k}$:
\begin{align*}
&\frac{1}{(k+1)}\sum_{1 \leq l_1, l_2 \leq k+1} (f^{(l_2)} \circ u^{(l_1)}) \otimes (res_{l_2} \circ res_{l_1}^*) = (\nu -k) f(u) \id_{(U^{\otimes k} \otimes \Del_k)^{S_k}}+\\
&+ \frac{1}{(k+1)}\sum_{1 \leq l_1 < l_2 \leq k+1}  (u^{(l_1)} \circ f^{(l_2-1)}) \otimes (res_{l_1}^* \circ res_{l_2-1})
- (T_{f,u}^{(l_1)} \circ C_{(l_1, ..., l_2-1)} ) \otimes C_{(l_1, ..., l_2-1)} +\\
&+ \frac{1}{(k+1)}\sum_{1 \leq l_2 < l_1 \leq k+1} (u^{(l_1-1)} \circ f^{(l_2)}) \otimes (res_{l_1-1}^* \circ res_{l_2})
- (T_{f,u}^{(l_1-1)} \circ C_{(l_2, ..., l_1-1)}^{-1} ) \otimes C_{(l_2, ..., l_1-1)}^{-1} =\\
&= (\nu -k) f(u) \id_{(U^{\otimes k} \otimes \Del_k)^{S_k}} - T_{f,u} \rvert_{(U^{\otimes k} \otimes \Del_k)^{S_k}} +\\
&+ \frac{1}{(k+1)}\sum_{1 \leq l_1, l_2 \leq k}  (u^{(l_1)} \circ f^{(l_2)}) \otimes (res_{l_1}^* \circ res_{l_2})
 + \frac{1}{(k+1)}\sum_{1 \leq l_1 \leq k+1} (u^{(l_1)} \circ f^{(l_1)}) \otimes (res_{l_1}^* \circ res_{l_1})=\\
&=(\nu -k) f(u) \id_{(U^{\otimes k} \otimes \Del_k)^{S_k}}- T_{f,u} \rvert_{(U^{\otimes k} \otimes \Del_k)^{S_k}} +\\
&+ \frac{1}{(k+1)}\sum_{1 \leq l_1, l_2 \leq k}  (u^{(l_1)} \circ f^{(l_2)}) \otimes (res_{l_1}^* \circ res_{l_2})
 + \frac{1}{k(k+1)}\sum_{1 \leq l_1, l_2 \leq k}  (u^{(l_1)} \circ f^{(l_2)}) \otimes (res_{l_1}^* \circ res_{l_2})=\\
&=\frac{1}{k}\sum_{1 \leq l_1,l_2 \leq k} (u^{(l_1)} \circ f^{(l_2)}) \otimes (res_{l_1}^* \circ res_{l_2})  +(\nu -k) f(u) \id_{(U^{\otimes k} \otimes \Del_k)^{S_k}} - T_{f,u}\rvert_{(U^{\otimes k} \otimes \Del_k)^{S_k}}
\end{align*}

\end{enumerate}
\end{proof}

\subsection{Proof of Lemma \ref{lem:tens_power_weak_torsion_free}}\label{appssec:tens_power_weak_torsion_free}
\begin{lemma}\label{applem:tens_power_weak_torsion_free}
Let \InnaA{$l \leq k$, and consider a non-zero morphism in $\underline{Rep}(S_{\nu})$} $$\phi: U^{\otimes l} \otimes \Del_l \longrightarrow U^{\otimes k} \otimes \Del_k $$ Let $u \in U \cong \mathfrak{u}_{\mathfrak{p}}^{-}, u \neq 0$.
Then $F_u \circ \phi \neq 0$, where $F_u \circ \phi := \frac{1}{k+1}\sum_{1 \leq l \leq k+1} (u^{(l)} \otimes res_l^*) \circ \phi$.
\end{lemma}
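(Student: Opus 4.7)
\emph{Plan.} The strategy is to reduce the statement to a classical injectivity claim about an ``insertion'' operator on tensor powers, via a Zariski-density argument in the parameter $\nu$ combined with the specialization functors $\mathcal{S}_n$.

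First I would observe that, using the basis of $\Hom(\Delta_r,\Delta_s) \cong \bC \bar{P}_{r,s}$ from Lemma \ref{lem:Delta_k_homs}, the composition $F_u \circ \phi$ lives in a finite-dimensional vector space (tensored with the $\nu$-independent factor $U^{*\otimes l} \otimes U^{\otimes(k+1)}$) whose coordinates in this basis are polynomials in $\nu$. Hence the condition ``$F_u \circ \phi \neq 0$'' is Zariski-open in $\nu$, so it suffices to verify it when $\nu = n$ ranges over an infinite set of positive integers. Taking $n$ large enough that $U^{\otimes l} \otimes \Delta_l$ and $U^{\otimes(k+1)} \otimes \Delta_{k+1}$ lie in $\underline{Rep}(S_{\nu=n})^{(n/2)}$ ensures that $\mathcal{S}_n$ is fully faithful on them. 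Since the diagrammatic formula for $F_u$ is intrinsic to the tensor category, $\mathcal{S}_n(F_u \circ \phi) = F_u \circ \mathcal{S}_n(\phi)$, with the right-hand $F_u$ now the classical operator $\tfrac{1}{k+1}\sum_i u^{(i)} \otimes \mathtt{res}_i^*$ on $U^{\otimes k} \otimes \bC\, Inj(\{1,\ldots,k\},\{1,\ldots,n\})$. The problem thus reduces to showing this classical operator is injective, given that $\mathcal{S}_n(\phi) \neq 0$.

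To prove classical injectivity, I would construct an explicit left inverse. Choose $f \in U^*$ with $f(u) = 1$ and set $E_f := \sum_{j=1}^{k+1} f^{(j)} \otimes \mathtt{res}_j$. A direct computation using the identities $\mathtt{res}_i \circ \mathtt{res}_i^* = (n-k)\,\mathrm{id}$ and the commutation relations for $\mathtt{res}_j \circ \mathtt{res}_i^*$ with $i \neq j$ (classical analogues of the identities established in the proof of Lemma \ref{lem:complex_ten_power_action_well_def}(c)) gives $E_f \circ F_u = (n-k)\,\mathrm{id} + R$, where $R$ is a linear combination of operators that either factor through $U^{\otimes(k-1)} \otimes \bC\, Inj(\{1,\ldots,k-1\},\{1,\ldots,n\})$ or are ``permutation-type'' operators built from a non-trivial $\sigma \in S_k$ together with some $T_{f,u}^{(p)}$. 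For $n > 2k$ the scalar $(n-k)$ dominates the rank of $R$, yielding injectivity; alternatively, one can argue directly, choosing a basis of $U$ containing $u$ and checking on each basis element that the image under $F_u$ is nonzero by reading off an appropriate coefficient.

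The main obstacle is the classical injectivity step, which although elementary requires careful diagrammatic bookkeeping to control the remainder $R$ (and specifically to ensure that the permutation-type terms cannot conspire to cancel the leading $(n-k)\,\mathrm{id}$ on the image of $\mathcal{S}_n(\phi)$). The Zariski-density reduction to integer $\nu$ is conceptually clean, but implicitly uses the explicit polynomial description of the composition in $\underline{Rep}(S_\nu)$ developed in \cite{CO}.
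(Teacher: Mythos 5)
There is a genuine gap, and it comes in two places.

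First, the Zariski‑density reduction is not sound as stated. You observe that the coordinates of $F_u\circ\phi$ in the basis $\bC\bar P_{l,k+1}\otimes U^{\otimes(k+1)}\otimes U^{*\otimes l}$ are polynomial in $\nu$, and then conclude that nonvanishing is Zariski‑open ``so it suffices to verify it when $\nu=n$ ranges over an infinite set of positive integers.'' But that only shows the polynomial is not identically zero, hence that $F_u\circ\phi\neq 0$ for all but finitely many $\nu$. The lemma is needed for \emph{every} $\nu\in\bC$ (it is invoked, for instance, in Lemma~\ref{lem:maps_L_0} at the specific value of $\nu$ under consideration). Generic nonvanishing does not give nonvanishing at a fixed $\nu$. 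What actually rescues the reduction is a stronger fact that the paper's proof exploits implicitly but your write‑up never establishes: because $res_s^*$ has no solitary vertices in its source row, the composition $res_s^*\circ\pi$ has $n(res_s^*,\pi)=0$, and the map $\phi\mapsto F_u\circ\phi$ is in fact \emph{independent} of $\nu$ on the level of the $\bar P$‑bases. If you want to specialize to integer $n$, you need to say this explicitly; Zariski‑openness alone does not suffice.

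Second, and more seriously, the classical injectivity step is not actually proved. The claim $E_f\circ F_u=(n-k)\,\mathrm{id}+R$ with $R$ ``dominated in rank'' by the scalar $(n-k)$ does not yield an argument: $R=F_uE_f-T_{f,u}\rvert$ is not a low‑rank operator, and ``a scalar dominates a rank'' is not a meaningful estimate. What is true is that $(n-k)\,\mathrm{id}-T_{f,u}\rvert$ is invertible for $n>2k$ (its eigenvalues are $n-k,\dots,n-2k$), so $F_u$ is injective on $\ker E_f$; but the image of $\mathcal S_n(\phi)$ is not contained in $\ker E_f$, and your argument never controls the $F_uE_f$ term on that image. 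You flag this yourself as ``the main obstacle,'' but the proposed remedy (``choosing a basis of $U$ containing $u$ and checking\ldots by reading off an appropriate coefficient'') is exactly where the real work lies and is left entirely unperformed. The paper's proof does precisely this coefficient‑reading, but it requires a nontrivial two‑parameter descending induction on the statistics $m(x)$ (longest run of consecutive solitary bottom vertices of the diagram $x$) and $M(I,x)$ (longest run of consecutive occurrences of $u$ inside that window); the key observation is that inserting a solitary vertex paired with an insertion of $u$ can only \emph{increase} these statistics, so the coefficient at the maximal pair is unmixed. None of this combinatorial mechanism appears in your proposal. Absent it, the proof is not complete.
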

\begin{proof}
  Recall from Lemma \ref{lem:Delta_k_homs} that $$\Hom_{\underline{Rep}(S_{\nu})} (\Del_l,\Del_k) := \bC \bar{P}_{l,k}$$ where $ \bar{P}_{l,k}$ is the set of partitions of $\{1,..,l, 1',...,k'\}$ into disjoint subsets such that $i, j$ do not lie in the same subset, and neither do $i', j'$, for any $i \neq j, i' \neq j'$.

%
  So $$\Hom_{\underline{Rep}(S_{\nu})} (U^{\otimes l} \otimes \Del_l, U^{\otimes k} \otimes \Del_k) \cong \bC \bar{P}_{l,k} \otimes U^{\otimes k} \otimes {U^*}^{\otimes l}$$

  We now \InnaA{study} the map $$F_u \circ (\cdot): \bC \bar{P}_{l,k} \otimes U^{\otimes k} \otimes {U^*}^{\otimes l} \rightarrow \bC \bar{P}_{l,k+1} \otimes U^{\otimes k+1} \otimes {U^*}^{\otimes l}$$
  By definition of $F_u$, we know that $$F_u \circ (x \otimes u_1 \otimes ...\otimes u_k \otimes f_1 \otimes ...\otimes f_l) = \frac{1}{k+1}\sum_{1 \leq s \leq k+1} res_s^*(x) \otimes u_1 \otimes ...\otimes u_{s-1} \otimes u \otimes u_s \otimes ... \otimes u_k \otimes f_1 \otimes ...\otimes f_l$$ where $x \in \bar{P}_{l,k}, u_1, ..., u_k \in U, f_1 , ..., f_l \in U^*$.

  As we said before, we can consider $\phi$ as an element of $\bC \bar{P}_{l,k} \otimes U^{\otimes k} \otimes {U^*}^{\otimes l}$.

  Let $N:=\dim U$, and choose a basis $u_1, ..., u_N$ of $ U$ such that $u_1 =u$. Then we can write $$\phi = \sum_{\substack{x \in \bar{P}_{l,k},\\ i_1,...,i_k \in \{1,...,N\}}} \alpha_{x, I} x \otimes u_{i_1} \otimes ...\otimes u_{i_k}$$ where $I$ denotes the sequence $(i_1,...,i_k)$ and $ \alpha_{x, I} \in {U^*}^{\otimes l}$.

  Now assume $F_u \circ \phi=0$, i.e. $$\sum_{1 \leq s \leq k+1} \sum_{ \substack{x \in \bar{P}_{l,k},\\ i_1,...,i_k \in \{1,...,N\}}} \alpha_{x, I} res_s^*(x) \otimes u_{i_1} \otimes ...\otimes u_{i_{s-1}} \otimes u \otimes u_{i_s} \otimes ...\otimes u_{i_k} =0$$

  So for any $y \in \bar{P}_{l,k+1}$, and any sequence $J = (j_1,...,j_{k+1})$ (here $j_1,..,j_{k+1} \in \{1,...,N\}$), we have $$\sum_{\InnaA{\substack{\text{triples } (x, I, s): \\ 1 \leq s \leq k+1, \: res_s^*(x)=y, \: j_s =1, \: I=(j_1,...,j_{s-1}, j_{s+1},...,j_{k+1}) }}}\alpha_{x, I} = 0$$

  We will now show that this implies that $\alpha_{x, I} =0$ for any $x \in \bar{P}_{l,k}, I = (i_1,...,i_k), i_1,...,i_k \in \{1,...,N\}$, which will mean that $\phi=0$ and thus lead to a contradiction.

  For our convenience, let us denote by $Ins_s(I)$ the sequence $(i_1,..., i_{s-1}, 1, i_s, ...,i_k)$ ($1$ inserted in the $s$-th place). We will also use the following notation:
  \begin{itemize}
   \item For $x \in \bar{P}_{l,k}$, consider the longest sequence of consecutive solitary vertices in the bottom row of the diagram of $x$ (\InnaA{if there are several such sequences of maximal length, choose the first one}).

   Denote the length of this sequence by $m(x)$. Let $j_x$ be such that $j_x+1$ is the first element of this sequence (if this sequence is empty, then put $j_x:=1$).

   So this sequence of solitary vertices in $x$ is $\{j_x+1, j_x+2, ..., j_x+m(x)\}$.

   \item Let $x \in \bar{P}_{l,k}, I = (i_1,...,i_k), i_1,...,i_k \in \{1,...,N\}$. Consider the sequence $(i_{j_x+1}, i_{j_x+2}, ..., i_{j_x+m(x)})$ and inside it the longest segment of consecutive occurrences of $1$ (\InnaA{if there are several such segments of maximal length, choose the first one}).

   Denote the length of this segment by $M(I,x)$. Let $j_{I,x}$ be such that $j_{I, x}+1$ is the position of the first element of this segment (if this segment is empty, i.e. $M(I,x)=0$, then put $j_{I,x}:=j_x$).

  \end{itemize}

  \InnaA{
  We now rewrite the equality we obtained above: for any triple $x, I, s$ where $x \in \bar{P}_{l,k}$, $I$ is a sequence of length $k$ with entries in $\{1,...,N\}$, and $1 \leq s \leq k+1$, we have:  
  $$\sum_{\substack{\text{triples } (x', I', s'): \\ 1 \leq s' \leq k+1, \: res_{s'}^*(x')=res_s^*(x), \: Ins_{s'}(I') =Ins_s(I)}}\alpha_{x', I'} = 0$$
We will now use two-fold descending induction on the values $m(x), M(I, x)$ to prove that $\alpha_{x, I} =0$ for any $x \in \bar{P}_{l,k}$, and any sequence $I$ of length $k$ with entries in $\{1,...,N\}$.}

 Base: Let $x, I$ such that $m(x)=k, M(I, x)=k$. Then the bottom row of $x$ consists of solitary vertices, and $I$ consists only of $1$'s. Now choose any $s \in \{1,...,k+1\}$. Then, by definition, the bottom row of $res_s^*(x)$ consists of solitary vertices, and $Ins_s(I)$ consists only of $1$'s.

 \InnaA{Then for any triple} $(x', I', s')$ which satisfies $res_{s'}^*(x')=res_s^*(x), Ins_{s'}(I') =Ins_s(I)$, we will have $x'=x$, $I'=I$. The above equality then implies that $\alpha_{x, I} =0$.

 Step: Let $0\leq M, m \leq k$, and $M+m <2k$. Assume $\alpha_{x, I} =0$ for any $x, I$ such that either $m(x)> m$, or $m(x)=m,  M(I, x) >M$.

 Let $x, I$ be such that $m(x)=m,  M(I, x)=M$. Set $s:=j_{I,x}+1$.

 All we have to do is prove the following Sublemma, and we are done.
 \begin{sublemma}
   Let $(x', I', s')$ be a triple which satisfies $res_{s'}^*(x')=res_s^*(x), Ins_{s'}(I') =Ins_s(I)$. Then one of the following statements holds:
  \begin{itemize}
   \item $m(x') > m(x)$,
   \item $x' =x, M(I', x) > M(I, x)$,
   \item $x'=x, I'=I$.
  \end{itemize}
 \end{sublemma}
\begin{proof}

By definition, $res_s^*(x)$ has a sequence of $m(x)+1$ consecutive solitary vertices. We assumed that $res_{s'}^*(x')=res_s^*(x)$, so $x'$ is obtained by removal of the $s'$-th vertex from the bottom row of $res_s^*(x)$. So either $x'$ has a sequence of $m(x)+1$ consecutive solitary vertices, i.e. $m(x')=m(x)+1$, or we are removing one of the $m(x)+1$ consecutive solitary vertices of $res_s^*(x)$, which means that $x'=x$.

Now, assume $x'=x$, and use a similar argument for $I, I'$. By definition, the sequence $Ins_s(I)$ has a segment of $ M(I, x)+1$ consecutive occurrences of $1$. Again, we assumed that $Ins_{s'}(I') =Ins_s(I)$, so $I'$ is obtained by removal of the $s'$-th element of the sequence $Ins_s(I)$. So either $I'$ has a segment of $ M(I, x)+1$ consecutive occurrences of $1$, i.e. $M(I', x)= M(I, x)+1$, or $I'=I$.
\end{proof}

\end{proof}

\subsection{Proof of Lemma \ref{lem:gl_V_action_usual_tens_power}}\label{appssec:action_on_usual_tensor_power_lemmas}
\InnaA{
Let $V \cong \InnaB{\bC \triv} \oplus U$ be a split unital finite-dimensional vector space. We will use the same notations as in Section \ref{ssec:comp_tens_power_compatible_classical}.

The following two technical lemmas will be used to prove Lemma \ref{applem:gl_V_action_usual_tens_power}.
 \begin{lemma}\label{applem:action_on_usual_tensor_power1}
   Let $k \in \{0, ..., n-1\}, \{j_1 < j_2 <... <j_k\} \subset \{1, ..., n\}$, $u \in U$, $v_{j_1}, v_{j_2}, ..., v_{j_k} \in U$, and let $f_{j_1 < j_2 < ...<j_k}$ be the map $\{1,...,k\} \rightarrow \{1, ..., n\}$ taking $i$ to $j_i$. Then 
   \begin{align*}
  &e_{S_{k+1}}\left(\sum_{ 1\leq l \leq k+1} \sum_{\substack {g \in Inj(\{1,...,k+1\} , \{1,...,n\}):\\ g \circ \iota_l =f_{j_1 < j_2 < ...<j_k}\\ g \text{ monotone increasing} }}  u^{(l)}.(v_{j_1} \otimes v_{j_2} \otimes ...\otimes v_{j_k}) \otimes g \right) =\\
  &=\frac{1}{(k+1)!}\sum_{ 1\leq l \leq k+1} \sum_{\sigma \in S_k}  (u^{(l)}\circ \sigma)(v_{j_1} \otimes v_{j_2} \otimes ...\otimes v_{j_k}) \otimes ( \mathtt{res}_l^*\circ \sigma)(f_{j_1 < j_2 < ...<j_k})
   \end{align*}
 \end{lemma}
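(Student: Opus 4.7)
My strategy will be to evaluate both sides of the identity as explicit linear combinations indexed by injections $h: \{1,\dots,k+1\} \to \{1,\dots,n\}$ whose image contains $\{j_1,\dots,j_k\}$, and verify that the two expressions agree term by term. The key auxiliary object will be the tensor
\[
T_h := w_1 \otimes \cdots \otimes w_{k+1} \in U^{\otimes(k+1)}, \quad w_i := \begin{cases} u &\text{if } h(i) \notin \{j_1,\dots,j_k\}, \\ v_{h(i)} &\text{else,} \end{cases}
\]
defined for any injection $h$ with $\operatorname{Im}(h) \supset \{j_1,\dots,j_k\}$.

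First I will handle the LHS. For fixed $l$, a monotone increasing $g$ with $g \circ \iota_l = f_{j_1<\cdots<j_k}$ is uniquely determined by its image $\{j_1,\dots,j_k\} \cup \{m\}$ for some $m \notin \{j_1,\dots,j_k\}$, and $l$ is forced to be $g^{-1}(m)$. A direct unpacking of the definition of $u^{(l)}$ in Notation~\ref{notn:action_on_tes_powers}(3) shows that, for this distinguished $l$, one has $u^{(l)}.(v_{j_1}\otimes \cdots \otimes v_{j_k}) = T_g$. Hence the inner sum in the LHS equals $\sum_{g \text{ mono.}, \operatorname{Im}(g) \supset \{j_1,\dots,j_k\}} T_g \otimes g$.

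Next I will apply $e_{S_{k+1}} = \frac{1}{(k+1)!}\sum_{\tau \in S_{k+1}} \tau$. Using the commutation relation $\tau \circ u^{(l)} = u^{(\tau(l))}\circ \rho_l(\tau)$ (the $S_{k+1}$-analogue of Lemma~\ref{applem:res_commutant_S_k_projection}) together with the natural action $\tau.g = g \circ \tau^{-1}$ on injections, I will verify the equivariance $\tau.(T_h \otimes h) = T_{\tau.h} \otimes \tau.h$. Since the $S_{k+1}$-orbit of a fixed monotone $h$ with image $\{j_1,\dots,j_k,m\}$ consists of all injections with that image (each appearing exactly once), summing over monotone $h$ and then over $\tau$ yields
\[
\text{LHS} = \frac{1}{(k+1)!}\sum_{h \text{ inj.},\, \operatorname{Im}(h)\supset \{j_1,\dots,j_k\}} T_h \otimes h.
\]

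For the RHS I will proceed analogously. Expanding $(\mathtt{res}_l^*\circ\sigma)(f_{j_1<\cdots<j_k}) = \sum_{g \circ \iota_l = \sigma.f} g$ (cf.\ Remark~\ref{rmrk:res_l_image_under_S_n_functor}) parametrizes each occurrence by a triple $(l,\sigma,j) \in \{1,\dots,k+1\} \times S_k \times (\{1,\dots,n\}\setminus\{j_1,\dots,j_k\})$, producing a unique injection $g_{l,\sigma,j}$ with $g_{l,\sigma,j}(l)=j$ and $g_{l,\sigma,j}\circ\iota_l = \sigma.f$. A short computation from the definitions will give $(u^{(l)}\circ\sigma)(v_{j_1}\otimes\cdots\otimes v_{j_k}) = T_{g_{l,\sigma,j}}$. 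Finally I will observe that the assignment $(l,\sigma,j) \mapsto g_{l,\sigma,j}$ is a bijection onto the same index set (both sides have cardinality $(k+1)!\,(n-k)$; injectivity is immediate, since $g_{l,\sigma,j}$ determines $l$ as the preimage of $j$, hence determines $j$ and $\sigma$). Consequently the RHS also equals $\frac{1}{(k+1)!}\sum_h T_h \otimes h$, and the lemma follows.

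The main obstacle is purely bookkeeping: carefully verifying the two key identifications $u^{(l_g)}.x = T_g$ and $u^{(l)}.(\sigma.x) = T_{g_{l,\sigma,j}}$, and confirming the bijection $(l,\sigma,j) \leftrightarrow g_{l,\sigma,j}$. No conceptual difficulty is present; the content is that the symmetrization on the LHS reorganizes the sum over monotone $g$'s into the sum over all injections in the same image-orbit, which matches precisely the $S_k$-sum on the RHS.
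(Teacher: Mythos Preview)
Your proof is correct and arguably cleaner than the paper's. The paper does not introduce the auxiliary tensor $T_h$ or compute a common closed form for both sides; instead it rewrites the left-hand side as a double sum over $(\sigma,l)\in S_{k+1}\times\{1,\dots,k+1\}$ and the right-hand side as a sum over $(\sigma',l')\in S_k\times\{1,\dots,k+1\}$, and then matches them fiberwise via the map $\mu(\sigma,l)=(\rho_l(\sigma),\sigma(l))$ from Lemma~\ref{applem:res_commutant_S_k_projection}, checking that the fiber $\mu^{-1}(\sigma',l')$ contributes exactly the $(\sigma',l')$-term on the right. Your route---identifying both sides with $\frac{1}{(k+1)!}\sum_h T_h\otimes h$ over all injections $h$ with image containing $\{j_1,\dots,j_k\}$---replaces that fiberwise bookkeeping by two transparent bijections (the $S_{k+1}$-orbit decomposition on the left, the triple parametrization $(l,\sigma,j)\leftrightarrow g_{l,\sigma,j}$ on the right). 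The equivariance $\tau.(T_h\otimes h)=T_{\tau.h}\otimes(\tau.h)$ you use is precisely what the paper's relation $\sigma\circ u^{(l)}=u^{(\sigma(l))}\circ\rho_l(\sigma)$ encodes, so the content is the same; your packaging simply avoids the explicit fiber count.
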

\begin{proof}
We rewrite both sides of the identity we want to prove: the left hand side becomes
   \begin{align*}
  &\frac{1}{(k+1)!}\sum_{\substack {\sigma \in S_{k+1}, \\ 1\leq l \leq k+1}}   (\sigma \circ u^{(l)}).(v_{j_1} \otimes v_{j_2} \otimes ...\otimes v_{j_k}) \otimes \left(\sum_{\substack {g \in Inj(\{1,...,k+1\} , \{1,...,n\}):\\ g \circ \iota_l =f_{j_1 < j_2 < ...<j_k}\\ g \text{ monotone increasing} }} g \circ \sigma^{-1}\right) 
   \end{align*}
   and the right hand side becomes
      \begin{align*}
  &\frac{1}{(k+1)!}\sum_{ 1\leq l' \leq k+1} \sum_{\sigma' \in S_k}  (u^{(l')}\circ \sigma').(v_{j_1} \otimes v_{j_2} \otimes ...\otimes v_{j_k}) \otimes \left( \sum_{\substack{g' \in Inj(\{1,...,k+1\} , \{1,...,n\}):\\ g' \circ \iota_{l'} =f_{j_1 < j_2 < ...<j_k} \circ \sigma'^{-1} } } g' \right)
   \end{align*}
   
We now define the following map:
\begin{align*}
 \mu: S_{k+1} \times \{1,...,k+1\} &\longrightarrow S_{k} \times \{1,...,k+1\} \\
 (\sigma, l) &\mapsto (\rho_l(\sigma), \sigma(l))
\end{align*}
where $\rho_l(\sigma)$ is defined in Lemma \ref{applem:res_commutant_S_k_projection}.

Then it is enough to check that for every $(\sigma', l') \in S_{k} \times \{1,...,k+1\}$, the following identity holds:
\begin{align*}
&\sum_{(\sigma, l) \in \mu^{-1}(\sigma', l')}  (\sigma \circ u^{(l)}).(v_{j_1} \otimes v_{j_2} \otimes ...\otimes v_{j_k})\otimes \left( \sum_{\substack {g \in Inj(\{1,...,k+1\} , \{1,...,n\}):\\ g \circ \iota_l =f_{j_1 < j_2 < ...<j_k}\\ g \text{ monotone increasing} }}  g \circ \sigma^{-1} \right)= \\
&= (u^{(l')}\circ \sigma').(v_{j_1} \otimes v_{j_2} \otimes ...\otimes v_{j_k}) \otimes \left( \sum_{\substack{g' \in Inj(\{1,...,k+1\} , \{1,...,n\}):\\ g' \circ \iota_{l'} =f_{j_1 < j_2 < ...<j_k} \circ \sigma'^{-1} } } g' \right)
\end{align*}

From Lemma \ref{applem:res_commutant_S_k_projection}, we know that $\sigma \circ u^{(l)} = u^{(l')}\circ \sigma'$ for any $(\sigma, l) \in \mu^{-1}(\sigma', l')$.

So we need to check that
\begin{align*}
&\sum_{(\sigma, l) \in \mu^{-1}(\sigma', l')} \sum_{\substack {g \in Inj(\{1,...,k+1\} , \{1,...,n\}):\\ g \circ \iota_l =f_{j_1 < j_2 < ...<j_k}\\ g \text{ monotone increasing} }} (g \circ \sigma^{-1}) =  \sum_{\substack{g' \in Inj(\{1,...,k+1\} , \{1,...,n\}):\\ g' \circ \iota_{l'} =f_{j_1 < j_2 < ...<j_k} \circ \sigma'^{-1} } } g' 
\end{align*}
Notice that by definition of $\mu$, $\sigma \circ \iota_l = \iota_{\sigma(l)} \circ \rho_l(\sigma)$, i.e. $$\sigma \circ \iota_l = \iota_{l'} \circ \sigma'$$ Thus for any monotone increasing map $g :\{1,...,k+1\} \rightarrow \{1,...,n\}$ such that $g \circ \iota_l =f_{j_1 < j_2 < ...<j_k}$, the map $g':= g \circ \sigma^{-1}$ is an injective map $\{1,...,k+1\} \rightarrow \{1,...,n\}$ satisfying: $g' \circ \iota_{l'} =f_{j_1 < j_2 < ...<j_k} \circ \sigma'^{-1}$.

It remains to check that the summands in the left hand side are pairwise different and that both sides have the same number of summands. 

The first of these statements is proved as follows: let $g_1 \circ \sigma^{-1}$, $g_2 \circ \tau^{-1}$ be two summands in the left hand side. Assume they are equal. This means that $g_1, g_2$ have the same image, and since they both are monotone increasing, we conclude that $g_1 =g_2$, which of course means that $\sigma =\tau$, and we are done.

The second statement is proved as follows: the number of summands in the right hand side is obviously $n-k$. To show that this is also the number of summands in the left hand side, we only need to check that the projection $S_{k+1} \times \{1,...,k+1\} \rightarrow \{1,...,k+1\}$ maps $\mu^{-1}(\sigma', l')$ bijectively to $\{1,...,k+1\}$. 

By the definition of $\mu$ and the construction described in Lemma \ref{applem:res_commutant_S_k_projection}, we see that for every $l \in \{1,...,k+1\}$, we can (uniquely) reconstruct $\sigma$ from the data $(\sigma', l', l)$ so that $\mu(\sigma, l)= (\sigma', l')$: we consider the diagram of $\sigma' \in \bar{P}_{k, k}$, insert a vertex in the $l$-th position in the top row, a vertex in the $l'$-th position in the bottom row and an edge connecting the two. The obtained diagram will be $\sigma$.
This completes the proof of the lemma.
\end{proof}

  \begin{lemma}\label{applem:action_on_usual_tensor_power2}
  Let $k \in \{0, ..., n-1\}, \{j_1 < j_2 <... <j_{k+1}\} \subset \{1, ..., n\}$, $\lambda \in U^*$, $v_{j_1}, v_{j_2}, ..., v_{j_{k+1}} \in U$, and let $f_{j_1 < j_2 < ...<j_{k+1}}$ be the map $\{1,...,{k+1}\} \rightarrow \{1, ..., n\}$ taking $i$ to $j_i$. Then 
   \begin{align*}
  & e_{S_{k}} \left(\sum_{ 1\leq l \leq k+1} \lambda^{(l)}.(v_{j_1} \otimes v_{j_2} \otimes ...\otimes v_{j_{k+1}}) \otimes \mathtt{res}_l(f_{j_1 < j_2 < ...<j_{k+1}}) \right)=\\
  &=\frac{1}{(k+1)!}\sum_{ 1\leq l \leq k+1} \sum_{\sigma \in S_{k+1}}  (\lambda^{(l)}\circ \sigma)(v_{j_1} \otimes v_{j_2} \otimes ...\otimes v_{j_{k+1}}) \otimes ( \mathtt{res}_l\circ \sigma)(f_{j_1 < j_2 < ...<j_{k+1}})
   \end{align*}
 \end{lemma}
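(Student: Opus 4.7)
The plan is to follow the strategy of the proof of Lemma \ref{applem:action_on_usual_tensor_power1} essentially verbatim, with the roles of $S_k$ and $S_{k+1}$ swapped and the map $\mu$ replaced by a ``dual'' map $\tilde\mu$ going in the opposite direction. First I would rewrite both sides as double sums: expanding $e_{S_k} = \tfrac{1}{k!}\sum_{\tau\in S_k}\tau.(\cdot)$ turns the left-hand side into
\[
\frac{1}{k!}\sum_{(\tau,l')\in S_k\times\{1,\dots,k+1\}} (\tau\circ\lambda^{(l')})(v_{j_1}\otimes\cdots\otimes v_{j_{k+1}})\otimes (\tau\circ \mathtt{res}_{l'})(f_{j_1<\cdots<j_{k+1}}),
\]
while the right-hand side is already $\tfrac{1}{(k+1)!}$ times a sum indexed by $(\sigma,l)\in S_{k+1}\times\{1,\dots,k+1\}$. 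Since the cardinality ratio of these two indexing sets is $k+1$, it suffices to exhibit a $(k+1)$-to-one surjection $\tilde\mu$ between them that matches the summands.

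The candidate map is $\tilde\mu(\sigma,l):=(\rho_l(\sigma^{-1})^{-1},\sigma^{-1}(l))$. Both its derivation and the summand-matching ultimately reduce to a single ``dual'' counterpart of Lemma \ref{applem:res_commutant_S_k_projection}, which I would establish first: for every $\sigma\in S_{k+1}$ and $l\in\{1,\dots,k+1\}$,
\[
\mathtt{res}_l\circ\sigma = \rho_l(\sigma^{-1})^{-1}\circ \mathtt{res}_{\sigma^{-1}(l)},\qquad \lambda^{(l)}\circ\sigma = \rho_l(\sigma^{-1})^{-1}\circ \lambda^{(\sigma^{-1}(l))}.
\]
Writing the $S_{k+1}$-action on injections as $\sigma.g=g\circ\sigma^{-1}$ and using $\mathtt{res}_l(g)=g\circ\iota_l$, the first identity collapses to the single equality $\sigma^{-1}\circ\iota_l = \iota_{\sigma^{-1}(l)}\circ\rho_l(\sigma^{-1})$, which is exactly the relation from the construction of $\rho_l$ in Lemma \ref{applem:res_commutant_S_k_projection} applied to $\sigma^{-1}$ in place of $\sigma$; the verification on the tensor side is strictly parallel.

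Finally, I would observe that $\tilde\mu$ factors as $(\sigma,l)\mapsto(\sigma^{-1},l)\stackrel{\mu}{\mapsto}(\rho_l(\sigma^{-1}),\sigma^{-1}(l))\mapsto(\rho_l(\sigma^{-1})^{-1},\sigma^{-1}(l))$, a composition of two bijections (inversion on $S_{k+1}$ and on $S_k$, each fixing the second component) with the map $\mu$ from the proof of Lemma \ref{applem:action_on_usual_tensor_power1}. This immediately gives that $\tilde\mu$ is surjective with fibers of size $k+1$. Applying the dual identity term-by-term to the right-hand side and re-collecting via $\tilde\mu$ then converts the coefficient $\tfrac{1}{(k+1)!}$ into $\tfrac{k+1}{(k+1)!}=\tfrac{1}{k!}$ and reproduces the rewritten left-hand side. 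The only delicate step is the derivation of the dual commutation relation; this is essentially bookkeeping about whether $S_n$ acts by pre- or post-composition on $Inj(\{1,\dots,k+1\},\{1,\dots,n\})$ and on $U^{\otimes(k+1)}$, and presents no real obstacle once the conventions are fixed.
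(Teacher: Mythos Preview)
Your proposal is correct and follows essentially the same route as the paper: expand $e_{S_k}$, define $\tilde\mu$ via $\mu$ and inversion, reduce to the identity $\sigma^{-1}\circ\iota_l=\iota_{\sigma^{-1}(l)}\circ\rho_l(\sigma^{-1})$ from Lemma~\ref{applem:res_commutant_S_k_projection}, and read off the $(k+1)$-to-one fiber count from the analysis already carried out for $\mu$ in Lemma~\ref{applem:action_on_usual_tensor_power1}. The only cosmetic difference is that you absorb the inversion on the $S_k$-component into the definition of $\tilde\mu$ itself, whereas the paper defines $\tilde\mu(\sigma,l)=\mu(\sigma^{-1},l)$ and then looks at fibers over $(\sigma'^{-1},l')$; the two formulations are manifestly equivalent.
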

\begin{proof}
 We rewrite the left hand of the identity we want to prove, and it becomes
   \begin{align*}
  &\frac{1}{k!}\sum_{\substack {\sigma' \in S_{k}, \\ 1\leq l' \leq k+1}}   (\sigma' \circ \lambda^{(l')}).(v_{j_1} \otimes v_{j_2} \otimes ...\otimes v_{j_{k+1}}) \otimes \left(\sigma' \circ \mathtt{res}_{l'}\right)(f_{j_1 < j_2 < ...<j_{k+1}})
   \end{align*}
   
 We use the definition of the map $\mu$ from the proof of Lemma \ref{applem:action_on_usual_tensor_power1}, and define the map 
 \begin{align*}
 \tilde{\mu}: S_{k+1} \times \{1,...,k+1\} &\longrightarrow S_{k} \times \{1,...,k+1\} \\
 (\sigma, l) &\mapsto \mu(\sigma^{-1}, l)
\end{align*}
Then it is enough to check that for every $(\sigma', l') \in S_{k} \times \{1,...,k+1\}$,
\begin{align*}
&(\sigma' \circ \lambda^{(l')}).(v_{j_1} \otimes v_{j_2} \otimes ...\otimes v_{j_{k+1}}) \otimes \left(\sigma' \circ \mathtt{res}_{l'}\right)(f_{j_1 < j_2 < ...<j_{k+1}}) =\\
&= \frac{1}{k+1}\sum_{(\sigma, l) \in \tilde{\mu}^{-1}(\sigma'^{-1}, l')}(\lambda^{(l)}\circ \sigma)(v_{j_1} \otimes v_{j_2} \otimes ...\otimes v_{j_{k+1}}) \otimes ( \mathtt{res}_l\circ \sigma)(f_{j_1 < j_2 < ...<j_{k+1}})
\end{align*}

By definition of $\tilde{\mu}$, for every $(\sigma, l) \in \tilde{\mu}^{-1}(\sigma'^{-1}, l')$, we have: $\mu(\sigma^{-1}, l)  = (\sigma'^{-1}, l')$, which means that $\sigma^{-1} \circ \iota_l = \iota_{l'} \circ \sigma'^{-1}$ (see the proof of Lemma \ref{applem:action_on_usual_tensor_power1}), and so 
$$\sigma' \circ \mathtt{res}_{l'} = \mathtt{res}_{l} \circ\sigma$$ and similarly
$$\sigma' \circ \lambda^{(l')} = \lambda^{(l)}\circ \sigma$$
Thus it only remains to check that the right hand side has $k+1$ summands, i.e. that $\tilde{\mu}^{-1}(\sigma'^{-1}, l')$ has $k+1$ elements. The latter can be easily deduced from the arguments in the proof of Lemma \ref{applem:action_on_usual_tensor_power1}.
\end{proof}

\begin{lemma}\label{applem:gl_V_action_usual_tens_power}
There is an isomorphism of $\mathfrak{gl}(V)$-modules \InnaA{$$\Phi: V^{\otimes n} \stackrel{\sim}{\longrightarrow} \bigoplus_{k=0,...,n} (U^{\otimes k} \otimes \bC Inj(\{1,...,k\} , \{1,...,n\}))^{S_k}$$
where $\Phi (\triv \otimes \triv \otimes ... \otimes \triv) =1$ \InnaB{(lies in degree zero of the right hand side)}.

Moreover, this isomorphism is an isomorphism of $\bC[S_n] \otimes_{\bC} \mathcal{U}(\gl(V))$-modules.}
\end{lemma}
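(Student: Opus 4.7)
The plan is to define $\Phi$ explicitly using the splitting $V = \bC\triv \oplus U$. Namely, $V^{\otimes n}$ decomposes as a direct sum over subsets $S \subset \{1,\dots,n\}$ of subspaces $V^{\otimes n}_S := W_1 \otimes \dots \otimes W_n$ where $W_i = U$ for $i \in S$ and $W_i = \bC\triv$ for $i \notin S$. On a pure tensor with $\triv$ at positions outside $S = \{j_1 < j_2 < \dots < j_k\}$ and vectors $v_{j_1}, \dots, v_{j_k} \in U$ at the indicated positions, I will set
$$\Phi(v_1 \otimes \dots \otimes v_n) := e_{S_k}\bigl((v_{j_1} \otimes \dots \otimes v_{j_k}) \otimes f_{j_1 < \dots < j_k}\bigr),$$
where $e_{S_k} = \frac{1}{k!}\sum_{\sigma \in S_k} \sigma$ is the symmetrization projector. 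In particular $\Phi(\triv^{\otimes n}) = 1$, as required.

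First I would verify that $\Phi$ is a vector space isomorphism. Since $S_k$ acts freely on $Inj(\{1,\dots,k\}, \{1,\dots,n\})$, the space $(U^{\otimes k} \otimes \bC Inj(\{1,\dots,k\},\{1,\dots,n\}))^{S_k}$ decomposes as a direct sum indexed by the $S_k$-orbits, i.e.\ by $k$-element subsets of $\{1,\dots,n\}$; each orbit contributes a single copy of $U^{\otimes k}$, spanned by symmetrizations based at the unique increasing representative $f_{j_1 < \dots < j_k}$. This matches the decomposition $V^{\otimes n} = \bigoplus_S V^{\otimes n}_S$ summand-by-summand.

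Next I would check the intertwining property one generator at a time, using the triangular decomposition $\gl(V) \cong \bC \id_V \oplus \gl(U) \oplus U \oplus U^*$. The cases of $\id_V$ (scalar $n$ on both sides), $\gl(U)$ (direct comparison of diagonal actions, noting that every $A \in \gl(U) \subset \gl(V)$ satisfies $A(\triv)=0$), and the $S_n$-action (which permutes the summands $V^{\otimes n}_S$ in the same way as it permutes the injections $f_{j_1 < \dots < j_k}$) are all straightforward and purely combinatorial. The substantive content lies in the $U$- and $U^*$-actions. Under the embedding $U \cong \mathfrak{u}_{\mathfrak{p}}^- \hookrightarrow \gl(V)$, an element $u \in U$ corresponds to $T_{\triv^*, u}$, and its diagonal action on $V^{\otimes n}$ replaces each $\triv$ in a pure tensor by $u$ (vanishing on factors in $U$). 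Applying $\Phi$ to this sum and reorganizing the result as a symmetrization over $S_{k+1}$ based at an increasing injection $g$ with $g \circ \iota_l = f_{j_1 < \dots < j_k}$ matches exactly the right-hand side of Lemma \ref{applem:action_on_usual_tensor_power1}, which in turn equals $F_u$ applied to $\Phi$ of the original tensor. The case of $f \in U^*$ is entirely analogous using Lemma \ref{applem:action_on_usual_tensor_power2}.

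The main obstacle will be correct bookkeeping of scalar factors: the asymmetry between $F_u$ (carrying a $\frac{1}{k+1}$) and $E_f$ (no prefactor) arises from the ratio $|S_{k+1}|/|S_k| = k+1$ when passing between the symmetrizers $e_{S_{k+1}}$ and $e_{S_k}$, and this is precisely what Lemmas \ref{applem:action_on_usual_tensor_power1} and \ref{applem:action_on_usual_tensor_power2} are designed to encode. With these in hand, the verification reduces to matching sums indexed by $(l,\sigma) \in \{1,\dots,k+1\} \times S_k$ against sums indexed by monotone increasing injections $g$ extending $f_{j_1 < \dots < j_k}$, which is the bijection already established in the proofs of those lemmas via the map $\mu$.
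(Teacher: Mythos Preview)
Your proposal is correct and follows essentially the same approach as the paper: the same explicit map $\Phi$ built from the symmetrizations $e_{S_k}(v_{j_1}\otimes\cdots\otimes v_{j_k}\otimes f_{j_1<\cdots<j_k})$, the same triangular decomposition of $\gl(V)$, and the same appeal to Lemmas~\ref{applem:action_on_usual_tensor_power1} and~\ref{applem:action_on_usual_tensor_power2} for the $\mathfrak{u}_{\mathfrak{p}}^{\pm}$-actions. One tiny slip: what you call the ``right-hand side'' of Lemma~\ref{applem:action_on_usual_tensor_power1} (the $e_{S_{k+1}}$-symmetrization over monotone increasing $g$) is in fact its left-hand side, but since the lemma asserts equality this does not affect your argument.
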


\begin{proof}
\InnaB{Fix a dual basis vector $\triv^* \in (\bC \triv)^*$ such that $\triv^* ( \triv )=1$.}

Given a subset $J = \{j_1 < j_2 < ...<j_k\} \subset \{1,...,n\}$, let $$U^{\otimes J} = \InnaB{\bC \triv}^{\otimes j_1-1} \otimes U \otimes \InnaB{\bC \triv}^{\otimes j_2-j_1-1} \otimes U \otimes... \otimes \InnaB{\bC \triv}^{\otimes j_k-j_{k-1}-1} \otimes U \otimes \InnaB{\bC \triv}^{\otimes n-j_k}$$ (that is, the factors $j_1, j_2$, etc. are $U$, and the rest are $\InnaB{\bC \triv}$).
Then $V^{\otimes n} = \bigoplus_{J \subset \{1,..,n\}} U^{\otimes J}$.

Let
\begin{align*}
\Phi_{J}:U^{\otimes \{j_1 < j_2 < ...<j_k\}} &\longrightarrow (U^{\otimes k} \otimes \bC Inj(\{1,...,k\} , \{1,...,n\}))^{S_k}\\
v_{1} \otimes ...\otimes v_{n} &\mapsto  e_{S_k}(v_{j_1} \otimes v_{j_2} \otimes ...\otimes v_{j_k} \otimes f_{j_1 < j_2 < ...<j_k}) \cdot \prod_{i \notin J} \InnaB{\triv^*(v_i)}
\end{align*}
 Here $f:=f_{j_1 < j_2 < ...<j_k} \in Inj(\{1,...,k\} , \{1,...,n\})$ is given by $f(s):=j_s$ and $e_{S_k}$ is the projection $$U^{\otimes k} \otimes \bC Inj(\{1,...,k\} , \{1,...,n\}) \rightarrow (U^{\otimes k} \otimes \bC Inj(\{1,...,k\} , \{1,...,n\}))^{S_k}$$

Finally, set the map $$\Phi: V^{\otimes n} = \bigoplus_{J \subset \{1,..,n\}} U^J \longrightarrow \bigoplus_{k=0,...,n} (U^{\otimes k} \otimes \bC Inj(\{1,...,k\} , \{1,...,n\}))^{S_k}$$ to be $\sum_{J \subset \{1,..,n\}} \Phi_{J}$.

Notice that $\Phi(\triv \otimes \triv \otimes ... \otimes \triv) = \Phi_{\emptyset}(\triv \otimes \triv \otimes ... \otimes \triv) = 1$.

We claim that $\Phi$ is a map of $\gl(V)$-modules. Again, we consider the decomposition $$\gl(V) \cong \bC \id_V \oplus \mathfrak{u}_{\mathfrak{p}}^{-} \oplus \mathfrak{u}_{\mathfrak{p}}^{+} \oplus \gl(U)$$
\begin{itemize}[leftmargin=*]
 \item $\id_V$ acts by the scalar $n$ on both sides.
 \item Let $u \in  U \cong \mathfrak{u}_{\mathfrak{p}}^{-}$, and let $v_{1} \otimes ...\otimes v_{n} \in U^{\otimes \{j_1 < j_2 < ...<j_k\}}$ 
 
 Then $u$ acts on $ V^{\otimes n}$ by operator $F_u$ which satisfies:
 $$F_u.(v_{1} \otimes ...\otimes v_{n}) = \sum_{i \notin J} v_1 \otimes ... \otimes \InnaB{\triv^*(v_i)} u \otimes ...\otimes v_n$$
and thus 
$$\Phi(F_u.(v_{1} \otimes ...\otimes v_{n})) = e_{S_{k+1}}\left(\sum_{ 1\leq l \leq k+1} \sum_{\substack {g \in Inj(\{1,...,k+1\} , \{1,...,n\}):\\ g \circ \iota_l =f_{j_1 < j_2 < ...<j_k}\\ f(l-1) < g(l) < f(l) }}  u^{(l)}.(v_{j_1} \otimes v_{j_2} \otimes ...\otimes v_{j_k}) \otimes g \right)\cdot \prod_{i \notin J} \InnaB{\triv^*(v_i)}
$$
Here $\iota_l$ is the injection 
$$ \{1,...,k\}\hookrightarrow \{1,...,k+1\}, \, i \mapsto \begin{cases} i &\text{ if } i < l \\
i+1 &\text{ if } i \geq l                                                                                                  \end{cases}$$
 Now,
 \begin{align*}
  &F_u.\Phi(v_{1} \otimes ...\otimes v_{n}) = F_u.\left(e_{S_k}(v_{j_1} \otimes v_{j_2} \otimes ...\otimes v_{j_k} \otimes f_{j_1 < j_2 < ...<j_k}) \cdot \prod_{i \notin J} \InnaB{\triv^*(v_i)} \right) =\\
  &= \frac{1}{(k+1)!}  \prod_{i \notin J} \InnaB{\triv^*(v_i)} \left(\sum_{ 1\leq l \leq k+1} \sum_{\sigma \in S_k}  (u^{(l)}\circ \sigma)(v_{j_1} \otimes v_{j_2} \otimes ...\otimes v_{j_k}) \otimes ( \mathtt{res}_l^*\circ \sigma)(f_{j_1 < j_2 < ...<j_k})\right)
 \end{align*}
 We now use Lemma \ref{applem:action_on_usual_tensor_power1} to conclude that $\Phi$ is a map of $\mathfrak{u}_{\mathfrak{p}}^{-}$-modules. 

\item Let $\lambda \in  U^* \cong \mathfrak{u}_{\mathfrak{p}}^{+}$ , $k \geq 1$, and $v_{1} \otimes ...\otimes v_{n} \in U^{\otimes \{j_1 < j_2 < ...<j_k\}}$.
 
 Then $\lambda$ acts on $ V^{\otimes n}$ by operator $E_{\lambda}$ which satisfies:
 $$E_{\lambda}.(v_{1} \otimes ...\otimes v_{n}) = \sum_{j \in J} v_1 \otimes ... \otimes \lambda(v_j) \otimes ...\otimes v_n$$
and thus 
$$\Phi(E_{\lambda}.(v_{1} \otimes ...\otimes v_{n})) = e_{S_{k-1}} \left(\sum_{ 1\leq l \leq k} \lambda^{(l)}.(v_{j_1} \otimes v_{j_2} \otimes ...\otimes v_{j_k}) \otimes \mathtt{res}_l(f_{j_1 < j_2 < ...<j_k}) \right)\cdot \prod_{i \notin J} \InnaB{\triv^*(v_i)}$$
 Now,
 \begin{align*}
  &E_{\lambda}.\Phi(v_{1} \otimes ...\otimes v_{n}) = E_{\lambda}.\left(e_{S_k}(v_{j_1} \otimes v_{j_2} \otimes ...\otimes v_{j_k} \otimes f_{j_1 < j_2 < ...<j_k}) \cdot \prod_{i \notin J} \InnaB{\triv^*(v_i)} \right) =\\
  &= \frac{1}{k!} \prod_{i \notin J} \InnaB{\triv^*(v_i)} \left(\sum_{ 1\leq l \leq k} \sum_{\sigma \in S_k}  (\lambda^{(l)}\circ \sigma)(v_{j_1} \otimes v_{j_2} \otimes ...\otimes v_{j_k}) \otimes ( \mathtt{res}_l\circ \sigma)(f_{j_1 < j_2 < ...<j_k})\right)
 \end{align*}

 We now use Lemma \ref{applem:action_on_usual_tensor_power2} to conclude that $\Phi$ is a map of $\mathfrak{u}_{\mathfrak{p}}^{+}$-modules (note that the action of $\lambda$ on $(U^{\otimes 0} \otimes \bC)^{S_0} \cong \bC$ is zero, as is on $\InnaB{\bC \triv} \cong U^{\otimes \emptyset}$). This Lemma is proved at the end of this section.

 \item $ \gl(U)$ acts naturally on each summand $U^{\otimes \{j_1 < j_2 < ...<j_k\}}$ on the left and on each summand $(U^{\otimes k} \otimes \bC Inj(\{1,...,k\} , \{1,...,n\}))^{S_k}$ on the right, and this action gives us isomorphisms of $\gl(U)$-modules: 
 $$\Phi_{J}:U^{\otimes \{j_1 < j_2 < ...<j_k\}} \rightarrow U^{\otimes k} \otimes \bC f_{j_1 < j_2 < ...<j_k}$$ and
 $$\bigoplus_{\{j_1 < j_2 < ...<j_k\} \subset \{1,...,n\}} U^{\otimes k} \otimes \bC f_{j_1 < j_2 < ...<j_k} \cong (U^{\otimes k} \otimes \bC Inj(\{1,...,k\} , \{1,...,n\}))^{S_k}  $$
\end{itemize}

Note that the last argument also shows that $\Phi$ is an isomorphism.

\InnaA{It remains to check that $\Phi$ is also a morphism of $S_n$-modules. 
Fix $k \in \{0, ..., n\}$.
It is enough to check that 
\begin{align*}
\bigoplus_{\substack{J \subset \{1, ..., n\},\\ \abs{J}=k}} \Phi_{J}: \bigoplus_{\substack{J \subset \{1, ..., n\},\\ \abs{J}=k}} U^{\otimes \{j_1 < j_2 < ...<j_k\}} &\longrightarrow (U^{\otimes k} \otimes \bC Inj(\{1,...,k\} , \{1,...,n\}))^{S_k}\\
v_{1} \otimes ...\otimes v_{n} &\mapsto  e_{S_k}(v_{j_1} \otimes v_{j_2} \otimes ...\otimes v_{j_k} \otimes f_{j_1 < j_2 < ...<j_k}) \cdot \prod_{i \notin J} \InnaB{\triv^*(v_i)}
\end{align*}
is a morphism of $S_n$-modules (here $J = \{j_1 < j_2 < ...<j_k\}$).

Fix $\sigma \in S_n$, and fix $J = \{j_1 < j_2 < ...<j_k\} \subset \{1, ..., n\}$. Let $\tau \in S_k$ be such that 
$\sigma(j_{\tau^{-1}(1)}), \sigma(j_{\tau^{-1}(2)}), ..., \sigma(j_{\tau^{-1}(k)})$ is a monotone increasing sequence. We will denote this sequence by $\sigma(J)$.

We have:
\begin{align*}
\sigma(\Phi_{J}( v_{1} \otimes ...\otimes v_{n})) = e_{S_k}(v_{j_1} \otimes v_{j_2} \otimes ...\otimes v_{j_k} \otimes (\sigma \circ f_{j_1 < j_2 < ...<j_k})) \cdot \prod_{i \notin J} \InnaB{\triv^*(v_i)}
\end{align*}
On the other hand, 
\begin{align*}
&\Phi_{\sigma(J)}(\sigma( v_{1} \otimes ...\otimes v_{n})) = \Phi_{\sigma(J)}( v_{\sigma^{-1}(1)} \otimes ...\otimes v_{\sigma^{-1}(n)}) =\\
&=e_{S_k}\left(v_{j_{\tau^{-1}(1)}} \otimes v_{j_{\tau^{-1}(2)}} \otimes ...\otimes v_{j_{\tau^{-1}(k)}} \otimes f_{j_{\sigma(\tau^{-1}(1)} )< \sigma(j_{\tau^{-1}(2)}) < ...<\sigma(j_{\tau^{-1}(k)})}\right) \cdot \prod_{i \notin J} \InnaB{\triv^*(v_i)}=\\
&=e_{S_k}\left(\tau.(v_{j_1} \otimes v_{j_2} \otimes ...\otimes v_{j_k}) \otimes (\sigma \circ f_{j_1 < j_2 < ...<j_k} \circ \tau^{-1}) \right) \cdot \prod_{i \notin J} \InnaB{\triv^*(v_i)} = \\
&= e_{S_k}(v_{j_1} \otimes v_{j_2} \otimes ...\otimes v_{j_k} \otimes (\sigma \circ f_{j_1 < j_2 < ...<j_k})) \cdot \prod_{i \notin J} \InnaB{\triv^*(v_i)}
\end{align*}

Thus $$\sigma \circ \left(\bigoplus_{J \subset \{1, ..., n\}, \abs{J}=k} \Phi_{J} \right) = \left( \bigoplus_{J \subset \{1, ..., n\}, \abs{J}=k} \Phi_{J} \right) \circ \sigma$$ and we are done.
}
\end{proof}

}

\newpage
\section{Definition of a complex tensor power: $\nu \notin \bZ_+$}\label{app:complex_ten_power_generic_nu}

 Let $\nu \in \bC \setminus \bZ_+$, and let $(V, \InnaB{\triv})$ be a finite-dimensional unital vector space.
 \InnaB{In this section we discuss an alternative definition of the $\nu$-th tensor power of $(V, \InnaB{\triv})$, which does not use a splitting of $V$, but is applicable only when $\nu \notin \bZ_+$.}

 As before, we use Notation \ref{notn:par_subalg} for the parabolic subalgebra $\mathfrak{p}$, its ``mirabolic'' subalgebra $\bar{\mathfrak{p}}_{\InnaB{\bC \triv}}$ and the nilpotent subalgebra $\mathfrak{u}_{\mathfrak{p}}^{+}$. 
 
 We will also denote by $\mathfrak{p}_{0}$ the subalgebra of $\mathfrak{p}$ consisting of all the endomorphisms $\phi: V \rightarrow V$ for which $\Im \phi \subset \InnaB{\bC \triv}$ (notice that $\mathfrak{u}_{\mathfrak{p}}^{+}= \mathfrak{p}_{0} \cap \bar{\mathfrak{p}}_{\InnaB{\bC \triv}}$).
 $$ $$
The quotient $\mathfrak{l}:=\quotient{\mathfrak{p}}{\mathfrak{u}_{\mathfrak{p}}^{+}}$ is then a reductive Lie algebra which can be decomposed as a direct sum of reductive Lie algebras $\mathfrak{l} \cong \mathfrak{l}_1 \oplus \mathfrak{l}_2$, where $\mathfrak{l}_1:=\quotient{\mathfrak{p}_{0}}{\mathfrak{u}_{\mathfrak{p}}^{+}}$ is a one-dimensional Lie algebra, and $\mathfrak{l}_2:=\quotient{\bar{\mathfrak{p}}_{\InnaB{\bC \triv}}}{\mathfrak{u}_{\mathfrak{p}}^{+}}$.

 Notice that the Lie algebra $\mathfrak{p}$ has a one-dimensional center ($Z(\mathfrak{p}) = \bC \id_V$), and so does $\mathfrak{l}$. In fact, we have a canonical splitting $\mathfrak{p}\cong \bC \id_V \oplus \bar{\mathfrak{p}}_{\InnaB{\bC \triv}}$, and thus a canonical splitting $\mathfrak{l} \cong Z(\mathfrak{l}) \oplus \mathfrak{l}_2$.

 Consider the quotient space $U := \quotient{V}{\InnaB{\bC \triv}}$. Since both $V$ and $\InnaB{\bC \triv}$ are $\mathfrak{p}$-modules, $U$ also has the structure of a $\mathfrak{p}$-module. Moreover, $\mathfrak{p}_{0}$ obviously acts trivially on $U$, so the action of $\mathfrak{p}$ on $U$ factors through an action of $\mathfrak{l}_2$ on $U$.

  We now give the following definition of a parabolic Verma module for $(\gl(V), \mathfrak{p})$ (this is actually the parabolic Verma module of highest weight $(\nu-\abs{\lambda}, \lambda)$):
 \begin{definition}\label{def:par_Verma_mod_indep}
 Let $\lambda$ be a Young diagram.

 If $\ell(\lambda) > \dim V  -1$, we define the parabolic Verma module $M_{\mathfrak{p}}(\nu-\abs{\lambda}, \lambda)$ to be zero.

 Otherwise, consider the $\mathfrak{l}_2$-module $ S^{\lambda} U$ (i.e. the Schur functor $S^{\lambda}$ applied to the $\mathfrak{l}_2$-module $U$). We make $S^{\lambda} U$ a $\mathfrak{l}$-module by requiring that $Z(\mathfrak{l})$ act on $S^{\lambda} U$ by the scalar $\nu$, and then lift the action of $\mathfrak{l}$ on $S^{\lambda} U$ to an action of $\mathfrak{p}$ on $S^{\lambda} U$ by requiring that $\mathfrak{u}_{\mathfrak{p}}^{+}$ act trivially on $S^{\lambda} U$.

Finally, we define
  $$M_{\mathfrak{p}}(\nu-\abs{\lambda}, \lambda):=\mathcal{U}(\gl(V)) \otimes_{\mathcal{U}(\mathfrak{p})} S^{\lambda} U$$
 \end{definition}

 \begin{remark}
  Recall that the $\gl(V)$-module $M_{\mathfrak{p}}(\nu-\abs{\lambda}, \lambda)$ is irreducible, since $\nu \notin \bZ_+$ (c.f. Proposition \ref{prop:par_cat_O_Vermas}).
 \end{remark}

The following definition is proposed in \cite{E1} (we still assume that $\nu \notin \bZ_+$).

\begin{definition}\label{def:complex_ten_power_generic_nu}
Define the object $V^{\otimes^{Del'} \nu}$ of $Ind-(\underline{Rep}^{ab}(S_{\nu}) \boxtimes \co^{\mathfrak{p}}_{\nu, V})$ through the formula
 $$V^{\otimes^{Del'} \nu}:= \bigoplus_{\lambda \InnaA{\text{ is a Young diagram}}} X_{\lambda} \otimes M_{\mathfrak{p}}(\nu-\abs{\lambda}, \lambda)$$
\end{definition}

\begin{proposition}\label{prop:equiv_def_tensor_powers}
Fix a splitting $V \cong \InnaB{\bC \triv} \oplus U$.

The object $V^{\otimes^{Del'} \nu}$ defined in Definition \ref{def:complex_ten_power_generic_nu} is isomorphic to the object $V^{\underline{\otimes}  \nu}$ defined in Definition \ref{def:complex_ten_power_splitting}.
\end{proposition}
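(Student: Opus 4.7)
The plan is to exploit the semisimplicity of $\underline{Rep}(S_\nu)$ when $\nu \notin \bZ_+$, which gives a canonical decomposition of every $\mathrm{Ind}$-object into isotypic components whose multiplicity spaces carry the $\gl(V)$-action automatically. Since $\nu \notin \bZ_+$, Lemma \ref{lem:nu_classes_struct} tells us that every Young diagram lies in a trivial $\stackrel{\nu}{\sim}$-class, so $\underline{Rep}(S_\nu)$ is semisimple (Theorem \ref{thrm:blocks_S_nu}) and for every $Y \in Ind\text{-}(\underline{Rep}(S_\nu) \boxtimes \co^{\mathfrak{p}}_{\nu,V})$ we have a canonical decomposition
\[
Y \;\cong\; \bigoplus_{\lambda} X_\lambda \otimes \Hom_{Ind\text{-}\underline{Rep}(S_\nu)}(X_\lambda,\,Y),
\]
where the multiplicity space naturally inherits a $\gl(V)$-action from the one on $Y$.

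Applying this to $Y = V^{\underline{\otimes}\nu}$, the multiplicity space of $X_\lambda$ is by definition $SW_\nu(X_\lambda)$. I would then invoke Proposition \ref{prop:image_SW_ss_case}, which was proved in the semisimple block case and gives $SW_\nu(X_\lambda) \cong M_{\mathfrak{p}}(\nu-\abs{\lambda},\lambda)$ as a $\gl(V)$-module, where $M_{\mathfrak{p}}(\nu-\abs{\lambda},\lambda)$ is the parabolic Verma module of Definition \ref{def:parabolic_Verma_mod} (defined via the chosen splitting).

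The one point that needs explicit verification is that Definition \ref{def:parabolic_Verma_mod} (using the splitting $V \cong \bC\triv \oplus U$, with $\gl(U) \subset \mathfrak{p}$ acting on $S^\lambda U$ and $\mathfrak{u}_{\mathfrak{p}}^+$ acting by zero) and Definition \ref{def:par_Verma_mod_indep} (using $U = V/\bC\triv$, with $\mathfrak{l}_2 = \bar{\mathfrak{p}}_{\bC\triv}/\mathfrak{u}_{\mathfrak{p}}^+$ acting) produce the same $\gl(V)$-module. This is immediate once a splitting is fixed: the splitting gives $\mathfrak{p} \cong \bC\id_V \oplus \gl(U) \oplus \mathfrak{u}_{\mathfrak{p}}^+$, the composition $U \hookrightarrow V \twoheadrightarrow V/\bC\triv$ identifies the two copies of $U$ equivariantly, and both constructions then describe the same induced module $\mathcal{U}(\gl(V)) \otimes_{\mathcal{U}(\mathfrak{p})} S^\lambda U$.

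Putting these ingredients together,
\[
V^{\underline{\otimes}\nu} \;\cong\; \bigoplus_{\lambda} X_\lambda \otimes SW_\nu(X_\lambda) \;\cong\; \bigoplus_{\lambda} X_\lambda \otimes M_{\mathfrak{p}}(\nu-\abs{\lambda},\lambda) \;=\; V^{\otimes^{Del'}\nu}
\]
as objects of $Ind\text{-}(\underline{Rep}^{ab}(S_\nu) \boxtimes \co^{\mathfrak{p}}_{\nu,V})$. The only substantive step is the appeal to Proposition \ref{prop:image_SW_ss_case}; the rest is bookkeeping about semisimplicity of $\underline{Rep}(S_\nu)$ and the identification of the two presentations of the parabolic Verma module. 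I do not anticipate any real obstacle, since compatibility of the decomposition with the $\gl(V)$-action is automatic from the functoriality of $\Hom_{Ind\text{-}\underline{Rep}(S_\nu)}(X_\lambda, -)$.
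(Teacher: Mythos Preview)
Your proof is correct and slightly more economical than the paper's. Both arguments begin with the semisimplicity of $\underline{Rep}(S_\nu)$ for $\nu \notin \bZ_+$, but you then decompose $V^{\underline{\otimes}\nu}$ only with respect to the $X_\lambda$'s and identify the multiplicity space directly as $SW_\nu(X_\lambda)$, invoking Proposition~\ref{prop:image_SW_ss_case} to conclude $SW_\nu(X_\lambda)\cong M_{\mathfrak{p}}(\nu-\abs{\lambda},\lambda)$. The paper instead also uses the semisimplicity of $\co^{\mathfrak{p}}_{\nu,V}$ (Corollary~\ref{cor:parab_Verma_simple}), writes $V^{\underline{\otimes}\nu}\cong\bigoplus_{\lambda,\mu} L(\nu-\abs{\lambda},\lambda)\otimes X_\mu\otimes Mult_{\lambda,\mu}$, and determines $Mult_{\lambda,\mu}$ by restricting both sides to $\gl(U)$ and comparing via Lemmas~\ref{lem:hom_X_tau_Delta_k} and~\ref{lem:gl_u_struct_o_cat}. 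In effect the paper is redoing the computation underlying Proposition~\ref{prop:image_SW_ss_case} rather than citing it. Your route reuses earlier work and avoids the (not entirely trivial) matching of $\gl(U)$-decompositions $\bigoplus_{\rho\in\mathcal{I}^+_\mu}S^\rho U\cong\bigoplus_\lambda\bigoplus_{\rho\in\mathcal{I}^+_\lambda}S^\rho U\otimes Mult_{\lambda,\mu}$; the paper's route keeps the appendix self-contained and independent of Section~\ref{sec:SW_functor}. Your remark reconciling Definitions~\ref{def:parabolic_Verma_mod} and~\ref{def:par_Verma_mod_indep} is a necessary bookkeeping step that the paper leaves implicit.
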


\begin{proof}

Since we assumed $\nu \notin \bZ_+$, the categories $\underline{Rep}(S_{\nu}), \co^{\mathfrak{p}}_{\nu, V}$ are semisimple abelian categories (see Sections \ref{sec:Del_cat_S_nu}, \ref{sec:par_cat_o}). In this case, any object $A$ of $Ind-(\underline{Rep}^{ab}(S_{\nu}) \boxtimes \co^{\mathfrak{p}}_{\nu, V})$ can be written as a direct sum with summands of the form $L(\nu-\abs{\lambda}, \lambda) \otimes X_{\mu} $ ($\lambda, \mu$ are Young diagrams).

In the case of the object $V^{\underline{\otimes}  \nu}$ of $Ind-(\underline{Rep}^{ab}(S_{\nu}) \boxtimes \co^{\mathfrak{p}}_{\nu, V})$, we get:
$$V^{\underline{\otimes}  \nu} \cong \bigoplus_{\lambda, \mu} L(\nu-\abs{\lambda}, \lambda) \otimes X_{\mu} \otimes Mult_{\lambda, \mu}$$
(here $Mult_{\lambda, \mu}$ is the multiplicity space of $L(\nu-\abs{\lambda}, \lambda) \otimes X_{\mu} $ in $V^{\underline{\otimes}  \nu}$, not necessarily finite dimensional). 

\InnaA{Recall from Section \ref{sec:par_cat_o} that for any Young diagram $\lambda$, we have an isomorphism of $\gl(V)$-modules $$L(\nu-\abs{\lambda}, \lambda) \cong M_{\mathfrak{p}}(\nu-\abs{\lambda}, \lambda)$$

We now need to prove that $\dim Mult_{\lambda, \mu} = \delta_{\lambda, \mu}$, and we are done.}

To do this, consider
$$V^{\underline{\otimes}  \nu}  \cong \bigoplus_{\lambda, \mu} L(\nu-\abs{\lambda}, \lambda) \otimes X_{\mu} \otimes Mult_{\lambda, \mu}$$ as an object of $Ind-\underline{Rep}(S_{\nu})$ with an action of $\gl(U)$. Using Lemmas \ref{lem:hom_X_tau_Delta_k}, \ref{lem:gl_u_struct_o_cat}, we get the following decompositions:
$$V^{\underline{\otimes}  \nu} \cong \bigoplus_{\mu} \bigoplus_{\rho \in \mathcal{I}^{+}_{\mu} } S^{\rho} U \otimes X_{\mu} $$
and
$$\bigoplus_{\lambda, \mu} L(\nu-\abs{\lambda}, \lambda) \otimes X_{\mu} \otimes Mult_{\lambda, \mu}\cong \bigoplus_{\lambda, \mu} \bigoplus_{\rho \in \mathcal{I}^{+}_{\lambda} } S^{\rho} U \otimes X_{\mu} \otimes Mult_{\lambda, \mu}$$

Thus for any Young diagram $\mu$, we have $$ \bigoplus_{\rho \in \mathcal{I}^{+}_{\mu} } S^{\rho} U \cong \bigoplus_{\lambda} \bigoplus_{\rho \in \mathcal{I}^{+}_{\lambda} } S^{\rho} U \otimes Mult_{\lambda, \mu}$$
and we immediately conclude that $\dim Mult_{\lambda, \mu} = \delta_{\lambda, \mu}$, proving the statement of the proposition.
\end{proof}

\end{document}